\newcommand{\kommentar}[1]{}
\newcommand{\norm}[1]{\left\|{#1}\right\|}
\newcommand{\abs}[1]{\left|{#1}\right|}
\newcommand{\set}[1]{\left\{{#1}\right\}}
\newcommand{\lie}[1]{\left[{#1}\right]}
\def\sbs{\subseteq}
\def\sps{\supseteq}
\newcommand{\es}[1]{\begin{equation*}\begin{split}{#1}\end{split}\end{equation*}}
\newcommand{\kD}[1]{\begin{equation*}\begin{xy}\xymatrix{#1}\end{xy}\end{equation*}}
\theoremstyle{plain}
\newtheorem{thm}{Theorem}[section]
\newtheorem{lem}[thm]{Lemma}
\newtheorem{prop}[thm]{Proposition}
\newtheorem{cor}[thm]{Corollary}
\theoremstyle{definition}
\newtheorem{rem}[thm]{Remark}
\newtheorem{defi}[thm]{Definition}
\newtheorem{exa}[thm]{Example}
\def\1{\mathbf{1}}
\def\A{\mathcal A}
\def\ad{\operatorname{ad}}
\def\Aut{\operatorname{Aut}}
\def\Bc{\operatorname{\check{B}}}
\def\C{\mathbb C}
\def\Cent{\operatorname{Cent}}
\def\Ci{\operatorname{C}^{\infty}}
\def\Ck{\operatorname{C}^k}
\def\Co{\mathfrak{Cov}}
\def\D{\mathcal D}
\def\d{\partial}
\def\Der{\operatorname{Der}}
\def\Diff{\operatorname{Diff}}
\def\E{\mathbb E}
\def\End{\operatorname{End}}
\def\ep{\varepsilon}
\def\ev{\operatorname{ev}}
\def\g{\mathfrak g}
\def\gl{\mathfrak{gl}}
\def\GL{\operatorname{GL}}
\def\H{\mathbb H}
\def\h{\mathfrak h}
\def\Hc{\operatorname{\check{H}}}
\def\Hom{\operatorname{Hom}}
\def\I{\mathcal I}
\def\id{\operatorname{id}}
\def\ide{\trianglelefteq}
\def\im{\operatorname{im}}
\def\Iso{\operatorname{Iso}}
\def\K{\mathbb K}
\def\k{\mathfrak k}
\def\L{\mathbb L}
\def\L{\mathbb L}
\def\lra{\longrightarrow}
\def\N{\mathbb N}
\def\Nc{\mathcal N}
\def\Ni{\overline{\mathbb N}}
\def\O{\mathcal O}
\def\o{\mathfrak o}
\def\P{\mathfrak P}
\def\ph{\varphi}
\def\R{\mathbb R}
\def\S{\mathbb S}
\def\sl{\mathfrak{sl}}
\def\so{\mathfrak{so}}
\def\sp{\mathfrak{sp}}
\def\su{\mathfrak{su}}
\def\supp{\operatorname{supp}}
\def\U{\mathfrak U}
\def\V{\mathfrak V}
\def\W{\mathfrak W}
\def\wt{\widetilde}
\def\Y{\mathcal Y}
\def\Z{\mathbb Z}
\def\z{\mathfrak z}
\def\Zc{\operatorname{\check{Z}}}
\def\diffem{diffeomorphism\xspace}
\def\diffec{diffeomorphic\xspace}
\def\La{Lie algebra\xspace}
\def\vs{vector space\xspace}
\def\Kvs{$\K$-vector space\xspace}
\def\iff{if and only if\xspace}
\def\wrt{with respect to\xspace}
\title{\huge{Lie algebras of smooth sections}}
\author{Hasan Gündo\u gan}
\date{July 2007}
\begin{document}

\maketitle

\subsection*{Abstract}
Lie algebras of smooth sections are Lie algebras obtained
from bundles of Lie algebras, where the latter are vector bundles of which
the fibers are Lie algebras. We also consider the $\Ck$-sections for $k \in
\N$. This paper, which is essentially my diploma thesis from May
2007 at Technische Universität Darmstadt,  studies the derivations, the
centroid and the isomorphisms of such Lie algebras and generalizes some facts
from Lecomte's publications \cite{Le79} and \cite{Le80} to the case where the
fiber is perfect or centerfree and it gives some more explicit
proofs.

\tableofcontents

\section{Introduction}
\subsection{Motivation and requirements}
There are two main goals in the analysis of Lie algebras of smooth sections: On 
the one hand, the infinite-dimensional Lie algebras are not yet as well studied
as the finite-dimensional ones and the methods used in finite-dimensional Lie
theory are difficult to adapt to the infinite-dimensional case. I will discuss
some properties of the Lie algebras of smooth sections in Chapter 3, especially
their derivations, centroids and isomorphisms.

A solid knowledge of analysis, linear algebra, topology and some knowledge of
the theory of manifolds, e.g. a priori to be acquired in \cite{Ne05}, is
required. However, there is no knowledge of Lie algebra theory required, for
the relevant parts will be explained.

\subsection{Remarks concerning notation}
I will follow some notational conventions which ought to be clarified.
\begin{itemize}
\item The formulae $A \sbs B$ and $B \sps A$ will mean that each element of the set $A$ is an
element of the set $B$ and equality is not excluded. In order to describe a subset relation
excluding equality I will write $A \subsetneq B$ or $B \supsetneq A$. \item All vector spaces and
(bi)linear maps are considered over a field $\K \in \set{\R,\C}$. \item The set $\set{0,1,2, \ldots
}$ is denoted by $\N$ and the set $\set{0,1,2, \ldots } \cup \set{\infty}$ is denoted by $\Ni$.
\item A $\Ck$-map, $k \in \Ni$, is a $k$-times continuously differentiable function. A function is
also called ``smooth'' instead of $\Ci$. \item A manifold $M$ will always be smooth,
finite-dimensional over $\R$, hausdorff, paracompact and connected. \item Let $(e_1, \ldots , e_m)$
denote the canonical basis of $\R^m$. Then $(U,\xi)$ being a chart of a manifold $M$ with $\dim M =
m$, $x\in U$ and $f: U \to E$ being a $\operatorname{C}^1$-map between $U$ and a finite-dimensional
vector space $E$, we define the notation \[ \left(\partial_{\xi_i}\right)_x {f} := \partial_{\xi_i}
{f}(x):=\left. \frac{d}{dt} \right|_{t=0}  {f}\left(\xi^{-1}(\xi(x) + t e_i)\right). \] As a basis
of $T_xM$ we can take $\left( \left(\d_{\xi_1}\right)_x, \ldots , \left(\d_{\xi_m}\right)_x
\right)$, where $\left(\d_{\xi_i}\right)_x:= T_{\xi(x)}\left(\xi^{-1}\right)(e_i)$. We write
$\d_{\xi_i}$ for the map $U \to T_xM$, $x \mapsto \left(\partial_{\xi_i}\right)_x$. Any vector field
$X \in \mathcal V(M)$ takes the local form $\sum_{i=1}^m X^i \d_{\xi_i}$ for certain functions $X^1,
\ldots X^m \in \Ci(U,\R)$. For $f \in \Ci(M,\R)$ we define $X.f \in \Ci(M,\R)$ by $(X.f)(x) := (T_x
f) (X_x)$ and we see that the two different definitions of $\left(\partial_{\xi_i}\right)_x$ are
linked by the formula $\left(\partial_{\xi_i}\right)_x (f) = \left(\left(\partial_{\xi_i}\right).
f\right)(x).$

If $f$ is a $\Ck$-function and $\alpha=(\alpha_1, \ldots , \alpha_m) \in \N^m$  a multi-index with $\abs{\alpha}:=\sum_{i=1}^m \alpha_i \le k$, then we write:
\es{
\partial_{\xi}^{\alpha}f := \partial_{\xi_1}^{\alpha_1} \cdots \partial_{\xi_m}^{\alpha_m} f.
}
For a multi-index $\alpha \in \N^m$ we also define $\alpha!:= \prod_{i=1}^m \alpha_i$. Then, if
$\xi(U)$ is convex, the Taylor Formula can be stated as follows for $x,y \in U$ and some $z \in
[0,1]\cdot (x-y) + y \sbs U$: \es{ f(x+y) = \sum_{\abs{\alpha} < k} \frac{\prod_{i=1}^m
\xi_i(y)^{\alpha_i}}{\alpha !} \cdot \partial_{\xi}^\alpha f(x) + \sum_{\abs{\alpha} = k}
\frac{\prod_{i=1}^m \xi_i(y)^{\alpha_i}}{\alpha !} \cdot \partial_{\xi}^\alpha f(z). } Furthermore,
we define the notation $\gamma \le \alpha$ for multi-indices $\alpha, \gamma \in \N^m$, which means
$\gamma_i \le \alpha_i$ for all $i \in \set{1, \ldots , m}$, and we define the multinomial
coefficients for multi-indices $\gamma \le \alpha \in \N^m$: \[ \begin{pmatrix}\alpha \\
\gamma\end{pmatrix}:=\frac{\alpha !}{\gamma !(\alpha-\gamma)!}. \] For multi-indices $\gamma \le
\alpha \in \N^m$ and any canonical basis vector $e_i$ of $\R^m$, the multinomial coefficients
satisfy the formula \[ \begin{pmatrix}\alpha + e_i\\ \gamma+e_i\end{pmatrix} = \begin{pmatrix}\alpha
\\ \gamma\end{pmatrix} + \begin{pmatrix}\alpha \\ \gamma+e_i\end{pmatrix}. \]
\item A Lie group is a manifold equipped with a smooth group multiplication whose inversion is smooth.
\item The neutral element of a group is denoted by 1 and if $g$ is an element of the group, then $\lambda_g$ is the multiplication by $g$ from the left and $\rho_g$ is the multiplication by $g$ from the right.
\item Depending on the context, the symbol $\1$ denotes the identity map of a vector space, the endomorphism of a vector bundle where each $\1_x$ is the identity of the $x$-fiber, or a constant map with value 1 on a manifold.

\item For numbers $m,n \in \N \backslash \set{0}$, the space of all $m \times n$-matrices with entries in $\K$ is denoted by $M_{m,n}(\K)$ and is canonically identified with the space of linear functions $\K^n \to \K^m$. Note that, if we have $m=0$ or $n=0$, then $M_{m,n}(\K)$ is the vector space containing the unique linear map $\K^n \to 0$ or $0 \to \K^n$, respectively.
\item If $f: M \to E$ is a $\Ck$-map between a manifold and a finite-dimensional vector space, $n \in \N$ with $n \le k$ and $x \in M$, then we define the notation
\[
j^n_x(f) = 0 \quad :\Longleftrightarrow \quad \begin{cases}f(x)=0 & \text{ if } n=0 \\ j^{n-1}_x(f)=0 \text{ and } T^n_x f = 0 & \text{ if } n>0. \end{cases}
\]
\item If, for each point $x \in M$ of a set, $T_x=T(x): N \to P$ is a map and $A_x=A(x) \in N$ is a point, then we denote the map $M \to P$, $x \mapsto T_x (A_x)$ by $T \cdot A$.
If, for each point $x \in M$ of a set, $T_x=T(x): N \to P$ is a map and $S_x=S(x): L \to N$ is a map, then we denote the map $M \to P^L$, $x \mapsto T_x \circ S_x$ by $T \circ S$.
\end{itemize}

\section{Definitions, notions, former results}
\subsection{Associative algebras and Lie algebras}
\begin{defi} Let $A$ be a \vs.
\begin{enumerate}
\item $A$ is called an \emph{algebra} if equipped with a bilinear map $A \times A \to A, (x,y) \mapsto x\cdot y$, which we call the \emph{multiplication} of the algebra $A$.
\item An algebra $A$ is called \emph{associative}, if $x \cdot (y \cdot z) = (x \cdot y) \cdot z$ is satisfied for all $x,y,z \in A$. In this case the expression $x\cdot y \cdot z$ is clear without ambiguity. An associative algebra $A$ may possess an element $1 \in A$ with $1\cdot x = x \cdot 1 = x$ for all $x \in A$, called \emph{unity element}.
\item An algebra $\g$ is called a \emph{\La}and its multiplication map is also written
$\lie{\cdot,\cdot}: \g \times \g \to \g$, $(x,y) \mapsto \lie{x,y}$, if the following two conditions are satisfied:
\begin{enumerate}
\item $\lie{\cdot,\cdot}$ is alternating, i.e. $\lie{x,x}=0$ for all $x \in \g$.
\item the \emph{Jacobi identity}, i.e. $\lie{\lie{x,y},z}+\lie{\lie{z,x},y}+\lie{\lie{y,z},x}=0$ for all $x,y,z \in \g$.
\end{enumerate}
\item If $S,T$ are subsets of an algebra $A$, we define $S \cdot T$ to be the \vs generated by the set $\set{s\cdot t: s \in S, t \in T}$. In the Lie algebra case we also write $\lie{S,T}$ instead of $S \cdot T$.
\item Let $B$ be a vector subspace of an algebra $A$. It is called \emph{left ideal} of $A$, if $A \cdot B \sbs B$, i.e. $x \cdot y\in B$ for all $x \in A$, $y \in B$. It is called \emph{right ideal} of $A$, if $B \cdot A \sbs B$, i.e. $y \cdot x\in B$ for all $x \in A$, $y \in B$. It is called \emph{(two-sided) ideal} of $A$, symbolically $B \ide A$, if it is a left ideal and a right ideal of $A$.
In the Lie algebra case, the conditions $A \cdot B \sbs B$ and $B \cdot A \sbs B$ are equivalent.
\item A vector subspace $B$ of an algebra $A$ is called a \emph{subalgebra} of $A$, symbolically $B \leq A$, if $x \cdot y \in B$ for all $x,y \in B$, i.e. the multiplication of $A$ induces a multiplication of $B$. Of course a subalgebra of $A$ is an algebra, too.
\item Let $\g$ be a Lie algebra. The vector subspace $\lie{\g,\g}$ is called the \emph{commutator} of $\g$ and $\g$ is \emph{perfect} if $\g = \lie{\g,\g}$. The vector subspace $\set{\left.x \in \g \right|\lie{x,y}=0 \text{ for all }y\in \g}$ is called the \emph{center} of $\g$, denoted by $\z(\g)$, and $\g$ is \emph{abelian} if $\z(\g)=\g$ or, equivalently, $\lie{\g,\g}=0$. The Lie algebra $\g$ is called \emph{centerfree}, if $\z(\g)=0$.
\item A Lie algebra $\g$ is \emph{nilpotent}, if its \emph{lower central series} $\g^0, \g^1, \g^2, \ldots$, which is defined by $\g^0:=\g$ and $\g^{n+1}:=\lie{\g,\g^n}$ for $n \in \N$, becomes zero eventually.
\item A Lie algebra $\g$ is \emph{solvable}, if its \emph{derived series} $\g^{(0)}, \g^{(1)}, \g^{(2)}, \ldots$, which is defined by $\g^{(0)}:=\g$ and $\g^{(n+1)}:=\lie{\g^{(n)},\g^{(n)}}$ for $n \in \N$, becomes zero eventually.
\end{enumerate}
\end{defi}

\begin{rem}
In our case of $\K$ being a field of characteristic 0, an algebra multiplication
is alternating \iff it is skew-symmetric: On the one hand we have, for all $x,y 
\in \g:$ \[ \lie{x,y} = - \lie{y,x} \implies \lie{x,x} = - \lie{x,x} \implies 2
\lie{x,x}=0 \implies  \lie{x,x}=0. \]
On the other hand, for all $x,y,z \in \g$:
\es{
\lie{z,z}=0 &\implies 0=\lie{x+y,x+y}=\lie{x,x}+\lie{x,y}+\lie{y,x}+\lie{y,y}=\lie{x,y}+\lie{y,x}\\
&\implies \lie{x,y} = - \lie{y,x}.
}
\end{rem}

The next lemma (of which I leave out the elementary proof) shows four ``canonical'' ways to construct algebras.

\begin{lem} $ $
\begin{enumerate}
\item Let $V$ be an arbitrary \vs. Then its \vs endomorphisms form an associative algebra where the multiplication of the algebra is $\circ$, the composition of functions. This associative algebra is called the \emph{endomorphism algebra} of $V$, denoted by $\End(V)$\footnote{The symbol $\End(V)$ will also be used to describe only the \textsl{set} of \vs endomorphisms of $V$.}.
\item Let $A$ be an associative algebra. $A$ equipped with the map $\lie{\cdot,\cdot}: A \times A \to A$, $(a,b) \mapsto a\cdot b - b\cdot a$, called the \emph{commutator bracket} of $A$, is a \La, denoted by $A_L$.
\item $A$ being an algebra, the vector space $\set{\left.f \in \End(A)\right|f(a\cdot b)=f(a)\cdot b + a \cdot f(b) \text{ for all } a,b \in A}$ is a Lie subalgebra of $\End(A)_L$, called the \emph{Lie algebra of derivations} of $A$, denoted by $\Der(A)$.
\item $A$ being an algebra and $I \ide A$ an ideal, the quotient vector space $A / I$ equipped with the well-defined induced multiplication
\es{
A/I \times A/I &\lra A/I\\
(a + I, b + I) &\longmapsto (a \cdot b) + I
}
is an algebra, called \emph{quotient algebra} of $A$ modulo $I$, denoted by $A/I$.
\end{enumerate}
\end{lem}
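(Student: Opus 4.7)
The plan is to treat each of the four items separately, since they are independent and all essentially computational. In every case, the work reduces to verifying a short identity in the underlying vector space using only the hypotheses on $A$.

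For (1), the bilinearity of $\circ$ on $\End(V)$ follows immediately from the \vs structure of $V$ (composition distributes over addition on both sides, and commutes with scalar multiplication), and associativity of $\circ$ is a general fact about composition of functions. For (2), I would check the two Lie algebra axioms directly: $\lie{a,a} = a\cdot a - a\cdot a = 0$ gives the alternating property, and the Jacobi identity is obtained by expanding $\lie{\lie{a,b},c} + \lie{\lie{c,a},b} + \lie{\lie{b,c},a}$ into twelve associative products and observing that each term cancels with exactly one other after applying associativity in $A$.

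For (3), one has to verify that $\Der(A)$ is closed under the commutator bracket inherited from $\End(A)_L$. Given $f,g \in \Der(A)$ and $a,b \in A$, I would expand
\es{
(f \circ g)(a \cdot b) &= f(g(a) \cdot b + a \cdot g(b))\\
&= f(g(a))\cdot b + g(a)\cdot f(b) + f(a) \cdot g(b) + a \cdot f(g(b))
}
and similarly for $g \circ f$; subtracting, the mixed terms $g(a)\cdot f(b)$ and $f(a)\cdot g(b)$ cancel, leaving precisely the Leibniz rule for $\lie{f,g}$. Linearity of $\lie{f,g}$ is immediate, so $\lie{f,g} \in \Der(A)$.

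For (4), the main point is well-definedness of the induced multiplication on $A/I$. If $a+I = a'+I$ and $b+I = b'+I$, write $a' = a + x$, $b' = b + y$ with $x,y \in I$; then $a'\cdot b' = a\cdot b + x\cdot b + a \cdot y + x \cdot y$, and each of the last three summands lies in $I$ since $I$ is a two-sided ideal, hence $a'\cdot b' + I = a\cdot b + I$. Bilinearity of the induced multiplication is then inherited directly from the bilinearity of the multiplication on $A$ via the quotient map. None of these steps poses any real obstacle; the only care needed is in (4), where the two-sidedness of $I$ is what makes the quotient multiplication well-defined.
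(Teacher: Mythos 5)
Your proof is correct; the paper explicitly omits the proof of this lemma as elementary, and your verification (bilinearity and associativity of composition, cancellation of the twelve Jacobi terms, cancellation of the mixed terms $g(a)\cdot f(b)$ and $f(a)\cdot g(b)$ in the derivation computation, and two-sidedness of $I$ for well-definedness of the quotient multiplication) is exactly the standard argument the author had in mind. No gaps.
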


\begin{defi}
The set of \emph{Lie algebra morphisms} of $\g$ and $\h$ is
\es{
\Hom(\g,\h):=\set{\left.f: \g \to \h \text{ linear map }\right|f(\lie{x,y})=\lie{f(x),f(y)}\text{ for all }x,y\in \g}.
}
The set of \emph{Lie algebra isomorphimsm} of $\g$ and $\h$ is
\es{
\Iso(\g,\h):=\set{\left.f: \g \to \h \text{ linear isomorphism }\right|f(\lie{x,y})=\lie{f(x),f(y)}\text{ for all }x,y\in \g}.
}
In the special case of $\g = \h$ we even have a group $\Aut(\g):=\Iso(\g,\g)$ with neutral element $\1=\id_\g$, which is called \emph{group of Lie algebra automorphisms} of $\g$.
\end{defi}

Now it is time to give some important examples of algebras.

\begin{exa}\label{exampAlg} \
\begin{enumerate}
\item Every \vs $V$ can be equipped with the \emph{trivial Lie bracket}: $\lie{x,y}:=0$ for all $x,y \in V$.
\item $S$ being a set and $A$ being an algebra, the set $A^S$ of all mappings $S \to A$ can be pointwisely equipped with an algebra structure. An important subalgebra of such an algebra is $\operatorname{C}^k(M,\K) \le \K^M$, where $M$ is a smooth manifold, $\K$ is equipped with the field multiplication as algebra multiplication and $k \in \Ni$.
\item If $\left(\g,\lie{\cdot,\cdot}\right)$ is a Lie algebra and $\ph: V \to \g$ is an isomorphism of \vs{}s, then we can define a Lie bracket on $V$ as follows:
\es{
\lie{\cdot,\cdot}_{\ph}: V \times V &\lra V\\
(v,w) &\longmapsto \ph^{-1}\lie{\ph(v),\ph(w)}.
}
Then $\ph$ becomes an isomorphism of the Lie algebras $\left(V,\lie{\cdot,\cdot}_{\ph}\right)$ and $\left(\g,\lie{\cdot,\cdot}\right)$.
\item If $V$ is a \vs, we call $\gl(V):=\End(V)_L$ the \emph{general \La of $V$}.
\item If $V$ is a \vs and $\dim(V) < \infty$, we call $\sl(V):=\set{\left. f \in \End(V)\right| \text{tr}(f)=0}$ the \emph{special \La of $V$}. In fact, $\sl(V)$ is a Lie subalgebra of $\gl(V)$:
\[
\text{tr}\left(\lie{f,g}\right) = \text{tr}(f\circ g - g \circ f) = \text{tr}(f\circ g) - \text{tr}(g \circ f)=\text{tr}(f\circ g) -\text{tr}(f\circ g) =0
\]
This calculation even shows that $\lie{\gl(V),\gl(V)} \sbs \sl(V)$.
\item The squared matrices over $\K$ with $n$ rows form the associative algebra $M_{n,n}(\K)$ \wrt the multiplication of matrices. It is isomorphic to $\End\left(\K^n\right)$. The corresponding \La $\left(M_{n,n}(\K)\right)_L$ isomorphic to $\gl\left(\K^n\right)$ is denoted by $\gl_n(\K)$ and called \emph{general linear \La of order $n$}.

Analogously, $\sl_n(\K):=\set{\left.A\in M_n(\K)\right|\text{tr}(A)=0}$ is called \emph{special linear \La of order $n$} and is isomorphic to $\sl(\K^n)$. We identify $\End\left(\K^n \right),\gl\left(\K^n \right),\sl\left(\K^n \right)$ with $M_n(\K)$, $\gl_n(\K)$, $\sl_n(\K)$, respectively.
\item Let $\beta: V \times V \to \K$ be a bilinear map. The set
\[
\o(V,\beta):=\set{\left.f\in \End(V)\right|\beta(f(x),y)+\beta(x,f(y))=0}
\]
is a $\K$-Lie subalgebra of $\gl(V)$, called the \emph{Lie algebra of
$\beta$-skew-symmetric endomorphisms} of $V$. There are some important examples
of this type of Lie algebra: \begin{enumerate}
\item For $V=\K^n$ and
$\beta(x,y)=\sum_{i=1}^n x_i y_i$ we write $\o_n(\K):=\o(V,\beta)$. This is the
\emph{orthogonal \La of order $n$}, which can be identified with \es{
\set{\left.A\in M_n(\K)\right|A+A^T=0}, } the set of skew-symmetric $n \times
n$-matrices. The space $\so_n(\K):=\o_n(\K) \cap \sl_n(\K)$ is a Lie subalgebra
of $\o_n(\K)$, called \emph{special orthogonal \La of order $n$}.\footnote{In
our considered case of $\K \in \set{\R,\C}$, there is no difference between
$\so_n(\K)$ and $\o_n(\K)$. The case $\so_n(\K) \subsetneq \o_n(\K)$ is only
possible if $\text{char}(\K) =2$.}
\item For $V=\C^n$ and
$\beta(x,y)=\sum_{i=1}^n x_i \overline{y_i}$ we write ${\mathfrak
u}_n(\C):=\o(V,\beta)$. This is the \emph{unitary \La of order $n$}, which can
be identified with \es{ \set{\left.A\in M_n(\C)\right|A+\overline{A^T}=0}, } the
set of complex skew-hermitian $n \times n$-matrices.
Note that this is \textsl{not} a complex, but a real \La, since it is not a
$\C$-\vs. The space
$\su_n(\C):={\mathfrak u}_n(\C) \cap \sl_n(\C)$ is a Lie subalgebra of
${\mathfrak u}_n(\C)$, called \emph{special unitary \La of order $n$}. \item For
$V=\K^{2n}$ and $\beta(x,y)=\sum_{i=1}^n x_i y_{n+i} - x_{n+i} y_i$ we write
$\sp_{2n}(\K):=\o(V,\beta)$. This is the \emph{symplectic \La of order $n$},
which can be identified with \[ \set{\left.\begin{pmatrix}A & B\\C &
-A^T\end{pmatrix}\in M_2\left(M_n(\K)\right) \cong M_{2n}(\K)\right|B=B^T,
C=\operatorname{C}^T}. \]

\end{enumerate}
\end{enumerate}
\end{exa}

\begin{rem}\
\begin{enumerate}
\item One can show that for any Lie algebra $\g$ there is an associative algebra with unity $\mathcal{U}(\g)$, such that $\g$ can be embedded into $\left(\mathcal{U}(\g)\right)_L$ via an injective morphism of Lie algebras $\eta_{\g}: \g \to \left(\mathcal{U}(\g)\right)_L$ (cf. Part III of \cite{Ne94}). If one demands a universal property\footnote
{
If $A$ is an associative algebra with unity and $\alpha: \g \to A_L$ is a morphism of Lie algebras, then there is a unique morphism of associative algebras $\alpha': \mathcal{U}(\g) \to A$ such that $\alpha'(1)=1$ and $\alpha' \circ \eta_{\g} = \alpha$.
} of $\mathcal{U}(\g)$ and $\eta_{\g}$, then $(\mathcal{U}(\g),\eta_{\g})$ is unique up to algebra isomorphism, i.e. an isomorphism of vector spaces preserving the multiplications, and $(\mathcal{U}(\g),\eta_{\g})$ is called \emph{universal enveloping algebra} of $\g$.

\item The \emph{Ado Theorem} states that each finite-dimensional Lie algebra is isomorphic to a Lie subalgebra of $\gl_n(\K)$ for certain $n \in \N$, i.e. each finite-dimensional Lie algebra is, up to isomorphism, a Lie algebra of squared matrices.
\end{enumerate}
\end{rem}

\begin{defi}  Let $\g$ be a \La.
\begin{enumerate}
\item If $V$ is a \vs, then a \emph{representation of $\g$ on $V$} is a morphism of Lie algebras
\es{
\rho: \g \lra \gl(V).
}
\item A \emph{$\g$-module} is a \vs $V$ with a $\K$-bilinear map
\es{
\mu: \g \times V &\lra V\\
(x,v) &\longmapsto \mu(x,v)=:x . v
}
satisfying the equation
\[
\lie{x,y}. v = x . (y . (v)) - y . (x . (v))
\]
for all $x,y \in \g$ and $v \in V$.

It is not difficult to obtain a bijective correspondence between the concepts of $\g$-module and representation of $\g$: Given a module with multiplication $\mu: \g \times V \to V$ one can define a representation by
\es{
r(\mu): \g &\lra \gl(V)\\
x &\longmapsto (v \mapsto \mu(x,v)).
}
Given a representation $\rho: \g \lra \gl(V)$ one can define a module structure on $V$ by
\es{
m(\rho): \g \times V &\lra V\\
(x,v) &\longmapsto \rho(x)(v).
}
It is easy to verify that $r(m(\rho))=\rho$ and $m(r(\mu))=\mu$ for every representation $\rho$ and every module multiplication $\mu$.

A linear map $f: V \to W$ between two modules is called \emph{module morphism}, if $f(x.v)=x.f(v)$ for all $x \in \g, v \in V$. The vector space of module morphisms $V \to W$ is denoted by $\Hom_\g(V,W)$ and we write $\End_\g(V):=\Hom_\g(V,V)$. A vector subspace $W \sbs V$ of a module which is invariant under $\g$ is itself a module, called \emph{submodule}. Images and kernels of module morphisms, quotients of modules modulo submodules and intersections of submodules are again modules. Note that the direct sum (in the sense of vector subspaces) of two submodules is also a submodule.
\item A module $V\neq0$ is called
\begin{enumerate}
\item \emph{simple}, if $0$ is its only proper submodule.
\item \emph{semisimple}, if for each of its submodules $W$ there is a complementary submodule $W' \sbs V$ such that $W\oplus W' = V$.
\item \emph{trivial}, if $\g.V=0$.
\end{enumerate}
\end{enumerate}
\end{defi}
Some useful facts about simple and semisimple modules are presented in the following two lemmas, shown e.g. in \cite{Ne02}:
\begin{lem}  \
\begin{enumerate}
\item Submodules and quotient modules of semisimple modules are semisimple, too.
\item The following statements are equivalent for a Lie algebra $\g$ and a $\g$-module $V$:
\begin{enumerate}
\item $V$ is semisimple.
\item $V$ is a sum of simple modules.
\item $V$ is a direct sum of simple modules.
\end{enumerate}
\end{enumerate}
\end{lem}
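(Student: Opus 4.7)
The plan is to prove Part 1 first, since the argument for Part 2(a)$\Rightarrow$(c) will rely on it. Given a semisimple module $V$, a submodule $W \sbs V$, and a sub-submodule $U \sbs W$, one picks by semisimplicity of $V$ a complement $U' \sbs V$ with $V = U \oplus U'$, and then checks that $W \cap U'$ is a complement of $U$ inside $W$: indeed $U \cap (W \cap U') \sbs U \cap U' = 0$, and for any $w \in W$ the decomposition $w = u + u'$ with $u \in U \sbs W$ forces $u' = w - u \in W$, hence $u' \in W \cap U'$. For the quotient, fixing a complement $V = W \oplus W'$ identifies $V/W$ with $W'$ as a module, and $W'$ is semisimple by what was just proved.

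For Part 2, (c)$\Rightarrow$(b) is immediate. For (b)$\Rightarrow$(a), given $V = \sum_{i \in I} V_i$ with all $V_i$ simple and $W \sbs V$ a submodule, I would apply Zorn's lemma to the family of subsets $J \sbs I$ for which $W$ together with the $V_j$, $j \in J$, form a direct sum. A maximal such $J$ yields a complement: for any $i \in I$ the alternative $V_i \cap (W + \sum_{j \in J} V_j) = 0$ would allow adjoining $i$ to $J$, contradicting maximality, so by simplicity of $V_i$ the intersection equals $V_i$ and hence $V_i \sbs W \oplus \bigoplus_{j \in J} V_j$. Therefore this direct sum exhausts $V$.

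The crux of (a)$\Rightarrow$(c) is the intermediate claim that every nonzero semisimple module $V$ contains a simple submodule. Fixing $0 \neq v \in V$, letting $U$ be the submodule generated by $v$ (semisimple by Part 1), and applying Zorn's lemma to the submodules of $U$ not containing $v$, I obtain a maximal such $M$. By semisimplicity of $U$, one has $U = M \oplus M'$ with $M' \neq 0$ (else $v \in M$). Then $M'$ must be simple: if $M'$ had a proper decomposition $M' = M'' \oplus M'''$ with both summands nonzero (possible by Part 1 applied to $M'$), writing $v = m + m'' + m'''$ in $M \oplus M'' \oplus M'''$ yields $m'' \neq 0$ or $m''' \neq 0$, and after swapping if necessary, $M \oplus M''$ strictly contains $M$ yet still avoids $v$ by uniqueness of the triple decomposition, contradicting maximality of $M$. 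With this lemma in hand, a second Zorn argument on families of simple submodules of $V$ that sum directly produces a maximal family with direct sum $S$; semisimplicity furnishes a complement $V = S \oplus S'$, and if $S'$ were nonzero it would contain a simple submodule by the lemma, contradicting maximality. Hence $V = S$ is the desired direct sum of simples.

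The main obstacle is the simple-submodule-existence step: the verification that the complement $M'$ of the maximal $v$-avoiding submodule is actually simple crucially relies on uniqueness in the direct sum $M \oplus M'' \oplus M'''$, which is the only reason that $M \oplus M''$ continues to avoid $v$. Everything else amounts to set-up and standard Zorn's lemma applications.
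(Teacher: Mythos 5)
Your proof is correct. Note that the paper itself does not prove this lemma at all---it is stated as a known fact with a citation to \cite{Ne02}---so there is no in-paper argument to compare against. What you give is the standard textbook proof: the complement-intersection trick $W = U \oplus (W \cap U')$ for submodules, the identification $V/W \cong W'$ for quotients, Zorn's lemma for (b)$\Rightarrow$(a) and for assembling a maximal direct family of simples, and the key existence of a simple submodule via a maximal submodule of the cyclic module $U$ avoiding the generator $v$. All the steps check out (including the point you flag yourself, that $M \oplus M''$ still avoids $v$ by uniqueness of components in $M \oplus M'' \oplus M'''$); the only details left tacit are the routine verifications that unions of chains again lie in the respective Zorn families, which hold because directness and avoidance of $v$ are conditions of finite character.
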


\begin{lem}  \textbf{\upshape(Schur Lemma)} Let $V,W$ be simple $\g$-modules of a \La $\g$.
\begin{enumerate}
\item $\Hom_\g(V,W)=0$, if $V$ is not isomorphic to $W$.
\item Each non-zero element of $\End_\g(V)$ is invertible.
\item If $\dim(V) < \infty$ and $\K=\C$, then $\End_\g(V)=\C \cdot \1$.
\end{enumerate}
\end{lem}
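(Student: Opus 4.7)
The plan is to work through the three parts in order, since each one builds on the previous via a very short argument. The only real ingredient is the fact (already noted in the preceding paragraph) that kernels and images of module morphisms are again submodules, combined with the definition of simplicity.

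For part (1), I would take any $f \in \Hom_\g(V,W)$ and observe that $\ker(f) \sbs V$ and $\im(f) \sbs W$ are submodules. If $V \not\cong W$, then $f$ cannot be an isomorphism, so either $\ker(f) \neq 0$ or $\im(f) \neq W$. By simplicity of $V$ this forces $\ker(f) = V$, or by simplicity of $W$ it forces $\im(f) = 0$; in either case $f = 0$.

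For part (2), I would apply essentially the same argument to $f \in \End_\g(V)$ with $f \neq 0$: then $\ker(f) \subsetneq V$, so $\ker(f) = 0$ by simplicity, and $\im(f) \neq 0$, so $\im(f) = V$. Hence $f$ is a linear bijection, and a short check shows that $f^{-1}$ is again a module morphism (from $\lie{x,y} = f^{-1}(\lie{x, f(f^{-1}y)}) = f^{-1}(x.f(f^{-1}y))$ applied with appropriate relabelling), so $f$ is invertible in $\End_\g(V)$.

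For part (3), under the hypothesis $\dim V < \infty$ and $\K = \C$, I would pick any $f \in \End_\g(V)$ and use that it has an eigenvalue $\lambda \in \C$. Then $f - \lambda \cdot \1 \in \End_\g(V)$ has non-zero kernel (the eigenspace), so by part (2) it cannot be a non-zero element, hence $f = \lambda \cdot \1 \in \C \cdot \1$; the reverse inclusion $\C \cdot \1 \sbs \End_\g(V)$ is immediate.

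There is no real obstacle here; the only point requiring slight care is the routine verification in part (2) that the set-theoretic inverse of a module isomorphism is again $\g$-equivariant, and the use in part (3) of the algebraic closedness of $\C$ together with finite-dimensionality to guarantee the existence of an eigenvalue. Both are standard.
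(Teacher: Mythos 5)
Your proposal is correct, and it is the standard textbook argument. Note that the paper does not actually prove this lemma at all --- it is stated with the remark that it is ``shown e.g.\ in [Ne02]'' --- so there is no in-paper proof to compare against; your argument simply supplies the omitted details, and all three steps (kernel/image are submodules, hence $0$ or everything; bijectivity plus equivariance of the inverse; existence of an eigenvalue over $\C$ in finite dimension, so that $f-\lambda\cdot\1$ has non-trivial kernel and must vanish by part (2)) are exactly right. The only blemish is notational: in part (2) the displayed identity should read $f^{-1}(x.v)=f^{-1}\bigl(x.f(f^{-1}(v))\bigr)=f^{-1}\bigl(f(x.f^{-1}(v))\bigr)=x.f^{-1}(v)$, using the module action throughout rather than the bracket $\lie{x,y}$; as written the left-hand side is not what you intend, though your parenthetical ``with appropriate relabelling'' makes clear you know the intended computation.
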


\begin{defi} Let $\g$ be a \La.
We define the \emph{adjoint representation}\footnote{$\ad$ represents $\g$ on $\g$: $\ad_{\lie{x,y}}(z)=\lie{\lie{x,y},z}=-\lie{y,\lie{x,z}}+\lie{x,\lie{y,z}}=\lie{\ad_x,\ad_y}(z)$\\
$\implies \ad_{\lie{x,y}}=\lie{\ad_x,\ad_y}$.} of $\g$ to be the map
\es{
\ad: \g &\lra \Der(\g)\\
x &\longmapsto \ad_x:=\left(y \mapsto \lie{x,y} \right).
}
The Jacobi identity for $\g$ ensures that the range is properly chosen. Note that $\z(\g)=\ker(\ad)$.
\end{defi}
The adjoint representation of $\g$ corresponds to the $\g$-module structure on $\g$, where the multiplication is the Lie bracket of $\g$. In this sense the ideals of $\g$ are the submodules of $\g$. We use this change of perspective in order to define the notions ``simple'' and ``semisimple'' for \La{}s.
\begin{defi} Let $\g$ be a \La and consider it as a $\g$-module \wrt the adjoint representation.
\begin{enumerate}
\item $\g$ is a \emph{simple} \La, if it is a \textsl{non-trivial} simple $\g$-module.
\item $\g$ is a \emph{semisimple} \La, if it is the direct sum of \textsl{non-trivial} simple $\g$-modules, i.e. of simple Lie algebras.
\item $\g$ is a \emph{reductive} \La, if it is the direct sum of simple $\g$-modules., i.e. ideals of $\g$.
\end{enumerate}
\end{defi}

\begin{exa}
The Lie algebras $\sl_n(\K)$ for $n \ge 2$ and $\so_n(\K)=\o_n(\K)$ for $n \ge 5$ and $\sp_{2n}(\K)$ for $n \ge 1$ are simple. The Lie algebras $\gl_n(\K)$ for $n \ge 1$ are reductive and $\dim \z\left(\gl_n(\K)\right)=1$. For proofs, cf. e.g. \cite{Ne02}.
\end{exa}

\begin{rem}
If $\g=\mathfrak{a} \oplus \mathfrak{b}$ is a decomposition of a Lie algebra into a direct sum of ideals, then $\lie{\mathfrak{a}, \mathfrak{b}} \sbs \mathfrak{a} \cap \mathfrak{b} = 0$.
\end{rem}

The relation between reductive and semisimple Lie algebras is described in the following lemma. For a proof, cf. e.g. \cite{Ne02}.
\begin{lem}
Let $\g$ be a Lie algebra.
\begin{enumerate}
\item If $\g$ is semisimple, then it is reductive, perfect, centerfree and
$\Der(\g)=\ad(\g)$. So $\ad: \g \to \Der(\g)$ is an isomorphism of Lie algebras.
\item If $\g$ is reductive, then $\g=\lie{\g,\g} \oplus
\z(\g)$ and $\lie{\g,\g}$ is semisimple. In particular, a reductive Lie algebra
is semisimple \iff it is centerfree \iff it is perfect. \end{enumerate}
\end{lem}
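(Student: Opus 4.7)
The plan is to extract everything from the hypothesised module decomposition together with the remark immediately preceding the lemma, which forces $\lie{\g_i,\g_j} \subseteq \g_i \cap \g_j = 0$ for distinct summands in any direct-sum decomposition into ideals.

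For part (1), write $\g = \bigoplus_i \g_i$ with each $\g_i$ a non-trivial simple ideal of $\g$. Reductivity is immediate, since a simple Lie algebra is in particular a simple $\g$-submodule. For each index $i$, non-triviality of the module $\g_i$ gives $\lie{\g,\g_i}=\g_i$, which by the orthogonality remark collapses to $\lie{\g_i,\g_i}=\g_i$; summing over $i$ yields $\lie{\g,\g}=\g$. Simplicity of $\g_i$ then forces $\z(\g_i)=0$ (the alternative $\z(\g_i)=\g_i$ would make $\g_i$ abelian, contradicting non-triviality), and decomposing any $x \in \z(\g)$ into $\g_i$-components shows each component lies in $\z(\g_i)=0$, so $\g$ is centerfree.

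For the identity $\Der(\g)=\ad(\g)$, centerfreeness gives injectivity of $\ad$, and the derivation rule yields $\lie{D,\ad_x}=\ad_{Dx}$ for every $D \in \Der(\g)$, identifying $\ad(\g)$ as an ideal of $\Der(\g)$. The classical route (in the finite-dimensional case that is implicit here) is to invoke the Killing form $\kappa$ of $\Der(\g)$: it restricts on the ideal $\ad(\g)$ to the Killing form of $\g$, which is non-degenerate by Cartan's criterion, so $\Der(\g) = \ad(\g) \oplus \ad(\g)^\perp$ as ideals. For any $D \in \ad(\g)^\perp$ and $x \in \g$, the identity $\ad_{Dx}=\lie{D,\ad_x}\in \ad(\g)\cap \ad(\g)^\perp = 0$ together with $\z(\g)=0$ forces $D=0$. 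This is the main obstacle, since it genuinely needs Killing-form machinery (Cartan's criterion) that is not developed in the paper but imported from \cite{Ne02}; everything else in the lemma is module and ideal bookkeeping.

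For part (2), split a decomposition $\g = \bigoplus_i \g_i$ into simple $\g$-submodules into the trivial part $\mathfrak{a}:=\bigoplus_{i\in I_0}\g_i$ (each such $\g_i \subseteq \z(\g)$, hence necessarily one-dimensional) and the non-trivial part $\mathfrak{s}:=\bigoplus_{i\in I_1}\g_i$. For $i\in I_1$, any Lie ideal $J$ of $\g_i$ satisfies $\lie{\g,J}=\lie{\g_i,J}\subseteq J$ by the orthogonality remark, so $J$ is a $\g$-submodule of $\g_i$ and hence $0$ or $\g_i$; thus each such $\g_i$ is a simple Lie algebra and $\mathfrak{s}$ is semisimple. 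Bracket computations across the decomposition yield $\lie{\g,\g}=\mathfrak{s}$ (using perfectness from part (1) applied to $\mathfrak{s}$) and $\z(\g)=\mathfrak{a}$ (using $\z(\g_i)=0$ for $i\in I_1$), which gives $\g=\lie{\g,\g}\oplus\z(\g)$. The three-way equivalence then drops out, since each of \emph{semisimple}, \emph{centerfree} and \emph{perfect} reduces to $I_0=\emptyset$.
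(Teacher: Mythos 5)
The paper does not prove this lemma at all; it is stated with a pointer to \cite{Ne02}, so there is no in-paper argument to compare against. Your proof is correct and is essentially the standard one, and it has the virtue of working entirely from the paper's own module-theoretic definitions of \emph{simple}, \emph{semisimple} and \emph{reductive} together with the remark $\lie{\mathfrak a,\mathfrak b}\sbs \mathfrak a\cap\mathfrak b=0$: the bookkeeping for perfectness, centerfreeness, the identification of $\z(\g)$ with the sum of the trivial summands and of $\lie{\g,\g}$ with the sum of the non-trivial ones, and the resulting three-way equivalence are all sound. One small point worth making explicit in part (2): for $i\in I_1$ you use $\z(\g_i)=0$ and perfectness of $\g_i$, which you get by first showing each such $\g_i$ is a simple Lie algebra and then citing part (1) for the one-summand case; that circularity-free order should be stated. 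You are also right that $\Der(\g)=\ad(\g)$ is the only step that cannot be done by ideal bookkeeping and needs the Killing form and Cartan's criterion (the first Whitehead lemma route is an alternative, but equally external to this paper), and your flag that finite-dimensionality is implicitly required there is apt --- the lemma as printed says only ``Let $\g$ be a Lie algebra,'' and $\Der(\g)=\ad(\g)$ genuinely fails for infinite direct sums of simple ideals, so the restriction is a defect of the statement rather than of your proof.
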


\begin{exa}\label{nonreduLa}
Let $\g$ be a two-dimensional \Kvs with basis $\left(x_1,x_2\right)$. The unique
skew-symmetric bilinear map $\lie{\cdot,\cdot}: \g \times \g \to \g$ determined
by the relation $\lie{x_1,x_2} := x_1$ is a Lie
bracket. The Lie algebra $\left(\g,\lie{\cdot,\cdot}\right)$ is centerfree, but
not perfect, thus not reductive.
Let $\h$ be a five-dimensional \Kvs with basis $\left(y_1, \ldots, y_5\right)$.
The unique skew-symmetric bilinear map $\set{\cdot,\cdot}: \h \times \h \to \h$
determined by the relations
\es{
\set{y_1,y_2} := y_1, \quad \set{y_1,y_3} := y_2, \quad \set{y_1,y_4} &:= y_3,
\quad \set{y_2,y_3} := y_4, \quad \set{y_2,y_4} := y_5, \\
\set{y_1,y_5}:=\set{y_2,y_5}:=\set{y_3,y_4}&:=\set{y_3,y_5}:=\set{y_4,y_5}:=0.
}
is a Lie bracket. The Lie algebra $\left(\h,\set{\cdot,\cdot}\right)$
possesses the non-zero center $\K \cdot y_5$, but it is perfect, thus not
reductive.
\end{exa}

An interesting vector subspace of $\End(\g)$ is the centroid of $\g$.
\begin{defi}
The \emph{commutant of the adjoint representation} or \emph{centroid} of a Lie algebra $\g$ is defined as follows:
\es{
\Cent(\g) :&= \set{\left.f \in \End(\g)\right| \lie{f,\ad_x}= 0 \text{ for all } x \in \g} = \set{\left.f \in \End(\g)\right| f \circ \ad_x = \ad_x \circ f \text{ for all } x \in \g}\\
&= \set{\left.f \in \End(\g)\right| f\lie{x,y} = \lie{x,f(y)} \text{ for all } x,y \in \g}.
}
Note that for $f \in \Cent(\g)$ and $x,y \in \g$ we have:
\[
\lie{f(x),y}=-\lie{y,f(x)}=-f\lie{y,x}=f\lie{x,y}=\lie{x,f(y)}.
\]
\end{defi}

For a finer analysis of the structure of $\Cent(\g)$, we introduce special elements in an endomorphism algebra.
\begin{defi} Let $\End(V)$ be the endomorphism algebra of a \vs $V$ and $\g$ a Lie algebra.
\begin{enumerate}
\item A map $f \in \End(V)$ is \emph{nilpotent}, if there exists $n \in \N$ such that $f^n=0$.\footnote{Note that in the case of $\dim V = d < \infty$ this is equivalent to $f^d = 0$.} A map $f \in \End(V)$ is \emph{semisimple}, if for each $f$-invariant subspace $W \sbs V$ there exists a complementary $f$-invariant subspace $W' \sbs V$ such that $W \oplus W' = V$.
\item We define the subsets $\operatorname{N}(\g):=\set{\left.f \in \Cent(\g)\right|f \text{ nilpotent}}$, $\operatorname{S}(\g):=\set{\left.f \in \Cent(\g)\right|f \text{ semisimple}}$ and the vector subspace $\operatorname{J}(\g):= \set{\left.{\ph} \in \Cent(\g) \right| \ad_x \circ {\ph} = 0 = {\ph} \circ \ad_x \text{ for all } x \in \g}$.
\end{enumerate}
\end{defi}

\begin{lem}\label{Liealglemma}
Let $\g$ be a \La.
\begin{enumerate}
\item $\Cent(\g)$ is an associative subalgebra of $\End(\g)$.
\item We have the inclusions $\Cent(\g) \circ \Der(\g) \sbs \Der(\g)$ and $\lie{\Cent(\g), \Der(\g)} \sbs \Cent(\g)$ or, to say it in other words, $\Der(\g)$ is a $\Cent(\g)$-module and $\Der(\g)$ acts by module derivations on $\Cent(\g)$.
\item We have the inclusion $\lie{\Cent(\g),\Cent(\g)} \sbs \Hom(\g / \lie{\g,\g}, \z(\g))$. In particular, if $\g$ is perfect or centerfree, then $\Cent(\g)$ is abelian. In the latter case, if $\g$ is of finite dimension, then $\operatorname{N}(\g), \operatorname{S}(\g)$ are associative subalgebras of $\Cent(\g)$ with $\Cent(\g)=\operatorname{N}(\g) \oplus \operatorname{S}(\g)$.
\item If $\g$ is of dimension $d \in \N$ and semisimple, then $\operatorname{N}(\g)=0$, thus $\Cent(\g)=\operatorname{S}(\g)$. 
\end{enumerate}
\end{lem}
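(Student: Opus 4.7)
The plan is to decompose the semisimple Lie algebra $\g$ into its simple ideals and then apply the Schur Lemma on each component. By the lemma relating semisimple and reductive Lie algebras, $\g$ is centerfree and decomposes as $\g = \g_1 \oplus \cdots \oplus \g_n$ where each $\g_i$ is a simple, hence non-trivial and perfect, ideal, with $\lie{\g_i,\g_j} = 0$ for $i\neq j$.

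The first step is to verify that every $f \in \Cent(\g)$ respects this decomposition, i.e.\ $f(\g_i) \sbs \g_i$. Since $\g_i = \lie{\g_i,\g_i}$ by perfectness, every element of $f(\g_i)$ has the form $f\lie{x,y}$ with $x,y \in \g_i$, and the centroid identity $f\lie{x,y} = \lie{x,f(y)}$ (noted in the definition of $\Cent(\g)$) places this element in the ideal $\g_i$. Consequently $f|_{\g_i}$ belongs to $\End_\g(\g_i)$, where $\g_i$ is regarded as a simple $\g$-module via $\ad$. The Schur Lemma then forces $f|_{\g_i}$ to be either zero or invertible. If in addition $f \in \operatorname{N}(\g)$, then $f|_{\g_i}$ is nilpotent; as a nilpotent bijection of the non-zero finite-dimensional space $\g_i$ is impossible, this gives $f|_{\g_i}=0$ for every $i$, and hence $f=0$. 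Thus $\operatorname{N}(\g)=0$.

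The second claim $\Cent(\g) = \operatorname{S}(\g)$ now drops out of part 3 of the present lemma, whose hypotheses (centerfree and finite-dimensional) are satisfied: that part gives $\Cent(\g) = \operatorname{N}(\g) \oplus \operatorname{S}(\g)$, which combined with $\operatorname{N}(\g)=0$ yields the equality. I do not expect a genuine obstacle here; the entire argument hinges on the bookkeeping that $f$ preserves each simple ideal, after which the finish is a textbook application of Schur together with the elementary observation that an invertible endomorphism of a non-zero space cannot be nilpotent.
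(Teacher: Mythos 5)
Your argument addresses only part 4, which is the only part of the lemma requiring a non-routine idea, and it is correct; but it takes a genuinely different route from the paper. The paper never decomposes $\g$ into simple ideals: it observes that for $f \in \operatorname{N}(\g)$ the image $f(\g)$ is an ideal (since $\lie{y,f(x)} = f\lie{x,y} \in f(\g)$), that the terms of its lower central series satisfy $f(\g)^n \sbs f^n(\g)$, and hence that $f(\g)$ is a nilpotent ideal of the semisimple algebra $\g$, which forces $f(\g)=0$. You instead write $\g = \bigoplus_i \g_i$ with $\g_i$ simple, check via perfectness of $\g_i$ and the centroid identity that $f(\g_i) \sbs \g_i$, and then apply the Schur Lemma to $f|_{\g_i} \in \End_\g(\g_i)$ to rule out a nonzero nilpotent restriction. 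Both arguments are sound. Your preliminary step that $f$ preserves each $\g_i$ is genuinely needed and correctly handled: the perfectness argument $f\lie{x,y}=\lie{x,f(y)} \in \lie{\g_i,\g} \sbs \g_i$ sidesteps the complication that Schur's first part alone would not exclude components $\g_i \to \g_j$ between isomorphic simple summands. What the paper's route buys is independence from the direct-sum decomposition (it only uses that semisimple algebras have no nonzero nilpotent ideals, itself a one-line consequence of the definition); what your route buys is that it leans only on the Schur Lemma as stated earlier in the paper and connects directly to the paper's definition of semisimplicity as a direct sum of non-trivial simple ideals. The final step, deducing $\Cent(\g)=\operatorname{S}(\g)$ from part 3, is the same in both.
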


\begin{proof}\
\begin{enumerate}
\item Clearly, $\Cent(\g)$ is a vector subspace of $\End(\g)$. If $f,g \in \Cent(\g)$, then for all $x,y \in \g$ we have:
\[
(f  g)\lie{x,y} = f\left(g\lie{x,y} \right)=f\left( \lie{x,g(y)}\right) = \lie{x,f(g(y))} = \lie{x,(f  g)(y)}.
\]
So, $f g$ is also in $\Cent(\g)$. Therefore $\Cent(\g)$ is an associative subalgebra of $\End(\g)$.
\item Let $f \in \Cent(\g)$, $g \in \Der(\g)$ and $x,y \in \g$. Then:
\es{
\lie{f,g}\lie{x,y} &= f(g\lie{x,y}) - g(f\lie{x,y}) = f(\lie{gx,y}+\lie{x,gy}) - g(\lie{x,fy})\\
&= f(\lie{gx,y}) +f(\lie{x,gy}) - \lie{gx,fy} - \lie{x,gfy} = \lie{gx,fy} +\lie{x,fgy} - \lie{gx,fy} - \lie{x,gfy}\\
&=\lie{x,fgy} - \lie{x,gfy} = \lie{x,fgy - gfy} = \lie{x,\lie{f,g}(y)}.
}
This implies $\lie{\Cent(\g), \Der(\g)} \sbs \Cent(\g)$. We may also calculate:
\es{
(fg)\lie{x,y}= f(\lie{gx,y}+\lie{x,gy}) = \lie{(fg)x,y}+\lie{x,(fg)y}.
}
We thus get $\Cent(\g) \circ \Der(\g) \sbs \Der(\g)$.
\item Let $f,g \in \Cent(\g)$ and $x,y \in \g$. We calculate:
\es{
\lie{\lie{f,g}(x),y}&=\lie{f(gx)-g(fx),y}=f\lie{gx,y}-g\lie{fx,y}=fg\lie{x,y}-gf\lie{x,y}\\
&=\lie{fx,gy}-\lie{fx,gy}=0.
}
Therefore $\lie{f,g}\left(\g\right) \sbs \z(\g)$. If $z \in \lie{\g,\g}$ is arbitrarily chosen, then there exist finitely many elements $x_1, y_1, x_2, y_2, \ldots , x_n, y_n \in \g$ such that $z=\sum_{i=1}^n \lie{x_i,y_i}$ and thus we obtain
\es{
\lie{f,g}(z)=\sum_{i=1}^n fg \lie{x_i,y_i}-gf \lie{x_i,y_i}=0
}
implying $\lie{f,g}\left(\lie{\g,\g}\right) = 0$ and so $\lie{f,g}$ identifies with a \vs morphism $\g / \lie{\g,\g} \to \z(\g)$.

Now let $\Cent(\g)$ be abelian and $\g$ of finite dimension. Obviously, the subsets $\operatorname{N}(\g), \operatorname{S}(\g)$ are closed under the multiplication by scalars in $\K$.

Let $f,g \in \operatorname{N}(\g)$ and $n,m \in \N$ such that $f^n = g^m = 0$. Obviously, $f \circ g$ is also nilpotent. Since $\Cent(\g)$ is abelian, we have $\lie{f,g} = 0$ and so we can apply the Binomial Theorem, yielding
\begin{align*}
(f+g)^{n+m+1} &= \sum_{k=0}^{n+m-1} \begin{pmatrix}n+m-1\\k\end{pmatrix} f^k g^{n+m-1-k} \\
&=
\sum_{k=0}^{n-1} \begin{pmatrix}n+m-1\\n-1-k\end{pmatrix} f^{n-1-k}g^{m+k} +
\sum_{k=0}^{m-1} \begin{pmatrix}n+m-1\\n+k\end{pmatrix} f^{n+k}g^{m-k-1}
= 0+0 =0.
\end{align*}
Thus $f+g$ is also nilpotent. So $\operatorname{N}(\g)$ is an associative subalgebra of $\Cent(\g)$.

Let $f,g \in \operatorname{S}(\g)$. We want to show that $f+g$ and $f \circ g$ are also semisimple. If $\K = \R$, then we use Corollary V.2.8 of \cite{Ne94} to say that $h \in \End(\g)$ would be semisimple \iff its complexification $h_{\C} := \id_{\C \otimes \g} \otimes h$ was semisimple. So we only consider the case $\K = \C$. By Lemma V.3.3 of \cite{Ne94}, ``semisimple'' means the same as ``diagonalizable'' for $\K = \C$. Since $\Cent(\g)$ is abelian, we have $\lie{f,g} = 0$ and so we can apply a theorem about simultaneous diagonalzation (cf. e.g. Theorem 11.B.15 of \cite{StWi90}): Diagonalizable endomorphisms of a vector space of finite dimension are simultaneously diagonalizable \iff they commute.
So $f$ and $g$ are diagonal \wrt a fixed basis, thus also $f+g$ and $f \circ g$, so they are semisimple, too. This shows that $\operatorname{S}(\g)$ is an associative subspace of $\Cent(\g)$.

The Jordan Theorem V.3.5 of \cite{Ne94} yields that each element $f\in \Cent(\g)$ decomposes into the sum of a nilpotent \vs endomorphism $f_n$ and a semisimple one $f_s$ and because of the fact that $f$ commutes with any $\ad_x$ for $x \in \g$ this is
 also satisfied for $f_n$ and $f_s$. Since $\Cent(\g)$ is abelian, any two endomorphisms $n,s$ with $f=n+s$, where $n \in \operatorname{N}(\g)$ and $s \in \operatorname{S}(\g)$, commute with $f$ and so the theorem also implies that $n=f_n$ and $s=f_s$. This shows $\Cent(\g)=\operatorname{N}(\g) \oplus \operatorname{S}(\g)$.

\item If $f \in \operatorname{N}(\g)$, then $f^d(\g) = 0$. On the other hand, $f(\g)$ is an ideal of $\g$ because for $x,y \in \g$ we have $\lie{y,f(x)} = f\lie{x,y} \in f(\g)$. Since $f(\g)^n \sbs f^n(\g)$ for all $n \in \N$, this implies $f(\g)^d = 0$, so $f(\g)$ is a nilpotent ideal of $\g$. But $\g$ is semisimple, so this ideal is trivial and $f = 0$.
\end{enumerate}
\end{proof}

\begin{rem} \
\begin{enumerate}
\item Note that any finite-dimensional associative commutative $\C$-algebra $A$ with unity element 1 can be realized as the centroid of a perfect and centerfree $\C$-Lie algebra $\g$. We show this in several steps.
\begin{enumerate}
\item Let $\k$ be an finite-dimensional simple $\C$-Lie algebra, e.g. $\k =
\sl_2(\C)$. Then it is central, i.e. \es{\Cent(\k)=\End_{\k}(\k) = \K \cdot \1,}
by the Schur Lemma. We define $\g:= \k \otimes A$, understood as the tensor
product of \Kvs{s}. By setting $\lie{(x \otimes a),(y \otimes b) }:=\lie{x,y}
\otimes ab$ for all generating elements $x,y \in \k$ and $a,b \in A$ and
extending $\lie{\cdot,\cdot}$ to a $\C$-bilinear map $\g \times \g \to \g$, we
obtain a Lie algebra $(\g,\lie{\cdot,\cdot})$. \item Since $\k$ is simple, it is
perfect. This implies that  $\g$ is also perfect: An element $v \in \g$ takes
the form $v=\sum_{i=1}^n x_i \otimes a_i$ for certain $x_1, \ldots , x_n \in \k$
and $a_1, \ldots , a_n \in A$ and each $x_i$ can be written as $x_i = \sum_{j
=1}^{r_i} \lie{y_{ij}, z_{ij}}$ for some $y_{i1}, z_{i1}, \ldots , y_{ir_i},
z_{ir_i} \in \k$. So we have \[v =\sum_{i=1}^n \left(\sum_{j =1}^{r_i}
\lie{y_{ij}, z_{ij}}\right) \otimes a_i = \sum_{i=1}^n \sum_{j =1}^{r_i}
\lie{y_{ij} \otimes a_i, z_{ij}\otimes 1}. \] \item Furthermore, $\g$ is
centerfree because $\k$ is so: If $v = \sum_{j=1}^m x_j \otimes a_j \in \z(\g)$
for some $x_1, \ldots , x_m \in \k$ and $a_1, \ldots , a_m \in A$, then, for all
$x \in \k$, we have: \es{ 0=\lie{v,x \otimes 1}=\sum_{j=1}^m \lie{x_j \otimes
a_j, x \otimes 1}=\sum_{j=1}^m \lie{x_j,x} \otimes a_j. } But this is only
possible, if, for all $j \in \set{1, \ldots , m}$, we have $a_j = 0$ or $x_j \in
\z(\k) =0$, thus $v=0$. \item We may consider $\k \sbs \g$ by the embedding $\k
\hookrightarrow \g$, $x \mapsto x \otimes 1$ and $\g$ becomes a $\k$-module by
restriction of the adjoint representation. By \cite{Ne02}, there are $x_1,
\ldots , x_n, x^1, \ldots , x^n \in \k$ such that $\sum_{i=1}^n \ad(x_i) \circ
\ad(x^i)$ is an element in $\Cent(\k)$ and, by \cite{Hu72} on page 122, it is
equal to $\lambda \cdot \1$ for some $\lambda \in \C \backslash \set{0}$ and so
we may assume, after normalizing: \es{\sum_{i=1}^n \ad(x_i) \circ \ad(x^i) =
\1.} If $a \in A$ is an arbitrary element, then $f_a:=\sum_{i=1}^n \ad(x_i
\otimes a) \circ \ad(x^i \otimes a)$ is in $\Cent(\g)$ and $f_a(x \otimes b) = x
\otimes ab$ for all $x \in \k$ and $b \in A$. So $\Cent(\g)=\End_{\g}(\g)$ is
generated by endomorphisms of the type $\ad(x) \otimes \1$, where $x \in \k$,
and those of the type $f_a$, where $a \in A$. We obtain: \es{ \Cent(\g) =
\End_{\g}(\g) \cong \End_{\k}(\k) \otimes A \cong \K \otimes A \cong A. }
\end{enumerate}

\item
If $\g$ is an arbitary Lie algebra, then the subsets $\operatorname{N}(\g), \operatorname{S}(\g) \sbs \Cent(\g)$ are not necessarily closed under the addition in $\End(\g)$: Let $\g$ be the abelian complex Lie algebra $\C^2$. Then $\Cent(\g)=\End(\g)$ is, as associative algebra, isomorphic to $M_{2,2}(\C)$. We can write
\[
\begin{pmatrix}0 & 1 \\ 1 & 0 \end{pmatrix} = \begin{pmatrix}0 & 1 \\ 0 & 0 \end{pmatrix} + \begin{pmatrix}0 & 0 \\ 1 & 0 \end{pmatrix}
\]
and the two matrices on the right hand site are nilpotent, but the matrix on the left hand site is not.
And we can write
\[
\begin{pmatrix}0 & 2 \\ 0 & 0 \end{pmatrix} = \begin{pmatrix}0 & 1 \\ 1 & 0 \end{pmatrix} + \begin{pmatrix}0 & 1 \\ -1 & 0 \end{pmatrix}
\]
and the two matrices on the right hand site are semisimple, but the matrix on the left hand site is not.
\item
Let $\g$ and $\h$ be \La{s}. The quotient Lie algebra $\g / \lie{\g,\g}$ and the
 Lie subalgebra $\z(\h) \sbs \h$ are abelian and so the Lie algebra 
morphisms $\g / \lie{\g,\g} \to \z(\h)$ are just the linear maps $\g /
\lie{\g,\g} \to \z(\h)$. In particular, we have $\Hom(\g / \lie{\g,\g}, \z(\h))
= 0$ \iff $\g$ is perfect or $\h$ is centerfree.

There is a natural isomorphism between linear maps $\overline \ph: \g / \lie{\g,\g} \to \z(\g)$ and linear maps  ${\ph}: \g \to \z(\g)$ with $\ad_x \circ {\ph} = 0 = {\ph} \circ \ad_x$ for all $x \in \g$: Given $\overline{\ph}$, we set $\ph(x):=\overline{\ph}(x+\lie{\g,\g})$. Given $\ph$, the map $\overline \ph$ is well-defined by $\overline \ph(x+\lie{\g,\g}):=\ph(x)$ because $x+\lie{\g,\g} = y + \lie{\g,\g}$ implies the existence of $a_1, b_1, \ldots , a_n, b_n \in \g$ such that $x-y = \sum_{i=1}^n\lie{a_i,b_i}$ and thus \es{\ph(x)-\ph(y)=\sum_{i=1}^n\ph{\lie{a_i,b_i}}=\sum_{i=1}^n \ph \circ \ad_{a_i} (b_i) = 0.} We obtain:
\es{
\Hom(\g / \lie{\g,\g},\z(\g)) \cong \set{\left.{\ph} \in \Hom(\g,\z(\g)) \right| \ad_x \circ {\ph} = 0 = {\ph} \circ \ad_x \text{ for all } x \in \g} = \operatorname{J}(\g).
}
\item Let $\g$ be a Lie algebra such that $\z(\g) \sbs \lie{\g,\g}$. For all $f \in \Cent(\g)$, $g \in \operatorname{J}(\g)$, $x,y \in \g$ we have:
\es{
fg(\lie{x,y}) &= f(0) = 0\\
\lie{fg(x),y} &= f\lie{g(x),y}=f(0)=0.\\
gf\lie{x,y}&=g\lie{f(x),y}=0\\
\lie{gf(x),y}&=\lie{g(f(x)),y}=0\\
g^2(x)=g(g(x)) &\in g(\z(\g)) \sbs g(\lie{\g,\g})=0.
}
Thus, in this case, $\operatorname{J}(\g)$ is a two-sided ideal of $\Cent(\g)$ with $\operatorname{J}(\g)^2=0$.
\end{enumerate}
\end{rem}

\begin{defi}
A \La $\g$ is called \emph{decomposable} if it is the direct sum of two proper
ideals. If there is no such decomposition, $\g$ is called \emph{indecomposable}.
\end{defi}

\begin{rem}
A simple \La is automatically indecomposable, since 0 is its only proper
ideal. If an indecomposable \La $\g$ is reductive, then it is one-dimensional
and trivial or it is simple because
$\g=\z(\g) \oplus \lie{\g,\g}$ implies $\g=\z(\g)$ or $\g=\lie{\g,\g}$ and in the latter case
a decomposition of the semisimple Lie algebra $\lie{\g,\g}$ into simple Lie algebras may only have one summand.
In general, indecomposable \La{}s are not reductive, e.g. the
Lie algebra $\g$ in Example \ref{nonreduLa}.
\end{rem}

\begin{lem}\label{indecomposablemma}
Let $\g$ be a \La of finite dimension with abelian centroid, e.g. $\g$ is perfect or centerfree. Then there exists a decomposition $\g = \bigoplus_{i=1}^n \g_i$ into a direct sum of non-zero ideals $\g_i$, where each $\g_i$ is an indecomposable \La and this decomposition is unique except for the order.
\end{lem}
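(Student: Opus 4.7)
The plan is to reduce the problem to the uniqueness of decompositions of $\1 \in \Cent(\g)$ into primitive orthogonal idempotents inside the finite-dimensional unital commutative associative $\K$-algebra $\Cent(\g)$. Existence of the ideal decomposition follows by a straightforward induction on $\dim \g$: if $\g$ is indecomposable, take $n = 1$; otherwise split $\g = \mathfrak{a} \oplus \mathfrak{b}$ into proper ideals and recurse, the induction terminating since dimensions strictly drop.

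For uniqueness, I would first establish a bijection between direct sum decompositions $\g = \bigoplus_{i=1}^n \g_i$ into non-zero ideals and complete systems $(e_1, \ldots , e_n)$ of non-zero pairwise orthogonal idempotents in $\Cent(\g)$ with $\sum_i e_i = \1$. The forward direction sends a decomposition to its family of projections $e_i$; these lie in $\Cent(\g)$ because for $i \neq j$ one has $\lie{\g_i,\g_j} \sbs \g_i \cap \g_j = 0$, yielding $e_i \lie{x,y} = \lie{x_i,y_i} = \lie{x,e_i y}$ for $x=\sum x_k$, $y=\sum y_k$ decomposed along the $\g_k$. The inverse direction sends $(e_i)$ to the ideals $e_i(\g)$, which are ideals of $\g$ since $e_i \in \Cent(\g)$.

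Under this bijection, $\g_i$ is indecomposable \iff $e_i$ is primitive in $\Cent(\g)$, meaning that $e_i$ is not the sum of two non-zero orthogonal idempotents. In one direction, any splitting $e_i = f + g$ in $\Cent(\g)$ gives the ideal decomposition $\g_i = f(\g) \oplus g(\g)$; in the other, if $\g_i = \mathfrak{a} \oplus \mathfrak{b}$ is a non-trivial decomposition into ideals of $\g_i$, then $\mathfrak{a}$ and $\mathfrak{b}$ are automatically ideals of $\g$ (the other $\g_j$ commute with $\g_i$, hence with $\mathfrak{a}$ and $\mathfrak{b}$), and their projections lie in $\Cent(\g)$. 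Consequently, uniqueness of the decomposition of $\g$ into indecomposable ideals is equivalent to uniqueness, up to order, of decompositions $\1 = f_1 + \cdots + f_m$ in $\Cent(\g)$ into primitive orthogonal idempotents.

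For the latter, given a second decomposition $\1 = g_1 + \cdots + g_n$, the hypothesis that $\Cent(\g)$ is abelian enters essentially: commutativity makes each $f_i g_j$ idempotent, and for fixed $i$ the family $(f_i g_j)_j$ is pairwise orthogonal and sums to $f_i$, so primitivity of $f_i$ forces $f_i g_{\sigma(i)} = f_i$ for a unique $\sigma(i)$ while $f_i g_j = 0$ otherwise. Then $g_{\sigma(i)} = f_i + (g_{\sigma(i)} - f_i)$ is an orthogonal sum of idempotents with $f_i \neq 0$, and primitivity of $g_{\sigma(i)}$ forces $g_{\sigma(i)} = f_i$; a symmetric argument shows $\sigma$ is bijective. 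The main obstacle is setting up the bijection cleanly, in particular verifying that indecomposability of $\g_i$ corresponds precisely to primitivity of $e_i$ inside $\Cent(\g)$ (rather than inside $\End(\g_i)$); once that is in place, the rest is routine commutative-algebra bookkeeping and consumes the abelianness hypothesis exactly in the multiplicativity step $(f_i g_j)^2 = f_i g_j$.
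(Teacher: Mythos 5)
Your proposal is correct and follows essentially the same route as the paper: existence by induction on the dimension, and uniqueness by observing that the projections of both decompositions are commuting idempotents in the abelian centroid and then using indecomposability on both sides to match them up. You merely package the matching step as the standard uniqueness of a decomposition of $\1$ into primitive orthogonal idempotents in a commutative algebra (comparing the products $f_i g_j$), where the paper phrases the same computation in terms of the restrictions $p_i|_{\h_j}$ being $0$ or $\1$; the content is identical.
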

\begin{proof}
The existence of a decomposition into indecomposable non-zero ideals is proven by mathematical induction over the finite dimension of $\g$. The only difficult part is the one of the uniqueness: Let $\g=\bigoplus_{i=1}^n \g_i=\bigoplus_{j=1}^m \h_j$ be two such decompositions with corresponding systems of orthogonal projections $p_1, \ldots , p_n$ and $q_1, \ldots , q_m$. The projections are in $\Cent(\g)$, as we can show with the following easy calculation:
\es{
p_i(\lie{x,y})=p_i(\lie{x_i,y_i}+\lie{x',y'})=\lie{x_i,y_i}=\lie{x_i+x',y_i}=\lie{x,p_i(y)},
} where $z=z_i + z'$ is the decompositions of an element $z \in \g$ into $\g_i$ and its complement \wrt the decomposition $\bigoplus_{i=1}^n \g_i$. Since $\Cent(\g)$ is abelian, we have $p_i \circ q_j = q_j \circ p_i$ for $i \in \set{1, \ldots , n}$ and $j \in \set{1, \ldots , m}$. Now fix an $i$. Then $p_i$ induces a projection $\h_j \to \h_j$ for each $j \in \set{1, \ldots , m}$ and so, by the indecomposability of the $\h_j$, we obtain $p_i = 0$ on $\h_j$ or $p_i = \1$ on $\h_j$. But from this we deduce that $\g_i=\im(p_i)$ is the direct sum of some $\h_j$'s. We conclude, by the indecomposability of $\g_i$, that there is exactly one $j \in \set{1, \ldots , m}$ such that $\g_i = \h_j$.
\end{proof}

The following theorem is due to Médina and Revoy, cf. \cite{MeRe93}, and gives us information about the centroids of Lie algebras which are not necessarily perfect or centerfree.

\begin{thm}\label{MedinaRevoy}
Let $\g$ be a Lie algebra of finite dimension such that $\z(\g) \sbs \lie{\g,\g}$.
\begin{enumerate}
\item There are indecomposable idempotents $p_1, \ldots , p_n \in \Cent(\g)$ which are pairwise orthogonal,  i.e. $p_i \circ p_j = \delta_{ij} p_i$ for $i,j \in \set{1, \ldots , n}$, satisfying  $\sum_{i=1}^n p_i = \1$.
\item Setting $\g_i:=p_i(\g)$ for $i \in \set{1, \ldots , n}$ we have a decomposition $\g=\bigoplus_{i=1}^n \g_i$ into indecomposable non-zero ideals.
\item Each $\Cent(\g_i)$ is isomorphic to $p_i \circ \Cent(\g) \circ p_i$, its quotient $\Cent(\g_i)/\operatorname{J}(\g_i)$ modulo the maximal nilpotent ideal $\operatorname{J}(\g_i)$ is a field.
\item Setting $C_{ij}:=p_i \circ \Cent(\g) \circ p_j$ for $i,j \in \set{1, \ldots , n}$ we have the decomposition $\Cent(\g)=\bigoplus_{i,j=1}^n C_{ij}$ into ideals, each $C_{ij}$ is a $\Cent(\g_i)$-$\Cent(\g_j)$-bimodule and, if $i \ne j \in \set{1, \ldots , n}$, then the vector space $\Hom(\g_j / \lie{\g_j,\g_j}, \z(\g_i))$ is isomorphic to $C_{ij}$. Furthermore, $\operatorname{J}(\g)=\bigoplus_{i=1}^n \operatorname{J}(\g_i) \oplus \bigoplus_{i\ne j =1}^n C_{ij}$ is also a decomposition into ideals.
\end{enumerate}
\end{thm}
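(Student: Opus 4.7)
The plan is to leverage $\Cent(\g)$ as a finite-dimensional unital associative $\K$-algebra (note $\1 \in \Cent(\g)$) and apply the standard theory of Artinian algebras. For part 1, any such algebra admits a complete system of pairwise orthogonal primitive idempotents $p_1,\ldots,p_n$ summing to $\1$; take these as the required $p_i$. For part 2, set $\g_i := p_i(\g)$. The same calculation as in Lemma \ref{indecomposablemma} shows each $\g_i$ is an ideal, and $\sum p_i = \1$ together with orthogonality yields $\g = \bigoplus_i \g_i$. Indecomposability of $\g_i$ follows by contradiction: a nontrivial decomposition $\g_i = \mathfrak{a} \oplus \mathfrak{b}$ into ideals of $\g$ would yield a projection in $\Cent(\g)$ refining $p_i$ into two nonzero orthogonal idempotents, contradicting primitivity of $p_i$.

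For part 3, the key step is the $\K$-algebra isomorphism $p_i \circ \Cent(\g) \circ p_i \to \Cent(\g_i)$ by restriction $f \mapsto f|_{\g_i}$, whose inverse extends $\tilde f \in \Cent(\g_i)$ by zero on $\bigoplus_{j \ne i} \g_j$; this extension automatically lies in $\Cent(\g)$ because $\lie{\g_i, \g_j} = 0$ for $i \ne j$. Since $p_i$ is primitive, $\Cent(\g_i)$ has no nontrivial idempotents and is therefore a local Artinian algebra, so its quotient by the Jacobson radical is a division algebra. To identify $\operatorname{J}(\g_i)$ with the Jacobson radical, first note that $\z(\g) \sbs \lie{\g,\g}$ descends to $\z(\g_i) \sbs \lie{\g_i,\g_i}$ via $\z(\g) = \bigoplus_i \z(\g_i)$ and $\lie{\g,\g} = \bigoplus_i \lie{\g_i,\g_i}$, so by the earlier remark $\operatorname{J}(\g_i)^2 = 0$, placing $\operatorname{J}(\g_i)$ inside the radical; the reverse containment uses that in a local finite-dimensional algebra the radical is the unique maximal ideal and every element outside $\operatorname{J}(\g_i)$ should be shown to be a unit.

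For part 4, apply the Peirce decomposition $f = \1 f \1 = \sum_{i,j} p_i f p_j$ to get $\Cent(\g) = \bigoplus_{i,j} C_{ij}$, the bimodule structure being immediate from $p_i p_k = \delta_{ik} p_i$. The isomorphism $C_{ij} \cong \Hom(\g_j/\lie{\g_j,\g_j}, \z(\g_i))$ for $i \ne j$ comes from observing that any $f \in C_{ij}$ vanishes outside $\g_j$ and is determined by $f|_{\g_j}: \g_j \to \g_i$; testing the centroid identity $f\lie{x,y} = \lie{x, f(y)}$ with $x,y \in \g_j$ forces $f(\lie{\g_j,\g_j}) = 0$ (since $\lie{\g_j, \g_i} = 0$), and with $x \in \g_i$, $y \in \g_j$ forces $f(\g_j) \sbs \z(\g_i)$; conversely, any such linear map extends by zero to an element of $C_{ij}$. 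The $\operatorname{J}(\g)$ decomposition then follows because every off-diagonal $f \in C_{ij}$, $i \ne j$, automatically satisfies both $f(\lie{\g,\g}) = 0$ and $\lie{\g, f(\g)} \sbs \lie{\g, \z(\g_i)} = 0$, so $C_{ij} \sbs \operatorname{J}(\g)$, while the diagonal components reduce to $\operatorname{J}(\g_i)$ via the isomorphism of part 3.

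The hard part will be the precise identification $\operatorname{J}(\g_i) = \mathrm{rad}(\Cent(\g_i))$ in part 3 together with the ensuing ``field'' conclusion, since a priori finite-dimensional division algebras over $\R$ need not be commutative; the commutativity required for the quotient to genuinely be a field presumably exploits the hypothesis $\z(\g) \sbs \lie{\g,\g}$ in a finer way than the elementary consequence $\operatorname{J}(\g_i)^2 = 0$ used above.
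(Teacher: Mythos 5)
First, note that the paper itself gives no proof of Theorem \ref{MedinaRevoy}: it is quoted from M\'edina and Revoy \cite{MeRe93} and used as a black box, so your argument has to stand on its own. Most of it does. Parts 1, 2 and 4 are sound and essentially complete: $\Cent(\g)$ is a finite-dimensional unital associative algebra, so $\1$ decomposes into orthogonal primitive idempotents; the computation from Lemma \ref{indecomposablemma} shows each $p_i(\g)$ is an ideal and, conversely, that a splitting of $\g_i$ into two non-zero ideals produces orthogonal idempotents $q$ and $p_i-q$ in $p_i\circ\Cent(\g)\circ p_i$, contradicting primitivity; the Peirce decomposition and the identification $C_{ij}\cong\Hom(\g_j/\lie{\g_j,\g_j},\z(\g_i))$ for $i\ne j$ work exactly as you describe, and your verification that the off-diagonal blocks lie in $\operatorname{J}(\g)$ is correct. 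The first half of part 3 (restriction/extension-by-zero gives $\Cent(\g_i)\cong p_i\circ\Cent(\g)\circ p_i$, which is local because $p_i$ is primitive) is also fine.

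The genuine gap is the one you flag yourself at the end, and it is worse than you suggest. The commutativity you worry about is actually the easy half: by Lemma \ref{Liealglemma}.3 together with item 3 of the subsequent Remark, $\lie{\Cent(\g_i),\Cent(\g_i)}\sbs\operatorname{J}(\g_i)$, and since $\z(\g_i)\sbs\lie{\g_i,\g_i}$ (which, as you note, descends from the hypothesis on $\g$) forces $\operatorname{J}(\g_i)^2=0$, the commutators lie in a nilpotent ideal, hence in $\operatorname{rad}(\Cent(\g_i))$; so the quotient of the local algebra $\Cent(\g_i)$ by its radical is a commutative division algebra, i.e.\ a field. What cannot be completed is the containment $\operatorname{rad}(\Cent(\g_i))\sbs\operatorname{J}(\g_i)$ that you leave open: with the paper's definition of $\operatorname{J}$ it is simply false. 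Take $\g=\sl_2(\C)\otimes A$ with $A=\C[t]/(t^2)$, as constructed in the Remark following Lemma \ref{Liealglemma}: this $\g$ is perfect, centerfree and indecomposable (its centroid $\cong A$ is local, so it admits no non-trivial idempotent), and perfectness forces $\operatorname{J}(\g)=0$ because $\ph\circ\ad_x=0$ for all $x$ already kills $\lie{\g,\g}=\g$; yet $\Cent(\g)$ contains the non-zero nilpotent ``multiplication by $t$''. So $\operatorname{J}(\g)$ is not the maximal nilpotent ideal, and $\Cent(\g)/\operatorname{J}(\g)\cong\C[t]/(t^2)$ is not a field. The statement is only provable if one reads $\operatorname{J}(\g_i)$ in part 3 (and in the last assertion of part 4) as $\operatorname{rad}(\Cent(\g_i))$, i.e.\ as the genuinely maximal nilpotent ideal, rather than as the space $\set{\ph\in\Cent(\g_i):\ad_x\circ\ph=0=\ph\circ\ad_x}$ defined earlier in the paper; with that reading, your local-algebra argument plus the commutator inclusion above closes the proof, and no finer use of $\z(\g)\sbs\lie{\g,\g}$ is required than the one you already made.
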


We also need the following result, Propsition 22.1 of \cite{La01}, to show a more general proposition about the uniqueness of the decomposition of a finite-dimensional Lie algebra into indecomposable non-zero ideals.
\begin{thm}\label{Lam} Let $R$ be a ring with unity element 1. Suppose there exists a decomposition of $1 \in R$ into a sum of indecomposable idempotents, say $1=c_1 + \ldots + c_r$, such that $c_i c_j = \delta_{ij} c_i$ and $c_i \in Z(R)$ for all $i,j \in \set{1, \ldots , n}$. Then the decomposition is unique except for the order.
\kommentar{%
Let $I \ide R$ be a nilpotent ideal, $\pi: R \to R/I := \overline R$, $x \mapsto \overline x$ the canonical projection and $e \in R$ an idempotent. Then $e$ is indecomposable in $R$ \iff $\overline e$ is indecomposable in $\overline R$.
}%
\end{thm}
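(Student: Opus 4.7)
The plan is to compare the two decompositions by cross-multiplying and exploiting centrality together with indecomposability of the idempotents. Suppose we have two decompositions
\[
1 = c_1 + \cdots + c_r = d_1 + \cdots + d_s
\]
into central pairwise orthogonal indecomposable idempotents. For any $i \in \{1,\ldots,r\}$ and $j \in \{1,\ldots,s\}$, the product $c_i d_j$ is again an idempotent, because $c_i, d_j$ commute (both are central) and are themselves idempotent: $(c_i d_j)^2 = c_i^2 d_j^2 = c_i d_j$. Moreover, these products are pairwise orthogonal in each index: $(c_i d_j)(c_i d_k) = c_i (d_j d_k) = \delta_{jk} c_i d_j$ and analogously $(c_i d_j)(c_k d_j) = \delta_{ik} c_i d_j$.

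Next I would fix $i$ and write
\[
c_i = c_i \cdot 1 = \sum_{j=1}^{s} c_i d_j,
\]
which exhibits $c_i$ as a sum of pairwise orthogonal idempotents. Since $c_i$ is indecomposable, exactly one summand is nonzero; denote the corresponding index by $\sigma(i)$, so that $c_i d_{\sigma(i)} = c_i$ and $c_i d_j = 0$ for $j \neq \sigma(i)$. Symmetrically, for each fixed $j$ we expand
\[
d_j = 1 \cdot d_j = \sum_{i=1}^{r} c_i d_j,
\]
and only those $i$ with $\sigma(i) = j$ contribute, each contributing $c_i$. By indecomposability of $d_j$, exactly one such $i$ exists and $d_j = c_i$. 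This shows $\sigma$ is a bijection between $\{1,\ldots,r\}$ and $\{1,\ldots,s\}$, in particular $r = s$, and $d_{\sigma(i)} = c_i$ for all $i$, which is the desired uniqueness up to reordering.

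The only subtlety I anticipate is being precise about the meaning of ``indecomposable idempotent'': an idempotent $e \neq 0$ is indecomposable if it cannot be written as a sum of two nonzero orthogonal idempotents. With that understood, the indecomposability is applied to the orthogonal decompositions $c_i = \sum_j c_i d_j$ and $d_j = \sum_i c_i d_j$, forcing exactly one nonzero summand in each; the rest of the argument is then purely combinatorial bookkeeping. Centrality of the $c_i$ and $d_j$ is used only to ensure that the products $c_i d_j$ are again idempotents and that the decompositions above are sums of commuting, orthogonal idempotents, so this hypothesis is essential and is the step where the proof would break down in a general noncommutative setting.
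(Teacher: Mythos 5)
Your argument is correct. There is nothing in the paper to compare it against: Theorem \ref{Lam} is quoted from Proposition 22.1 of Lam's book and the paper supplies no proof of its own, so your direct argument genuinely fills a gap rather than paralleling an existing one. The proof you give is the standard one, and the bookkeeping ($c_i = c_i d_{\sigma(i)}$, then $d_j = \sum_{\sigma(i)=j} c_i$, forcing $\sigma$ to be a bijection with $d_{\sigma(i)} = c_i$) is sound. Two points deserve to be made fully explicit. First, the step ``$c_i$ is indecomposable, hence exactly one summand of $c_i = \sum_j c_i d_j$ is nonzero'' uses a small regrouping lemma: if $e = f_1 + \cdots + f_s$ with the $f_j$ pairwise orthogonal idempotents and two of them nonzero, say $f_1 \ne 0 \ne f_2$, then $g := f_2 + \cdots + f_s$ is an idempotent orthogonal to $f_1$ with $f_2 g = f_2 \ne 0$, so $e = f_1 + g$ contradicts indecomposability; and not all summands can vanish since an indecomposable idempotent is nonzero by definition. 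Second, you correctly identify centrality as the hypothesis making each $c_i d_j$ an idempotent and the families $(c_i d_j)_j$ and $(c_i d_j)_i$ orthogonal; in fact centrality of only one of the two families already suffices for all of these computations (e.g.\ $(c_i d_j)(c_k d_j) = c_i c_k d_j^2 = \delta_{ik} c_i d_j$ needs only the $c_i$ central), which is a mild strengthening of the statement, though in the paper's application in Proposition \ref{uniquedecompo} both families of projections are shown to be central anyway.
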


\begin{prop}\label{uniquedecompo}
Let $\g$ be a finite-dimensional \La such that $\z(\g) \sbs \lie{\g,\g}$ with a decomposition into indecomposable non-zero ideals $\bigoplus_{i=1}^n \g_i$ such that $\Hom(\g_i / \lie{\g_i,\g_i} , \z(\g_j)) = 0$ if $i \ne j \in \set{1, \ldots , r}$, e.g. $\g$ is reductive and $\dim \z(\g) \le 1$. Then this decomposition is unique except for the order.
\end{prop}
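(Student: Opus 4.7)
The plan is to associate to the given decomposition the orthogonal projections $p_i:\g\to\g_i$ and to show that $\1=p_1+\ldots+p_n$ is a decomposition of the unity of the unital ring $\Cent(\g)$ into \emph{central} indecomposable orthogonal idempotents, so that Theorem \ref{Lam} applies and yields uniqueness at the level of idempotents, whence uniqueness of the Lie algebra decomposition will follow.

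First I would verify, by repeating the short computation from the proof of Lemma \ref{indecomposablemma}, that each $p_i$ lies in $\Cent(\g)$ (the crucial fact being that the $\g_i$ are ideals); by construction the $p_i$ satisfy $p_i\circ p_j=\delta_{ij}p_i$ and $\sum_i p_i=\1$. To upgrade ``lies in $\Cent(\g)$'' to ``lies in the center of $\Cent(\g)$'' I would invoke Theorem \ref{MedinaRevoy}(4), which delivers the decomposition $\Cent(\g)=\bigoplus_{i,j=1}^n C_{ij}$ with $C_{ij}=p_i\circ\Cent(\g)\circ p_j$ and isomorphisms $C_{ij}\cong\Hom(\g_j/\lie{\g_j,\g_j},\z(\g_i))$ for $i\ne j$. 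The Hom hypothesis of the proposition kills every off-diagonal $C_{ij}$, so that each $f\in\Cent(\g)$ has the form $f=\sum_i p_i\circ f\circ p_i$; composing with $p_k$ on either side and using orthogonality then gives $p_k\circ f=p_k\circ f\circ p_k=f\circ p_k$, hence $p_k\in Z(\Cent(\g))$.

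Next I would check that each $p_i$ is indecomposable as an idempotent in $\Cent(\g)$: if $p_i=e+e'$ with $e,e'\in\Cent(\g)$ non-zero orthogonal idempotents, then $e(\g)$ and $e'(\g)$ are non-zero ideals of $\g$ (since $e,e'\in\Cent(\g)$ commute with every $\ad_x$) whose direct sum is $p_i(\g)=\g_i$, contradicting the indecomposability of $\g_i$. At this point Theorem \ref{Lam} applies to the ring $\Cent(\g)$ and asserts that the decomposition $\1=p_1+\ldots+p_n$ is unique up to order.

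Finally, given a second decomposition $\g=\bigoplus_{j=1}^m\h_j$ into indecomposable non-zero ideals with projections $q_j\in\Cent(\g)$, I would exploit the centrality of the $p_i$: each $p_i$ commutes with every $q_j$, so $p_i\circ q_j$ is an idempotent in $\Cent(\g)$ and $p_iq_j(\g)$ is an ideal of $\g$ sitting inside $\g_i$. The identity $\g_i=\bigoplus_j p_iq_j(\g)$, together with the indecomposability of $\g_i$, forces a unique index $\sigma(i)$ with $p_iq_{\sigma(i)}(\g)=\g_i$; the symmetric argument applied to $\h_j$ yields the inverse map and the equality $\g_i=\h_{\sigma(i)}$. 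I expect the main obstacle of the whole argument to be the centrality step: converting the Hom hypothesis into $p_i\in Z(\Cent(\g))$ through Theorem \ref{MedinaRevoy} is the decisive move, after which everything reduces to routine manipulations with orthogonal idempotents.
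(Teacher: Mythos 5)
Your argument is correct and follows the same backbone as the paper's proof: both use Theorem \ref{MedinaRevoy} together with the vanishing of $\Hom(\g_j/\lie{\g_j,\g_j},\z(\g_i))$ for $i\ne j$ to kill the off-diagonal components $C_{ij}$ and thereby place the projections $p_i$ in the center of $\Cent(\g)$, and both then appeal to Theorem \ref{Lam}. You are slightly more careful than the paper in two places: you explicitly verify that each $p_i$ is an indecomposable idempotent of $\Cent(\g)$ (the paper leaves this implicit), and your endgame differs in a useful way. The paper asserts that the projections $q_j$ of the second decomposition are also central ``by a dual argument'' and then reads off $\set{p_i}=\set{q_j}$ from Theorem \ref{Lam}; but the dual argument would require the Hom-vanishing hypothesis for the second decomposition, which is not given a priori. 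Your final step avoids this: using only the centrality of the $p_i$, you write $\g_i=\bigoplus_j p_iq_j(\g)$ with each summand an ideal contained in $\h_j$, let indecomposability of $\g_i$ select a unique $\sigma(i)$, and close the loop with the symmetric decomposition of $\h_j$. This makes the invocation of Theorem \ref{Lam} essentially redundant in your write-up, but it renders the conclusion self-contained and patches the one soft spot in the paper's version.
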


\begin{proof}
By Theorem \ref{MedinaRevoy}, we have $R:=\Cent(\g) \cong \bigoplus_{i=1}^n \Cent(\g_i)$, $\operatorname{J}(\g) = \bigoplus_{i=1}^n \operatorname{J}(\g_i)$ and the quotient $\Cent(\g) / \operatorname{J}(\g) \cong  \bigoplus_{i=1}^n \Cent(\g_i) / \operatorname{J}(\g_i)$ is commutative. Now suppose there are two decompositions of $\g$ into indecomposable non-zero ideals, say $\g=\bigoplus_{i=1}^n \g_i = \bigoplus_{j=1}^m \h_j$ with corresponding systems of orthogonal projections $p_1, \ldots p_n$ and $q_1, \ldots , q_m$. Let $f \in \Cent(\g)$ and $f=f_1 + \ldots + f_n$ its decomposition into elements of the $\Cent(\g_i) \sbs \Cent(\g)$. Then, for fixed $i \in \set{1, \ldots , n}$, we have
\es{
p_i \circ f = \sum_{k=1}^n p_i \circ f_k = p_i \circ f_i = f_i = f_i \circ p_i = \sum_{k=1}^n f_k \circ p_i = f \circ p_i.
}
So the $p_i$ are in $Z(R)$ and, by a dual argument, all the $q_j$ are so. By Theorem \ref{Lam}, this implies $\set{p_1, \ldots , p_n}=\set{q_1, \ldots , q_m}$, thus $\set{\g_1, \ldots , \g_n}=\set{\h_1, \ldots , \h_m}$.
\kommentar{%
By the facts that $\operatorname{J}(\g) \ide \Cent(\g)$ is nilpotent and $\Cent(\g) / \operatorname{J}(\g)$ is commutative and by Theorem \ref{Lam}, we have $m=n$ and there is   $\sigma \in S_n$ such that $\overline{p_i}=\overline{q_{\sigma(i)}}$, i.e. there is $\ph_i \in \operatorname{J}(\g)$ with $p_i = q_{\sigma(i)} + \ph_i$, for each $i\in \set{1, \ldots , n}$. Since $\Hom(\g_i / \lie{\g_i,\g_i} , \z(\g_j)) = 0$ if $i \ne j \in \set{1, \ldots , n}$, there cannot be two distinct indices $i \ne j \in \set{1, \ldots , n}$ such that $\operatorname{J}(\g_i)$ and $\operatorname{J}(\g_j)$ both are non-trivial. Therefore there is an index $k\in \set{1, \ldots , n}$ with $\ph_i \in \operatorname{J}(\g_k)$ for each $i \in \set{1, \ldots , n}$. This yields $\h_{\sigma(i)} = \im q_{\sigma(i)} = \im p_i + \im \ph_i = \g_i + \im \ph_i \sbs \g_i + \z(\g_k)$ for all $i \in \set{1, \ldots , n}$ and, in particular, $\h_{\sigma(k)} = \g_k$. Since $\h_i \cap \h_j = 0$ for $i \ne j \in \set{1, \ldots , n}$, we may conclude $\h_{\sigma(i)} = \g_i$ for all $i \in \set{1, \ldots , n}$.
}%
\end{proof}

\begin{rem} In general, the decomposition of a finite-dimensional Lie algebra $\g$ into indecomposable non-zero ideals is not unique and the requirements of the preceding proposition are sharp: Let $\g = \g_1 \oplus \g_2$ be a decomposition into indecomposable non-zero ideals with corresponding indecomposable projections $p_1$, $p_2$ and $\Hom(\g_1 / \lie{\g_1,\g_1},\z(\g_2))\ne0$. If $0 \neq \ph: \g \to \g$ is a morphism mapping $\g_1$ to $\z(\g_2)$ and the spaces $\lie{\g_1,\g_1}$ and $\g_2$ to $0$, then $p_1':=(\1+\ph) \circ p_1 \circ (\1+\ph)^{-1}=p_1+\ph$ is also an indecomposable idempotent of $\Cent(\g)$. Thus $\g=\im p_1' + \im (\1 - p_1')$ is another decomposition of $\g$ into indecomposable non-zero ideals.
\end{rem}

\begin{rem}\label{Hopen}
If $\g_1, \ldots , \g_n$ are the unique indecomposable non-zero ideals of a Lie algebra $\g$ and $f \in \Aut(\g)$ is a Lie algebra automorpism, then of course $\g_1, \ldots , \g_n$ are the unique indecomposable non-zero ideals of $f(\g)$ and thus there is a permutation $\sigma \in S_n$ such that $f(\g_i) = \g_{\sigma(i)}$ for all $i \in \set{1, \ldots, n}$.
\end{rem}

\begin{lem}\label{indecomplexreallemma}
Let $\g$ be a \La of finite dimension.
\begin{enumerate}
\item If $\K = \C$, then $\g$ is indecomposable \iff $\operatorname{S}(\g)=\C \cdot \1$.
\item If $\K = \R$, then $\g$ is indecomposable \iff either $\operatorname{S}(\g)=\R \cdot \1$ or $\operatorname{S}(\g)=\R \cdot \1 + \R \cdot J$ for a complex structure $J$ on $\g$, i.e $J^2=-\1$. In the second case, $J$ and $-J$ are the only complex structures on $\g$ compatible with the Lie bracket.
\end{enumerate}
\end{lem}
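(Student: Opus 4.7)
My plan is to reduce indecomposability of $\g$ to a statement about idempotents in $\Cent(\g)$, and then classify those idempotents using the structure theory of semisimple endomorphisms, separately over $\C$ and $\R$.

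The central observation is that direct-sum decompositions of $\g$ into ideals correspond bijectively to idempotents of $\Cent(\g)$. Given $\g = \mathfrak{a} \oplus \mathfrak{b}$ with $\mathfrak{a}, \mathfrak{b} \ide \g$, the projection $p_{\mathfrak{a}}$ lies in $\Cent(\g)$ because $\lie{\mathfrak{a}, \mathfrak{b}} \sbs \mathfrak{a} \cap \mathfrak{b} = 0$; conversely, for any idempotent $p \in \Cent(\g)$, both $p(\g)$ and $(\1 - p)(\g)$ are ideals (since $p$ commutes with every $\ad_x$) and $\g = p(\g) \oplus (\1 - p)(\g)$. Any idempotent has minimal polynomial dividing $X(X-1)$, so it is semisimple and belongs to $\operatorname{S}(\g)$. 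Thus $\g$ is indecomposable iff $\operatorname{S}(\g)$ contains no idempotents other than $0$ and $\1$.

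For (1), the direction $\operatorname{S}(\g) = \C \cdot \1 \Rightarrow \g$ indecomposable is immediate. For the converse, I take $f \in \operatorname{S}(\g)$; over $\C$ this means $f$ is diagonalizable, so $\g = \bigoplus_\lambda V_\lambda$ with each eigenspace $V_\lambda = \ker(f - \lambda \1)$ an ideal (as $\ad_x$ commutes with $f$). Indecomposability forces a single non-zero eigenspace, giving $f \in \C \cdot \1$. For (2), I run the real version of the same argument. A real semisimple endomorphism $f$ has squarefree minimal polynomial $m_f = p_1 \cdots p_k$ with each $p_j \in \R[X]$ irreducible, hence of degree $1$ or of degree $2$ with negative discriminant, and the primary decomposition $\g = \bigoplus_j \ker p_j(f)$ is again a decomposition into ideals. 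Indecomposability forces $k = 1$: if $\deg p_1 = 1$ then $f \in \R \cdot \1$; if $p_1 = X^2 + bX + c$ with $b^2 < 4c$, completing the square gives $f = \alpha \1 + \beta J$ where $J := (f - \alpha \1)/\beta$ satisfies $J^2 = -\1$. This $J$ lies in $\Cent(\g)$ (it is a polynomial in $f$) and is semisimple (its complexification has minimal polynomial $X^2 + 1$), so $J \in \operatorname{S}(\g)$ and $\operatorname{S}(\g) = \R \cdot \1 + \R \cdot J$. The reverse implication in (2) follows from computing that on $a\1 + bJ$ the equation $p^2 = p$ forces $b = 0$ and $a \in \set{0, 1}$. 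Uniqueness of $J$ up to sign follows similarly: writing a second compatible complex structure as $J' = a\1 + bJ$ and imposing $(J')^2 = -\1$ yields $2ab = 0$ and $a^2 - b^2 = -1$, hence $a = 0$ and $b = \pm 1$.

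The main obstacle I anticipate is the real case (2), specifically making the real primary decomposition produce ideals. This reduces to noting that $p_j(f)$ belongs to $\Cent(\g)$ (since $\Cent(\g)$ is a unital associative subalgebra of $\End(\g)$ containing $f$), so each $\ker p_j(f)$ is preserved by every $\ad_x$. Once this is in place, everything else is a routine application of the structure theorem for semisimple real endomorphisms together with the elementary polynomial identities on $a\1 + bJ$.
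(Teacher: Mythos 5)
Your proposal is correct and follows the same underlying strategy as the paper's proof: spectral data of elements of $\Cent(\g)$ gives rise to decompositions of $\g$ into ideals, so indecomposability pins down the possible spectra of semisimple centroid elements. The differences are technical rather than conceptual, but two of them are genuine improvements. First, you make explicit the bijection between direct-sum decompositions into ideals and idempotents of $\Cent(\g)$; the paper's "only if" direction in the real case only observes that a decomposition produces zero-divisors in $\Cent(\g)$, which is not quite the statement needed (a nonzero nilpotent is also a zero-divisor, and $\Cent(\g)$ may well contain those while $\operatorname{S}(\g)=\R\cdot\1$ and $\g$ is indecomposable, e.g.\ for the Heisenberg algebra), whereas your idempotent criterion is exactly the right invariant. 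Second, in the real case you argue via the real minimal polynomial and its primary decomposition where the paper analyzes the eigenvalues of the complexification, and you verify uniqueness of the complex structure by the direct computation $(a\1+bJ)^2=-\1$ where the paper invokes the automorphism group of $\C$; both routes are equally valid.

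There is one step that your write-up leaves thin --- and the paper's proof has exactly the same gap: the passage from "each individual $f\in\operatorname{S}(\g)$ is of the form $\alpha\1+\beta J_f$ for \emph{some} complex structure $J_f$ depending on $f$" to "$\operatorname{S}(\g)=\R\cdot\1+\R\cdot J$ for a \emph{single} $J$". Your uniqueness computation already writes $J'=a\1+bJ$, i.e.\ it presupposes $J'\in\R\cdot\1+\R\cdot J$, so it cannot be used to identify the various $J_f$ with $\pm J$ without circularity. The gap closes easily whenever $\Cent(\g)$ is abelian (which is the situation in all of the paper's applications of this lemma, where $\Hom(\g/\lie{\g,\g},\z(\g))=0$ is in force): if $J,J'\in\Cent(\g)$ are commuting complex structures, then $(JJ')^2=\1$, so $p:=\tfrac{1}{2}(\1+JJ')$ is an idempotent of $\Cent(\g)$, which by your own criterion must be $0$ or $\1$; hence $JJ'=\pm\1$ and $J'=\mp J$. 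You should either insert this argument or record commutativity of $\Cent(\g)$ as an additional hypothesis for the final clause of part (2).
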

\begin{proof} \
\begin{enumerate}
\item For any $f \in \Cent(\g)$ the eigenspaces of $f$ are ideals of $\g$: If $y \ne 0$ is an eigenvector of $f$ for the eigenvalue $\lambda$ and $x \in \g$ then $f\lie{x,y}=\lie{x,fy}=\lie{x,\lambda y}=\lambda \lie{x,y}$, i.e. $\lie{x,y}$ is also an eigenvector of $f$ for the eigenvalue $\lambda$.
If $\K = \C$, then the semisimple endomorphisms in $\Cent(\g)$ are exactly the diagonalizable endomorphisms in $\Cent(\g)$. Thus $\g$ is indecomposable \iff each endomorphism in $\operatorname{S}(\g)$ has exactly one eigenvalue \iff $\operatorname{S}(\g)=\C \cdot \1$.
\item
If $\K = \R$ and $\g$ is decomposable into the direct sum of non-trivial ideals $\g_1$ and $\g_2$ with corresponding orthogonal projections $p_1$ and $p_2$, then $p_1 p_2=0$ although $p_1 \ne 0 \ne p_2$, thus $\Cent(\g)$ has zero-divisors and is neither isomorphic as associative $\R$-algebras to $\R$ nor to $\C$.

If $\K = \R$ and $\g$ is indecomposable, then any endomorphism in $\operatorname{S}(\g)$ admits exactly one real eigenvalue or exactly two non-real, complex conjugate eigenvalues. If every endomorphism in $\operatorname{S}(\g)$ admits exactly one real eigenvalue, then $\operatorname{S}(\g)=\R \cdot \1$. In the other case, if any $f \in \operatorname{S}(\g)$ admits the non-real complex eigenvalues $a+ib$ and $a-ib$, then $J:=\frac 1 b (f - a\1) \in \operatorname{S}(\g)$ is a complex structure of $\g$ because of the following calculation for the eigenvector $x \in \g$ for the eigenvalue $a\pm ib$:
\es{
J^2(x)&=(\frac {1} {b^2} (f - a\1)^2) (x) = \frac{1}{b^2} (f^2(x) - 2af(x) + a^2x)\\
&= \frac{1}{b^2} ((a\pm ib)^2 x -2a(a \pm ib)x +a^2x ) = \frac{1}{b^2} (a^2 \pm 2iab - b^2 - 2a^2 \mp 2iab + a^2) (x)\\
&= -x.
} The space of semisimple endomorphims in $\Cent(\g)$ in this case is the space of real linear combination of $\1$ and $J$, thus isomorphic to $\C$ as associative algebras over $\R$. Since the complex conjugation is the only $\R$-linear algebra automorphism on $\C$ different from the identity, the only complex structures on $\operatorname{S}(\g)$ are $J$ and $-J$.
\end{enumerate}
\end{proof}

\subsection{Fiber bundles}
The definitions of various bundles in this subsection are based on \cite{tD91},
\cite{Hu75} and \cite{BiCr01}. Our notion ``bundle'' is what sometimes is called
``fiber bundle'', i.e. the fibers of a bundle are all ``of the same type''.

\begin{defi} \
\begin{enumerate}
\item Let $E,M,F$ be manifolds such that there is surjective submersion $\pi: E \to M$ and each fiber $E_x:=\pi^{-1}(x)$ for $x \in M$ is diffeomorphic to $F$. If there is an open cover $\U=\left(U_i\right)_{i \in I}$ of $M$ and a family of smooth maps $\Phi=\left(\ph_i\right)_{i \in I}$, each $\ph_i: \pi^{-1}(U_i) \to F$ inducing a \diffem $\left(\pi,\ph_i\right): \pi^{-1}(U_i) \to U_i \times F$ (\emph{local triviality}), then the sextuple $\left(E,M,\pi,F,\U,\Phi\right)$ is called a \emph{bundle} with \emph{bundle space} $E$ over the \emph{base space} $M$ with \emph{projection} $\pi$, \emph{fiber} $F$ and \emph{bundle atlas} $(\U,\Phi)$. A pair $(U_i, \ph_i)$ is called \emph{bundle chart}. As a shorter formulation we will say ``$\pi: E \to M$ is a bundle with fiber $F$'' without explicit mention of the bundle atlas.
\item A bundle $\pi': E' \to M'$ is a \emph{subbundle} of a bundle $\pi: E \to M$ provided $E' \sbs E$ and $M' \sbs M$ and $\pi'(x)=\pi(x)$ for all $x \in E'$.
\end{enumerate}
\end{defi}

\begin{rem}
Since we only consider paracompact manifolds as base spaces, we may always assume that the bundle atlas of a bundle and a corresponding\footnote{The cover of the base space is the same.} atlas of the base spaces are locally finite.
\end{rem}

For each bundle there is a class of interesting functions, the so-called sections.

\begin{defi}
Let $\pi: E \to M$ be a bundle and $k \in \Ni$. A $\Ck$-map
\es{
X: M &\lra E\\
x &\longmapsto X(x) = X_x
}
is called $\Ck$-\emph{section} of the bundle, if $\pi \circ X = \id_M$. The set of all $\Ck$-sections of the bundle $\pi: E \to M$ is denoted by $\Gamma^k(E)$ and we also write $\Gamma(E)$ instead of $\Gamma^\infty(E)$.
\end{defi}

We now give a first definition of a vector bundle.

\begin{defi}\label{Defvb1}
Let $V$ be a finite-dimensional \vs and $\pi: E \to M$ a bundle with fiber $V$ with a \vs structure on each fiber $E_x$ for $x \in M$, isomorphic to $V$ by $\left(\ph_i\right)_{|_{E_x}}: E_x \to V$ for each $i \in I$. Then the sextuple $\left(E,M,\pi,V,\U,\Phi\right)$ is called a \emph{vector bundle}. As a shorter formulation we will say ``$\pi: E \to M$ is a vector bundle with fiber $V$''.
\end{defi}

\begin{defi}\label{bundlemorph} Let $\pi_i: E_i \to N$ be a vector bundle with fiber $V_i$ for $i=1,2$.
\begin{enumerate}
\item A \emph{vector bundle morphism} from $\pi_1: E_1 \to N$ to $\pi_2: E_2 \to N$ is a map $\kappa: E_1 \to E_2$ such that the following diagram
\kD{
E_1 \ar[r]^\kappa \ar[d]_{\pi_1} & E_2\ar[dl]^{\pi_2}\\
N
}
commutes and $\kappa: (E_1)_x \to (E_2)_x$ is a \vs morphism for all $x \in N$. The space of vector bundle morphisms from $\pi_1: E_1 \to N$ to $\pi_2: E_2 \to N$ is denoted by $\Hom(E_1,E_2)$. A vector bundle morphism from $\pi_1: E_1 \to N$ to $\pi_1: E_1 \to N$ is also called \emph{vector bundle endomorphism} of $\pi_1: E_1 \to N$.
\item A vector bundle morphism is called \emph{vector bundle map}, if fibers are bijectively mapped on each other and, thus, the fibers are isomorphic \vs{s}.
\item A vector bundle morphism $\kappa$ is a \emph{vector bundle isomorphism}, if there exists a vector bundle morphism $\iota \in \Hom(E_2,E_1)$, such that $\kappa$ is inverse to $\iota$. In this case the vector bundles $\pi_1: E_1 \to N$ and $\pi_2: E_2 \to N$ are called \emph{isomorphic}. Note that vector bundle isomorphisms are vector bundle maps.
\end{enumerate}
\end{defi}

A second definition of a vector bundle needs the definition of a principal bundle and a bundle associated to a principal bundle.

\begin{defi}
Let $G$ be a Lie group and $\pi: P \to M$ a bundle with fiber $G$ such that $\pi$ is identic to the canonical projection of a smooth right action
\es{
R: P \times G &\lra P\\
(p,g) &\longmapsto p\cdot g = R_g(p)=R^p(g).
}
If the following diagram commutes for all $i \in I$, $g \in G$
\kD{
\pi^{-1}(U_i) \ar[r]^{R_g} \ar[d]_{\ph_i} & \pi^{-1}(U_i)\ar[d]^{\ph_i}\\
G \ar[r]_{\rho_g}& G
,
}
then the septuple $\left(P,M,\pi,G, R,\U,\Phi\right)$ is called a \emph{principal bundle} with \emph{structure group} $G$ and \emph{bundle action} $R$. As a shorter formulation we will say ``$\pi: P \to M$ is a $G$-principal bundle'' without explicit mention of the bundle action.
\end{defi}

\begin{rem} Let $\pi: P \to M$ be a $G$-principal bundle.
\begin{enumerate}
\item $G$ acts freely on $P$ by $R$, i.e. $p \cdot g = p$ for some $p \in P$, $g \in G$ implies $g=1$. To see this, let $p \cdot g = p$ for some $p \in P$, $g \in G$ and let $(U,\ph)$ be a bundle chart in $x:=\pi(p)$. We calculate:
\es{
\ph(p) = \ph(p \cdot g) = \ph(p) g \quad \implies \quad 1=g.
}
It is easy to show now that $R$ acts simply transitively on its orbits, i.e. for all $p \in P$ the map $R^p: G \to P$ is injective.
\item The condition that the surjective submersion $\pi: P \to M$ is identic to the canonical projection $\pi^R: P \to P/G$ of the action $R$ can be weakened in the following sense: If they both are surjective submersions, then $M$ and $P/G$ are already \diffec. To show this, let $h$ be a map $P/G \to M$ such that the following diagram commutes:
\kD{
P\ar[r]^{\pi} \ar[d]_{\pi^R} & M\\
P/G\ar[ru]_h & .
}
The map $h$ is smooth because $\pi^R$ is a submersion and $h$ is surjective because $\pi$ is. It is even bijective because $h\left(\pi^R(p)\right)=h\left(\pi^R(q)\right)$ implies $\pi(p)=\pi(q)$ and there exists a bundle chart $(U, \ph)$ and an element $g \in G$ such that $(\pi(p),\ph(p))=(\pi(q),\ph(q)\cdot g)=(\pi(q),\ph(q\cdot g))$
yielding $p=q \cdot g$, thus $\pi^R(p)=\pi^R(q)$. Furthermore, for any $x \in M$ there is an open neighbourhood $U \sbs M$ such that there exists a section $X: U \to \pi^{-1}(U)$. Then $h^{-1}$ maps $y \in U$ on $\pi^R(X(y)) \in P/G$ and is smooth. So $h$ is a \diffem $P/G \to M$.
\end{enumerate}
\end{rem}

If $\pi: P \to M$ is a $G$-principal bundle and $F$ is a manifold on which $G$ acts from the left, then we can construct a new bundle with fiber $F$ by replacing the structure group $G$ by $F$.

\begin{defi}
Let $\pi: P \to M$ be a $G$-principal bundle and $F$ a manifold with a left action
\es{
L: G \times F &\lra F\\
(g,f) &\longmapsto g\cdot f = L_g(f)=L^f(g).
}
By defining a left action as follows
\es{
L': G \times (P \times F) &\lra P \times F\\
(g,(p,f)) &\longmapsto g\cdot (p,f) := \left(p \cdot g^{-1}, g \cdot f \right).
}
and considering the orbit space $B:=(P \times F)/G$, we obtain a bundle $\pi': B \to M$ with fiber $F$, called the \emph{bundle associated} to the $G$-principal bundle $\pi: P \to M$ with fiber $F$.\footnote{In this formulation we neglect the dependence of the bundle structure on the actions $R$ and $L$.} Let us examine its structure:
The bundle projection is
\es{
\pi': B &\lra M\\
[(p,f)]:=G \cdot (p,f) &\longmapsto \pi(p)
}
and this is well-defined because $\pi(p)=\pi(p \cdot g^{-1})$ for all $p \in P, g \in G$. The bundle atlas $(U_i, \ph_i)_{i \in I}$ of the $G$-principal bundle $\pi: P \to M$ induces a bundle atlas $(U_i, \ph'_i)_{i \in I}$ of $\pi': B \to M$ by
\es{
\ph'_i: \pi'^{-1}(U_i) &\lra F\\
[(p,f)] &\longmapsto \ph_i(p) \cdot f
}
and this is well-defined because $[(p,f)]=[(q,e)]$ yields the existence of $g \in G$ such that $p \cdot g^{-1}=q$ and $g \cdot f=e$ and then
$\ph_i(q) \cdot e= \ph_i(p \cdot g^{-1}) \cdot g \cdot f= \ph_i(p) \cdot g^{-1} \cdot g \cdot f =\ph_i(p) \cdot f.$ Finally, each fiber $B_x=\pi'^{-1}(x)$ for $x \in M$ is \diffec to $F$:
\es{
\pi'^{-1}(x) &= \set{\left. [(p,f)] \in (P \times F)/G\right| \pi(p)=x} = \set{\left. [(p,f)] \in (P \times F)/G\right| p \in P_x} = (P_x \times F)/G \cong F.
}
\end{defi}
Now we can define a vector bundle as being a bundle associated to a principal bundle.

\begin{defi}\label{Defvb2}
Given a finite-dimensional \vs $V$ and a Lie subgroup $G$ of $\GL(V)$ acting on $V$ \footnote{This condition may be replaced by the existence of an smooth group morphism $\Xi: G \to \GL(V)$ (a so-called Lie group representation) and $G$ acting on $V$ as follows: $g.v := \Xi(g)(v)$.} and a $G$-principal bundle $\pi: P \to M$, the bundle $\pi': B \to M$ associated to the $G$-principal bundle $\pi: P \to M$ with fiber $V$ is called a \emph{vector bundle}.
\end{defi}

\begin{rem} \label{VecBunEquiv}
\begin{enumerate}
\item A vector bundle $\pi': B \to M$ in the sense of Definition \ref{Defvb2} allows a \vs structure on each fiber $B_x$ for $x \in M$, defined by the condition that all restrictions $\left(\ph'_i\right)_{|_{B_x}}: B_x \to V$ are vector space isomorphisms and this is well-defined because
\es{
\left(\ph'_i\right)_{|_{B_x}}^{-1}\left(\lambda \ph'_i(G \cdot (p,v))+ \ph'_i(G \cdot (q,w)) \right)&=
   \left(\ph'_i\right)_{|_{B_x}}^{-1}\left(\lambda \ph_i(p)(v) + \ph_i(q)(w) \right)\\
&= \left(\ph'_i\right)_{|_{B_x}}^{-1}\left(\ph_i(p)(\lambda v + w) \right)\\
&= \left(\ph'_i\right)_{|_{B_x}}^{-1}\left(\ph'_i(G \cdot (p,\lambda v +w )) \right)\\
&=G \cdot (p,\lambda v +w)
}
does not depend on the choice of the bundle chart $(U_i, \ph'_i)$, where $x \in U_i$, $\lambda \in \K$, $G \cdot (p,v)$, $G \cdot (q,w) \in B_x$. Therefore every vector bundle in the sense of Definition \ref{Defvb2} is a vector bundle in the sense of Definition \ref{Defvb1}.

\item On the other hand, let $\pi: E \to M$ be a vector bundle with fiber $V$ in the sense of Definition \ref{Defvb1}. By setting
\es{
P:= \dot{\bigcup_{x \in M}} \Iso\left(V, E_x\right),
}
we obtain a surjective submersion $\rho: P \to M$ by setting $\rho(f):=x$ if $f \in \Iso\left(V, E_x\right)$, where the smooth structure on $P$ is induced by the bundle atlas $(U_i, \ph_i)_{i \in I}$ of the bundle $\pi: E \to M$ by declaring each map
\es{
\rho^{-1}(U_i) &\lra U_i \times \GL(V)\\
f &\longmapsto \left(x, \left(\ph_i\right)_{|_{E_{\rho(f)}}} \circ f\right)
}
to be a \diffem. We obtain a $\GL(V)$-principal bundle $\rho: P \to M$ with bundle action
\es{
P \times \GL(V) &\lra P\\
(f,T) &\longmapsto f \circ T
}
and we define $\rho': B=(P \times V)/\GL(V) \to M$ to be the bundle associated to this principal bundle with fiber $V$. It is isomorphic to the vector bundle $\pi: E \to M$ via
\es{
\kappa: B &\lra E\\
\GL(V) \cdot (f,v) &\longmapsto f(v).
}
We will see that, under certain conditions, the structure group may be reduced to a closed subgroup $G$ of $\GL(V)$.
\end{enumerate}
\end{rem}

\begin{defi}
Let $\pi': B \to M$ be a vector bundle with fiber $V$ associated to the $G$-principal bundle $\pi: P \to M$ with bundle atlas $(U_i, \ph'_i)_{i \in I}$ and $(U_i, \ph_i)_{i \in I}$, respectively. We also define an index set as follows: $\I:=\set{\left.(i,j) \in I \times I \right|U_i \cap U_j \neq \emptyset}$. For $(i,j) \in \I$ and $x \in U_i \cap U_j$ we define an element $g_{ji}(x) \in G$ by
\es{
\left(\pi,\ph_j \right) \circ \left(\pi,\ph_i \right)^{-1}: \left(U_i \cap U_j\right) \times G &\lra \left(U_i \cap U_j\right) \times G\\
(x,g) &\longmapsto \left(x,g_{ji}(x) g\right).
}
Note that, if $G \le GL(V)$ or $G$ is injectively representated in $GL(V)$, this element can also be defined by
\es{
\left(\pi',\ph'_j \right) \circ \left(\pi',\ph'_i \right)^{-1}: \left(U_i \cap U_j\right) \times V &\lra \left(U_i \cap U_j\right) \times V\\
(x,v) &\longmapsto \left(x,g_{ji}(x)(v)\right).
}
We obtain smooth functions $g_{ji}: U_i \cap U_j \to G$ for $(i,j) \in \I$, called \emph{transition maps}. The family $(g_{ji})_{(i,j) \in \I}$ is called \emph{cocyle} relative to the bundle atlas $(U_i, \ph_i)_{i \in I}$. As a shorter formulation we will say ``$\pi': B \to M$ is a vector bundle with fiber $V$ and cocycle $(g_{ji})_{(i,j) \in \I}$.'' We have:
\begin{equation}\label{cocyle}
g_{ji}(x) \cdot g_{ik}(x) \cdot g_{kj}(x) = 1 \text{ for all } x \in U_i \cap U_j \cap U_k.
\end{equation}
Given another bundle atlas $(U_i, \psi_i)_{i \in I}$ (the cover $\U$ is the same!) and the cocyle $(k_{ji})_{(i,j) \in \I}$ relative to this bundle atlas, we define smooth mappings $h_i: U_i \to G$ for $i \in I$ by
\es{
\left(\pi,\psi_i \right) \circ \left(\pi,\ph_i \right)^{-1}: U_i \times G &\lra U_i \times G\\
(x,g) &\longmapsto \left(x,h_i(x) g\right).
}
The two cocyles are then linked by the formula
\begin{equation}\label{cohomological}
k_{ji}(x) = h_j(x) \cdot g_{ji}(x) \cdot h_i(x)^{-1} \text { for all } x \in U_i \cap U_j.
\end{equation}
\end{defi}

\v Cech cohomology deals with abstract cocyles fulfilling the conditions
\eqref{cocyle} and \eqref{cohomological} and the equivalence classes of
cohomological cocyles. We will see details in Subsection \ref{Cech}.

\begin{rem}\label{RedStrucGrp}
If $G \le \GL(V)$ is a closed subgroup (and hence a Lie subgroup) containing the images of the
transition maps of a vector bundle $\pi: E \to M$, then one can show (cf. Theorem
I.6.4.1 of \cite{Hu75}) that $\pi: E \to M$ is isomorphic to the bundle associated to a
$G$-principal bundle with base space $M$ and fiber $V$. \end{rem}

Another equivalent way to define vector bundles is given by Proposition
I.5.5.1 of \cite{Hu75}.
\begin{prop}\label{vectobundlebycocyles}
Let $V$ be a finite-dimensional \vs, $G$ a closed subgroup of $\GL(V)$, $M$ a manifold, $(U_i)_{i \in I}$ an open cover of $M$ and $g_{ji}: U_i \cap U_j \to G$ for $(i,j) \in \I$ a familiy of maps meeting the condition \eqref{cocyle}, where $\I:=\set{\left.(i,j) \in I \times I \right|U_i \cap U_j \neq \emptyset}$. Then there is a vector bundle $\pi: E \to M$ with fiber $V$, bundle atlas $(U_i, \ph_i)_{i \in I}$ and cocyle $(g_{ji})_{(i,j) \in \I}$ and it is unique up to isomorphism.
\end{prop}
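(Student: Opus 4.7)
The plan is to glue the local pieces $U_i \times V$ together along the cocycle $(g_{ji})_{(i,j)\in \I}$ and verify that the resulting space is a vector bundle with the prescribed transition data.

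First I would form the disjoint union $\widetilde E := \bigsqcup_{i \in I} U_i \times V \times \{i\}$ and define a relation $\sim$ on it by declaring $(x,v,i) \sim (y,w,j)$ \iff $x=y \in U_i \cap U_j$ and $w = g_{ji}(x)(v)$. Setting $k=j=i$ in \eqref{cocyle} yields $g_{ii}(x)^3 = 1$, and then taking $k=j$ and $i=j$ separately shows $g_{ii}(x) = 1$ and $g_{ij}(x) = g_{ji}(x)^{-1}$, so $\sim$ is reflexive and symmetric; transitivity is precisely \eqref{cocyle}. Thus $\sim$ is an equivalence relation, and I set $E := \widetilde E / \sim$ with projection $\pi([x,v,i]) := x$. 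The maps $\ph_i : \pi^{-1}(U_i) \to V$, $[x,v,i] \mapsto v$, are well-defined bijections onto their images of $U_i \times V$, and they equip $E$ with a smooth manifold structure by declaring each $(\pi, \ph_i): \pi^{-1}(U_i) \to U_i \times V$ to be a diffeomorphism. Compatibility on overlaps is automatic from the definition of $\sim$: the transition $(\pi, \ph_j) \circ (\pi, \ph_i)^{-1}$ is $(x,v) \mapsto (x, g_{ji}(x)v)$, which is smooth since $g_{ji}$ is smooth and the $G$-action on $V$ is smooth.

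Next I would install the vector space structure on each fiber $E_x = \pi^{-1}(x)$. For any $i$ with $x \in U_i$, transport the $\K$-vector space structure of $V$ through the bijection $(\ph_i)|_{E_x} : E_x \to V$. This is independent of $i$: if $x \in U_i \cap U_j$, then the change-of-chart on the fiber is multiplication by $g_{ji}(x) \in G \le \GL(V)$, which is $\K$-linear. Hence $E_x$ carries a canonical vector space structure isomorphic to $V$, and $(\ph_i)|_{E_x}$ is an isomorphism for every admissible $i$. This yields the claimed vector bundle in the sense of Definition \ref{Defvb1}; by Remark \ref{VecBunEquiv}.1 and Remark \ref{RedStrucGrp}, it is isomorphic to a bundle associated to a $G$-principal bundle, so it is a vector bundle in the sense of Definition \ref{Defvb2}. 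By construction the cocycle relative to $(U_i, \ph_i)_{i \in I}$ is exactly $(g_{ji})_{(i,j)\in \I}$.

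For the uniqueness up to isomorphism, suppose $\pi' : E' \to M$ is a second vector bundle with fiber $V$, bundle atlas $(U_i, \psi_i)_{i \in I}$ over the \emph{same} cover and the same cocycle $(g_{ji})$. I would define local isomorphisms $\kappa_i := (\pi', \psi_i)^{-1} \circ (\pi, \ph_i) : \pi^{-1}(U_i) \to \pi'^{-1}(U_i)$. On an overlap $\pi^{-1}(U_i \cap U_j)$, the fact that both bundles share the cocycle gives $\psi_j \circ \psi_i^{-1}(x,v) = (x, g_{ji}(x)v) = \ph_j \circ \ph_i^{-1}(x,v)$, so $\kappa_i = \kappa_j$ on the overlap. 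Hence the $\kappa_i$ glue to a global fiberwise linear diffeomorphism $\kappa: E \to E'$ with $\pi' \circ \kappa = \pi$, which is the desired vector bundle isomorphism.

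I expect the main obstacle to be of the bookkeeping type rather than conceptual: namely, carefully verifying that the smooth structure on the quotient $E$ is Hausdorff and second countable (so that $E$ is actually a manifold in our sense) and that the locally finiteness of the cover $(U_i)_{i \in I}$, guaranteed by paracompactness of $M$, suffices to assemble the local charts into a genuine smooth atlas. Once that is in place, all the remaining verifications are routine applications of the cocycle identity and the linearity of the $g_{ji}(x)$.
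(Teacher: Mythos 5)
The paper itself offers no proof of this proposition: it is quoted as Proposition I.5.5.1 of \cite{Hu75}, so there is nothing internal to compare against. Your gluing construction (disjoint union of the $U_i \times V$ modulo the relation induced by the cocycle, charts $(\pi,\ph_i)$ declared to be diffeomorphisms, fiberwise vector space structure transported through any chart and independent of the chart because $g_{ji}(x) \in G \le \GL(V)$ is linear, and uniqueness by gluing the local isomorphisms $(\pi',\psi_i)^{-1}\circ(\pi,\ph_i)$) is exactly the standard argument and, up to the point below, it is sound; the Hausdorffness and second-countability issues you defer are indeed routine here, since distinct points of $E$ over distinct base points are separated by $\pi$-preimages and points over the same base point lie in a common chart $\pi^{-1}(U_i)\cong U_i\times V$.

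The one step that does not work as written is your derivation of $g_{ii}(x)=1$. Condition \eqref{cocyle} with $i=j=k$ gives only $g_{ii}(x)^3=1$, and specializing $k=j$ or $i=j$ gives $g_{ji}g_{ij}g_{jj}=1$ and $g_{ii}g_{ik}g_{ki}=1$, from which $g_{ii}=1$ still does not follow: for instance $G=\set{1,\omega,\omega^2}\le\GL_1(\C)$ with $\omega$ a primitive cube root of unity and the constant family $g_{ji}\equiv\omega$ satisfies \eqref{cocyle} identically while $g_{ii}\ne 1$. Without $g_{ii}=1$ your relation $\sim$ is not reflexive, $\ph_i$ is not well defined, and $g_{ij}=g_{ji}^{-1}$ (which you use again in the uniqueness step) fails. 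The defect is really in the literal statement of \eqref{cocyle} rather than in your strategy: every cocycle arising from an actual bundle atlas satisfies $g_{ii}=1$ automatically, and the version of the cocycle identity used in \cite{Hu75}, namely $g_{kj}(x)g_{ji}(x)=g_{ki}(x)$, forces it at once. So you should either add the normalization $g_{ii}=1$ as a (harmless and standard) hypothesis or work with that multiplicative form of the identity; with that emendation the rest of your argument goes through.
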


Knowing different ways to define and construct vector bundles, it is now time to give some important examples of vector bundles.

\begin{exa} \
\begin{enumerate}
\item Let $V$ be a finite-dimensional \vs and $M$ a manifold. Then, by setting $E:=M \times V$, we obtain a vector bundle $\pi: E \to M, (m,v) \mapsto m$ with fiber $V$, called the \emph{trivial} vector bundle with base space $M$ and fiber $V$.
\item Let $M$ be a $m$-dimensional smooth manifold. Then the tangent bundle $TM$ is the disjoint union of tangent spaces $\bigcup_{x \in M} T_x M$ provided with a topology and a smooth structure like in \cite{Ne05}, Definition II.1.8. If $\pi: TM \to M$, $\pi(T_xM) = \set{x}$ is the natural projection, then $\pi: TM \to M$ is a vector bundle with fiber $\R^m$. The set of smooth sections of this bundle is denoted by $\mathcal{V}(M)$ or $\Gamma(TM)$ and these sections are called \emph{vector fields}.
\item Let $\pi_\ell: E_\ell \to M$ be a vector bundle with fiber $V_\ell$, bundle atlas $(U_i, \ph^\ell_i)_{i \in I}$ and cocyle $(g^\ell_{ji})_{(i,j) \in \I}$ for $\ell=1,2$. Theorem \ref{vectobundlebycocyles} yields the existence of the vector bundle $\Pi: \Hom(E_1,E_2) \to M$ with fiber $\Hom(V_1,V_2)$, bundle atlas $(U_i, \Phi_i)_{i \in I}$ and cocyle $(G_{ji})_{(i,j) \in \I}$, where we define the mappings $G_{ji}: U_i \cap U_j \to \GL(\Hom(V_1,V_2))$ by
\es{
G_{ji}(x)(f):= g^2_{ji}(x) \circ f \circ g^1_{ji}(x)^{-1}
}
for $f \in \Hom(V_1,V_2)$, $x \in U_i \cap U_j$ and $(i,j) \in \I$.
\end{enumerate}
\end{exa}

In order to discuss sections of bundles locally, we introduce the following notation.
\begin{defi}Let $(U,\ph)$ be a bundle chart of a vector bundle $\pi: E \to M$ with fiber $V$. The
local form of a section $X \in \Gamma^k(\pi^{-1}(U))$, where $k \in \Ni$, is $X^\ph \in
\operatorname{C}^k(U,V)$, defined by claiming the following diagram to be commutative: \kD{
\pi^{-1}(U)\ar[r]^{(\pi,\ph)} & U \times V \\ U\ar[u]^{X} \ar[ru]_{\left(\id_{U}, X^\ph \right)}.
}
If $N$ is a set and $\eta: \Gamma^k(\pi^{-1}(U)) \to N$ is a map, then the corresponding local form
of $\eta$ is denoted by $\eta^{\ph}: \operatorname{C}^k(U,V) \to N$. \end{defi}

A very useful type of cover of the base space of a bundle is the so-called Palais cover.
\begin{defi}\label{Palais}
If $\pi: E \to M$ is a bundle, then $\left(\left(V_i, \psi_i, \xi_i, \rho_i\right)_{i \in J}, (J_t)_{t=1}^r\right)$ is called \emph{Palais cover}, if
\begin{enumerate}
\item [(a)] $(V_i)_{i \in J}$ is a locally finite cover of $M$,
\item [(b)] $(V_i,\psi_i)_{i \in J}$ is a bundle atlas of $E$,
\item [(c)] $(V_i,\xi_i)_{i \in J}$ is an atlas of $M$,
\item [(d)] $(\rho_i: M \to [0,1])_{i \in J}$ is a partition of unity such that $\supp (\rho_i)$ is a compact subset of $V_i$ for all $i \in J$,
\item [(e)] $\set{J_1, \ldots , J_r}$ is a partition of $J$ such that $i,j \in J_t$ and $V_i \cap V_j \ne \emptyset$ already implies $i=j$ for all $t \in \set{1, \ldots , r}$.
\end{enumerate}
\end{defi}

If the base space of a bundle is paracompact, then there is always a Palais cover.
\begin{thm}\label{Palaisexi}
A bundle with paracompact base space possesses a Palais cover.
\end{thm}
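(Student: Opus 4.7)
The plan is to build the Palais cover in two stages: first produce a locally finite cover whose members serve simultaneously as bundle trivializations and manifold charts and carry a partition of unity with compact supports, and then refine it further to obtain the ``coloring'' condition (e), which is the only truly non-routine item.

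For the first stage, I would take the bundle atlas $(W_\alpha, \varphi_\alpha)_{\alpha \in A}$ of $\pi\colon E \to M$ and an atlas $(Y_\beta, \zeta_\beta)_{\beta \in B}$ of $M$ and pass to a common refinement: for each $x \in M$ choose $\alpha, \beta$ with $x \in W_\alpha \cap Y_\beta$ and pick an open neighborhood $V$ of $x$ with $\overline V$ compact and $\overline V \subseteq W_\alpha \cap Y_\beta$. Paracompactness lets me extract a locally finite refinement $(U_\gamma)_{\gamma \in \Gamma}$ of this cover. Each $U_\gamma$ inherits both a bundle chart $\psi_\gamma := \varphi_\alpha|_{\pi^{-1}(U_\gamma)}$ and a manifold chart $\xi_\gamma := \zeta_\beta|_{U_\gamma}$, and the standard shrinking lemma together with a smooth Urysohn argument supplies a subordinate partition of unity $(\rho_\gamma)$ with $\supp \rho_\gamma$ compact in $U_\gamma$. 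So conditions (a)--(d) hold for $(U_\gamma)_{\gamma \in \Gamma}$.

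The substantive step is condition (e), and it is where finite-dimensionality of $M$ enters. The classical fact I would invoke is that any $m$-dimensional paracompact manifold has covering dimension $\le m$ (so any open cover admits a refinement of order $\le m+1$) and, more strongly, it admits an open cover by $m+1$ sets each of which is a disjoint union of coordinate balls (this is the usual ``good cover by $m+1$ colors'' lemma used in the proof of finite-type results). I would therefore refine $(U_\gamma)$ once more: for each $\gamma$ choose relatively compact open $V \subset \overline V \subset U_\gamma$ so small that $\xi_\gamma(V)$ is a Euclidean ball; then apply the coloring lemma to obtain $(V_i)_{i \in J}$ together with a partition $J = J_1 \sqcup \cdots \sqcup J_{m+1}$ such that $V_i \cap V_j = \emptyset$ whenever $i \ne j$ lie in the same $J_t$. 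Each $V_i$ sits inside some $U_{\gamma(i)}$, so I transport the bundle chart, manifold chart, and bump function from $\gamma(i)$ to $i$; the partition of unity is rebuilt either by grouping ($\rho'_i := \rho_{\gamma(i)} \chi_i$ with $\chi_i$ a subordinate bump) or, more cleanly, by forming a fresh subordinate partition of unity, which exists because the refined cover is still locally finite.

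The main obstacle is precisely the coloring lemma used in the last step, since the rest of the construction is packaging of standard paracompactness and partition-of-unity tools. If one does not wish to cite it, I would prove it by taking a locally finite refinement whose nerve has every simplex of dimension $\le m$ and then coloring the vertices of the nerve greedily using $m+1$ colors, exploiting that each vertex of the nerve has only finitely many neighbors and that the simplices have bounded dimension; the finite chromatic bound $m+1$ drops out because each $V_i$ only meets finitely many others of the refined cover and the clique number is $\le m+1$. That is the single place where both the finite dimension of $M$ and paracompactness are jointly used, and it is the reason $r$ in the statement may be chosen independently of the starting atlas.
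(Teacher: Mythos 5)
Your proposal follows essentially the same route as the paper: conditions (a)--(d) come from standard paracompactness arguments (common refinement of a bundle atlas and a manifold atlas, shrinking, subordinate partition of unity), and condition (e) is obtained by citing the classical decomposition result that a finite-dimensional manifold admits a refinement of any open cover partitioned into finitely many families of pairwise disjoint sets; the paper cites exactly this as Theorem I in Chapter 1.2 of Greub--Halperin--Vanstone and then restricts the chart maps and bump functions to the refined cover, just as you do.

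One caveat on your optional self-contained argument for the coloring lemma: bounding the dimension of the simplices of the nerve bounds the \emph{clique number} of its $1$-skeleton, but the chromatic number of a graph is not controlled by its clique number (there are triangle-free graphs of arbitrarily large chromatic number), so a greedy coloring with $m+1$ colors is not guaranteed to succeed from that hypothesis alone. The actual proof of the decomposition theorem for covers of order $\le m+1$ requires a more careful inductive refinement (as in dimension theory or in the cited reference), not merely a coloring of the nerve. Since your primary route is to cite the lemma, as the paper does, this does not affect the validity of the proof as a whole.
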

\begin{proof}
Since $M$ is paracompact, there is surely $\left(V_j, \psi_j, \xi_j, \rho_j\right)_{j \in J}$ fulfilling the conditions (a)-(d). By Theorem I in Chapter 1.2 of \cite{GrHaVa72}, there is a finite set $S$ and a refinement $(V_{sk})_{s\in S, k \in \N}$ of $(V_j)_{j \in J}$, such that for each $s \in S$ we have $V_{sk} \cap V_{s\ell} = \emptyset$ if $k \ne \ell \in \N$. By restriction of the $\psi_j$, $\xi_j$ and $\rho_j$ we obtain induced mappings $\psi_{sk}, \xi_{sk}, \rho_{sk}$ and a Palais cover $\left(\left(V_{sk}, \psi_{sk}, \xi_{sk}, \rho_{sk}\right)_{s\in S, k \in \N}, (\set{s0, s1, s2, \ldots})_{s \in S}\right)$.
\end{proof}

The following lemma will be very useful in some future proofs.

\begin{lem}\label{Taylorlemma}
If we have a vector bundle $\pi: E \to M$ with fiber $V$, a section $A \in \Gamma(E)$, a point $x \in M$ and an integer $s \in \N$ such that $j^s_x(A)=0$, then there is an integer $r \in \N$ and there are sections $A_1, \ldots , A_r \in \Gamma(E)$ and functions $a_1, \ldots, a_r \in \Ci(M,\R)$ with $a_1(x)=\ldots=a_r(x)=0$ such that
\begin{equation}\label{Taylorlemmaeq}A=\sum_{i=1}^r a_i^{s+1} A_i.\end{equation}
\end{lem}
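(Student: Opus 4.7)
The plan is to combine a local Hadamard/Taylor expansion of $A$ at $x$ with a bump-function argument, invoking the polarization identity to convert the mixed coordinate monomials produced by the expansion into pure $(s+1)$-th powers.

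Pick a bundle chart $(U,\xi,\varphi)$ around $x$ with $\xi(x)=0$ and $\xi(U)$ star-shaped at $0$, over which $E$ is trivial. Then $A^\varphi\in\Ci(U,V)$ satisfies $j^s_x(A^\varphi)=0$, and Taylor's theorem with integral remainder (equivalently, the iterated Hadamard lemma) produces smooth maps $C_\alpha:U\to V$ for $|\alpha|=s+1$ with
\[
A^\varphi(y) = \sum_{|\alpha|=s+1}\prod_{i=1}^{m}\xi_i(y)^{\alpha_i}\,C_\alpha(y)\qquad(y\in U).
\]
Since $\K$ has characteristic zero, the classical polarization identity expresses each monomial $\prod_i\xi_i^{\alpha_i}$ of degree $s+1$ as a real-linear combination $\sum_k c_{\alpha,k}\,L_{\alpha,k}^{s+1}$, where each $L_{\alpha,k}=\sum_j\lambda_j^{(\alpha,k)}\xi_j$ is a real linear form (so $L_{\alpha,k}(x)=0$). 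Regrouping yields, on $U$,
\[
A^\varphi = \sum_{q=1}^{r'} L_q^{s+1}\,F_q
\]
with $L_q\in\Ci(U,\R)$, $L_q(x)=0$, and $F_q\in\Ci(U,V)$.

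To globalize, choose nested open neighborhoods $\overline{W_0}\subset W_1$, $\overline{W_1}\subset W_2$, $\overline{W_2}\subset U$ of $x$ and bump functions $\chi,\rho\in\Ci(M,[0,1])$ with $\chi=1$ on $W_1$, $\supp(\chi)\subset W_2$, $\rho=1$ on $\overline{W_2}$ and $\supp(\rho)\subset U$. Set $\tilde L_q:=\rho L_q$ (extended by zero outside $U$); this yields $\tilde L_q\in\Ci(M,\R)$ with $\tilde L_q(x)=0$. Let $\widetilde F_q\in\Gamma(E)$ be the global section agreeing with $\chi F_q$ on $U$ and vanishing off $\supp(\chi)$. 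Because $\rho\equiv1$ on $\supp(\chi)\subset\overline{W_2}$, the identity $\chi A=\sum_q\tilde L_q^{s+1}\,\widetilde F_q$ holds on all of $M$: both sides vanish off $\supp(\chi)$, and on $\supp(\chi)$ the equation reduces to the local expansion multiplied by $\chi$.

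For the remaining piece $(1-\chi)A$ pick a further bump $\eta\in\Ci(M,[0,1])$ with $\eta=1$ on $W_0$, $\supp(\eta)\subset W_1$, and $\eta<1$ on $M\setminus\overline{W_0}$, and set $c:=1-\eta\in\Ci(M,\R)$. Then $c(x)=0$ and $c>0$ on $M\setminus\overline{W_0}$, while $(1-\chi)A\equiv0$ on $W_1\supset\overline{W_0}$. Defining $B:=(1-\chi)A/c^{s+1}$ on $M\setminus\overline{W_0}$ and $B:=0$ on $W_1$ gives prescriptions that agree on the overlap $W_1\setminus\overline{W_0}$ (where $(1-\chi)A$ vanishes) and combine to a smooth section $B\in\Gamma(E)$ with $(1-\chi)A=c^{s+1}B$. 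Summing both pieces yields
\[
A = \chi A+(1-\chi)A = \sum_{q}\tilde L_q^{s+1}\,\widetilde F_q + c^{s+1}\,B,
\]
which is of the required form~\eqref{Taylorlemmaeq}. The only non-routine ingredient is the polarization step, which converts degree-$(s+1)$ coordinate monomials into $(s+1)$-th powers of linear forms; the remaining globalization and matching of local pieces is standard bump-function bookkeeping.
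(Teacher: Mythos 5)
Your proof is correct, and its core is the same as the paper's: expand the local form $A^\varphi$ at $x$ by the Taylor formula with smooth coefficients in the degree-$(s+1)$ terms, use the polarization (multinomial inversion) identity to convert each monomial $\prod_i \xi_i^{\alpha_i}$ into a combination of $(s+1)$-th powers of linear forms vanishing at $x$, and then globalize with bump functions. The one place where you genuinely go beyond the paper is the piece $(1-\chi)A$ supported away from $x$. The paper's second step only produces an expression for $\rho\cdot A$: with $a_i=\rho\cdot(\tilde a_i\circ\xi)$ and $A_i=\rho\cdot A_i'$ the right-hand side $\sum_i a_i^{s+1}A_i$ is supported in $\supp(\rho)$, so it cannot equal $A$ unless $A$ itself is supported there, and the remainder is never accounted for. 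Your observation that $(1-\chi)A$ vanishes on a neighbourhood of $x$ and can therefore be written as $c^{s+1}B$ with $c=1-\eta$ smooth, $c(x)=0$ and $c>0$ off $\overline{W_0}$ is exactly what is needed to close this; the gluing of $B$ is sound because $(1-\chi)A$ already vanishes on $W_1\supset\overline{W_0}$, so the two prescriptions for $B$ agree on their overlap. In short, your argument is the same in spirit but is the more complete of the two.
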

\begin{proof}
Let $n$ be the vector space dimension of $V$ and $m$ the dimension of $M$. The lemma will be proven in two steps.
\begin{enumerate}
\item We consider the case of $M$ being a convex open neigbourhood $U \sbs \R^m$ of $x=0$ and $E = U \times \K^n$ and $A \in \Ci(U,\K^n)$ with compact support $F \sbs U$. Then, by the Taylor Formula, there is a family of functions $A_{\alpha} \in \Ci\left(U, \K^n \right)$, $\alpha \in \N^m$, $\abs{\alpha} = s+1$, such that
\es{
A(y) = \sum_{\abs{\alpha}=s+1} \frac{1}{\alpha !} \cdot y_1^{\alpha_1} \cdots  y_m^{\alpha_m} \cdot A_{\alpha}(y)
}
for all $y \in U$. Let $W \sbs U$ be an open set with $F \sbs W$ and $\rho: U \to [0,1]$ be a smooth function with compact support such that $\rho_{|W} \equiv 1$. Then we have
\begin{equation}\label{Taylorlemmastep1}
A(y) = \rho(y)^{s+2} \cdot A(y) = \sum_{\abs{\alpha}=s+1} (\rho(y)y_1)^{\alpha_1} \cdots (\rho(y)y_m)^{\alpha_m} \cdot \frac{1}{\alpha !} \cdot \rho(y) \cdot A_{\alpha}(y)
\end{equation}
for all $y \in U$. By the Multinomial Theorem, we have
\[
(t_1  + \ldots + t_m)^k = \sum_{\abs{\alpha}=k} \begin{pmatrix} k \\ \alpha ! \end{pmatrix}  t_1^{\alpha_1} \cdots t_m^{\alpha_m}
\]
for $t_1, \ldots , t_m \in \R$ and, by the inversion formula presented in \cite{MoHeRaCo94}, there are, for $\alpha \in \N^m$, scalars $\lambda_{ij} = \lambda_{ij}(\alpha) \in \R$ and $\mu_j =\mu_j(\alpha) \in \R$, where $i\in \set{1, \ldots , m}$ and $j \in \set{1, \ldots , p}$ for an integer $p = p(\alpha) \in \N$, such that
\[
t_1^{\alpha_1} \cdots t_m^{\alpha_m} = \sum_{j=1}^{p} \mu_j \left(\lambda_{1j} t_1  + \ldots + \lambda_{mj} t_m \right)^{\abs{\alpha}}.
\]
Equation \eqref{Taylorlemmastep1} then turns into:
\es{
A(y) &= \sum_{\abs{\alpha}=s+1}  \sum_{j=1}^{p(\alpha)} \left(\underbrace{\lambda_{1j} \rho(y) y_1  + \ldots + \lambda_{mj} \rho(y) y_m}_{=: a_{\alpha}(y)} \right)^{s+1}  \cdot \underbrace{\frac{\mu_j}{\alpha !} \cdot \rho(y) \cdot A_{\alpha}(y)}_{=:\wt A_\alpha(y)}\\
& =\sum_{\abs{\alpha}=s+1} a_{\alpha}(y)^{s+1} \cdot \wt A_\alpha(y).
}
Since $\wt A_\alpha$ is of compact support and $a_{\alpha}(0)=0$ for all $\alpha \in \N^m$ with $\abs{\alpha} = s+1$, we have shown \eqref{Taylorlemmaeq} for this local case.
\item For the general case, we take an $x$-neigbourhood $U \sbs M$, a bundle chart $(U,\ph)$, a chart $(U,\xi)$ of $M$ such that $\xi(x)=0$ and $\xi(U)$ is convex and a smooth function $\rho: M \to [0,1]$ such that $\supp (\rho)$ is a compact subset of $U$ and $\rho_{|W} \equiv 1$ for an $x$-neighbourhood $W \sbs U$. By the argument in the first step, we my write
\es{
(\rho \cdot A)^\ph \circ \xi^{-1} = \sum_{i=1}^r {\wt a_i}^{s+1} \wt A_i
}
for smooth functions $\wt A_1, \ldots \wt A_r \in \Ci(\xi(U),V)$ and $\wt a_1, \ldots , \wt a_r \in \Ci(\xi(U),\R)$ with $\wt a_1(x) = \ldots = \wt a_r(x) = 0$. For $i \in \set{1, \ldots r}$, we define $A_i \in \Gamma(E)$ by $A_i := \rho \cdot A'_i$, where $(A'_i)^\ph (y):= \wt A_i(\xi(y))$ for all $y \in U$ and we define $a_i \in \Ci(M,\R)$ with $a_1(x)=\ldots=a_r(x)=0$ by $a_i := \rho \cdot \left(\wt a_i \circ \xi\right)$. This yields $A=\sum_{i=1}^r a_i^{s+1} A_i$ on $M$.
\end{enumerate}
\end{proof}

\subsection{Bundles of Lie algebras and the Lie algebra of sections}
Now we are ready to define a central objects of this diploma thesis: Bundles of
Lie algebras and the corresponding Lie algebra of $\Ck$-sections. The
definitions of this subsection are taken from \cite{Le80}.
\begin{defi}\label{bundleofLiealgebras} Let $\k$ be a Lie algebra and $M$ a
manifold. A \emph{bundle of Lie algebras} $\pi: \L \to M$ with fiber $\k$ is a
vector bundle $\pi: \L \to M$ with fiber $\k$ which has an $\Aut(\k)$-valued
cocyle $(g_{ji})_{(i,j) \in \I}$. \end{defi}

We now define additional structure on a bundle of Lie algebras and on the
corresponding space of $\Ck$-sections.

\begin{defi} Let $\pi: \L \to M$ be a bundle of Lie algebras with fiber $\k$ and $k \in \Ni$.
\begin{enumerate}
\item We justify the name ``bundle of Lie algebras'' by defining a Lie bracket on each fiber $\L_x$ for $x \in M$ by a bundle chart $(U_i, \ph_i)$ in $x$ and the Lie bracket $\lie{\cdot,\cdot}$ on $\k$ (cf. Example \ref{exampAlg}.2):
\es{
\lie{\cdot,\cdot}: \L_x \times \L_x &\lra \L_x\\
(v,w) &\longmapsto \lie{v,w}_{\ph_i}=\left(\ph_i\right)_{|_{\L_x}}^{-1}\lie{\ph_i(v),\ph_i(w)}.
}
In order to see that this is well-defined, we make use of the fact that all maps $g_{ij}(x)$ for $(i,j)\in \I$ are isomorphisms of Lie algebras:
\es{
\lie{v,w}_{\ph_j}&=\left(\ph_j\right)_{|_{\L_x}}^{-1}\lie{\ph_j(v),\ph_j(w)}=
\left(\ph_i\right)_{|_{\L_x}}^{-1} \left(g_{ij}(x) \lie{\ph_j(v),\ph_j(w)}\right)\\
&=\left(\ph_i\right)_{|_{\L_x}}^{-1} \lie{g_{ij}(x)(\ph_j(v)),g_{ij}(x)(\ph_j(w))}=\left(\ph_i\right)_{|_{\L_x}}^{-1}\lie{\ph_i(v),\ph_i(w)}\\
&=\lie{v,w}_{\ph_i}.
}

\item With the help of the Lie algebra structure on each fiber of $\L$, we pointwisely define a Lie bracket on the vector space $\Gamma^k(\L)$ by
\es{
\lie{X,Y}(x):=\lie{X(x),Y(x)} \text{ for }X,Y \in \Gamma^k(\L), x \in M.
}

\item $\Gamma^k(\L)$ turns out into a \textsl{topological} Lie algebra by embedding it into the space $\Ck(M,\L)$ equipped with the $\Ck$-compact-open topology, i.e. embed it into the topological product $\prod_{i=0}^k \text{\textit{C}}\left(T^iM,T^i\L\right)$ by $X \mapsto \left(T^iX\right)_{i=0}^k$ , where each $\text{\textit{C}}\left(T^iM,T^i\L\right)$ is equipped with the compact-open topology.
\end{enumerate}
\end{defi}

\begin{defi}\label{Liealgebraoforderzero}
The Lie algebra $\Gamma^k(\L)$ is called \emph{Lie algebra of $\Ck$-sections}.
In the case of $k=\infty$ we call $\Gamma^\infty(\L)=\Gamma(\L)$ also \emph{Lie
algebra of  smooth sections}. \end{defi}

\begin{rem} \
\begin{enumerate}
\item In the case of $M:=\set{0}$ we have $\L=\set{0}\times \k \cong \k$ and
$\Gamma^k(\L)= \Ck\left(\set{0},\set{0}\times \k\right)\cong \k$. So the
finite-dimensional Lie algebras are special \La{}s of $\Ck$-sections.
\item The
Lie algebra $\Gamma^k(\L)$ is a Lie algebra of order zero in the following sense: For sections $X, Y
\in \Gamma^k(\L)$ and $x \in M$, the expression $\lie{X,Y}(x)$ depends on $X,Y$ only via their
``zeroth order parts'', namely $X(x)$, $Y(x)$. The Lie algebra of smooth vector fields
$\mathcal{V}(M)=\Gamma(TM)$ is a Lie algebra of order 1. \end{enumerate}
\end{rem}

There is a useful lemma about Lie algebras of $\Ck$-sections, which we want to
prove right now.

\begin{lem}\label{billigLemma}
Let $\pi: \L \to M$ be a bundle of Lie algebras with fiber $\k$ and $k \in \Ni$. Then for all $x \in M$ the evaluation map
\es{
\ev_x: \Gamma^k(\L) & \lra \L_x\\
X &\longmapsto X_x
}
is a surjective morphism of Lie algebras.
\end{lem}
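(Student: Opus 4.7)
The statement splits naturally into two parts: verifying that $\ev_x$ is a Lie algebra morphism, and verifying surjectivity. The first part is essentially a tautology given how the bracket on $\Gamma^k(\L)$ was defined. The plan is to note that the bracket was introduced pointwise, so for any $X,Y \in \Gamma^k(\L)$ one has
\[
\ev_x(\lie{X,Y}) = \lie{X,Y}(x) = \lie{X(x),Y(x)} = \lie{\ev_x(X),\ev_x(Y)},
\]
and linearity is even more immediate. No cleverness is required here.

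For surjectivity, I would pick $v \in \L_x$ arbitrary and construct a global $\Ck$-section (in fact a smooth one, which is automatically $\Ck$) taking the value $v$ at $x$. First, choose a bundle chart $(U,\ph)$ with $x \in U$. Define the constant local section $Y \in \Gamma(\pi^{-1}(U))$ by $Y(y) := \left(\ph\right)_{|_{\L_y}}^{-1}(\ph(v))$, which is smooth and satisfies $Y(x) = v$. This is only locally defined, so next I would cut it off: pick a smooth bump function $\rho: M \to [0,1]$ with compact support contained in $U$ and with $\rho \equiv 1$ on some neighbourhood of $x$ (this exists because $M$ is paracompact, hence admits partitions of unity subordinate to any open cover). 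Define $X \in \Gamma(\L)$ by setting $X(y) := \rho(y) \cdot Y(y)$ for $y \in U$ and $X(y) := 0$ for $y \notin \supp(\rho)$. Since $\supp(\rho)$ is contained in $U$, the two definitions agree on the overlap and assemble into a smooth global section. By construction $X(x) = \rho(x) \cdot Y(x) = 1 \cdot v = v$, proving $\ev_x(X) = v$.

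There is no real obstacle here. The only thing to be careful about is that the scalar multiplication $\rho(y) \cdot Y(y)$ on the fiber $\L_y$ is well-defined as a vector-bundle operation and, away from $\supp(\rho)$, extends by zero to a globally smooth section; this is the standard bump-function extension argument for sections of a vector bundle, and it works because a vector bundle has a distinguished zero section. Since the section $X$ produced is smooth, it lies in $\Gamma^k(\L)$ for every $k \in \Ni$, giving surjectivity uniformly in $k$.
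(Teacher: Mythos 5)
Your proposal is correct and follows essentially the same route as the paper: the morphism property is immediate from the pointwise definition of the bracket, and surjectivity is obtained by transporting $\ph(v)$ back along the chart on each fiber and cutting off with a bump function supported in $U$ (the paper writes the cut-off section directly as $y \mapsto \ph_{|\L_y}^{-1}(\rho(y)\cdot\ph(v))$, which coincides with your $\rho \cdot Y$ since each $\ph_{|\L_y}$ is linear). The paper only requires $\rho(x)=1$ rather than $\rho \equiv 1$ near $x$, but this is an immaterial difference.
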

\begin{proof}
The function $\ev_x$ is clearly linear. It is also compatible with the Lie bracket:
\es{
\ev_x\lie{X,Y}=\lie{X,Y}_x = \lie{X_x, Y_x} = \lie{\ev_x(X), \ev_x(Y)}.
}
For the proof of the surjectivity, choose an arbitrary $u \in \L_x$, a bundle chart $(U,\ph)$ such that $x\in U$ and let $\rho: U \to \R$ be a smooth map with compact support contained in $U$ and $\rho(x)=1$. We define $X \in \Gamma(\L) \sbs \Gamma^k(\L)$ via
\[
X_y:=
\begin{cases}
\ph_{|\L_x}^{-1}\left(\rho(y) \cdot \ph(u)\right) & \text{ for }y \in U.\\
0 & \text{ for }y \in M\backslash U.
\end{cases}
\]
Then $\ev_x(X)=X_x=u$.
\end{proof}

\begin{rem}\label{dirderi}
Let $\pi: \L \to M$ be a bundle of Lie algebras with fiber $\k$ and $(U,\ph)$ a bundle chart of $\L$ and $(U,\xi)$ a  corresponding chart of $M$. Any directional derivative $\d_{\xi_i}$ for $i \in \set{1, \ldots , \dim(M)}$ is a derivation of the poinwise Lie algebra structure on $\Ci(U,\k)$, the local forms of the sections in  $\Gamma\left(\pi^{-1}(U)\right)$. This follows from the Chain rule, the fact that any bilinear map $\beta: V \times V \to V$ for $\dim(V) < \infty$ is smooth and the rule $T_{(a,b)}\beta(v,w)=\beta(v,b)+\beta(a,w)$. Thus we have the equation:
\es{
\d_{\xi_i} \lie{A^{\ph},B^{\ph}} = \lie{\d_{\xi_i}A^{\ph}, B^{\ph}} + \lie{A^{\ph},\d_{\xi_i}B^{\ph}}
}
for all $A,B \in \Gamma\left(\pi^{-1}(U)\right)$.\end{rem}

There are certain bundles associated to a bundle of Lie algebras which will help
us to analyze the structure of a Lie algebra of $\Ck$-sections.

 \begin{defi}\label{specialsubbundles}Let $\pi: \L \to M$ be a bundle of Lie algebras with fiber $\k$ associated to an $\Aut(\k)$-principal bundle $\rho: P \to M$.
\begin{enumerate}
\item
Let $I \ide \k$ be a \emph{characteristic} ideal of $\k$, i.e. an ideal which is $\Aut(\k)$-stable.
We define $\varpi: \L[I] \to M$ to be the bundle associated to $\rho: P \to M$
with fiber $I$. It can be identified with a subbundle of $\pi: \L \to M$ as
follows: A typical element of $\L[I]_x \sbs \L[I]$ takes the form \es{
[(\psi,i)]:=\set{(\psi f^{-1}, f(i)): f \in \Aut(\k) } }
for a Lie algebra isomorphism $\psi: \k \to \L_x$ and an element $i \in I$. The element $\psi(i) \in \L_x$
is clearly well-defined and so we may embed $\L[I]$ into $\L$.
In the cases of $I$ being $\lie{\k,\k}$ and $\z(\k)$ \footnote{
These ideals are characteristic:
\begin{enumerate}
\item $i=\sum_j \lie{x_j,y_j} \implies f(i) = \sum_j \lie{fx_j,fy_j}$ for all $f \in \Aut(\k)$, $i \in \lie{\k,\k}$, $x_j, y_j \in \k$.
\item $\lie{f^{-1}x,i}=0 \implies f\lie{f^{-1}x,i} = 0 \implies \lie{x,f(i)}= 0$ for all $f \in \Aut(\k)$, $i \in \z(\k)$, $x \in \k$.
\end{enumerate}
} we also write $\lie{\L,\L}$ instead of $\L[\lie{\k,\k}]$, called the
\emph{commutator} of $\L$ and $\z(\L)$ instead of $\L[\z(\k)]$, called the
\emph{center} of $\L$. Note that $\lie{\L,\L}_x \cong \lie{\L_x,\L_x}$ and
$\z(\L)_x \cong \z(\L_x)$ via $[(\psi,i)] \mapsto \psi(i)$ for $i \in
\lie{\k,\k}$ and $i \in \z(\k)$, respectively.

\item Let $J \sbs \End(\k)$ be a subalgebra which is invariant under the left action
\es{
\Aut(\k) \times \End(\k) &\lra \End(\k)\\
(f,g) &\longmapsto f \circ g \circ f^{-1}.
}
We define $\varpi: \L(J) \to M$ to be the bundle associated to $\rho: P \to M$ with fiber $J$. It can be identified
with a subbundle of $\Pi: \Hom(\L,\L) \to M$ as follows: A typical element of
$\L(J)_x \sbs \L(J)$ takes the form \es{ [(\psi,j)]:=\Aut(\k)\cdot
(\psi,j)=\set{(\psi f^{-1}, f j f^{-1}): f \in \Aut(\k) } } for a Lie algebra
isomorphism $\psi: \k \to \L_x$ and an endomorphism $j: \k \to \k$, $j \in J$.
The map $\psi \circ j \circ \psi^{-1}: \L_x \lra \L_x$ is clearly well-defined
and, by applying this construction to all classes $[(\psi,j)]$ for $\psi \in
\Iso(\k,\L_x)$, where $x$ runs through $M$ and $j \in J$ is fixed, we obatin an
element of $\Hom(\L,\L)$. In the cases of $J$ being $\Der(\k)$ and $\Cent(\k)$
\footnote{ These algebras are $\Aut(\k)$-invariant: \begin{enumerate} \item
$fgf^{-1}\lie{x,y}=fg(\lie{f^{-1}x,f^{-1}y})=f(\lie{gf^{-1}x,f^{-1}y}+\lie{f^{-1
}x,gf^{-1}y}) = \lie{fgf^{-1}x,y}+\lie{x,fgf^{-1}y}$\\ for all $f \in \Aut(\k)$,
$g \in \Der(\k)$, $x,y \in \k$. \item $(fgf^{-1} \circ
\ad_x)(y)=fgf^{-1}\lie{x,y}=fg(\lie{f^{-1}x,f^{-1}y})=(fg \circ
\ad_{f^{-1}x})(f^{-1}y)=(f \circ
\ad_{f^{-1}x})(gf^{-1}y)=f\lie{f^{-1}x,gf^{-1}y}=\lie{x,fgf^{-1}y} = (\ad_x
\circ fgf^{-1})(y)$ for all $f \in \Aut(\k)$, $g \in \Cent(\k)$, $x,y \in \k$.
\end{enumerate} } we also write $\Der \left(\L\right)$ instead of $\L(\Der(\k))$
and $\Cent(\L)$ instead of $\L(\Cent(\k))$. Note that $\Der(\L)_x \cong
\Der(\L_x)$ and $\Cent(\L)_x \cong \Cent(\L_x)$ via $[(\psi,j)] \mapsto \psi
\circ j \circ \psi^{-1}$ for $j \in \Der(\k)$ and $j \in \Cent(\k)$,
respectively. \end{enumerate} \end{defi}

\subsection{Lie connections}
We briefly repeat the definition of a covariant derivative and define the Lie connection corresponding to a fixed bundle of Lie algebras.

\begin{defi}\label{covderi}
Let $\pi: E \to M$ be a vector bundle with fiber $V$. A \emph{covariant derivative} or \emph{connection} of $E$ is a linear map
\es{
\nabla: \Gamma(TM) &\lra \End(\Gamma(E))\\
X &\longmapsto \nabla_X
}
satisfying the following conditions:
\begin{enumerate}
\item [(a)] $\nabla_X(aA) = (X.a)A + a \nabla_X A$ for all $X \in \Gamma(TM), a\in \operatorname{C}^\infty(M,\K), A \in \Gamma(E)$.
\item [(b)] $\nabla_{(aX + bY)}A = a\nabla_X A + b \nabla_Y A$ for all $X,Y \in \Gamma(TM), a,b\in \operatorname{C}^\infty(M,\K), A \in \Gamma(E)$.
\end{enumerate}
\end{defi}

\begin{rem}\label{pointwisedepen}
Given a point $x \in M$, a vector field $X \in \Gamma(TM)$ and sections $A,B \in \Gamma(E)$ it is, by using property (b) of Definition \ref{covderi}, easy to see that $((\nabla_X A)B )(x)$ depends on $X$ only via $X_x$:

Firstly we show that $((\nabla_X A)B )(x)$ depends on $X$ at most via $X_{|U}$, where $U \sbs M$ is any open neighbourhood of $x$. In fact, if $\rho: M \to [0,1]$ is a smooth map with $\rho_{|M \backslash U} \equiv 0$ and $\rho_{|W} \equiv 1$ for a smaller $x$-neighbourhood $W \sbs U$, then $(X-X')_{|U} \equiv 0$ implies
\es{
(\nabla_{X}Y)(x) - (\nabla_{X'}Y)(x)=(\nabla_{X-X'}Y)(x)= \rho(x) \cdot (\nabla_{X-X'}Y)(x) = (\nabla_{\rho \cdot (X-X')}Y)(x) = (\nabla_{0}Y)(x)=0.
}
Thus we can now reduce the problem to a problem in a chart $(U,\xi)$: Let $X, X'$ be two vector fields identical in $x$ and $\rho: M \to [0,1]$ as above. We can write $X-X'= \sum_{i=1}^m Z^i \d_{\xi_i}$ for certain functions $Z^1, \ldots Z^m \in \Ci(U,\R)$.  We note that $Z^i(x)=0$ for all $i$ and conclude by calculating:
\es{
(\nabla_{X}Y)(x) - (\nabla_{X'}Y)(x)&= (\nabla_{\rho \cdot (X-X')}Y)(x) = \left(\nabla_{\rho \cdot \left( \sum_{i=1}^m Z^i \d_{\xi_i}\right)}Y\right)(x)\\
&= \left(\sum_{i=1}^m \left( \rho \cdot Z^i \right) \cdot \nabla_{\d_{\xi_i}}Y  \right)(x) = \sum_{i=1}^m Z^i(x) \cdot \left(\nabla_{\d_{\xi_i}}Y\right)(x) = 0.
}
\end{rem}

Locally, a covariant derivative ist given by the so-called Christoffel symbols.
\begin{defi}
Let $(U,\ph)$ be a bundle chart of the vector bundle $\pi: E \to M$ with fiber $V$ and $(U,\xi)$ a corresponding chart of $M$ and let $\nabla$ be a covariant derivative of $E$. Then there are unique mappings $\Gamma^{\ph}_i \in \Ci(U,\End(V))$ for $i \in \set{1, \ldots , \dim(M) }$ such that
\es{
\left(\nabla_X A\right)^{\ph} = \sum_{i=1}^{\dim(M)} X^i \left(\d_{\xi_i}A^{\ph}+ \Gamma^{\ph}_i \cdot A^\ph \right)
}
for all sections $A$ and all vector fields $X$ with local form $\sum_{i=1}^m X^i \d_{\xi_i}$. The maps $\Gamma^\ph_i$ are called \emph{Christoffel symbols} of $\nabla$ \wrt $(U,\ph)$.
\end{defi}
For a given bundle of Lie algebras we now define a Lie connection.
\begin{defi}
Let $\pi: \L \to M$ be a bundle of Lie algebras with fiber $\k$ and $\nabla$ a covariant derivative of $\L$. We call $\nabla$ a \emph{Lie connection} if for all vector fields $X \in \Gamma(TM)$ and for all sections $A,B \in \Gamma(\L)$ we have
\es{
\nabla_X\lie{A,B} = \lie{\nabla_X A,B} + \lie{A,\nabla_X B},
}
i.e. $\nabla$ is a map $\Gamma(TM) \to \Der(\Gamma(\L))$.
\end{defi}
Whether a covariant derivative of a given bundle of Lie algebras is a Lie connection, can be decided by examining its Christoffel symbols.
\begin{lem}\label{localadapted}
Let $\pi: \L \to M$ be a bundle of Lie algebras with fiber $\k$ and $\nabla$ a covariant derivative of $\L$. Then $\nabla$ is a Lie connection \iff for all bundle charts $(U,\ph)$ of $\L$ and all corresponding charts $(U,\xi)$ of $M$ the Christoffel symbols are in in $\Ci(U,\Der(\k))$.
\end{lem}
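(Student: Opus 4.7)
The plan is to reduce the global identity defining a Lie connection to a pointwise condition on each Christoffel symbol via a direct local computation in a bundle chart.

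First I would fix a bundle chart $(U,\ph)$ of $\L$ and a corresponding chart $(U,\xi)$ of $M$. Since both sides of the defining identity of a Lie connection are $\Ci(M,\K)$-linear in $X$ and depend only on the local behaviour of $A,B$ near a given point (by the argument of Remark \ref{pointwisedepen} applied to $A$ and $B$ with bump functions), it suffices to check everything on local forms inside this chart. Using the formula $(\nabla_X A)^\ph = \sum_{i} X^i \left( \d_{\xi_i} A^\ph + \Gamma^\ph_i \cdot A^\ph \right)$ and the pointwise nature of the bracket, I would compute
\es{
\left(\nabla_X \lie{A,B}\right)^\ph = \sum_i X^i \left( \d_{\xi_i} \lie{A^\ph,B^\ph} + \Gamma^\ph_i \cdot \lie{A^\ph,B^\ph} \right),
}
and
\es{
\left( \lie{\nabla_X A, B} + \lie{A, \nabla_X B} \right)^\ph = \sum_i X^i \Bigl( \lie{\d_{\xi_i}A^\ph, B^\ph} + \lie{A^\ph, \d_{\xi_i} B^\ph} + \lie{\Gamma^\ph_i \cdot A^\ph, B^\ph} + \lie{A^\ph, \Gamma^\ph_i \cdot B^\ph} \Bigr).
}
By Remark \ref{dirderi}, the operators $\d_{\xi_i}$ already behave as derivations of the pointwise Lie bracket, so the $\d_{\xi_i}$-terms on the two sides match. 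The Lie connection condition therefore reduces to
\es{
\sum_i X^i \Bigl( \Gamma^\ph_i \cdot \lie{A^\ph, B^\ph} - \lie{\Gamma^\ph_i \cdot A^\ph, B^\ph} - \lie{A^\ph, \Gamma^\ph_i \cdot B^\ph} \Bigr) = 0
}
for all vector fields $X$ and all sections $A,B$.

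For the direction ``$\Leftarrow$'', assuming each $\Gamma^\ph_i$ takes values in $\Der(\k)$, each summand vanishes pointwise, and the identity holds globally after covering $M$ by bundle charts; hence $\nabla$ is a Lie connection. For the direction ``$\Rightarrow$'', I would fix $x \in U$, an index $i$, and arbitrary elements $a,b \in \k$. Using the construction from Lemma \ref{billigLemma} with the chart $(U,\ph)$, I produce smooth sections $A,B \in \Gamma(\L)$ with local forms constantly equal to $a$ and $b$ on a neighbourhood of $x$, so that $\d_{\xi_j} A^\ph = 0$ and $\d_{\xi_j} B^\ph = 0$ at $x$. Plugging the coordinate vector field $\d_{\xi_i}$ for $X$ and evaluating the reduced identity at $x$ gives
\es{
\Gamma^\ph_i(x) \cdot \lie{a,b} = \lie{\Gamma^\ph_i(x) \cdot a, b} + \lie{a, \Gamma^\ph_i(x) \cdot b},
}
which is exactly the statement $\Gamma^\ph_i(x) \in \Der(\k)$. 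Varying $x$, $i$ and the chart yields the claim.

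The only mildly delicate step is the ``$\Rightarrow$'' direction, where one must extract a pointwise statement in $\k$ from a global identity on sections; but this is straightforward once one exhibits sections with prescribed values and vanishing first-order local derivatives at a point, which is why I would invoke the construction of Lemma \ref{billigLemma} in combination with a cutoff function. The rest is a routine bookkeeping of the Christoffel-symbol expansion.
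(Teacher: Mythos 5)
Your proposal is correct and follows essentially the same route as the paper: reduce to the local Christoffel-symbol identity, cancel the $\d_{\xi_i}$-terms via Remark \ref{dirderi}, and extract the pointwise derivation property of each $\Gamma^\ph_i(x)$ by evaluating on sections with prescribed values at $x$ (the paper cites Lemma \ref{billigLemma} for exactly this step). Your treatment of the ``$\Rightarrow$'' direction is merely slightly more explicit about choosing constant local forms and coordinate vector fields.
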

\begin{proof}
The covariant derivative $\nabla$ is a Lie connection \iff for all bundle charts $(U,\ph)$ of $\L$, all corresponding charts $(U,\xi)$ of $M$, all sections $A,B \in \Gamma\left(\pi^{-1}(U)\right)$ and any integer $i \in \set{1, \ldots , \dim(M)}$ we have
\begin{equation}\label{chrislocaldericondition}
\d_{\xi_i}\lie{A^{\ph},B^{\ph}}+ \Gamma^{\ph}_i \lie{A^{\ph},B^{\ph}}
= \lie{\d_{\xi_i}A^{\ph}+\Gamma^{\ph}_i\left(A^{\ph}\right) ,B^{\ph}}
+ \lie{A^{\ph} ,\d_{\xi_i}B^{\ph}+\Gamma^{\ph}_i\left(B^{\ph}\right)}.
\end{equation}
Due to Remark \ref{dirderi} we have
\es{
\d_{\xi_i} \lie{A^{\ph},B^{\ph}} = \lie{\d_{\xi_i}A^{\ph}, B^{\ph}} + \lie{A^{\ph},\d_{\xi_i}B^{\ph}}.
}
So the equation \eqref{chrislocaldericondition} turns into
\es{
\Gamma^{\ph}_i (x)\lie{A^{\ph}(x),B^{\ph}(x)} =  \lie{\Gamma^{\ph}_i (x) \left(A^{\ph}(x)\right),B^{\ph}(x)} + \lie{A^{\ph}(x),\Gamma^{\ph}_i (x) \left(B^{\ph}(x)\right)}
}
for all $x \in U$. But the last equation is, due to Lemma \ref{billigLemma}, equivalent to $\Gamma^{\ph}_i \in \Ci(U,\Der(\k))$.
\end{proof}

Finally we want to show the existence of Lie connections.
\begin{prop}
Let $\pi: \L \to M$ be a bundle of Lie algebras with fiber $\k$. Then there exists a covariant derivative $\nabla$ of $\L$.
\end{prop}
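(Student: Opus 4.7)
The plan is to glue together trivial local Lie connections using a partition of unity, relying on the observation that any affine combination $\sum_i \rho_i \nabla^{(i)}$ with $\sum_i \rho_i \equiv 1$ of Lie connections is again a Lie connection: the Leibniz rule survives because the coefficients sum to $1$, and the Lie-derivation property survives because the bracket on $\Gamma(\L)$ is pointwise, hence $\Ci(M,\K)$-bilinear, so the scalar factors $\rho_i$ may be pulled into the brackets. Combined with the fact that $0\in\Der(\k)$, this reduces the existence problem to an almost formal patching argument.

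First I would invoke Theorem \ref{Palaisexi} to fix a Palais cover $\left((V_i,\psi_i,\xi_i,\rho_i)_{i\in J},(J_t)_{t=1}^r\right)$ of $\pi\colon \L\to M$. For each $i\in J$ define a connection $\nabla^{(i)}$ of the restricted bundle $\pi^{-1}(V_i)\to V_i$ by declaring all Christoffel symbols \wrt $(V_i,\psi_i,\xi_i)$ to vanish, i.e., for $X=\sum_j X^j\d_{(\xi_i)_j}$ and $A\in\Gamma(\pi^{-1}(V_i))$ set
\es{
(\nabla^{(i)}_X A)^{\psi_i} := \sum_{j=1}^{\dim M} X^j\cdot \d_{(\xi_i)_j} A^{\psi_i}.
}
Since the zero map lies in $\Ci(V_i,\Der(\k))$, Lemma \ref{localadapted} guarantees that $\nabla^{(i)}$ is a Lie connection of $\pi^{-1}(V_i)\to V_i$.

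Next, for $A\in\Gamma(\L)$ and $X\in\Gamma(TM)$ I would define globally
\es{
\nabla_X A := \sum_{i\in J} \rho_i\cdot \nabla^{(i)}_X\bigl(A|_{V_i}\bigr),
}
where each summand is extended by zero outside $\supp(\rho_i)\sbs V_i$; this is a well-defined smooth section because $\supp(\rho_i)$ is compact in $V_i$ and the cover is locally finite. Bilinearity of $\nabla$ in $X$ over $\Ci(M,\K)$ is inherited from the $\nabla^{(i)}$, while the Leibniz rule uses $\sum_i\rho_i\equiv 1$:
\es{
\nabla_X(aA) = \sum_i \rho_i \bigl((X.a)\,A|_{V_i} + a\,\nabla^{(i)}_X(A|_{V_i})\bigr) = (X.a)\,A + a\,\nabla_X A.
}

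Finally I would verify the Lie-derivation property. Using that each $\nabla^{(i)}$ is a Lie connection on its patch and that $\rho_i\cdot\lie{C,D}=\lie{\rho_i C,D}=\lie{C,\rho_i D}$ pointwise,
\es{
\nabla_X\lie{A,B}
&=\sum_i \rho_i\Bigl(\lie{\nabla^{(i)}_X(A|_{V_i}),B|_{V_i}}+\lie{A|_{V_i},\nabla^{(i)}_X(B|_{V_i})}\Bigr)\\
&=\Bigl[\sum_i \rho_i\,\nabla^{(i)}_X(A|_{V_i}),\,B\Bigr]+\Bigl[A,\sum_i \rho_i\,\nabla^{(i)}_X(B|_{V_i})\Bigr]\\
&=\lie{\nabla_X A,B}+\lie{A,\nabla_X B}.
}
I do not expect a serious obstacle: the only conceptual points are that flat local connections are automatically Lie connections (because $0\in\Der(\k)$) and that the $\Ci(M,\K)$-bilinearity of the fiberwise bracket lets partition-of-unity weights commute with $\lie{\cdot,\cdot}$; both are immediate from the definitions in this paper.
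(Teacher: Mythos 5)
Your proposal is correct and follows essentially the same route as the paper: both define flat local connections on a trivializing cover (which are Lie connections because the zero Christoffel symbols lie in $\Ci(U,\Der(\k))$) and glue them with a subordinate partition of unity, the affine combination preserving both the Leibniz rule and the derivation property. The only difference is cosmetic -- you invoke a full Palais cover where the paper uses just a locally finite bundle atlas with a partition of unity -- and your explicit verification of the two properties merely spells out what the paper summarizes in one sentence.
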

\begin{proof}
Let $\pi: \L \to M$ be a bundle of Lie algebras with fiber $\k$ and $(U_j,\ph_j)_{j \in J}$ a locally finite bundle atlas of $\L$ with corresponding atlas $(U_j,\xi_j)_{j\in J}$ of $M$. For each $j \in J$ we define a covariant derivative $\nabla^{U_j}$ on $\pi^{-1}(U_j)$ by:
\es{
\left(\nabla^{U_j}_X A \right)^{\ph_j} := \sum_{i=1}^{\dim(M)} X^i \d_{\left(\xi_j\right)_i} A^{\ph}
}
for $X \in \Gamma\left(TU\right)$ and $A \in \Gamma\left(\pi^{-1}(U_j)\right)$. This covariant derivative is a Lie connection (cf. Lemma \ref{localadapted}). After choosing a smooth partition $\left( \rho_j: M \to [0,1] \right)_{j \in J}$ of 1 subordinated to the cover $(U_j)_{j \in J}$, i.e. $\supp(\rho_j)$ is a compact subset of $U_j$ for all $j \in J$, we define a covariant derivative $\nabla$ of $\L$ by
\es{
\nabla := \sum_{j \in J}\rho_j \cdot \nabla^{U_j}.
}
Since $\nabla$ is locally the finite sum of Lie connections, it is also a Lie connection.
\end{proof}

\subsection{Local operators and the Peetre Theorems}
\begin{defi}
Let $\pi_1: E_1 \to M$, $\pi_2: E_2 \to M$ be two vector bundles and $k_1,k_2 \in \Ni$. A linear operator $T: \Gamma^{k_1}(E_1) \to \Gamma^{k_2}(E_2)$ is called \emph{local}, if for any $X \in \Gamma^{k_1}(E_1)$ and any open set $U \sbs M$ the condition $X_x=0 \in (E_1)_x$ for all $x \in U$ implies $(TX)_x=0 \in (E_2)_x$ for all $x \in U$.
\end{defi}
The meaning of an operator to be local is shown in the following lemma.
\begin{lem}
If $Y,Z \in \Gamma^{k_1}(E_1)$ are identical sections on an open set $U \sbs M$ and we have a local operator $T: \Gamma^{k_1}(E_1) \to \Gamma^{k_2}(E_2)$, then $TY, TZ \in \Gamma^{k_1}(E_2)$ are identical on $U$.
\end{lem}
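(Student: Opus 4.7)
The plan is to reduce the claim directly to the definition of locality by exploiting the linearity of $T$. Set $X := Y - Z \in \Gamma^{k_1}(E_1)$. Since $Y$ and $Z$ are identical on the open set $U$, we have $X_x = Y_x - Z_x = 0 \in (E_1)_x$ for all $x \in U$, so $X$ vanishes pointwise on $U$ in the sense required by the definition of a local operator.

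Next, I would apply the defining property of $T$: locality tells us that $(TX)_x = 0 \in (E_2)_x$ for every $x \in U$. Because $T$ is linear, $TX = T(Y-Z) = TY - TZ$, so $(TY)_x - (TZ)_x = 0$, i.e.\ $(TY)_x = (TZ)_x$ for every $x \in U$. This is exactly the statement that $TY$ and $TZ$ coincide on $U$.

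There is no real obstacle here; the lemma is essentially a reformulation of the definition once one passes from the pair $(Y,Z)$ to their difference. The only ingredient beyond the definition itself is the linearity of $T$, which is built into the hypothesis that $T$ is a linear operator. Hence the proof is a two-line application of linearity plus Definition invoking the vanishing of $Y-Z$ on $U$.
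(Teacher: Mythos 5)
Your proof is correct and is essentially identical to the paper's: both set $X := Y - Z$, observe that $X$ vanishes on $U$, apply the locality of $T$ to get $(TX)_{|U}=0$, and conclude via linearity that $TY$ and $TZ$ agree on $U$. Nothing further is needed.
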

\begin{proof}
The section $X:=Y-Z \in \Gamma^{k_1}(E_1)$ vanishs on $U$. The locality of $T$ yields that $TX=TY-TZ$ also vanishs on $U$. So $TY_{|U} = TZ_{|U}$.
\end{proof}

\begin{exa}
Let $\nabla: \Gamma(TM) \lra \End(\Gamma(E))$ be a covariant derivative on a vector bundle $\pi: E \to M$ with fiber $V$ and $X \in \Gamma(TM)$ a vector field. Then $\nabla_X$ is a local operator:

Let $Y \in \Gamma(E)$ be a section zero on an open set $U \sbs M$, let $x \in U$ be a point and let $\rho: M \to [0,1]$ be a smooth map such that $\rho_{|M \backslash U} \equiv 0$ and $\rho_{|W} \equiv 1$ for a smaller $x$-neighbourhood $W \sbs U$. We easily calculate:
\es{
(\nabla_{X}Y)(x) = \rho(x) (\nabla_{X}Y)(x) = (\nabla_{X} (\rho \cdot Y))(x) - \left((X.\rho)Y\right)(x) = 0-0 = 0.
}
\end{exa}

The Peetre Theorem, proven in \cite{Na68}, p. 175-176, can be stated as follows:
\begin{thm}\label{Peetre}\textbf{\upshape(Peetre Theorem, Version 1)}
Let $M$ be a smooth $m$-dimensional manifold and $\pi_i: E_i \to M$ a vector bundle with fiber $V_i$ for $i=1,2$. If $T: \Gamma\left(E_1\right) \to \Gamma\left(E_2\right)$ is a local operator, then for any point $x \in M$ there exists an open neighbourhood $U \sbs M$, bundle charts $(U,\ph_1)$, $(U,\ph_2)$ of $E_1$, $E_2$, respectively, a chart $(U,\xi)$ of $M$, a number $n \in \N$ and a familiy of functions $f_{\alpha} \in \Ci\left(U, \Hom(V_1,V_2) \right)$, $\alpha \in \N^m$, $\abs{\alpha} \leq n$, such that for all $X\in \Gamma\left(\pi_1^{-1}(U)\right)$ we have
\begin{equation}\label{Diffop}
(TX)^{\ph_2}=\sum_{\abs{\alpha} \leq n} f_{\alpha} \cdot \left(\partial_{\xi}^{\alpha}X^{\ph_1}\right).
\end{equation}
\end{thm}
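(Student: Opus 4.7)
The plan is to localize the problem to a bundle chart and split the proof into two main steps: establishing that $T$ has locally bounded order, and then extracting smooth coefficients from this bound via Lemma~\ref{Taylorlemma}. Fix $x_0\in M$ and choose bundle charts $(U,\ph_1)$ of $E_1$ and $(U,\ph_2)$ of $E_2$ together with a chart $(U,\xi)$ of $M$ with $\xi(U)$ convex. Throughout I work with the local forms $X^{\ph_1}\in\Ci(U,V_1)$ and $(TX)^{\ph_2}\in\Ci(U,V_2)$ and with the induced local operator.

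The central step is the \emph{local order bound}: there exist an open neighborhood $V\sbs U$ of $x_0$ and an integer $n\in\N$ such that for every $X\in\Gamma(E_1)$ and every $x\in V$, the condition $j^n_x(X)=0$ forces $(TX)(x)=0$. This is the classical Peetre trick, argued by contradiction. Assuming no such pair $(V,n)$ exists, I inductively extract a sequence $x_k\to x_0$ in $U\backslash\set{x_0}$, pairwise disjoint open neighborhoods $W_k$ of $x_k$ all avoiding $x_0$, and sections $X_k\in\Gamma(E_1)$ supported in $W_k$ with $j^k_{x_k}(X_k)=0$ and $(TX_k)(x_k)\ne 0$. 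Rescaling each $X_k$ by a sufficiently small scalar preserves $(TX_k)(x_k)\ne 0$ while guaranteeing $\norm{X_k^{\ph_1}}_{\Ck}<2^{-k}$, so $X:=\sum_k X_k$ converges in every $\operatorname{C}^r$-norm and defines a smooth section of $E_1$. By disjointness of supports, $X$ coincides with $X_k$ on a neighborhood of $x_k$, hence locality of $T$ yields $(TX)(x_k)=(TX_k)(x_k)\ne 0$. Comparing $X$ against the sections formed from disjoint subseries of $(X_k)$, whose images under $T$ must, by locality, simultaneously vanish and take the prescribed non-zero values at the $x_k$, produces the desired contradiction.

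Once the bound $n$ is secured on $V$, Lemma~\ref{Taylorlemma} furnishes the coefficient expansion: any section $X\in\Gamma\left(\pi_1^{-1}(V)\right)$ with $j^n_x(X)=0$ at some $x\in V$ is of the form $\sum_i a_i^{n+1}A_i$ with $a_i(x)=0$, and each summand still has vanishing $n$-jet at $x$. Hence $(TX)(x)$ depends on $X$ only through the tuple $\left(\d_\xi^\alpha X^{\ph_1}(x)\right)_{\abs{\alpha}\leq n}$, and this dependence is $\K$-linear in $X$. Thus there exist unique maps $f_\alpha: V\to\Hom(V_1,V_2)$ for $\abs{\alpha}\le n$ satisfying the desired formula. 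To verify $f_\alpha\in\Ci$, I feed $T$ the explicit smooth test sections $Y_{\beta,j}\in\Gamma(E_1)$ given locally by $Y_{\beta,j}^{\ph_1}(y):=\rho(y)\xi(y)^\beta e_j/\beta!$, where $(e_j)$ is a basis of $V_1$, $\abs{\beta}\le n$, and $\rho$ is a cutoff identically $1$ on a smaller neighborhood $V'\sbs V$ of $x_0$ and compactly supported in $V$. Evaluating the coefficient formula at $x\in V'$ against these sections produces a triangular linear system whose right-hand side, $(TY_{\beta,j})^{\ph_2}(x)$, is smooth in $x$; inverting the system yields $f_\alpha\in\Ci(V',\Hom(V_1,V_2))$.

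The main obstacle is the local order bound; the rest of the argument is essentially bookkeeping once that bound is available, reducing to linearity, Taylor's formula via Lemma~\ref{Taylorlemma}, and inversion of a triangular system on judiciously chosen test sections. It is precisely at the order-bound step that the smooth category intervenes, since the Peetre construction exploits arbitrarily high-order vanishing to defeat any purported uniform jet order.
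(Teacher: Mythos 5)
The paper itself gives no proof of this theorem --- it cites \cite{Na68}, pp.\ 175--176 --- so your proposal can only be measured against the standard argument. Your overall architecture (local order bound by a gliding-hump contradiction, then coefficient extraction and smoothness via polynomial test sections and a triangular system) is exactly that standard argument, and the second half of your sketch is sound. But the order-bound step, which you correctly identify as the crux, has two genuine gaps. The more serious one is that the contradiction you describe does not close. After rescaling by small scalars you retain only $(TX_k)(x_k)\ne 0$, with no uniform lower bound. Taking the even subseries $X':=\sum_{j}X_{2j}$, locality gives $(TX')(x_{2j+1})=0$ and $(TX')(x_{2j})=(TX_{2j})(x_{2j})\ne 0$; since $TX'$ is a continuous section and $x_k\to x_0$, this only forces $(TX')(x_0)=0$ and $(TX_{2j})(x_{2j})\to 0$, which is perfectly consistent with every term being non-zero. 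To make the interlacing argument bite you must arrange simultaneously $\norm{X_k^{\ph_1}}_{\operatorname{C}^k}\le 2^{-k}$ and $\abs{(TX_k)(x_k)}\ge 1$; then the odd subseries forces $(TX')(x_0)=0$ while the even one forces $\abs{(TX')(x_0)}\ge 1$. These two normalizations are compatible only because $j^k_{x_k}(X_k)=0$: multiplying $X_k$ by a cutoff supported in $B(x_k,\ep)$ leaves $(TX_k)(x_k)$ unchanged by locality while driving the $\operatorname{C}^k$-norm to $0$ as $\ep\to 0$ (the vanishing $k$-jet absorbs the $\ep^{-\abs{\gamma}}$ growth of the cutoff's derivatives), so the ratio $\abs{(TX_k)(x_k)}/\norm{X_k}_{\operatorname{C}^k}$ can be made arbitrarily large before rescaling. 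This cutoff normalization is the essential quantitative ingredient of Peetre's argument and is absent from your sketch; a plain ``rescale by a sufficiently small scalar'' cannot supply it.

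The second gap is that you place the bad points $x_k$ in $U\backslash\set{x_0}$ without justification. The negation of your order-bound claim only yields some bad point in each ball around $x_0$, and nothing prevents all of them from being $x_0$ itself --- this is precisely the flaw in Peetre's original 1959 proof that he had to repair. The standard repair is to prove the order bound first on a \emph{punctured} neighbourhood (where the extracted points are automatically distinct from $x_0$ and, converging to $x_0$ without ever equalling it, may be taken pairwise distinct so that disjoint supports exist), derive the differential-operator form there, observe that the coefficients $f_\alpha$ --- being linear combinations of $T$ applied to globally defined test sections --- extend smoothly across $x_0$, and finally obtain the formula, hence the order bound, at $x_0$ by continuity of the smooth section $TX$. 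Your triangular-system step supplies exactly the tools for this last extension, but your sketch never performs it, and without it the case of unbounded order concentrated at the single point $x_0$ is not excluded.
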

\begin{defi}
The formula \eqref{Diffop} says that $T$ is a \emph{differential operator of order at most $n$} on $U$.
\end{defi}
By a modification of the proof of Theorem \ref{Peetre} one gets the following result (cf. \cite{Le79}, p.52):
\begin{thm}\label{PseudoPeetre}\textbf{\upshape(Peetre Theorem, Version 2)}
Let $M$ be a smooth $m$-dimensional manifold and $\pi_i: E_i \to M$ a vector bundle with fiber $V_i$ and $k_i \in \N$ for $i=1,2$. If $T: \Gamma^{k_1}(E_1) \to \Gamma^{k_2}(E_2)$ is a local operator, then $T$ is a differential operator of order at most $k_1 - k_2$. In particular, if $k_1=k_2$ then $T$ is a differential operator of order 0 and if $k_1 < k_2$ then $T=0$. Furthermore, we obtain  $T=0$ if $T: \Gamma^{k_1}(E_1) \to \Gamma(E_2) \sbs \Gamma^{k_1 + 1}(E_2) $ is local for $k_1 \in \N$.
\end{thm}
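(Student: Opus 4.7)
The plan is to adapt the proof of Theorem~\ref{Peetre} to the $C^{k_1}/C^{k_2}$ setting so as to obtain, locally near every point, a representation of $T$ as a differential operator of some finite order $n$, and then to argue — by induction on the top order — that the output regularity requirement $TX \in \Gamma^{k_2}(E_2)$ forces $n \le k_1 - k_2$.

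First, I would fix $x \in M$ and pick bundle charts $(U,\ph_1)$, $(U,\ph_2)$ together with a corresponding coordinate chart $(U,\xi)$ at $x$. The contradiction argument in the proof of Theorem~\ref{Peetre} — which constructs, at a hypothetical point of unbounded order, a test section supported in ever shrinking neighbourhoods of that point — uses only \emph{smooth} test sections, and these automatically lie in $\Gamma^{k_1}(E_1)$. Hence the very same construction, applied to our local $T$, produces, after shrinking $U$, an integer $n \in \N$ and smooth $f_\alpha \in \Ci(U,\Hom(V_1,V_2))$ with
\[
(TX)^{\ph_2} = \sum_{|\alpha| \le n} f_\alpha \cdot \d^\alpha_\xi X^{\ph_1} \quad \text{on } U, \quad X \in \Gamma^{k_1}\bigl(\pi_1^{-1}(U)\bigr).
\]
For this formula to make sense on an arbitrary $C^{k_1}$-section one must already have $f_\alpha = 0$ for $|\alpha| > k_1$; this follows by feeding $T$ local polynomial sections $X^{\ph_1}(y) = \psi(y)\,\xi(y)^\alpha v$ cut off by a bump $\psi \equiv 1$ near $x$.

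The key remaining step is the bound $n \le k_1 - k_2$, which I would prove by induction on the top occurring order, peeling off one order at a time. Assume inductively that $f_\alpha = 0$ for $|\alpha| > n_0$ and suppose $n_0 > k_1 - k_2$; I derive a contradiction by showing $f_{\alpha_0}(x) = 0$ for every $|\alpha_0| = n_0$. Pick $v \in V_1$ with $f_{\alpha_0}(x)(v) \ne 0$ and construct $X \in \Gamma^{k_1}(E_1)$ whose local form is $X^{\ph_1}(y) = \psi(y)\,\chi(\xi(y))\,v$, where $\chi: \xi(U) \to \R$ combines a polynomial of degree at most $n_0 - 1$ (killed by $\d^{\alpha_0}_\xi$) with a single designed singular monomial — for instance a product of a polynomial factor with a power of $|\xi_{i_0}|$, $i_0$ an index with $\alpha_{0,i_0} > 0$ — chosen so that $\chi \in \Ck[k_1]$, $\d^{\alpha_0}_\xi \chi$ is merely $C^{k_1-n_0}$ at $0$ and fails to be $C^{k_1-n_0+1}$ there, while for $|\beta| < n_0$ with $\beta \ne \alpha_0$ the derivative $\d^\beta_\xi \chi$ is smooth near $0$. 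Then, in the local expansion of $TX$, the inductive hypothesis kills everything of order $> n_0$, the $n_0$-term $f_{\alpha_0} \cdot \d^{\alpha_0}_\xi \chi \cdot v$ carries a genuine $C^{k_1-n_0}$-but-not-better singularity at $x$, and the remaining lower-order contributions are smooth there. Since $k_1 - n_0 < k_2$, this contradicts $TX \in \Gamma^{k_2}(E_2)$.

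The hardest part of the whole argument will be producing the test function $\chi$ in the inductive step: one must ensure that the non-smoothness surfaces exclusively through $\d^{\alpha_0}_\xi \chi$ and that no lower-order mixed derivative of $\chi$ can combine with a coefficient $f_\beta$ ($\beta \ne \alpha_0$, $|\beta| < n_0$) to cancel the target $C^{k_1-n_0}$-singularity. The three corollaries of the main bound then follow for free: $k_1 = k_2$ gives order $0$; $k_1 < k_2$ forces a formally negative bound and hence $T = 0$; and the ``furthermore'' results by viewing $T: \Gamma^{k_1}(E_1) \to \Gamma(E_2) \sbs \Gamma^{k_1 + 1}(E_2)$ and applying the main statement with $k_2 := k_1 + 1$, yielding order $\le -1$, i.e.\ $T = 0$.
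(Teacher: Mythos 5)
First, a point of reference: the paper does not prove this theorem at all; it quotes it from Lecomte's thesis (\cite{Le79}, p.~52) as ``a modification of the proof of Theorem \ref{Peetre}'', so your attempt has to stand on its own. Your overall architecture is reasonable, and the mechanism you propose for the final bound is the right one \emph{in the range where it applies}: if the top order $n_0$ satisfies $n_0\le k_1$ and some $f_{\alpha_0}(x)(v)\ne 0$ with $\abs{\alpha_0}=n_0>k_1-k_2$, one gets a contradiction with a suitably singular $\operatorname{C}^{k_1}$ test section. The anti-cancellation difficulty you flag is real but has a standard resolution you do not mention: after a linear change of the chart $\xi$ one may assume the top-order coefficient is nonzero at a pure power $\alpha_0=(n_0,0,\dots,0)$ (the top-order part is a nonzero $\Hom(V_1,V_2)$-valued homogeneous polynomial in the derivative directions), and then the test function $\chi=\abs{\xi_1}^{k_1+1/2}$, which depends on $\xi_1$ only, has $\partial_\xi^{\beta}\chi=0$ for every $\beta$ not of the form $(j,0,\dots,0)$, while the terms with $j<n_0$ are automatically of class $\operatorname{C}^{k_1-n_0+1}$; so only the targeted term carries the $\operatorname{C}^{k_1-n_0}$-but-no-better singularity and no induction on the order is needed.

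The genuine gap is the sentence claiming that $f_\alpha=0$ for $\abs{\alpha}>k_1$ ``follows by feeding $T$ local polynomial sections''. Polynomial sections are smooth, and feeding them to $T$ only \emph{computes} the values $f_\alpha(x)(v)$; it cannot force them to vanish. Moreover, the local formula you derived holds a priori only on smooth sections, so the fact that its right-hand side is undefined on a general $\operatorname{C}^{k_1}$-section is not in itself a contradiction, and your singular-test-function step cannot reach orders $n_0>k_1$ because a $\operatorname{C}^{k_1}$ function has no $n_0$-th derivative to insert into the formula; since $T$ is not assumed continuous, you also cannot pass to the limit from smooth approximants. This is precisely where the actual proof (Lecomte's) departs from your plan: instead of first establishing \emph{some} finite order and then trimming it, one builds the target order $k_1-k_2$ into the Peetre blow-up argument from the outset, proving directly that $j^{k_1-k_2}_y(X)=0$ implies $(TX)_y=0$ by assembling a single $\operatorname{C}^{k_1}$ section from rescaled bumps supported on disjoint balls shrinking to a hypothetical bad point and showing that the $\operatorname{C}^{k_2}$-regularity of its image must fail at the accumulation point. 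Until you supply an argument of this kind controlling all orders above $k_1-k_2$ (not just those $\le k_1$), the proof is incomplete; the three corollaries at the end are fine once the main bound is in place.
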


\begin{rem}\label{diffopsection}
In the situations of the above theorems, if $T: \Gamma^{k}(E_1) \to
\Gamma^{k}(E_2)$ is a differential operator of order 0 for $k \in \Ni$, then it
can be identified with a $\operatorname{C}^{k}$-section of the bundle
$\Hom(E_1,E_2)$ as follows: Fix an $x \in M$ and an open neighbourhood $U\sbs
M$ of $x$ such that $\pi_1^{-1}(U)$ is trivial. For vectors $v \in (E_1)_x$ we
choose sections $X: U \to \pi_1^{-1}(U)$ such that $X_x=v$ and define the linear
map \es{ \tau_x: (E_1)_x  &\lra (E_2)_x\\ v &\longmapsto \left(TX\right)_x. }
\kommentar{
$\widehat{Tu_1}=a \widehat{u_1}$ for smooth $a: U \cong \Omega \to M_{r_2,
r_1}(\K)$, then define a linear map $\tau_x: (E_1)_x \to (E_2)_x$ by making the
following diagram commutative: \kD{ (E_1)_x \ar[d]_{\tau_x}			 & V_1
\ar[l]_{\cong}			& \K^{r_1} \ar[l]_{\cong} \ar[d]^{a(x)}\\ (E_2)_x	 & V_2
\ar[l]^{\cong}			& \K^{r_2} \ar[l]^{\cong}. } }
depends on $X$ only via $X_x$ because $T$ is an operator of order $0$. The
required section is the map $M \to \Hom(E_1,E_2)$, $x \mapsto \tau_x$. \end{rem}

\subsection{\v Cech cohomology}\label{Cech}

\v Cech cohomology is one of the important cohomology theories in algebraic topology. It can be defined for any topological space and a presheaf of groups on this space. However, we will only consider it for a manifold $M$ and a discrete group $G$, which can be understood as a Lie group. The following definitions are due to \cite{tD91}.

\begin{defi} Let $\Co(M):=\set{\left. \V \in \P(\O(M)) \right| \bigcup \set{U \in \V}=M}$ denote the set of collections of open subsets of $M$ who cover $M$ and for $\V \in \Co(M)$ we define $\V * \V :=\set{\left.(U,V) \in \V \times \V \right| U \cap V \ne \emptyset}$.

\begin{enumerate}
\item A familiy of smooth functions $(g_{VU}: U \cap V \to G)_{(U,V) \in \V * \V}$ is called a \emph{cocyle} related to $\V \in \Co(M)$, if it satisfies the equation
\es{
g_{VU}(x) \cdot g_{UW}(x) \cdot g_{WV}(x) = 1
}
for all $x \in U \cap V \cap W$ for all $ U,V,W \in \V$. The set of these families is denoted by $\Zc^1(M,\V,G)$.

\item Two cocycles $(g_{VU})_{(U,V) \in \V * \V}, (k_{VU})_{(U,V) \in \V * \V} \in \Zc^1(M,\V,G)$ are called \emph{cohomologous}, if there exists a familiy of smooth mappings $(h_U: U \to G)_{U \in \V}$ such that the relation
\es{
k_{VU}(x) = h_V(x) \cdot g_{VU}(x) \cdot \left(h_U(x)\right)^{-1}
}
is satisfied for all $x \in U \cap V$ for all $U,V \in \V$. The relation ``cohomologous'' is an equivalence relation on $\Zc^1(M,\V,G)$ and the set of the corresponding equivalence classes is denoted by $\Hc^1(M,\V,G)$.

\item If $G$ is abelian, two cocycles $(g_{VU})_{(U,V) \in \V * \V}, (k_{VU})_{(U,V) \in \V * \V} \in \Zc^1(\V,G)$ can be multiplied pointwisely and we get a group structure on $\Zc^1(M,\V,G)$. The set of cocyles $x \mapsto h_V(x) \cdot \left(h_U(x)\right)^{-1}$ is a normal subgroup of $\Zc^1(M,\V,G)$, denoted by $\Bc^1(M,\V,G)$ and this gives rise to the group structure on the quotient $\Zc^1(M,\V,G) / \Bc^1(M,\V,G) = \Hc^1(M,\V,G)$.

\item $\Co(M)$ is a directed set by the refinement relation
\es{
\V \prec \W \quad :\Longleftrightarrow \quad \forall W \in \W \, \exists V \in \V: \, W \sbs V
}
and so we can define the \emph{first \v Cech cohomology group} of $M$ with values in $G$ to be\footnote{This definition is not rigorous. If $(W_j)_{j \in J}$ is a refinement of $(V_i)_{i \in I}$, then, for the definition of the direct limit, it is necessary to have a function $f: J \to I$ such that $f(j)=i$ implies $W_j \sbs V_i$. But one can show that $\Hc^1(M,G)$ does not depend on the choice of such functions.}
\es{
\Hc^1(M,G):=\varinjlim_{\V \in \Co(M)} \Hc^1(M,\V,G).
}
In general, if $G$ is not necessarily abelian, we obtain the \emph{first \v Cech cohomology set} $\Hc^1(M,G)$ without group structure.
\end{enumerate}
\end{defi}

If $G$ is abelian and discrete, then the \v Cech cohomology groups $\Hc^1(M,G)$ are isomorphic to the singular cohomology groups $\operatorname{H}^1(M,G)$ (cf. e.g \cite{EiSt57}, Chapter IX), which can be easily ``calculated'' in some cases. There are also group isomorphies $\Hc^1(M,G) \cong \Hom(\operatorname{H}_1(M),G) \cong \Hom(\pi_1(M),G)$, where $\operatorname{H}_1(M)$ is the first singular homology group of $M$ with values in $\Z$ and $\pi_1(M)$ is the fundamental group of $M$.

\begin{rem}
We give some first cohomology groups which are calculated in any elementary algebraic topology lecture:
\begin{enumerate}
\item If $G$ is an abelian group and $M$ is a simply connected manifold, then $\operatorname{H}^1(M,G)=0$.
\item If $G$ is an abelian group, then $\operatorname{H}^1(\S^n,G)= \begin{cases}G & \text{ if } n=1 \\0 & \text{ if } n>1. \end{cases}$
\item $\operatorname{H}^1(\C P^n,\Z/2\Z)=\operatorname{H}^1(\H P^n, \Z/2\Z)=0$.
\end{enumerate}
\end{rem}

\subsection{Structure of Lie algebras of $\Ck$-sections}

From now on, facts from \cite{Le80} are explained in detail and generalized. We have $k \in
\Ni$ and a bundle of Lie algebras $\pi: \L \to M$ with fiber $\k$. The dimensions of $M$ and $\k$
are denoted by $m$ and $d$, respectively. Lecomte only considered the case of $\z(\k) = 0$ and we
will replace this condition with weaker ones.

\subsubsection{Center and commutator} The center of
$\Gamma^k(\L)$ is easy to calculate. \begin{prop} For $k \in \Ni$ we have
$\z\left(\Gamma^k(\L)\right) = \Gamma^k\left(\z(\L) \right)$. \end{prop}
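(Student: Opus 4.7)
The plan is to prove both inclusions directly, using the fact (from Definition \ref{specialsubbundles}) that $\z(\L)$ has been realized as a subbundle of $\L$ whose fibers are exactly $\z(\L)_x \cong \z(\L_x)$.

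For the inclusion $\Gamma^k(\z(\L)) \subseteq \z(\Gamma^k(\L))$, I would take $X \in \Gamma^k(\z(\L))$ and any $Y \in \Gamma^k(\L)$. Since the Lie bracket on $\Gamma^k(\L)$ is defined pointwise, for every $x \in M$ we have $[X,Y](x) = [X(x), Y(x)]$, and this vanishes because $X(x) \in \z(\L_x)$. Hence $[X,Y] = 0$, so $X \in \z(\Gamma^k(\L))$.

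For the reverse inclusion, I would take $X \in \z(\Gamma^k(\L))$ and show first that $X(x) \in \z(\L_x)$ for every $x \in M$. Fix $x \in M$ and an arbitrary $v \in \L_x$. By Lemma \ref{billigLemma}, the evaluation map $\ev_x : \Gamma^k(\L) \to \L_x$ is surjective, so there exists $Y \in \Gamma^k(\L)$ with $Y(x) = v$. Since $X$ is central, $0 = [X,Y](x) = [X(x), v]$. As $v \in \L_x$ was arbitrary, $X(x) \in \z(\L_x) = \z(\L)_x$.

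Finally I would observe that $\z(\L) \hookrightarrow \L$ is an embedded vector subbundle (Definition \ref{specialsubbundles}), so a $\Ck$-section of $\L$ that factors pointwise through $\z(\L)$ is automatically a $\Ck$-section of $\z(\L)$ --- this can be checked locally in a bundle chart $(U,\ph)$ in which $\ph$ maps $\z(\L)_{|U}$ onto $U \times \z(\k)$, whence the local form $X^\ph$ takes values in the linear subspace $\z(\k) \subseteq \k$ and retains its $\Ck$-regularity when corestricted. Thus $X \in \Gamma^k(\z(\L))$. I do not expect a real obstacle here; the only point worth being careful about is this last regularity remark, which is why I would spell out the chart argument rather than treat it as obvious.
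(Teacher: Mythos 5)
Your proof is correct and follows essentially the same route as the paper: the pointwise bracket gives one inclusion, and the surjectivity of $\ev_x$ from Lemma \ref{billigLemma} gives the other. The only difference is that you additionally spell out why a section of $\L$ with values in $\z(\L)$ is a $\Ck$-section of the subbundle $\z(\L)$, a regularity point the paper leaves implicit; this is a reasonable bit of extra care but not a different argument.
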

\begin{proof}
If $X \in \Gamma^k\left(\z(\L) \right)$ then, for all $x \in M$, we have $X_x \in \z(\L_x)$ and thus $\lie{X,Y}_x = \lie{X_x,Y_x} = 0$ for all $Y \in \Gamma^k(\L)$ and $x \in M$. This proves $\z\left(\Gamma^k(\L)\right) \sps \Gamma^k\left(\z(\L) \right)$. If $X \in \z\left(\Gamma^k(\L)\right)$, $x \in M$ and $u \in \L_x$, then, by Lemma \ref{billigLemma}, there is a section $Y \in \Gamma^k(\L)$ such that $Y_x = u$, thus $\lie{X_x,u}=\lie{X,Y}_x=0$ proving $X_x \in \z(\L_x)$. This proves $\z\left(\Gamma^k(\L)\right) \sbs \Gamma^k\left(\z(\L) \right)$.
\end{proof}
\begin{cor}
$\Gamma^k(\L)$ is centerfree \iff $\k$ is centerfree.
\end{cor}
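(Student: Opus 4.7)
The plan is to reduce the corollary to the proposition just proved and then to a simple fact about sections of a vector bundle whose fiber is $\z(\k)$.

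By the proposition, $\z\bigl(\Gamma^k(\L)\bigr) = \Gamma^k\bigl(\z(\L)\bigr)$, so $\Gamma^k(\L)$ is centerfree if and only if $\Gamma^k\bigl(\z(\L)\bigr) = 0$. Recall from Definition \ref{specialsubbundles} that $\z(\L) \to M$ is the subbundle with typical fiber $\z(\k)$ (using that $\z(\k)$ is a characteristic ideal of $\k$), so the question becomes whether this vector bundle admits any nonzero $\Ck$-section.

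For the easy direction, if $\k$ is centerfree then $\z(\k) = 0$, hence $\z(\L)$ is the zero bundle over $M$ and the only $\Ck$-section is $0$, so $\Gamma^k(\L)$ is centerfree. For the converse, suppose $\z(\k) \neq 0$ and pick some $x \in M$ and a nonzero vector $u \in \z(\L_x) \cong \z(\k)$. Now imitate the surjectivity argument from the proof of Lemma \ref{billigLemma}: choose a bundle chart $(U,\ph)$ of $\z(\L)$ around $x$ and a smooth bump function $\rho: M \to \R$ with compact support in $U$ and $\rho(x) = 1$, and set
\[
X_y := \begin{cases} \ph_{|\z(\L_y)}^{-1}\bigl(\rho(y)\cdot \ph(u)\bigr) & \text{if } y \in U,\\ 0 & \text{if } y \in M \setminus U.\end{cases}
\]
This $X$ lies in $\Gamma(\z(\L)) \subseteq \Gamma^k(\z(\L))$ and satisfies $X_x = u \neq 0$, so $\Gamma^k(\L)$ fails to be centerfree.

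There is no real obstacle here, as the Lie algebra computation has already been absorbed into the preceding proposition; the only thing to note is that the construction of a section with prescribed value at a point does not use the Lie bracket at all, so the argument of Lemma \ref{billigLemma} applies verbatim to the vector bundle $\z(\L)$ even though $\z(\L)$ is not a bundle of Lie algebras with a nontrivial bracket.
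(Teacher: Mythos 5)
Your proof is correct and is exactly the intended argument: the paper states this corollary without proof as an immediate consequence of the proposition $\z\bigl(\Gamma^k(\L)\bigr) = \Gamma^k\bigl(\z(\L)\bigr)$, and your filling-in of the nontrivial direction via the bump-function construction of Lemma \ref{billigLemma} (which, as you correctly note, is purely a vector-bundle argument and does not use the bracket) is the natural way to complete it.
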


For the calculation of the commutator of $\Gamma^k(\L)$ we first have to prove a technical lemma.
\begin{lem}\label{commutatorlemma}
There exists an $r \in \N$ such that, for each bundle chart $(U,\ph)$ and each $X \in \Gamma^k(\lie{\L,\L})$ with compact support contained in $U$, there are sections $Y_1, Z_1, \ldots , Y_r, Z_r \in \Gamma^k(\L)$ with compact supports contained in $U$ with
\es{
X= \sum_{t=1}^r \lie{Y_t,Z_t}.
}
\end{lem}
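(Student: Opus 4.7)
The key point of the lemma is the \emph{uniformity} of $r$: it must depend only on $\k$, not on $X$ or on the choice of bundle chart. The plan is to produce $r$ by fixing, once and for all, a finite presentation of $\lie{\k,\k}$ in terms of commutators of $\k$, and then to transport this presentation pointwise to $\L$ through the chart.

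First I would choose, abstractly in $\k$, a basis $(w_1,\ldots,w_s)$ of the finite-dimensional vector space $\lie{\k,\k}$. For each $i$ pick a presentation $w_i = \sum_{j=1}^{n_i}\lie{y_{ij},z_{ij}}$ with $y_{ij}, z_{ij}\in \k$, and set $r:=\sum_{i=1}^s n_i$. This integer depends only on $\k$; in particular it is independent of the bundle chart $(U,\ph)$ and of the section $X$.

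Now fix a bundle chart $(U,\ph)$ and a section $X\in\Gamma^k(\lie{\L,\L})$ with compact support $K\sbs U$. The local form $X^{\ph}: U\to \lie{\k,\k}$ is a $\Ck$-map supported in $K$, so expanding in the basis gives unique functions $\alpha_1,\ldots,\alpha_s\in\Ck(U,\K)$, each supported in $K$, with $X^{\ph}=\sum_{i=1}^s\alpha_i w_i$; extending by zero we may view $\alpha_i\in\Ck(M,\K)$ with compact support in $U$. Choose a smooth cutoff $\rho: M\to[0,1]$ with compact support in $U$ such that $\rho\equiv 1$ on $K$. For $(i,j)$ with $1\le i\le s$ and $1\le j\le n_i$, define sections $Y_{ij},Z_{ij}\in\Gamma^k(\L)$ by prescribing their local forms on $U$,
\[
Y_{ij}^{\ph}(x):=\alpha_i(x)\,y_{ij},\qquad Z_{ij}^{\ph}(x):=\rho(x)\,z_{ij},
\]
and by setting them equal to zero on $M\setminus U$. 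Since $\alpha_i$ and $\rho$ have compact supports inside $U$, these sections are well-defined $\Ck$-sections of $\L$ with compact supports in $U$, and the total number of pairs is exactly $r$.

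Finally I would verify the identity. Because the Lie bracket on $\Gamma^k(\L)$ is computed pointwise in each fiber and the local form of a bracket is the pointwise bracket in $\k$, one gets
\[
\lie{Y_{ij},Z_{ij}}^{\ph}(x)=\alpha_i(x)\rho(x)\lie{y_{ij},z_{ij}}.
\]
Summing over $j$ for fixed $i$ yields $\alpha_i(x)\rho(x)\,w_i$, and summing over $i$ gives $\rho(x)X^{\ph}(x)$. Since $\rho\equiv 1$ on $\supp(X^{\ph})=K$ and $X^{\ph}$ vanishes outside $K$, this equals $X^{\ph}(x)$ for all $x\in U$; outside $U$ both sides vanish. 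Hence $X=\sum_{i,j}\lie{Y_{ij},Z_{ij}}$, which is the required decomposition with at most $r$ summands. The one point that requires care, and which I would expect to be the main conceptual obstacle, is precisely that $r$ be chosen \emph{before} seeing $X$ or $U$; this is exactly what is achieved by fixing the presentation of $\lie{\k,\k}$ once and for all in the abstract fiber, and pulling it back through $\ph$.
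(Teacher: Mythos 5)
Your proof is correct and follows essentially the same route as the paper: expand the local form of $X$ in a fixed basis of $\lie{\k,\k}$, absorb the coefficient functions into the first factor of each bracket and a cutoff $\rho \equiv 1$ on the support into the second factor. The only cosmetic difference is that the paper chooses the basis of $\lie{\k,\k}$ to consist of single commutators $\lie{u_t,w_t}$ (possible since such elements span $\lie{\k,\k}$), giving $r=\dim\lie{\k,\k}$, whereas you take an arbitrary basis and write each basis vector as a finite sum of commutators; both yield an $r$ depending only on $\k$, which is the point of the lemma.
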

\begin{proof}
Let $(\lie{u_1,w_1}, \ldots \lie{u_r,w_r})$ be a basis of $\lie{\k,\k}$. Let $F
\sbs M$ be compact such that \[\supp X \sbs F^\circ \sbs F \sbs U\]
and $\rho: M \to [0,1]$ a smooth function with compact support contained in $U$
such that $\rho_{|F} \equiv 1$. Then there are functions $f_1, \ldots , f_r \in
\Ck(M,\K)$ with supports contained in $F$ such that we have, for each $x \in U$:
\es{ X^\ph(x) = \sum_{t=1}^r f_t(x) \lie{u_t,w_t}. } For $t \in \set{1, \ldots ,
r}$ we define $Y_t, Z_t \in \Gamma^k(\L)$ by \es{ Y_t (x) = Z_t (x) = 0 &\text{
for }x \in M \backslash U\\ Y_t^\ph (x) = f_t(x) u_t \text{ and } Z_t^\ph (x) =
\rho(x) w_t &\text{ for }x \in U.\\ }
For $x \in M \backslash F$ we have $X(x) = 0 = \sum_{t=1}^r 0 = \sum_{t=1}^r \lie{Y_t(x),Z_t(x)}$. For $x \in F$ we have
\es{
X^\ph (x) = \sum_{t=1}^r \lie{f_t(x) u_t, 1 \cdot w_t} = \sum_{t=1}^r \lie{Y^\ph_t(x),Z^\ph_t(x)}.
}
Thus $X= \sum_{t=1}^r \lie{Y_t,Z_t}$.
\end{proof}
\begin{prop}
For $k \in \Ni$ we have $\lie{\Gamma^k(\L),\Gamma^k(\L)} = \Gamma^k\left(\lie{\L,\L} \right)$.
\end{prop}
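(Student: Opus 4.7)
The inclusion $\lie{\Gamma^k(\L),\Gamma^k(\L)} \subseteq \Gamma^k(\lie{\L,\L})$ is immediate: for $X,Y \in \Gamma^k(\L)$ and any $x \in M$ one has $\lie{X,Y}(x) = \lie{X_x,Y_x} \in \lie{\L_x,\L_x}$, which under the identification $\lie{\L,\L}_x \cong \lie{\L_x,\L_x}$ from Definition \ref{specialsubbundles} lands in $\lie{\L,\L}$; the map $x \mapsto \lie{X,Y}(x)$ is a $\Ck$-section because $\lie{\cdot,\cdot}$ is bilinear on each fiber and fiberwise smooth.

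For the reverse inclusion, the plan is to localize via a Palais cover and then use Lemma \ref{commutatorlemma} to get a \emph{globally finite} expression as a sum of brackets. By Theorem \ref{Palaisexi} pick a Palais cover $\left((V_i,\psi_i,\xi_i,\rho_i)_{i \in J},(J_s)_{s=1}^{p}\right)$, and let $X \in \Gamma^k(\lie{\L,\L})$. For each $i \in J$ the section $\rho_i X$ has compact support in $V_i$, so Lemma \ref{commutatorlemma} supplies a single integer $r \in \N$ (independent of $i$) and sections $Y_{i,1},Z_{i,1},\ldots,Y_{i,r},Z_{i,r} \in \Gamma^k(\L)$ with compact supports contained in $V_i$ such that
\es{
\rho_i X \;=\; \sum_{t=1}^r \lie{Y_{i,t},Z_{i,t}}.
}

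Now exploit property (e) of the Palais cover: for each fixed $s \in \set{1,\ldots,p}$ the sets $(V_i)_{i \in J_s}$ are pairwise disjoint, and hence so are the supports of the $Y_{i,t}$ (respectively $Z_{i,t}$) as $i$ ranges over $J_s$. Therefore the sums
\es{
Y_{s,t} \;:=\; \sum_{i \in J_s} Y_{i,t}, \qquad Z_{s,t} \;:=\; \sum_{i \in J_s} Z_{i,t}
}
are well-defined $\Ck$-sections (at every point at most one summand is nonzero, and local finiteness of the cover ensures $\Ck$-regularity). Moreover, the disjointness of supports within each $J_s$ gives $\lie{Y_{s,t},Z_{s,t}} = \sum_{i \in J_s}\lie{Y_{i,t},Z_{i,t}}$, so summing over $s$ and $t$ and using $\sum_{i \in J}\rho_i = 1$ yields
\es{
\sum_{s=1}^{p}\sum_{t=1}^{r}\lie{Y_{s,t},Z_{s,t}} \;=\; \sum_{i \in J}\sum_{t=1}^{r}\lie{Y_{i,t},Z_{i,t}} \;=\; \sum_{i \in J}\rho_i X \;=\; X,
}
a finite sum of $pr$ Lie brackets of elements of $\Gamma^k(\L)$.

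The only real obstacle is the step of converting the locally finite decomposition of $X$ into a globally finite one; without the partition $J = J_1 \sqcup \cdots \sqcup J_p$ from the Palais cover one would only get $X$ as a potentially infinite sum $\sum_i \lie{Y_{i,t},Z_{i,t}}$, which need not be rewritable as a finite sum of brackets. Property (e) of the Palais cover is precisely what allows the infinite sums to be absorbed inside single brackets $\lie{Y_{s,t},Z_{s,t}}$ via disjoint-support addition, and the existence of a uniform $r$ in Lemma \ref{commutatorlemma} keeps the count finite. Everything else is routine checking that the constructed $Y_{s,t},Z_{s,t}$ are indeed $\Ck$ and that the bracket distributes over the disjoint-support sum.
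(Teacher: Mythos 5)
Your proposal is correct and follows essentially the same route as the paper: the easy inclusion by pointwise evaluation, and the reverse inclusion by combining a Palais cover with Lemma \ref{commutatorlemma}, using the disjointness of the supports within each block $J_s$ to absorb the locally finite sums into finitely many single brackets. The uniformity of $r$ and the disjoint-support distributivity of the bracket, which you flag explicitly, are exactly the points the paper relies on.
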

\begin{proof}
Let $\left(\left(V_i, \psi_i, \xi_i, \rho_i\right)_{i \in J}, (J_t)_{t=1}^r\right)$ be a Palais cover (cf. Definition \ref{Palais} and Theorem \ref{Palaisexi}). Any section $X \in \Gamma^k(\L)$ takes the form $X=\sum_{t=1}^r X_t$ for $X_t=\sum_{i\in J_t} \rho_i X$, where $t \in \set{1, \ldots , r}$. We fix $X \in \Gamma^k(\lie{\L,\L})$ and $t \in \set{1, \ldots , r}$. Then, by Lemma \ref{commutatorlemma}, there are, for each $i \in J_t$, sections $Y^i_1, Z^i_1, \ldots , Y^i_s, Z^i_s \in \Gamma^k(\L)$ with compact supports contained in $V_i$, such that $\rho_i X = \sum_{p=1}^s \lie{Y^i_p, Z^i_p}$. By setting $Y_p:=\sum_{i \in J_t}Y^i_p \in \Gamma^k(\L)$ and $Z_p:=\sum_{i \in J_t}Z^i_p \in \Gamma^k(\L)$ for each $p \in \set{1, \ldots , s}$ and recalling that the supports of $Y^i_p$, $Z^j_p$ for $i \neq j$ are disjoint, we obtain $X_t= \sum_{p=1}^s \lie{Y_p,Z_p}$, yielding $X = \sum_{t=1}^r X_t$ and $\lie{\Gamma^k(\L),\Gamma^k(\L)} \sps \Gamma^k(\lie{\L,\L})$.

In order to show $\lie{\Gamma^k(\L),\Gamma^k(\L)} \sbs \Gamma^k(\lie{\L,\L})$ we consider $X = \sum_{s=1}^p \lie{Y_p, Z_p}$ for appropriate sections $Y_p, Z_p \in \Gamma^k(\L)$. For arbitary $x \in M$ we have $X(x) = \sum_{s=1}^p \lie{Y_p(x), Z_p(x)} \in \lie{\L_x,\L_x} = \lie{\L,\L}_x$.
\end{proof}

\begin{cor}\label{perfectcoro}
$\Gamma^k(\L)$ is perfect \iff $\k$ is perfect.
\end{cor}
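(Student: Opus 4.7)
The plan is to reduce the corollary immediately to the preceding proposition, which identifies the commutator subalgebra $\lie{\Gamma^k(\L),\Gamma^k(\L)}$ with $\Gamma^k(\lie{\L,\L})$. Thus perfectness of $\Gamma^k(\L)$ is equivalent to the equality $\Gamma^k(\L)=\Gamma^k(\lie{\L,\L})$, and the task is to show this holds \iff $\k=\lie{\k,\k}$.

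For the direction ``$\k$ perfect $\implies \Gamma^k(\L)$ perfect'' I would argue as follows. The commutator bundle $\lie{\L,\L}$ was defined in Definition \ref{specialsubbundles} as the subbundle $\L[\lie{\k,\k}]$ associated to the characteristic ideal $\lie{\k,\k}\ide\k$. If $\k$ is perfect then $\lie{\k,\k}=\k$, hence $\lie{\L,\L}=\L[\k]=\L$ as subbundles, and consequently $\Gamma^k(\lie{\L,\L})=\Gamma^k(\L)$. By the proposition, $\Gamma^k(\L)$ is perfect.

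For the converse, I would use the evaluation morphism. Assume $\Gamma^k(\L)$ is perfect, so $\Gamma^k(\L)=\Gamma^k(\lie{\L,\L})$. Fix any $x \in M$. By Lemma \ref{billigLemma} the evaluation $\ev_x:\Gamma^k(\L)\to \L_x$ is surjective, and the restriction $\ev_x:\Gamma^k(\lie{\L,\L})\to \lie{\L,\L}_x$ lands in $\lie{\L,\L}_x$. The identification $\lie{\L,\L}_x\cong \lie{\L_x,\L_x}$ recorded in Definition \ref{specialsubbundles} then gives
\es{
\L_x=\ev_x\left(\Gamma^k(\L)\right)=\ev_x\left(\Gamma^k(\lie{\L,\L})\right)\sbs \lie{\L,\L}_x\cong \lie{\L_x,\L_x},
}
so $\L_x$ is perfect. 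Since $\L_x\cong \k$ as Lie algebras (through any bundle chart at $x$), $\k$ is perfect.

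Neither direction presents a real obstacle, because the bulk of the work is already contained in the preceding proposition and in the fiberwise description of $\lie{\L,\L}$; the only point to watch is that one must invoke the canonical identification $\lie{\L,\L}_x\cong\lie{\L_x,\L_x}$ from Definition \ref{specialsubbundles} rather than treat $\lie{\L,\L}$ as an abstractly associated bundle.
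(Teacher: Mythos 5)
Your argument is correct and is essentially the intended one: the paper states this corollary without proof precisely because it follows immediately from the preceding proposition $\lie{\Gamma^k(\L),\Gamma^k(\L)} = \Gamma^k\left(\lie{\L,\L}\right)$, combined with the fiberwise identification $\lie{\L,\L}_x \cong \lie{\L_x,\L_x}$ and the surjectivity of $\ev_x$ from Lemma \ref{billigLemma}. Both of your directions are sound and use exactly the ingredients the paper has already set up.
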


\subsubsection{Derivations}
Now we want to calculate the derivations of the Lie algebras of $\Ck$-sections by applying Version 2 of the Peetre Theorem. \begin{thm}
Let $k \in \N$.  If $\k$ is perfect or centerfree, then $\Der\left(\Gamma^k(\L)\right)\cong\Gamma^k\left(\Der(\L)\right)$ as Lie algebras.
\end{thm}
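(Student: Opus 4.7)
The plan is to produce a natural Lie algebra homomorphism
\[
\Psi: \Gamma^k(\Der(\L)) \lra \Der\bigl(\Gamma^k(\L)\bigr), \qquad (\Psi(T)(X))(x) := T(x)\bigl(X(x)\bigr),
\]
and show it is a bijection. Note that $\Der(\L)$ is a bundle of Lie algebras whose fibers $\Der(\L)_x \cong \Der(\L_x)$ act on $\L_x$ by derivations (cf.\ Definition \ref{specialsubbundles}), so $\Psi(T)$ is well-defined in $\Gamma^k(\L)$, is a derivation (by applying the pointwise derivation property at each $x$), and $\Psi$ respects the pointwise brackets on both sides by the obvious fiberwise identity $[T(x), T'(x)](u) = T(x)T'(x)(u) - T'(x)T(x)(u)$. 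Injectivity of $\Psi$ is immediate from Lemma \ref{billigLemma}: if $\Psi(T) = 0$, then for every $x \in M$ and $u \in \L_x$, a section $X$ with $X(x) = u$ yields $T(x)(u) = 0$, so $T(x) = 0$.

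For surjectivity, fix $\delta \in \Der(\Gamma^k(\L))$ and claim it is a \emph{local operator}; the Peetre Theorem (Version 2, Theorem \ref{PseudoPeetre}) together with Remark \ref{diffopsection} then identifies $\delta$ with a $\Ck$-section $T \in \Gamma^k(\Hom(\L,\L))$ satisfying $(\delta X)(x) = T(x)(X(x))$. The derivation identity $\delta[X,Y] = [\delta X, Y] + [X, \delta Y]$ evaluated at $x$, combined with Lemma \ref{billigLemma} (which lets $X(x), Y(x)$ range over all of $\L_x$), gives
\[
T(x)[u,v] = [T(x)u, v] + [u, T(x)v] \qquad \text{for all } u,v \in \L_x,
\]
i.e.\ $T(x) \in \Der(\L_x)$ for every $x$, so $T \in \Gamma^k(\Der(\L))$ and $\Psi(T) = \delta$.

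The hard step is the locality of $\delta$. Suppose $X \in \Gamma^k(\L)$ vanishes on an open set $U$ and fix $x \in U$; we must show $(\delta X)(x) = 0$. In the \emph{centerfree} case ($\z(\k) = 0$), for any $Y \in \Gamma^k(\L)$ with $\supp(Y) \sbs U$ the pointwise bracket $[X,Y]$ vanishes identically, whence $0 = \delta[X,Y] = [\delta X, Y] + [X, \delta Y]$; evaluating at $x$ (where $X(x)=0$) gives $[(\delta X)(x), Y(x)] = 0$, and varying $Y(x)$ over $\L_x$ (Lemma \ref{billigLemma} applied inside $U$) forces $(\delta X)(x) \in \z(\L_x) = 0$. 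In the \emph{perfect} case ($\k=[\k,\k]$), I aim instead to write $X$ as a \emph{finite} sum of brackets of sections that vanish at $x$: choose a small open neighborhood $W$ of $x$ with $\overline W \sbs U$, and exploit that $\{U, M \setminus \overline W\}$ is an open cover of $M$ to pick a Palais cover $((V_i,\psi_i,\xi_i,\rho_i)_{i \in J}, (J_t)_{t=1}^r)$ so refined that every $V_i$ is contained in $U$ or in $M \setminus \overline W$. For $V_i \sbs U$ we have $\rho_i X = 0$; for $V_i \sbs M \setminus \overline W$, Lemma \ref{commutatorlemma} (valid in the perfect case, since $[\L,\L] = \L$) writes $\rho_i X = \sum_{s=1}^r [Y^i_s, Z^i_s]$ with $Y^i_s, Z^i_s$ supported in $V_i$, hence vanishing at $x$. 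Summing within each color class $J_t$ (whose $V_i$ are pairwise disjoint), the cross-brackets vanish, yielding a \emph{finite} decomposition $X = \sum_{t,s} [\widetilde Y^t_s, \widetilde Z^t_s]$ where every factor vanishes at $x$. Applying $\delta$ and evaluating at $x$ gives $(\delta X)(x) = 0$.

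The principal obstacle is the last argument in the perfect case: one must combine the local perfectness (Lemma \ref{commutatorlemma}) with a Palais-cover refinement adapted simultaneously to $x$ and to the open set on which $X$ vanishes, so that a single \emph{finite} bracket decomposition with all factors vanishing at $x$ can be produced. Once locality is established, the remainder of the argument is routine application of Peetre's Theorem and Lemma \ref{billigLemma}.
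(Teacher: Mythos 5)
Your proposal is correct and follows the same overall architecture as the paper's proof: establish locality of a derivation $D$ (splitting into the centerfree and perfect cases), invoke Peetre's Theorem (Version 2) and Remark \ref{diffopsection} to identify $D$ with a $\Ck$-section of $\Hom(\L,\L)$, use the derivation identity together with Lemma \ref{billigLemma} to see that this section lands in $\Gamma^k(\Der(\L))$, and check that the correspondence is a Lie algebra isomorphism. The centerfree case of the locality argument is identical to the paper's.

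The one place where you genuinely diverge is the perfect case of the locality argument. The paper does not return to the Palais cover at this point: it simply cites the already-established global perfectness of $\Gamma^k(\L)$ (Corollary \ref{perfectcoro}) to write $X=\sum_{t=1}^r\lie{Y_t,Z_t}$ as a \emph{finite} sum of brackets of arbitrary sections, and then uses the identity $X=(1-\rho)^2X$ (valid because $X$ vanishes on $U$ and $\rho$ vanishes off $U$) to replace this by $X=\sum_{t=1}^r\lie{(1-\rho)Y_t,(1-\rho)Z_t}$, where now every factor vanishes at $x$ since $\rho\equiv 1$ near $x$. Your route instead refines a Palais cover against $\set{U,M\setminus\overline W}$ and reapplies Lemma \ref{commutatorlemma} chart by chart, reassembling a finite bracket decomposition with factors vanishing at $x$ by summing over the color classes. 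Both arguments are sound and both ultimately rest on the same two ingredients (Lemma \ref{commutatorlemma} and the Palais cover); yours redoes, in a pointed form adapted to $x$, work that the paper has already packaged into Corollary \ref{perfectcoro}, while the paper's bump-function trick $(1-\rho)^2$ gets the same conclusion from the corollary as a black box in two lines. If you want to shorten your write-up, you can substitute the paper's trick for your refinement step without changing anything else.
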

\begin{proof}
Let $D$ be a derivation of $\Gamma^k(\L)$. We want to show that $D$ is automatically a local operator. Let $X \in \Gamma^k(\L)$ be zero on an open set $U \sbs M$.
\begin{itemize}
\item Suppose $\z(\k)=0$. Then for all $x \in U$ and $Y \in \Gamma^k(\L)$ with $\supp(Y) \sbs U$ we have $\lie{X,Y}=0$ and therefore:
\es{
\lie{(DX)_x, Y_x} = (D\lie{X,Y})_x - \lie{X_x,(DY)_x} = 0 - 0 = 0.
}
By Lemma \ref{billigLemma}, this implies $\lie{(DX)_x,v}=0$ for all $v \in \L_x$, yielding $(DX)_x \in \z(\L_x) = 0$ for all $x \in U$.
\item Suppose $\lie{\k,\k}=\k$. Since $\Gamma^k(\L)$ is also perfect, there exist sections $Y_1, Z_1, \ldots , Y_r, Z_r \in \Gamma^k(\L)$ such that $X=\sum_{t=1}^r \lie{Y_t,Z_t}$. Now, if $x \in U$ then there is an $x$-neigbourhood $W \sbs U$ and a smooth map $\rho: M \to [0,1]$ such that $\rho_{|W} \equiv 1$ and $\rho_{M\backslash U} \equiv 0$. We set $X':=(1-\rho)^2 \cdot X$. Thus $X = X'$ on $M$ and we obtain $X= \sum_{t=1}^r \lie{(1-\rho) \cdot Y_t,(1-\rho) \cdot Z_t}$ yielding
\es{
(DX)_x = \sum_{t=1}^r \lie{D ((1-\rho) \cdot Y_t)(x),(1-\rho)(x) \cdot Z_t(x)} + \lie{(1-\rho)(x)\cdot Y_t(x),D((1-\rho) \cdot Z_t)(x)} = 0.
}
\end{itemize}
In both cases we have $(DX)_{|U} \equiv 0$, thus $D$ is local.

By applying Theorem \ref{PseudoPeetre} and Remark \ref{diffopsection} we have a linear map $\Psi: \Der\left(\Gamma^k(\L)\right) \to \Gamma^k\left(\Hom(\L,\L)\right)$ well-defined by $\Psi(D)(X_x):=(DX)_x$ for $X \in \Gamma^k(\L)$, $x \in M$. Since we have
\es{
\Psi(D)\lie{X_x,Y_x}=(D\lie{X,Y})_x = (\lie{DX,Y})_x + (\lie{X,DY})_x = \lie{\Psi(D)(X_x),Y_x} + \lie{X_x,\Psi(D)(Y_x)}
} for $X,Y \in \Gamma^k(\L)$ and $x \in M$, the image of $\Psi$ is contained in $\Gamma^k(\Der(\L))$. Furthermore, the map $\Psi$ is compatible with the Lie brackets on $\Der\left(\Gamma^k(\L)\right)$ and $\Gamma^k(\Der(\L))$:
\es{
\Psi\lie{D,D'}(X_x) = \left(\lie{D,D'}X\right)_x =  \left(D(D'X)\right)_x - \left(D'(DX)\right)_x = \lie{\Psi(D),\Psi(D')}(X_x).
}
Finally, the function $\Phi: \Gamma^k(\Der(\L)) \to \Der\left(\Gamma^k(\L)\right)$ defined by $\left(\Phi\left(\mathfrak{D}\right)(X)\right)_x:=\mathfrak{D}_x(X_x)$ for a section $\mathfrak{D} \in  \Gamma^k(\Der(\L))$, $X \in \Gamma^k(\L)$, $x \in M$, is the inverse of $\Psi$. We conclude that $\Psi$ is an isomorphism of Lie algebras.
\end{proof}

In order to analyze $\Gamma(\L)=\Gamma^\infty(\L)$ we define the notion of an $x$-derivation of $\L$.

\begin{defi}\label{xderi}
Let $x \in M$.
\begin{enumerate}
\item An \emph{$x$-derivation} of $\L$ is a linear map $\delta: \Gamma(\L) \to \L_x$ such that the following condition is satisfied for all $X,Y \in \Gamma(\L)$:
\es{
\delta\lie{X,Y}=\lie{\delta(X),Y_x}+\lie{X_x,\delta(Y)}.
}
The vector space of all $x$-derivations of $\L$ is denoted by $\D_x(\L)$, the union $\bigcup_{x \in M} \D_x(\L)$ is denoted by $\D(\L)$ and we have a projection $\overline{p}: \D(\L) \lra M$ defined by $\overline{p}(\D_x(\L))=\set{x}$ for $x \in M$.

\item Note that $\Der(\L)$ can be embedded into $\D(\L)$ via the \emph{natural injection} $i: \Der(\L) \lra \D(\L)$, where $D \in \Der(\L_x)$ is mapped to the $x$-derivation $i(D): \Gamma(\L) \to \L_x$, $X \mapsto D(X_x)$.
\end{enumerate}
\end{defi}

\begin{thm}\label{dertheo}
Let $\k$ be perfect or centerfree. Then every $x$-derivation of $\L$ is a differential operator of order at most 1 and the triple $\left( \D(\L), \overline{p}, M\right)$ admits a vector bundle structure such that $\Der \left(\Gamma(\L)\right)$ can be naturally identified with $\Gamma(\D(\L))$. In addition, there exists a short exact sequence of vector bundles as follows:
\es{
0 \lra \Der \left(\L\right) \overset{i}{\lra} \D(\L) \overset{\sigma}{\lra} TM \otimes \Cent(\L) \lra 0.
}
\end{thm}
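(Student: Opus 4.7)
The plan is to argue in three stages: first show every $x$-derivation is a first-order operator, then identify the symbol as taking values in the centroid, and finally split the resulting short exact sequence with a Lie connection to obtain both the vector bundle structure on $\D(\L)$ and the identification with $\Gamma(\D(\L))$.

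First I would show that any $\delta \in \D_x(\L)$ annihilates sections $X$ with $j^1_x(X) = 0$. By Lemma \ref{Taylorlemma} applied with $s=1$, write $X = \sum_i a_i^2 A_i$ with $a_i(x) = 0$. In the perfect case, Corollary \ref{perfectcoro} lets me write each $A_i$ as a finite sum $\sum_j \lie{B_{ij}, C_{ij}}$ in $\Gamma(\L)$, so $a_i^2 A_i = \sum_j \lie{a_i B_{ij}, a_i C_{ij}}$, and applying $\delta$ with the Leibniz rule yields $0$ because each $a_i(x) = 0$. In the centerfree case, given any $Y \in \Gamma(\L)$, the identity $\lie{\delta(X), Y_x} = \delta\lie{X, Y}$ (which uses $X_x = 0$) combined with $\lie{X,Y} = \sum_i \lie{a_i A_i, a_i Y}$ forces $\lie{\delta(X), Y_x} = 0$; the surjectivity of $\ev_x$ from Lemma \ref{billigLemma} then places $\delta(X)$ in $\z(\L_x) \cong \z(\k) = 0$. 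Consequently $\delta(X)$ depends only on $j^1_x(X)$, and the map $X \mapsto T_x X$ (for $X_x = 0$) descends $\delta$ to a well-defined linear symbol $\sigma_\delta: T^*_x M \otimes \L_x \to \L_x$, i.e.\ $\sigma_\delta \in T_x M \otimes \End(\L_x)$.

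Next I would verify that $\sigma_\delta$ actually lands in $T_x M \otimes \Cent(\L_x)$ and establish exactness. Applying the derivation condition to $\lie{fY, Z} = f\lie{Y,Z}$ for $f \in \Ci(M, \R)$ with $f(x) = 0$ and $\xi := df(x)$ yields $\sigma_\delta(\xi \otimes \lie{u,v}) = \lie{\sigma_\delta(\xi \otimes u), v}$ for all $u,v \in \L_x$, which is precisely the centroid condition on the endomorphism $u \mapsto \sigma_\delta(\xi \otimes u)$. Injectivity of $i: \Der(\L) \to \D(\L)$ is immediate from surjectivity of $\ev_x$; exactness at $\D(\L)$ holds because $\sigma \circ i = 0$ (order zero) and, conversely, $\sigma_\delta = 0$ means $\delta$ depends only on $X_x$, so $\delta = i(D)$ for a linear $D: \L_x \to \L_x$ that the Leibniz identity forces into $\Der(\L_x)$; and $\sigma$ is surjective via a Lie connection $\nabla$ on $\L$ (existing by the preceding proposition): given $v \otimes c \in T_x M \otimes \Cent(\L_x)$, set $\delta(X) := c(\nabla_v X)$, which is an $x$-derivation of the correct symbol thanks to the Lie-connection property and $c \in \Cent(\L_x)$. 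Since $\nabla$ provides a smooth global splitting, one obtains $\D(\L) \cong \Der(\L) \oplus TM \otimes \Cent(\L)$ as vector bundles, which equips $\D(\L)$ with its bundle structure.

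For the final identification, a derivation $D \in \Der(\Gamma(\L))$ is local by exactly the argument from the previous ($\Ck$) theorem; Peetre's Theorem \ref{Peetre} then realizes $D$ locally as a finite-order differential operator, and the pointwise first-order property from the first stage forces the order to be at most $1$ with smooth coefficients, so that $x \mapsto (X \mapsto (DX)_x)$ is a smooth section of $\D(\L)$. Conversely, a smooth $\mathfrak{D} \in \Gamma(\D(\L))$ defines a derivation via $(DX)_x := \mathfrak{D}_x(X)$, whose output is smooth in $X$ by the local first-order form, and compatibility with the bracket is pointwise. The hardest step is the first one: it requires two genuinely different reductions for the perfect and the centerfree alternative, both hinging on the quadratic Taylor decomposition of Lemma \ref{Taylorlemma} so that the factors $a_i$ can be absorbed into both sides of a bracket and then killed by Leibniz.
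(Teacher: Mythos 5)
Your proposal is correct and proves all three assertions; the core first step (showing $\delta(X)=0$ whenever $j^1_x(X)=0$, via Lemma \ref{Taylorlemma} with $s=1$ and the two separate reductions for the perfect and the centerfree alternative) is word for word the paper's argument. Where you genuinely diverge is in how the vector bundle structure on $\D(\L)$ is produced. The paper stays local: from the vanishing on $1$-jets it reads off the local form $\delta^\ph(X^\ph)=D(X^\ph(x))+\sum_i S^i(\partial_{\xi_i}X^\ph(x))$, characterizes $x$-derivations by $D\in\Der(\k)$, $S^i\in\Cent(\k)$ (by testing constant against arbitrary sections, which is the chart version of your coordinate-free computation with $\lie{fY,Z}=f\lie{Y,Z}$), computes the transition functions explicitly as conjugation by the cocycle, and invokes Proposition \ref{vectobundlebycocyles} to get a bundle with fiber $\Der(\k)\times\Cent(\k)^m$; the symbol map and its surjectivity come only afterwards. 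You instead front-load the symbol, prove exactness of the sequence fiberwise, and then use a Lie connection $\nabla$ to split it globally, transporting the smooth structure from $\Der(\L)\oplus(TM\otimes\Cent(\L))$. This is shorter and closer in spirit to the paper's own Theorem \ref{globalderi}, but it buys the bundle structure at the price of an auxiliary choice: to justify the word \emph{naturally} you should add the one-line observation that two Lie connections differ by a smooth section of $\Hom(TM,\Der(\L))$, so the induced splittings differ by a smooth bundle automorphism and the transported structure on $\D(\L)$ is independent of $\nabla$. Your treatment of the identification $\Der(\Gamma(\L))\cong\Gamma(\D(\L))$ (locality, Peetre, then the pointwise first-order property killing the higher coefficients) is in fact slightly more explicit than the paper's, which asserts the smoothness of $x\mapsto\ev_x\circ D$ rather tersely.
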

\begin{proof}
Let $\delta \in \D_x(\L)$ be an $x$-derivation. By Lemma \ref{Taylorlemma}, every section $X \in \Gamma(\L)$ with $j^1_x(X)=0$ takes the form
\begin{equation}
X=\sum_{i=1}^r f_i^2 X_i
\end{equation}
for functions $f_i \in \Ci(M,\K)$, $f_i(x)=0$ and $X_i \in \Gamma(\L)$.
\begin{itemize}
\item Suppose $\z(\k)=0.$ If $Y \in \Gamma(\L)$ is an arbitrary smooth section, then:
\es{
\lie{\delta(X),Y(x)}&=\sum_{i=1}^r \lie{\delta\left(f_i^2 X_i \right),Y(x)} = \sum_{i=1}^r\left(\delta\lie{f_i^2 X_i,Y} - \underbrace{\lie{f_i(x)^2 X_i(x), \delta(Y)}}_{=0} \right)\\
&= \sum_{i=1}^r \delta\lie{f_i X_i, f_i Y}= \sum_{i=1}^r \lie{\delta(f_iX_i), f_i(x)Y(x)} + \lie{f_i(x)X_i(x), \delta(f_i Y)} \\&= \sum_{i=1}^r 0+0 = 0,
}
thus $\delta(X) \in \z(\L_x)=0$.
\item Suppose $\lie{\k,\k}=\k$. Since $\Gamma(\L)$ is also perfect, there exist, for each $i \in \set{1, \ldots , r}$, sections $Y_{i1}, Z_{i1}, \ldots , Y_{is}, Z_{is} \in \Gamma(\L)$ such that $X_i=\sum_{t=1}^s \lie{Y_{it},Z_{it}}$. Then:
\es{
\delta(X) &= \sum_{i=1}^r \delta \left(f_i^2 X_i \right) = \sum_{i=1}^r \sum_{t=1}^s \delta \lie{f_i Y_{it}, f_i Z_{it}}\\
& = \sum_{i=1}^r \sum_{t=1}^s \lie{\delta(f_iY_{it}), f_i(x)Z_{it}(x)} + \lie{f_i(x)Y_{it}(x), \delta(f_i Z_{it})} = \sum_{i=1}^r \sum_{t=1}^s 0= 0.
}
\end{itemize}
For both cases, we conclude:
\begin{equation}\label{order01only}
\delta(X) =0 \text{ if } j^1_x(X)=0.
\end{equation}

Let $(U,\ph)$ be a bundle chart in $x$ and $(U,\xi)$ a corresponding chart of $M$.
Since $\pi: \L \to M$ is defined by an $\Aut(\k)$-valued cocyle, $\ph$ restricted to any fiber is an isomorphism of Lie algebras, therefore the Lie bracket on $\Gamma\left(\pi^{-1}(U)\right)$ corresponds to the natural pointwise Lie bracket on $\Ci(U,\k)$ meaning $\lie{X,Y}^\ph =[X^\ph, Y^\ph]$ for $X,Y \in \Gamma\left(\pi^{-1}(U)\right)$. Note, by relation \eqref{order01only}, that $\delta$ has a local form without terms of order greater than 1:
\begin{equation}\label{deltalocal}
X^\ph \overset{{\delta^\ph}}{\longmapsto} D\left({X^\ph}(x)\right) +
\sum_{i=1}^m S^i\left(\partial_{\xi_i} {X^\ph}(x)\right)
\end{equation}
for certain $D, S^1, \ldots , S^m \in \End(\k)$.

Let us show now that a local form of the type \eqref{deltalocal} is satisfied for an $x$-derivation $\delta^\ph$ \iff $D \in \Der(\k)$ and $S^1, \ldots , S^m \in \Cent(\k)$. Equation \eqref{deltalocal} implies the following two equations:
\begin{equation}\label{partone}
\begin{split}
\delta^\ph \lie{ X^\ph,  Y^\ph }
&= D\left(\lie{ X^\ph(x),  Y^\ph(x)} \right) + \sum_{i=1}^m S^i \left(\d_{\xi_i} \lie{ X^\ph,  Y^\ph}(x) \right)\\
&= D\left(\lie{ X^\ph(x),  Y^\ph(x)} \right) + \sum_{i=1}^m S^i \left(\lie{\d_{\xi_i} X^\ph(x),  Y^\ph(x)} + \lie{ X^\ph(x),  \d_{\xi_i}  Y^\ph(x)} \right).
\end{split}
\end{equation}
\begin{equation}\label{parttwo}
\begin{split}
\lie{\delta^\ph\left( X^\ph\right),  Y^\ph (x)} + \lie{ X^\ph (x),\delta^\ph\left( Y^\ph\right)} &=\lie{D\left({X^\ph}(x)\right), Y^\ph (x)} + \lie{ X^\ph (x), D\left({Y^\ph}(x)\right)}\\
&+ \sum_{i=1}^m \lie{S^i\left(\partial_{\xi_i} {X^\ph}(x)\right), Y^\ph (x)}+
\lie{ X^\ph (x), S^i\left(\partial_{\xi_i} {Y^\ph}(x)\right)}.
\end{split}
\end{equation}
By comparing the equations \eqref{partone} and \eqref{parttwo} for constant sections $X,Y$ we obtain
\es{
D\left(\lie{ X^\ph(x),  Y^\ph(x)} \right) = \lie{D\left({X^\ph}(x)\right), Y^\ph (x)} + \lie{ X^\ph (x), D\left({Y^\ph}(x)\right)},
}
thus $D$ is a derivation of $\k$. Knowing this we compare the equations \eqref{partone} and \eqref{parttwo} for constant $X$ and arbitrary $Y$ and obtain, after the cancelation of the derivation part:
\es{
\sum_{i=1}^m S^i \lie{ X^\ph(x),  \d_{\xi_i}  Y^\ph(x)} = \sum_{i=1}^m \lie{ X^\ph (x), S^i\left(\partial_{\xi_i} {Y^\ph}(x)\right)}.
}
Since for any $i \in \set{1, \ldots , m}$ we have
\es{
\partial_{\xi_i} {Y^\ph}(x)=\left. \frac{d}{dt} \right|_{t=0}  {Y^\ph}\left(\xi^{-1}(\xi(x) + t e_i)\right)=\left(T_x Y^\ph\right) \left[\left(T_{\xi(x)}\xi^{-1} \right)(e_i)\right]
}
and $T_{\xi(x)}\xi^{-1}$ is a linear bijection, there is, for any $x \in U$ and $v \in \k$, a smooth map $Y^\ph: U \to \k$ with $\partial_{\xi_i} {Y^\ph}(x)= \delta_{ij} v$ for $j \in \set{1, \ldots , m}$. Therefore we may conclude that each $S^i$ is contained in the centroid of $\k$.

We obtain a bijective correspondence:
\begin{equation}\label{bijcorr}
\begin{split}
\bigcup_{x \in U} \D_x\left(U\times \k\right) &\lra  U \times \left(\Der(\k) \times \Cent(\k)^m \right)\\
\delta^\ph &\longmapsto \left(\overline{p}\left(\delta^\ph\right),\left(D,\left(S^1, \ldots , S^m \right) \right) \right).
\end{split}
\end{equation}
By extending this construction to all charts of a maximal bundle atlas of $M$, we construct trivializations of $\D(\L)$. In order to show that we obtain a bundle structure on $\D(\L)$ we will show that the transitions are linear: Let $(U,\ph)$ and $(V,\psi)$ be bundle charts of $\pi: \L \to M$ in $x$ with corresponding charts $(U,\xi)$ and $(V,\eta)$ of $M$ with $\xi(x) = \eta(x)=0$ and let $D, \wt D  \in \Der(\k)$, $S^1, \ldots , S^m, \wt S^1, \ldots , \wt S^m \in \Cent(\k)$ be the endomorphisms of $\k$ such that for all $X^\ph, X^\psi \in \Ci(U \cap V, \k)$ we have
\es{
\delta^\ph \left(X^\ph \right) =  D\left({X^\ph}(x)\right) + \sum_{i=1}^m S^i\left(\partial_{\xi_i} {X^\ph}(x)\right)
}
and
\es{
\delta^\psi \left(X^\psi \right) =   \wt D\left({X^\psi}(x)\right) + \sum_{i=1}^m \wt S^i\left(\partial_{\eta_i} {X^\psi}(x)\right).}
By the definition of the local forms, we have
\es{
\left(\left(\ph_{|{\L_x}}\right)^{-1} \circ \delta^{\ph} \right)\left(X^\ph \right) = \delta(X) = \left(\left(\psi_{|{\L_x}}\right)^{-1} \circ \delta^{\psi} \right)\left(X^\psi \right)
}
and
\es{
\left(\ph_{|{\L_x}}\right)^{-1}\left(X^\ph(x) \right) = X(x) = \left(\psi_{|{\L_x}}\right)^{-1}\left(X^\psi(x) \right)
,}
thus
\es{
\delta^{\psi}\left(X^\psi \right) = g(x) \left( \delta^\ph \left(X^\ph \right) \right)
\text{ and }
X^\ph(x) = g(x)^{-1}\left(X^\psi \right)
}
for a transition map $g: U \cap V \to \Aut(\k)$ of the principal bundle to which $\pi: \L \to M$ is associated. By considering the commutative diagram
\kD{
\k = T_{\ph(x)} \k \ar[d]_{g(x)} & & T_x M \ar[ll]_{T_x X^{\ph}} \ar[r]^{T_x \xi}& \R^m\\
\k = T_{\psi(x)} \k & & T_x M \ar[ru]_{T_x \eta} \ar[ll]^{T_x X^{\psi}} & ,
}
we may calcuate
\es{
\partial_{\xi_i}X^\ph(x) &= T_x X^\ph \left(T_0 \xi^{-1} (e_i) \right) = \left(g(x)^{-1} \circ T_x X^\psi \circ T_0 \eta^{-1} \circ T_x \xi \right) \left(T_0 \xi^{-1} (e_i) \right)\\
&=\left(g(x)^{-1} \circ T_x X^\psi \right) \left(T_0 \eta^{-1} (e_i) \right) = g(x)^{-1} \left(\partial_{\eta_i}X^\psi(x) \right)
}
and obtain, by the definition of $D, \wt D, S^1, \ldots , S^m, \wt S^1, \ldots , \wt S^m $:
\begin{equation}\label{schlangen}
\begin{split}
\wt D \left( X^\psi(x) \right) + \sum_{i=1}^m \wt S^i\left(\partial_{\eta_i} {X^\psi}(x)\right) &= \delta^\ph \left(X^\ph \right) = g(x) \left( \delta^\ph \left(X^\ph \right) \right) = g(x) \left( D\left({X^\ph}(x)\right) + \sum_{i=1}^m S^i\left(\partial_{\xi_i} {X^\ph}(x)\right) \right)\\
&=\left\{g(x) \circ D \circ g(x)^{-1} \right\} \left( X^\psi(x) \right) + \sum_{i=1}^m \left\{ g(x) \circ S^i \right\}\left(\partial_{\xi_i} {X^\ph}(x)\right)\\
&=\left\{g(x) \circ D \circ g(x)^{-1} \right\} \left( X^\psi(x) \right) + \sum_{i=1}^m \left\{ g(x) \circ S^i \circ g(x)^{-1} \right\}\left(\partial_{\eta_i} {X^\psi}(x)\right)
.
\end{split}
\end{equation}
For contstant sections $X$ this equation implies
\es{
\wt D \left( X^\psi(x) \right) = \left\{g(x) \circ D \circ g(x)^{-1} \right\} \left( X^\psi(x) \right),
} thus $\wt D =g(x) \circ D \circ g(x)^{-1}$. Knowing this, equation \eqref{schlangen} also implies
\es{
\sum_{i=1}^m \wt S^i\left(\partial_{\eta_i} {X^\psi}(x)\right) = \sum_{i=1}^m \left\{ g(x) \circ S^i \circ g(x)^{-1} \right\}\left(\partial_{\eta_i} {X^\psi}(x)\right)
} and, by inserting appropriate sections for $X$, we may conclude that $\wt S^i = g(x) \circ S^i \circ g(x)^{-1}$ for all $i \in \set{1, \ldots , m}$. So the transitions $D \mapsto \wt D$ and $S^i \to \wt S^i$ are linear isomorphisms and we have, by Theorem \ref{vectobundlebycocyles}, a vector bundle $\overline p: \D(\L) \to M$ with fiber $\Der(\k) \times \Cent(\k)^m$. There is an isomorphism of Lie algebras $\Phi: \Der(\Gamma(\L)) \to \Gamma(\D(\L))$, defined as follows:
If $D \in \Der(\Gamma(\L))$, then $\Phi(D)$ is the map $M \to \D(\L)$, $x \mapsto \ev_x \circ D$. If $\mathfrak{X} \in \Gamma(\D(\L))$, $X \in \Gamma(\L)$ and $x \in M$, then $\left(\Phi^{-1}(\mathfrak{X})X\right)_x = \left(\mathfrak{X}_x\right)X$.

Let us define the map $\sigma: \D(\L) \to TM \otimes \Cent(\L)$. If $\delta \in \D_x(\L)$, then we define $\sigma(\delta) \in T_xM \otimes \End(\L_x)$ as follows: We identify $T_xM \otimes \End(\L_x) = \Hom(T_xM^*,\End(\L_x))$ in the natural way, i.e. the element $\sum_i v_i \otimes A_i$ corresponds to the endomorphism $\alpha \mapsto \sum_i \alpha(v_i) \cdot A_i$. For any $\alpha \in T_xM^*$ and $w \in \L_x$ let $a \in C^\infty(M,\R)$ and $A \in \Gamma(\L)$ such that $T_x a = \alpha$ and $A(x) = w$. Then $\sigma(\delta)$ is pointwisely defined by
\es{
\left\{\sigma(\delta) (\alpha) \right\}(w) := \delta(aA)-a(x)\delta(A).
}
Now let us check that $\sigma$ is well-defined and valued in $TM \otimes \Cent(\L)$ by using \eqref{order01only} and \eqref{bijcorr}:
\es{
\left(\left\{\sigma(\delta) (\alpha) \right\}(w)\right)^\ph &= \left(\delta(aA)-a(x)\delta(A)\right)^\ph\\
&= D(a(x) A^\ph(x)) + \sum_{i=1}^m S^i(\d_{\xi_i}(aA^\ph)(x)) - a(x) \left(D(A^\ph(x)) +  \sum_{i=1}^m S^i(\d_{\xi_i}A^\ph(x)) \right)\\
&= \sum_{i=1}^m S^i \left(\d_{\xi_i}a(x) \cdot A^\ph(x) +a(x)\d_{\xi_i}A^\ph(x) -a(x)\d_{\xi_i}A^\ph(x) \right)\\
&= \sum_{i=1}^m S^i \left(\d_{\xi_i}a(x) \cdot \ph(w)\right) = \sum_{i=1}^m \d_{\xi_i}a(x) \cdot  S^i \left( \ph(w)\right).
}
By leaving the local coordinates we see that $\left\{\sigma(\delta) (\alpha) \right\}$ only depends on $\alpha$ and $w$ and we also obtain $\sigma(\delta) \in T_xM \otimes \Cent(\L_x)$. The above calculation also shows that $\sigma$ maps exactly the elements without local part in $\Cent(\k)^m$ to zero, i.e. $\im i = \ker \sigma$.

We finally show the surjectivity of $\sigma: \D(\L) \to TM \otimes \Cent(\L)$: Let $x \in M$ and $(U,\ph)$ a bundle chart in $x$ with corresponding chart $(U,\xi)$ of $M$ and $\rho: M \to [0,1]$ a smooth map with $\supp \rho \sbs U$ and $\rho_{|W} \equiv 1$ for an $x$-neighbourhood $W \sbs U$ and $(S^1, \ldots , S^m) \in \Cent(\k)^m$. We define $f_i:= \left(\ph_{|\L_x}\right)^{-1} \circ S^i \circ \ph \in \Cent (\L_x)$ and $\delta \in \D_x(\L)$ is defined by $\delta(X):= \rho \cdot \left(\ph_{|{\L_x}}\right)^{-1} \left( \sum_{i=1}^m S^i\left(\partial_{\xi_i} {X^\ph}(x)\right) \right)$ for $X \in \Gamma(\L)$. Then the local part of $\sigma(\delta)$ is $(S^1, \ldots , S^m)$.
\end{proof}

The short exact sequence in Theorem \ref{dertheo} naturally induces another short exact sequence.
\begin{cor}\label{Gammaexactsequence}
The following induced sequence is exact:
\es{
0 \lra \Gamma(\Der \left(\L\right)) \overset{i}{\lra} \Gamma(\D(\L)) \overset{\sigma}{\lra} \Gamma(TM \otimes \Cent(\L)) \lra 0,
}
where $(i(X))(x):=i(X(x))$ and $(\sigma(X))(x):=\sigma(X(x))$.
\end{cor}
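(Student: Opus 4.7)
The plan is to deduce exactness on sections directly from the pointwise short exact sequence of vector bundles established in Theorem \ref{dertheo}, handling the three pieces of exactness separately. Injectivity of $i$ on sections and vanishing of $\sigma \circ i$ are immediate, since both properties hold in every fiber: if $X \in \Gamma(\Der(\L))$ satisfies $i(X) = 0$, then $i(X(x)) = 0$ for all $x \in M$, so $X(x) = 0$ for all $x$ by the fiberwise injectivity of $i$, and analogously $\sigma(i(X))(x) = \sigma(i(X(x))) = 0$.

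For exactness in the middle, suppose $Y \in \Gamma(\D(\L))$ satisfies $\sigma \circ Y \equiv 0$. Then for each $x \in M$ we have $Y(x) \in \ker \sigma = \im i$, so there is a unique $Z(x) \in \Der(\L_x)$ with $i(Z(x)) = Y(x)$. It remains to check that the resulting map $Z \colon M \to \Der(\L)$ is smooth; this is a local question and follows from a bundle chart of $\Der(\L) \hookrightarrow \D(\L)$ obtained from the correspondence \eqref{bijcorr} in the proof of Theorem \ref{dertheo}: in such a chart $Y$ corresponds to a smooth map with values in $\Der(\k) \times \Cent(\k)^m$, and the assumption $\sigma(Y) = 0$ forces the $\Cent(\k)^m$-component to vanish identically, so the remaining $\Der(\k)$-component is just $Z$ read in the chart and is smooth.

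The main obstacle is surjectivity of $\sigma \colon \Gamma(\D(\L)) \to \Gamma(TM \otimes \Cent(\L))$, which I would prove by the standard partition-of-unity gluing argument for short exact sequences of vector bundles. Pick a locally finite bundle atlas $(U_j, \ph_j)_{j \in J}$ of $\L$ with corresponding charts $(U_j, \xi_j)$ of $M$, and a subordinate smooth partition of unity $(\rho_j)_{j \in J}$. For any $T \in \Gamma(TM \otimes \Cent(\L))$, the restriction $T_{|U_j}$ reads in the chart $(U_j, \xi_j, \ph_j)$ as a smooth map $U_j \to \R^m \otimes \Cent(\k)$, which in turn can be written as $(S^1_j, \ldots, S^m_j)$ with $S^i_j \in \Ci(U_j, \Cent(\k))$. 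Using the explicit surjectivity argument at the end of the proof of Theorem \ref{dertheo}, I build a local section $Y_j \in \Gamma(\overline p^{-1}(U_j))$ whose local form has $D \equiv 0$ and whose $\Cent(\k)^m$-part equals $(S^1_j, \ldots, S^m_j)$; by construction $\sigma \circ Y_j = T_{|U_j}$.

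Finally, I set $Y := \sum_{j \in J} \rho_j \cdot Y_j$, which is a well-defined global smooth section of $\D(\L)$ because the sum is locally finite and $\D(\L)$ is a vector bundle. Since $\sigma$ is a vector bundle morphism (fiberwise linear and smooth), it is $\Ci(M,\K)$-linear on sections, so
\[
\sigma(Y) = \sum_{j \in J} \rho_j \cdot \sigma(Y_j) = \sum_{j \in J} \rho_j \cdot T_{|U_j} = T,
\]
proving surjectivity and completing the exactness of the induced sequence.
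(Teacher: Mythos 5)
Your proposal is correct: the paper states this corollary without proof, treating it as the standard fact that a short exact sequence of vector bundles over a paracompact base induces a short exact sequence on spaces of smooth sections, and your argument (fiberwise exactness plus smoothness of the middle preimage read off from the local trivialization \eqref{bijcorr}, plus partition-of-unity gluing and $\Ci(M,\K)$-linearity of $\sigma$ for surjectivity) is exactly the intended justification. No gaps.
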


\begin{defi}
The map $\sigma: \D(\L) \to TM \otimes \Cent(\L)$ from Theorem \ref{dertheo} is called \emph{symbol map}.
\end{defi}

For a global description of $\Der(\Gamma(\L))$ we firstly extend the notion of
Lie connections. Fix a Lie connection $\nabla: \Gamma(TM) \to \Der(\Gamma(\L))$.

\begin{rem}
The symbol of a derivation $\nabla_X$ of $\Gamma(\L)$ for a vector field $X \in \Gamma(TM)$ is $X \otimes \1$: Let $x \in M$ be a point, $a \in C^\infty(M,\R)$ a smooth map and $A \in \Gamma(\L)$ a section of $\L$. Then we identify $\nabla_X \in \Der(\Gamma(\L))$ with $\left(y \mapsto \ev_y \circ \nabla_X\right) \in \Gamma(\D(\L))$ and calculate:
\es{
\left[\sigma\left(\nabla_X (x) \right) (T_x a) \right](A_x)
&= \left(\nabla_X (x) \right)(aA)-a(x)\left(\nabla_X (x) \right)(A) = \left(\nabla_X (aA) \right)(x) - a(x)\left(\nabla_X A \right)(x)\\
&= \left((X.a)A + a \nabla_X (A) \right)(x) - \left(a \nabla_X A \right)(x) = \left((X.a)A\right)(x) = (T_x a)(X_x) \cdot A_x
}
and therefore
\es{
\sigma\left(\nabla_X (x) \right) = X_x \otimes \1_x.
}
Thus, by the short exact sequence of Corollary \ref{Gammaexactsequence}, we see that not all derivations of $\Gamma(\L)$ can be described like that. It is necessary to extend $\nabla$ in an appropriate way.
\end{rem}

\begin{defi}
The \emph{extension} of $\nabla$ to $\Gamma(TM \otimes \Cent(\L))$ is the linear map
\es{
\nabla: \Gamma(TM \otimes \Cent(\L)) &\lra \Der(\Gamma(\L))=\Gamma(\D(\L))\\
\Y &\longmapsto \nabla_{\Y}
}
defined as follows: Let $\Y \in \Gamma(TM \otimes \Cent(\L))$ be a section and $(U,\xi)$ a chart of $M$. Then there are unique elements $S^1, \ldots  , S^m \in \Gamma(\Cent(\pi^{-1}(U)))$ such that
\es{
\Y_{|U} = \sum_{i=1}^m \d_{\xi_i} \otimes S^i.
}
We define $\nabla_{\Y}$ locally by
\begin{equation}\label{defext}
\left(\nabla_{\Y}\right)_{|U}:= \sum_{i=1}^m S^i \circ \nabla_{\d_{\xi_i}}.
\end{equation}
We show that this is well-defined: Let $(U,\xi)$ and $(V,\eta)$ be two charts of $M$ and $x \in U \cap V$. Then $\Y_x$ takes the form $\sum_{i=1}^m \left(\d_{\xi_i}\right)_x \otimes s^i = \sum_{j=1}^m \left(\d_{\eta_j}\right)_x \otimes r^j$ for endomorphisms $s^1, \ldots  , s^m, r^1, \ldots , r^m \in \Cent(\L_x)$. Note that, by the uniqueness of these forms, we can write
\es{
r^j = \Y_x(T_x \eta_j).
}
By writing the matrix $T_{\eta(x)}\left(\xi \circ \eta^{-1}\right)$ as $A=(a_{ij})$ we find the following relation between the basis vectors $\left(\d_{\xi_i}\right)_x$ and $\left(\d_{\eta_j}\right)_x$:
\es{
\left(\d_{\eta_j}\right)_x &= T_x \left(\xi^{-1} \circ \xi \right) T_{\eta(x)}\eta^{-1} (e_j)=T_{\xi(x)}\xi^{-1} (Ae_j) = T_{\xi(x)}\xi^{-1} \left(\sum_{k=1}^m a_{kj} e_k\right)
=\sum_{k=1}^m a_{kj} T_{\xi(x)}\xi^{-1} (e_k)\\
&=\sum_{k=1}^m a_{kj} \left(\d_{\xi_k}\right)_x.
}
We write $T_{\xi(x)}\left(\eta \circ \xi^{-1}\right) = A^{-1}=(a^{ij})$ and calculate:
\es{
\sum_{j=1}^m r^j \circ \nabla_{\left(\d_{\eta_j}\right)_x}
&=\sum_{j=1}^m \Y_x(T_x \eta_j) \circ \sum_{k=1}^m a_{kj} \nabla_{\left(\d_{\xi_k}\right)_x}=\sum_{j,k=1}^m \left(\sum_{i=1}^m \left(\d_{\xi_i}\right)_x \otimes s^i\right)(T_x \eta_j) \circ a_{kj}\nabla_{\left(\d_{\xi_k}\right)_x}\\
&=\sum_{i,j,k=1}^m (T_x \eta_j)\left(\left(\d_{\xi_i}\right)_x \right) \cdot s^i \circ a_{kj} \nabla_{\left(\d_{\xi_k}\right)_x}
=\sum_{i,j,k=1}^m a^{ij}\cdot a_{kj} \cdot s^i \circ \nabla_{\left(\d_{\xi_k}\right)_x}\\
&=\sum_{i=1}^m s^i \circ \nabla_{\left(\d_{\xi_i}\right)_x}.
}
So we have a well-defined extension.
\end{defi}

\begin{lem}
The extension of $\nabla$ satisfies the following properties for all $\Y \in \Gamma(TM \otimes \Cent(\L))$, $a \in \Ci(M,\R)$, $A,B \in \Gamma(\L)$, $X \in \Gamma(TM)$, $S \in \Gamma(\Cent(\L))$ and $x \in M$:
\begin{enumerate}
\item $\nabla_{\Y} \lie{A,B} = \lie{\nabla_{\Y} A , B} + \lie{A, \nabla_{\Y} B}$, i.e. $\nabla_{\Y} \in \Der(\Gamma(\L))$,
\item $\left(\nabla_{\Y} (aA) \right)_x = \left( \Y_x (T_x a) \right) \{A\} + \left(a \nabla_{\Y} A \right)_x $, i.e. $\nabla_\Y (aA) = (\Y.a)\{A\} + a \nabla_\Y A$, i.e. $\sigma\left(\nabla_{\Y}\right)=\Y$,
\item $\nabla_{a\Y} A = a \nabla_{\Y} A$,
\item $\nabla_{X \otimes S}A = S \circ \nabla_{X}A $. In particular, we have $\nabla_{X \otimes \1} = \nabla_X$.
\end{enumerate}
\end{lem}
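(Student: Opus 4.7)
\noindent\textit{Proof plan.} The strategy is to check each of the four identities locally in a chart and invoke the well-definedness of formula \eqref{defext} that has already been established in the definition. I fix a chart $(U,\xi)$ of $M$ with a compatible bundle chart of $\L$ and write $\Y|_U = \sum_{i=1}^m \d_{\xi_i} \otimes S^i$ with $S^i \in \Gamma(\Cent(\pi^{-1}(U)))$. Each $\nabla_{\d_{\xi_i}}$ is a derivation of $\Gamma(\pi^{-1}(U))$ because $\nabla$ is a Lie connection. Everything then reduces to fiberwise statements about $\Cent(\k)$ and $\Der(\k)$, together with the definition of $\sigma$ from Theorem \ref{dertheo}.

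For (1), the key input is Lemma \ref{Liealglemma}(2), which yields $\Cent(\L_x) \circ \Der(\L_x) \sbs \Der(\L_x)$ fibrewise. Applied to $S^i_x$ and the pointwise value of $\nabla_{\d_{\xi_i}}$ at each $x \in U$, this shows that $S^i \circ \nabla_{\d_{\xi_i}}$ is a derivation of $\Gamma(\pi^{-1}(U))$, hence so is the sum $\nabla_\Y|_U$. Patching over a chart cover of $M$ then gives $\nabla_\Y \in \Der(\Gamma(\L))$.

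For (2), I would apply the Leibniz rule of each $\nabla_{\d_{\xi_i}}$ to $aA$ and use that $S^i$, being a vector-bundle endomorphism of $\pi^{-1}(U)$, is $\Ci$-linear, obtaining
\[
\nabla_\Y(aA)|_U = \sum_{i=1}^m (\d_{\xi_i}.a)\, S^i(A) + a\, \nabla_\Y A.
\]
Evaluating at $x$ and noting $(T_x a)((\d_{\xi_i})_x) = (\d_{\xi_i}.a)(x)$, the first sum equals $\sum_i (T_x a)((\d_{\xi_i})_x)\cdot S^i_x(A_x)$, which under the identification $T_xM \otimes \End(\L_x) \cong \Hom(T_xM^*, \End(\L_x))$ from Theorem \ref{dertheo} is exactly $\Y_x(T_x a)\{A_x\}$. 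The assertion $\sigma(\nabla_\Y) = \Y$ then drops out by direct comparison with the defining formula for $\sigma$.

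Items (3) and (4) are bookkeeping. For (3), the expansion $(a\Y)|_U = \sum_i \d_{\xi_i} \otimes (a S^i)$ makes $\nabla_{a\Y} A = a\nabla_\Y A$ immediate. For (4), writing $X|_U = \sum_i X^i \d_{\xi_i}$ gives $(X \otimes S)|_U = \sum_i \d_{\xi_i} \otimes (X^i S)$, so the centroid factor $S$ pulls out and $\sum_i X^i \nabla_{\d_{\xi_i}} A = \nabla_X A$ follows from $\Ci$-linearity of $\nabla$ in the vector-field slot (cf. Remark \ref{pointwisedepen}); the case $S = \1$ recovers $\nabla_{X \otimes \1} = \nabla_X$. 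I expect the only mild obstacle to lie in (2), namely keeping the Hom-tensor identification straight so that the local computation of the symbol truly matches the global definition — but beyond Lemma \ref{Liealglemma}(2) and the machinery already unpacked in Theorem \ref{dertheo}, no new ingredient is needed.
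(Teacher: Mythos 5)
Your proposal is correct and follows essentially the same route as the paper: localize via $\Y|_U=\sum_i\d_{\xi_i}\otimes S^i$, use that each $\nabla_{\d_{\xi_i}}$ is a derivation together with the centroid property of the $S^i$ for (1) and the Leibniz rule plus the defining formula for $\sigma$ for (2), and reduce (3) and (4) to the rewriting of the tensor and the $\Ci$-linearity of $\nabla$ in the vector-field slot. The only caveat is that your appeal to Lemma \ref{Liealglemma}(2) ``fibrewise'' is slightly loose, since $\nabla_{\d_{\xi_i}}$ has no pointwise value in $\Der(\L_x)$; the correct reading is to apply that lemma's computation to the Lie algebra $\Gamma(\pi^{-1}(U))$ (or to $x$-derivations), which is exactly the calculation the paper writes out.
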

\begin{proof} Let $(U,\xi)$ be a chart of $M$. We show the properties by concluding locally:
\begin{enumerate}
\item
\es{
\nabla_{\Y}\lie{A, B} &= \sum_{i=1}^m S^i \circ \nabla_{\d_{\xi_i}} \lie{A, B}
 = \sum_{i=1}^m S^i \circ \left( \lie{\nabla_{\d_{\xi_i}} A, B} +  \lie{A,  \nabla_{\d_{\xi_i}} B} \right)\\
&=\sum_{i=1}^m S^i \circ  \lie{\nabla_{\d_{\xi_i}} A, B}  + \sum_{i=1}^m S^i \circ  \lie{A,  \nabla_{\d_{\xi_i}} B}  = \lie{\nabla_{\Y}A, B} + \lie{A, \nabla_{\Y} B}.
}
\item
\es{
\left(\nabla_{\Y} (aA) \right)_x &= \sum_{i=1}^m S^i(x) \circ \left( \nabla_{\d_{\xi_i}} aA \right)_x
=  \sum_{i=1}^m S^i(x) \circ \left(\left(\d_{\xi_i}a\right) A \right)_x + \sum_{i=1}^m S^i(x) \circ \left(a \nabla_{\d_{\xi_i}} A \right)_x\\
&= \Y_x (T_x a) [A] + \left(a \nabla_{\Y} A \right)_x.
}
\item If $\Y = \sum_{i=1}^m \d_{\xi_i} \otimes S^i$ then $a\Y = \sum_{i=1}^m \d_{\xi_i} \otimes a S^i$, thus:
\es{
\nabla_{a \Y}= \sum_{i=1}^m a S^i \circ \nabla_{\d_{\xi_i}} = a \sum_{i=1}^m S^i \circ \nabla_{\d_{\xi_i}} = a \nabla_{ \Y}.
}
\item If $X = \sum_{i=1}^m X^i \d_{\xi_i}$ then
\es{
X \otimes S = \left(\sum_{i=1}^m X^i \d_{\xi_i} \right) \otimes S = \sum_{i=1}^m \d_{\xi_i} \otimes X^i S
}
implying
\es{
\nabla_{X \otimes S}A = \sum_{i=1}^m X^i S \circ \d_{\xi_i} A = S \circ \nabla_{X}A.
}
\end{enumerate}
\end{proof}

\begin{rem} \
\begin{enumerate}
\item It is possible to define the extension of a Lie connection $\nabla: \Gamma(TM) \to \Der(\Gamma(\L))=\Gamma(\D(\L))$ without the help of charts of $M$: For $x \in M$ and $X \in \Gamma(TM)$ the $x$-derivation $\left(\nabla_X\right)_x$ depends on $X$ only via $X_x$ and so we can understand $\nabla$ as a map $TM \to \D(\L)$. We now extend $\nabla$ in a natural way to a map $\nabla: TM \otimes \Cent(\L) \to \D(\L)$ by mapping $X_x \otimes S_x$ to $S_x \circ \left(\nabla_X\right)_x$ (cf. Equation \eqref{defext}).
\item It is not difficult to show that any linear map $L: \Gamma(TM \otimes \Cent(\L)) \to \Der(\Gamma(\L))$ satisfying the four conditions of the preceding lemma is the extension of exactly one Lie connection $\nabla$.
\end{enumerate}
\end{rem}

We can now formulate and prove a theorem about the global structure of $\Der(\Gamma(\L))$.
\begin{thm}\label{globalderi}
Fix a Lie connection $\nabla$ on $\L$. If $\k$ is perfect or centerfree,
then a linear function $D: \Gamma(\L) \to \Gamma(\L)$ is a derivation of
$\Gamma(\L)$ \iff there are sections $\Y \in \Gamma(TM \otimes \Cent(\L))$ and
$D_0 \in \Gamma(\Der(\L))$ such that $D=\nabla_{\Y} + i(D_0)$ and this
decomposition is unique. \end{thm}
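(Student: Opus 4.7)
The plan is to interpret the statement as the splitting of the short exact sequence from Corollary \ref{Gammaexactsequence} induced by the extended Lie connection, and to exploit that $\nabla$ (in its extended form) provides a linear right-inverse to the symbol map.

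First I would dispose of the easy implication: given $\Y\in\Gamma(TM\otimes\Cent(\L))$ and $D_0\in\Gamma(\Der(\L))$, the map $\nabla_{\Y}$ is a derivation by part (1) of the lemma characterising the extension, and $i(D_0)\colon X\mapsto (x\mapsto D_0(x)(X_x))$ is a derivation since each $D_0(x)\in\Der(\L_x)$; hence their sum lies in $\Der(\Gamma(\L))$.

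For the nontrivial direction, I would use the identification $\Der(\Gamma(\L))\cong\Gamma(\D(\L))$ from Theorem \ref{dertheo}, valid since $\k$ is perfect or centerfree. Given a derivation $D\in\Der(\Gamma(\L))$, view it as a section of $\D(\L)$ and set
\[
\Y := \sigma(D)\in\Gamma(TM\otimes\Cent(\L)).
\]
By property (2) of the extension lemma, $\sigma(\nabla_{\Y})=\Y=\sigma(D)$, so $D-\nabla_{\Y}$ lies in $\ker\sigma$ pointwise. By the exactness of
\[
0\lra \Gamma(\Der(\L))\overset{i}{\lra}\Gamma(\D(\L))\overset{\sigma}{\lra}\Gamma(TM\otimes\Cent(\L))\lra 0
\]
(Corollary \ref{Gammaexactsequence}), there is a unique $D_0\in\Gamma(\Der(\L))$ such that $i(D_0)=D-\nabla_{\Y}$, which gives the desired decomposition $D=\nabla_{\Y}+i(D_0)$.

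Uniqueness follows by applying $\sigma$ to any other decomposition $D=\nabla_{\Y'}+i(D_0')$: since $\sigma\circ i = 0$ and $\sigma\circ\nabla = \id$ on $\Gamma(TM\otimes\Cent(\L))$, one reads off $\Y=\sigma(D)=\Y'$, and then injectivity of $i$ forces $D_0=D_0'$. The main obstacle here is really bookkeeping — the substantive content (that every derivation is a first-order differential operator with symbol in $TM\otimes\Cent(\L)$, and that $\nabla$ splits the symbol map) was already established in Theorem \ref{dertheo} and in the construction of the extension; the present theorem is essentially a clean reformulation of that splitting.
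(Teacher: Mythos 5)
Your proposal is correct and follows essentially the same route as the paper: set $\Y:=\sigma(D)$, use $\sigma(\nabla_{\Y})=\Y$ to conclude that $D-\nabla_{\Y}$ has vanishing symbol and hence lies in $i(\Gamma(\Der(\L)))$, and derive uniqueness from the fact that $\Y$ is determined as $\sigma(D)$. The only cosmetic difference is that you invoke the exactness of the sequence in Corollary \ref{Gammaexactsequence} where the paper argues directly that a zero-symbol derivation is an order-zero operator valued in the derivation bundle — these are the same fact, established in Theorem \ref{dertheo}.
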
 \begin{proof} Obviously, for any $\Y \in
\Gamma(TM \otimes \Cent(\L))$ and $D_0 \in \Gamma(\Der(\L))$ the linear map
$\nabla_{\Y} + i(D_0)$ is a derivation of $\Gamma(\L)$.

On the other hand, fix $D \in \Der(\Gamma(\L))=\Gamma(\D(\L))$ and set $\Y:=\sigma(D) \in \Gamma(TM \otimes \Cent(\L))$. Since $\sigma\left( \nabla_{\Y} \right) = \Y$, the linear map $D-\nabla_{\Y}: \Gamma(\L) \to \Gamma(\L)$ has symbol 0 and thus identifies with a differential operator of order zero which is a section of $\Hom(\L,\L)$. Since $D$ and $\nabla_{\Y}$ are derivations of $\Gamma(\L)$, this section takes its values in $\Der(\Gamma(\L))$. Therefore $D-\nabla_{\Y}$ can be written as $i(D_0)$ for a $D_0 \in \Gamma(\Der(\L))$.

The uniqueness of the decomposition follows from the fact that $\Y= \sigma(D)$ is the only element in $\Gamma(TM \otimes \Cent(\L))$ such that $\sigma(\nabla_{\Y}) = \Y$.
\end{proof}

\subsubsection{Centroid}
In this subsection we will calculate the centroid of $\Gamma^k(\L)$. We begin 
with a technical lemma.

\begin{lem}\label{centroidtheorem1lemma}
Let $X \in \Gamma(\L)$ be a section which is zero on an open set $U \sbs M$ and $x \in U$. Then there exists a compact $x$-neigbourhood $F$ and there are functions $D_t \in \Der (\Gamma(\L))$, $X_t \in \Gamma(\L)$ with ${X_t}_{|F} \equiv 0$ for $t \in \set{1, \ldots , r}$ such that
\es{
X = \sum_{t=1}^r D_t(X_t).
}
\end{lem}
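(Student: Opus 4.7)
The plan is to use a Lie connection $\nabla$ on $\L$ (which exists by the preceding proposition) and to exploit the identity
\[
(Y.g)\cdot A \;=\; \nabla_Y(gA)-\nabla_{gY}(A),
\]
valid for all $Y\in\Gamma(TM)$, $g\in\Ci(M,\K)$ and $A\in\Gamma(\L)$; here the $\Ci(M,\K)$-linearity of $\nabla$ in its vector-field argument has been used, and both $\nabla_Y$ and $\nabla_{gY}$ are derivations of $\Gamma(\L)$. This identity writes ``multiplication by $Y.g$'' as a difference of two derivations applied to sections, which is the engine of the decomposition.

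I first pick $F$ to be a compact $x$-neighbourhood with $F\sbs U$ (so that $\supp X\sbs W:=M\backslash F$) and choose a Palais cover $\left((V_i,\psi_i,\xi_i,\rho_i)_{i\in J},(J_t)_{t=1}^r\right)$ of the open submanifold $W$; in particular every $V_i$ is disjoint from $F$ and, extending the $\rho_i$ by zero, $X=\sum_{i\in J}\rho_i X$ on $M$. For each $i\in J$ I select a bump $\alpha_i\in\Ci(M,\R)$ equal to $1$ on a neighbourhood $V_i'$ of $\supp\rho_i$ and with $\supp \alpha_i\sbs V_i$, and set $g_i:=\alpha_i\cdot(\xi_i)_1\in\Ci(M,\K)$ and $Y_i:=\alpha_i\cdot\d_{(\xi_i)_1}\in\Gamma(TM)$, viewed as globally defined via zero extension. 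A short computation shows $Y_i.g_i\equiv1$ on $V_i'$, so applying the displayed identity to $A=\rho_i X$ yields
\[
\rho_i X \;=\; \nabla_{Y_i}(g_i\rho_i X)-\nabla_{g_i Y_i}(\rho_i X),
\]
and both input sections are supported in $V_i\sbs W$, hence vanish on $F$.

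The crux is to turn this locally finite expression into a \emph{finite} sum. For each colour $t\in\set{1,\ldots,r}$ the $V_i$ with $i\in J_t$ are pairwise disjoint, so the pointwise-finite sums $\mathcal{Y}^t:=\sum_{i\in J_t}Y_i$ and $\mathcal{Y}'^t:=\sum_{i\in J_t}g_i Y_i$ are well-defined vector fields, while $Z^t_1:=\sum_{i\in J_t}g_i\rho_i X$ and $Z^t_2:=\sum_{i\in J_t}\rho_i X$ are sections vanishing on $F$. The disjointness of supports within a colour kills all cross terms in the expansion of $\nabla_{\mathcal{Y}^t}(Z^t_1)-\nabla_{\mathcal{Y}'^t}(Z^t_2)$: at any point $p$, for any pair $i\ne j\in J_t$, either $Y_i(p)=0$ or the other input section is identically zero on a neighbourhood of $p$, and in either case the local first-order operation $\nabla$ returns zero. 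Consequently the difference equals $\sum_{i\in J_t}\rho_i X$, and summing over $t$ realises $X$ as a sum of $2r$ derivations applied to sections vanishing on $F$.

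The main obstacle is precisely this finiteness step: the naive per-chart construction only gives a locally finite sum, whereas the statement demands a finite one. The finite colouring supplied by a Palais cover, together with the pairwise disjointness of supports inside each colour, is exactly what allows the per-chart identities to be merged via the $\Ci(M,\K)$-linearity of $\nabla$ in its vector-field argument, giving one identity per colour and hence the required finite decomposition.
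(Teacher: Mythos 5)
Your argument is correct in its essential mechanism but takes a genuinely different route from the paper. The paper works chart by chart: it writes $(\rho_i X)^{\psi_i}$ as the first coordinate derivative $\partial_{(\xi_i)_1}(X^i)^{\psi_i}$ of a local antiderivative $X^i$, uses that $\partial_{(\xi_i)_1}$ is a derivation of the pointwise bracket (Remark \ref{dirderi}), and then merges over the colours of a Palais cover of $M$ refining $\set{U, M\backslash F}$; the vanishing of $X_t$ on $F$ comes from taking $X^i\equiv 0$ on the charts contained in $U$. You instead invoke a Lie connection $\nabla$ and the identity $(Y.g)A=\nabla_Y(gA)-\nabla_{gY}A$, which recovers $\rho_iX$ as $\nabla_{Y_i}(g_i\rho_iX)-\nabla_{g_iY_i}(\rho_iX)$, a difference of two \emph{globally} defined derivations applied to sections supported away from $F$. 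Your version is arguably cleaner on the globalisation of the derivations (the paper's $D^i$ and $X^i$ live a priori only over $V_i$ and their extension to $M$ is left implicit), at the cost of invoking the existence of Lie connections; the colour-merging step via the Palais partition is the same in spirit in both proofs.

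One technical point needs repair. You take the Palais cover of the open submanifold $W=M\backslash F$, so the family $(V_i)_{i\in J}$ is locally finite only in $W$ and may accumulate at $\partial F$. The sections $Z^t_1,Z^t_2$ are still well defined, because each summand is supported in $\supp X\sbs M\backslash U$ and the open set $U\sps F$ meets none of these supports; but the vector fields $\mathcal Y^t=\sum_{i\in J_t}Y_i$ and $\mathcal Y'^t=\sum_{i\in J_t}g_iY_i$ carry no such factor, and a sum of bumps $\alpha_i$ supported in pairwise disjoint charts accumulating at a point of $\partial F$ need not even be continuous there. The fix is immediate: take the Palais cover of all of $M$, refining $\set{U,M\backslash F}$ (as the paper does), discard the charts contained in $U$ (on which $\rho_iX=0$), and run your construction on the remaining ones; local finiteness in $M$ then makes every sum a smooth object, and the input sections still vanish on $F$ since they are supported in $\supp X\sbs M\backslash U$.
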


\begin{proof}
Let $F$ be a compact $x$-neigbourhood contained in $U$. Since $M$ is paracompact, there exists a Palais cover $\left(\left(V_i, \psi_i, \xi_i, \rho_i\right)_{i \in J}, (J_t)_{t=1}^r\right)$ refining the open cover $\set{U,M \backslash F}$ of $M$ (cf. Definition \ref{Palais} and Theorem \ref{Palaisexi}).
For any $i \in J$ and $Y \in \Gamma\left(\pi^{-1}(V_i)\right)$ recall that $Y^{\psi_i}: \R^m \sps \xi_i(V_i) \to \k$ denotes the mapping such that the following diagram commutes:
\kD{
**[l]\L \sps \pi^{-1}(V_i) \ar[r]^{(\xi_i \circ \pi, \psi_i)} &  **[r] \xi_i(V_i) \times \k \\
**[l]M \sps V_i \ar[r]_{\xi_i} \ar[u]^{Y} 							& **[r] \xi_i(V_i) \ar[u]_{\left(\id,Y^{\psi_i}\right)}.
}
The Lie bracket on $\Gamma\left(\pi^{-1}(V_i)\right)$ corresponds to the natural pointwise Lie bracket on $\Ci(\xi_i(V_i),\k)$, i.e. $\lie{Y,Z}^{\psi_i} =[Y^{\psi_i}, Z^{\psi_i}]$ for $Y,Z \in \Gamma\left(\pi^{-1}(V_i)\right)$, for all $i \in J$. We set $Y:=\rho_i X$ and choose a section $X^i \in \Gamma\left(\pi^{-1}(V_i)\right)$ by claiming $\partial_{(\xi_i)_1} (X^i)^{\psi_i} = Y^{\psi_i}$. With $D^i: \Gamma\left(\pi^{-1}(V_i)\right) \to \Gamma\left(\pi^{-1}(V_i)\right)$ defined by $(D^i)^{\psi_i}:=\partial_{(\xi_i)_1}$ one obtains
\es{
\rho_i X = Y = D^i(X^i).
}
Note that in the case of $\supp(\rho_i) \sbs U$ we can choose $X^i \equiv 0$ because $X_{|U} \equiv 0$ and each $V_i$ is a subset of exactly one, $U$ or $M\backslash F$. By setting $D_t := \sum_{i \in J_t} D^i$ and $X_t := \sum_{i \in J_t} X^i$ we obtain a well-defined derivation $D_t$ and a well-defined section $X_t$ for any $t \in \set{1, \ldots , r}$ and
\es{
\sum_{t=1}^r D_t(X_t) = \sum_{t=1}^r \left( \sum_{i \in J_t} D^i \left(\sum_{i \in J_t} X^i \right) \right) = \sum_{t=1}^r \left(\sum_{i\in J_t} \rho_i X \right)=X.
}
\end{proof}

\begin{thm}\label{centroidtheorem1}
Any endomorphism $T: \Gamma(\L) \to \Gamma(\L)$ with $T \circ \Der(\Gamma(\L)) \sbs \Der(\Gamma(\L))$ is a differential operator of order zero and can be identified with a smooth section of $\Hom(\L,\L)$.
\end{thm}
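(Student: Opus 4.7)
The plan is to reduce the theorem to showing that $T$ is a local operator. Once locality is established, the Peetre Theorem (Theorem \ref{PseudoPeetre}) applied in the case $k_1 = k_2 = \infty$ forces $T$ to be a differential operator of order zero, and Remark \ref{diffopsection} then identifies $T$ with a smooth section of $\Hom(\L,\L)$. Both of these steps are already available to us, so essentially everything reduces to proving locality.

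To prove locality, I would fix $X \in \Gamma(\L)$ vanishing on an open set $U \sbs M$ and pick an arbitrary $x \in U$; the goal is to show $(TX)(x) = 0$. The key input is the preceding technical Lemma \ref{centroidtheorem1lemma}: it supplies a compact $x$-neighbourhood $F \sbs U$ together with derivations $D_1, \ldots, D_r \in \Der(\Gamma(\L))$ and sections $X_1, \ldots, X_r \in \Gamma(\L)$ with $(X_t)_{|F} \equiv 0$ such that
\[
X = \sum_{t=1}^r D_t(X_t).
\]
Applying $T$ and using linearity gives
\[
TX = \sum_{t=1}^r (T \circ D_t)(X_t),
\]
and the hypothesis $T \circ \Der(\Gamma(\L)) \sbs \Der(\Gamma(\L))$ guarantees that each $T \circ D_t$ is again a derivation of $\Gamma(\L)$.

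Now I invoke the fact, established in the first step of the proof of the earlier derivation theorem (under the standing assumption that $\k$ is perfect or centerfree), that every derivation of $\Gamma(\L)$ is automatically a local operator. Applied to the derivation $T \circ D_t$ and the section $X_t$, which vanishes on the open set $F^\circ$, this yields $(T \circ D_t)(X_t)_{|F^\circ} \equiv 0$. Summing over $t$ gives $(TX)(x) = 0$, and since $x \in U$ was arbitrary we conclude $(TX)_{|U} \equiv 0$, i.e.\ $T$ is local.

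The main obstacle is really compressed into the preceding Lemma \ref{centroidtheorem1lemma}, whose role is precisely to rewrite a locally vanishing section as a sum of derivations applied to sections vanishing on a smaller neighbourhood; once that rewriting is available, the argument is a short manipulation using only the hypothesis on $T$ and the locality of derivations already proved. No further analytic input is needed before quoting the Peetre Theorem.
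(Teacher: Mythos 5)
Your locality argument is exactly the paper's: Lemma \ref{centroidtheorem1lemma} writes $X=\sum_t D_t(X_t)$ with each $X_t$ vanishing on a neighbourhood of $x$, the hypothesis puts $T\circ D_t$ back into $\Der(\Gamma(\L))$, and locality of derivations (via their identification with sections of $\D(\L)$) kills each term at $x$. That half is fine.

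The gap is in the final step. Theorem \ref{PseudoPeetre} is stated only for $k_1,k_2\in\N$, i.e.\ for \emph{finite} differentiability classes; it does not apply with $k_1=k_2=\infty$, and it cannot, because a local linear operator on \emph{smooth} sections need not have order zero --- any $\nabla_{\d_{\xi_i}}$, or more generally any derivation of $\Gamma(\L)$, is local and typically of order $1$. For $\Gamma(\L)=\Gamma^\infty(\L)$ locality only gives you Theorem \ref{Peetre} (Version 1), i.e.\ that $T$ is locally a differential operator of \emph{some} finite order $n$. Forcing $n=0$ requires a second, separate use of the hypothesis $T\circ\Der(\Gamma(\L))\sbs\Der(\Gamma(\L))$: since every derivation is a differential operator of order at most $1$ (Theorem \ref{dertheo}), so is every $T\circ D$; if $T$ had a nonvanishing top-order coefficient $f_{\alpha'}$ with $\abs{\alpha'}=n>0$ at some point $x'$, one takes $D^{\ph}=\d_{\xi_1}$ and $X^{\ph}=\xi_1\cdot\prod_i\xi_i^{\alpha'_i}\cdot u$, so that $j^1_{x'}(X)=0$ yet $(T(DX))^{\ph}(x')=f_{\alpha'}(x')(\alpha'!\,u)\ne 0$, contradicting that $T\circ D$ has order at most $1$. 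This order-reduction argument is the substantive second half of the paper's proof and is missing from your proposal; without it the conclusion does not follow.
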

\begin{proof}
We want to show that $T$ is local. Let $X \in \Gamma(\L)$ be zero on an open set $U \sbs M$. Lemma \ref{centroidtheorem1lemma} yields that there exist $D_1, \ldots , D_r \in \Der(\Gamma(\L))$ and $X_1, \ldots , X_r \in \Gamma(\L)$, each $X_t$ zero on an $x$-neighbouhood $F$, such that $X=\sum_{t=1}^r D_t(X_t)$. We obtain
\begin{equation}\label{TAnull}
TX_{|F} = \sum_{t=1}^r \left((T \circ D_t)(X_t)\right)_{|F} = \sum_{t=1}^r 0 = 0
\end{equation}
because any  $D \in \Der(\Gamma(\L))$ can be identified with a section of $\D(\L)$ by $D \mapsto X_D$, where $X_D(x):= \ev_x \circ D$, and hence is local. By \eqref{TAnull} we conclude that $T$ is local.

Theorem \ref{Peetre} yields that $T$ is a differential operator. By Theorem \ref{dertheo}, any derivation $D \in \Der(\Gamma(\L))$ is a differential operator of order at most one and so is any $T \circ D$. If $T$ were of order $n > 0$, then there would be a bundle chart $(U,\ph)$ with corresponding chart $(U,\xi)$ of $M$ and functions $f_{\alpha} \in \Ci\left(U, \End(\k) \right)$, $\alpha \in \N^m$, $\abs{\alpha} \leq n$, such that for all $X\in \Gamma\left(\pi^{-1}(U)\right)$ we would have
\es{
(TX)^{\ph}=\sum_{\abs{\alpha} \leq n} f_{\alpha} \cdot \left(\partial_{\xi}^{\alpha}X^{\ph}\right)
}
and there would be $x' \in U$, $u \in \k$ and $\alpha'=(\alpha'_1, \ldots , \alpha'_m) \in \N^m$ with $\abs{\alpha'} = n$ such that $f_{\alpha'}(x')(u) \ne 0$. We may assume, without loss of generality, that $\xi(x')=0$. By defining $D \in \Der\left(\Gamma\left(\pi^{-1}(U)\right)\right)$ by $D^{\ph}:=\d_{\xi_1}$ and $X \in \Gamma\left(\pi^{-1}(U)\right)$ by $X^{\ph}(x):=\xi_1(x) \cdot \prod_{i=1}^m\xi_i(x)^{\alpha'_i}  \cdot u$ we would obtain
\es{
(T(DX))^{\ph}
= \sum_{\abs{\alpha} \leq n} f_{\alpha} \cdot \left(\partial_{\xi}^{\alpha}(DX)^{\ph}\right)
= \sum_{\abs{\alpha} \leq n} f_{\alpha} \cdot \underbrace{\left(\partial_{\xi}^{\alpha}\d_{\xi_1} X^{\ph}\right)}_{= 0 \text{ in } x' \text{, if }\alpha \ne \alpha'}\\
\implies (T(DX))^{\ph}(x')= f_{\alpha'}(x') \left(\alpha' !  \cdot u \right) \ne 0,
}
contradicting to the facts that $T \circ D$ is a differential operator of order at most one and $j_{x'}^1(X)=0$. So $T$ is of order zero.
\end{proof}

\begin{thm}\label{Gammacentroid}
If $\k$ is perfect or centerfree, then $\Cent\left(\Gamma^k(\L)\right)\cong\Gamma^k\left(\Cent(\L)\right)$ as associative algebras.
\end{thm}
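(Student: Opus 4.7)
The plan is to mimic the structure of the derivation theorem: first prove that any element of $\Cent(\Gamma^k(\L))$ is a local operator, then apply a Peetre-type result to identify it with a section of $\Hom(\L,\L)$, and finally check that this section is valued in $\Cent(\L) \sbs \Hom(\L,\L)$ and that the correspondence is an algebra isomorphism.

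The first step splits into the two standing hypotheses on $\k$. Fix $T \in \Cent(\Gamma^k(\L))$, a section $X \in \Gamma^k(\L)$ vanishing on an open set $U \sbs M$, and a point $x \in U$. Recall from the definition of the centroid that $T\lie{X,Y} = \lie{X,TY} = \lie{TX,Y}$ for all $Y$. If $\k$ is centerfree, choose $Y \in \Gamma^k(\L)$ with $\supp(Y) \sbs U$; then $\lie{X,Y} = 0$ everywhere, so $\lie{(TX)_x,Y_x} = 0$, and since $\ev_x$ is surjective by Lemma \ref{billigLemma}, this forces $(TX)_x \in \z(\L_x) = 0$. If instead $\k$ is perfect, then by Corollary \ref{perfectcoro} so is $\Gamma^k(\L)$, hence $X = \sum_{t=1}^r \lie{Y_t,Z_t}$. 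Choose a smooth $\rho: M \to [0,1]$ with $\rho \equiv 1$ on a neighborhood of $x$ and $\supp(\rho) \sbs U$; then $X = (1-\rho)^2 X = \sum_t \lie{(1-\rho)Y_t, (1-\rho)Z_t}$, and applying $T$ gives $(TX)_x = \sum_t \lie{(1-\rho)(x)\cdot Y_t(x), T((1-\rho)Z_t)(x)} = 0$. Either way, $T$ is local.

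The second step is to invoke Peetre. For finite $k$, Theorem \ref{PseudoPeetre} yields that a local operator $\Gamma^k(\L) \to \Gamma^k(\L)$ is a differential operator of order zero, and by Remark \ref{diffopsection} identifies with a $\Ck$-section $\tau: M \to \Hom(\L,\L)$ given by $\tau(x)(X_x) = (TX)_x$. For $k = \infty$ one can alternatively quote Theorem \ref{centroidtheorem1} directly, since $T \circ \Der(\Gamma(\L)) \sbs \Der(\Gamma(\L))$ by Lemma \ref{Liealglemma}.2. The pointwise centroid identity
\es{
\tau(x)\lie{X_x,Y_x} = (T\lie{X,Y})_x = \lie{X_x, (TY)_x} = \lie{X_x, \tau(x)(Y_x)}
}
together with the surjectivity of $\ev_x$ shows $\tau(x) \in \Cent(\L_x)$ for every $x$, so $\tau \in \Gamma^k(\Cent(\L))$.

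Finally, the inverse map sends $\tau \in \Gamma^k(\Cent(\L))$ to the operator $T_\tau$ defined by $(T_\tau X)(x) := \tau(x)(X(x))$, which clearly lies in $\Cent(\Gamma^k(\L))$ and is inverse to $T \mapsto \tau$. Composition is pointwise, so $(T_1 T_2)(X)(x) = \tau_1(x)\bigl(\tau_2(x)(X(x))\bigr)$, meaning the bijection is an isomorphism of associative algebras. The main technical point is the locality step, which requires the two-case split according to whether $\k$ is perfect or centerfree; the rest is a direct transcription via Peetre and Lemma \ref{billigLemma}.
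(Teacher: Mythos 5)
Your proposal is correct and follows essentially the same route as the paper: locality via the perfect/centerfree case split, Theorem \ref{PseudoPeetre} for finite $k$ and Theorem \ref{centroidtheorem1} (via Lemma \ref{Liealglemma}.2) for $k=\infty$, then the pointwise identification with a section of $\Cent(\L)$ and the explicit inverse. The only cosmetic differences are that the paper's centerfree case uses $\lie{(TX)_x,Y_x}=\lie{X_x,(TY)_x}=0$ for arbitrary $Y$ without the support restriction, and its perfect case needs only a single factor of $(1-\rho)$ placed on one side of the bracket, since $T$ (unlike a derivation) passes entirely to one argument.
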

\begin{proof}
In the smooth case we use Lemma \ref{Liealglemma}.2 and Theorem \ref{centroidtheorem1} to see that all $T \in \Cent(\Gamma(\L))$ identify with smooth sections of $\Hom(\L,\L)$.

For $k \in \N$ we will firstly show that any $A \in \Cent(\Gamma^k(\L))$ is local: Let $X \in \Gamma^k(\L)$ be zero on an open set $U \sbs M$.
\begin{itemize}
\item Suppose $\z(\k)=0$. Then for all $x \in U$ and $Y \in \Gamma^k(\L)$ we have:
\es{
\lie{(AX)_x, Y_x} = \lie{X_x,(AY)_x} = 0.
}
This implies $(AX)_x \in \z(\L_x) = 0$ for all $x \in U$.
\item Suppose $\lie{\k,\k}=\k$. Then, by Corollary \ref{perfectcoro}, the Lie algebra $\Gamma^k(\L)$ is also perfect. So there exist sections $Y_1, Z_1, \ldots , Y_r, Z_r \in \Gamma^k(\L)$ such that $X=\sum_{t=1}^r \lie{Y_t,Z_t}$. Now, if $x \in U$ then there is an $x$-neigbourhood $W \sbs U$ and a smooth map $\rho: M \to [0,1]$ such that $\rho_{|W} \equiv 1$ and $\rho_{M\backslash U} \equiv 0$. We set $X':=(1-\rho) \cdot X$, thus $X = X'$ on $M$ and we obtain $X= \sum_{t=1}^r \lie{Y_t, (1-\rho) \cdot  Z_t}$, yielding
\es{
(AX)_x = \sum_{t=1}^r \lie{(A Y_t)(x),(1-\rho)(x) \cdot Z_t(x)} = 0.
}
\end{itemize}
In both cases we have $(AX)_{|U} \equiv 0$, thus $A$ is local. So we can use Theorem \ref{PseudoPeetre} to see that all $T \in \Cent(\Gamma^k(\L))$ identify with $\Ck$-sections of $\Hom(\L,\L)$.

We know now that, for $k \in \Ni$, there is a linear map $\Psi: \Cent\left(\Gamma^k(\L)\right) \to \Gamma^k\left(\Hom(\L,\L)\right)$ well-defined by $\Psi(A)(X_x):=(AX)_x$ for $X \in \Gamma^k(\L)$, $x \in M$. Since we have
\es{
\Psi(A)\lie{X_x,Y_x}=(A\lie{X,Y})_x = (\lie{AX,Y})_x  = \lie{\Psi(A)(X_x),Y_x}
} for $X,Y \in \Gamma^k(\L)$ and $x \in M$, the image of $\Psi$ is contained in $\Gamma^k(\Cent(\L))$. Furthermore, the map $\Psi$ is compatible with the composition of functions on $\Cent\left(\Gamma^k(\L)\right)$ and $\Gamma^k(\Cent(\L))$:
\es{
\Psi(A \circ A')(X_x) = \left((A \circ A')X\right)_x =  \left(A(A'X)\right)_x = (\Psi(A) \circ \Psi(A'))(X_x).
}
Finally, the function $\Phi: \Gamma^k(\Cent(\L)) \to \Cent\left(\Gamma^k(\L)\right)$ defined by $\left(\Phi\left(\mathfrak{A}\right)(X)\right)_x:=\mathfrak{A}_x(X_x)$ for a section $\mathfrak{A} \in  \Gamma^k(\Cent(\L))$, $X \in \Gamma^k(\L)$, $x \in M$, is the inverse of $\Psi$. We conclude that $\Psi$ is an isomorphism of associative algebras.
\end{proof}

We can now deduce an important structure theorem.

\begin{thm}\label{Gammaindecomposable}
Let $\k$ be indecomposable. Then $\Gamma^k(\L)$ is so, if $\Hom(\k/\lie{\k,\k},\z(\k))=0$.
\end{thm}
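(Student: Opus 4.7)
The plan is to translate the problem about $\Gamma^k(\L)$ being indecomposable into a problem about idempotents in its centroid, exploit the isomorphism from Theorem \ref{Gammacentroid} to move to pointwise idempotents in $\Cent(\k)$, and then use the indecomposability of $\k$ together with connectedness of $M$ to force such idempotents to be trivial.

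More concretely, suppose $\Gamma^k(\L) = \A \oplus \B$ is a decomposition into two ideals, with corresponding orthogonal projection $p_\A \in \Cent(\Gamma^k(\L))$ (the argument from Lemma \ref{indecomposablemma} that projections onto ideals sit in the centroid is purely algebraic and carries over verbatim to the infinite-dimensional setting). The hypothesis $\Hom(\k/\lie{\k,\k},\z(\k))=0$ says exactly that $\k$ is perfect or centerfree, so Theorem \ref{Gammacentroid} yields $\Cent(\Gamma^k(\L))\cong \Gamma^k(\Cent(\L))$ as associative algebras; since the isomorphism is given pointwise ($\Psi(A)(x)(X_x)=(AX)_x$) and composition on the right-hand side is pointwise composition, idempotents correspond to sections $P\in \Gamma^k(\Cent(\L))$ such that $P(x)\in \Cent(\L_x)\cong \Cent(\k)$ is an idempotent for every $x\in M$.

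Next, I would check that the only idempotents in $\Cent(\k)$ are $0$ and $\1$. Since $\k$ is perfect or centerfree, $\Cent(\k)$ is abelian by Lemma \ref{Liealglemma}.3, so the Jordan decomposition gives $\Cent(\k)=\operatorname{N}(\k)\oplus \operatorname{S}(\k)$; as any idempotent is semisimple, it lies in $\operatorname{S}(\k)$. By Lemma \ref{indecomplexreallemma}, indecomposability of $\k$ forces $\operatorname{S}(\k)=\K\cdot\1$ in the complex case, and either $\R\cdot\1$ or $\R\cdot\1 + \R\cdot J$ (with $J^2=-\1$) in the real case. A direct computation in each case (in the real complex-structure case, solving $(a\1+bJ)^2=a\1+bJ$ yields $a^2-b^2=a$, $2ab=b$, whose only real solutions are $(a,b)=(0,0)$ and $(1,0)$) shows the only idempotents are $0$ and $\1$.

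Finally, the closed sets $\{x\in M : P(x)=0\}$ and $\{x\in M : P(x)=\1\}$ partition $M$, because at every point $P(x)$ must take one of these two values. Since $M$ is connected, one of them is all of $M$, so $P\equiv 0$ or $P\equiv \1$, and correspondingly $\A=0$ or $\B=0$. This shows $\Gamma^k(\L)$ is indecomposable. The main subtlety lies in the last step of knowing that the only pointwise idempotents in $\Cent(\k)$ are scalar, especially in the real case with a compatible complex structure; once Lemma \ref{indecomplexreallemma} is applied, the rest is a continuity and connectedness argument.
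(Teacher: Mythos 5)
Your proof is correct and follows essentially the same route as the paper: identify the projection onto a direct summand with an idempotent section of $\Cent(\L)$ via Theorem \ref{Gammacentroid}, show each pointwise value must be $0$ or $\1_x$, and conclude by connectedness of $M$. The only difference is cosmetic: where you classify idempotents by computing inside $\operatorname{S}(\k)$ via Lemma \ref{indecomplexreallemma}, the paper observes directly that a nontrivial idempotent of $\Cent(\L_x)$ would split the indecomposable fibre $\L_x\cong\k$ into a direct sum of nonzero ideals.
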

\begin{proof}
Let $\Gamma^k(\L)=I_1 \oplus I_2$ be a decomposition into a direct sum of ideals and $P^1$ be the projection onto $I_1$ parallel to $I_2$. Note that $P^1 \in \Cent(\Gamma^k(\L)) = \Gamma^k(\Cent(\L))$ because for any $X,Y \in \Gamma^k(\L)$ decomposing into $X_1,Y_1 \in I_1$ and $X_2,Y_2 \in I_2$ we have:
\begin{equation}\label{projtheta}
P^1\lie{X,Y}=P^1(\lie{X_1,Y_1}+\lie{X_2,Y_2})=\lie{X_1,Y_1}=\lie{X_1+X_2,Y_1}=\lie{X,P^1Y}.
\end{equation}
Since $P^1 \circ P^1 = P^1$ on $\Gamma^k(\L)$, each $P^1_x \in \Cent(\L_x)$, where $x \in M$, is a projection in $\Cent(\L_x)$ and, by indecomposability of $\L_x \cong \k$, this implies $P^1_x = 0$ or $P^1_x = \1_x$. The map $M \to \Cent(\L_x) \sbs \Cent(\L)$, $x \mapsto P^1_x$  is continuous and for each bundle atlas $(U_i,\ph_i)_{i \in I}$, where each $U_i$ is connected, the mapping $x \longmapsto \ph_i \circ P^1_x$ has discrete image. But all this is only possible if $P^1=0$ or $P^1=\1$, i.e. $I_1=0$ or $I_2=0$. So $\Gamma^k(\L)$ is indecomposable.
\end{proof}

\begin{rem}
If $\Hom(\k/\lie{\k,\k}, \z(\k))=0$, then, by Lemma \ref{Liealglemma}.3, we have $\Cent(\k) = \operatorname{N}(\k) \oplus \operatorname{S}(\k)$ and, since $\Aut(\k)$ acts on $\operatorname{N}(\k)$ and $\operatorname{S}(\k)$ by conjugation, 
 we also have the following decomposition: $\Gamma^k(\Cent(\L)) = \Gamma^k(\operatorname{N}(\L)) \oplus \Gamma^k(\operatorname{S}(\L))$, where $\operatorname{N}(\L)$, $\operatorname{S}(\L)$ denote the corresponding subbundles $\L(\operatorname{N}(\k))$, $\L(\operatorname{S}(\k))$ of $\Cent(\L)=\L(\Cent(\k))$ (cf. Definition \ref{specialsubbundles}.2), respectively.
Consider the case $\operatorname{S}(\k) = \K \cdot \1$. Then
$\operatorname{S}(\L)$ is a bundle over $M$ with fiber $\K \cdot \1$ and, for any $x \in M$, the fiber $\operatorname{S}(\L)_x$ is naturally isomorphic to $\operatorname{S}(\L_x)$. Thus any section $X \in \Gamma^k\left(\operatorname{S}(\L)\right)$ takes the form $M \to \operatorname{S}(\L)$, $x\mapsto X_x = f(x) \1_x$ for some $f \in \operatorname{C}^k(M,\K)$, hence:
\begin{equation}\label{SGammakL}
\Gamma^k(\operatorname{S}(\L)) = \operatorname{C}^k(M,\K) \cdot \1.
\end{equation} \end{rem}

\begin{rem}
Let $\k$ be reductive and $\dim \z(\g) \le 1$, e.g., $\k= \gl_n(\K)$ or $\Hom(\k / \lie{\k,\k}, \z(\k)) = 0$.
If $\k= \bigoplus_{i=1}^n \k_i$ is a decomposition into a direct sum of indecomposable non-zero ideals, then we define $\operatorname{H}(\k):=\bigcap_{i=1}^n \set{\left. f \in \Aut(\k) \right| f(\k_i) = \k_i}$, the normal subgroup of $\Aut(\k)$ stabilizing this decomposition. This is well-defined because the decomposition into a direct sum of indecomposable non-zero ideals is, by Proposition \ref{uniquedecompo}, unique except for the order. We denote the quotient group $\Aut(\k) / \operatorname{H}(\k)$ by $\operatorname{G}(\k)$.

Let $\norm{\cdot}: \End(\k) \to \R_{+}$ be the Frobenius norm on $\End(\k)$ \wrt a basis $(v_1, \ldots , v_d)$ corresponding to the decomposition $\k = \bigoplus_{i=1}^n \k_i$, i.e. $\norm{f}:=\sqrt{\sum_{i,j=1}^d a_{ij}^2}$, where $f(v_j)=a_{ij} v_i$ for $i,j \in \set{1, \ldots , d}$. This norm induces, by $\dim \k = d < \infty$, the compact-open topology. So the subspace $\Aut(\k) \sbs \End(\k)$ becomes a topological group.

By Remark \ref{Hopen}, the matrices corresponding to the automorphisms in $\Aut(\k)$ are ``permutated block diagonal matrices''.  Therefore, if $f \in \operatorname{H}(\k)$ with $f(v_j)=a_{ij} v_i$ for $i,j \in \set{1, \ldots , d}$ and we set $\ep:= 0.5 \cdot \min_{i,j \in \set{1, \ldots , d}} a_{ij}$, then the $\ep$-ball around $f$ in $\Aut(\k)$ \wrt $\norm{\cdot}$ is entirely contained in $\operatorname{H}(\k)$. So $\operatorname{H}(\k) \sbs \Aut(\k)$ is open and $\operatorname{G}(\k)$ equipped with the corresponding quotient topology is discrete.
\end{rem}

The following general theorem about first \v Cech cohomology sets is Theorem V.5 of \cite{Ne04}. We will use it to show Lemma \ref{firstCechlemma} which will be useful for two structure theorems.

\begin{thm}\label{NeebsCechTheorem}
If $M$ is a smooth manifold and $q: A \to G$ is a smooth and surjective morphism of Lie groups with kernel $H$, then there exists an exact sequence of morphisms of pointed sets as follows:
\es{
1 \lra \Ci(M,H) \lra \Ci(M,A) \lra \Ci(M,G) \lra \Hc^1(M,H) \lra \Hc^1(M,A) \lra \Hc^1(M,G).
}
\end{thm}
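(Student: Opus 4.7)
The plan is to construct the five nontrivial arrows in the sequence and then verify, at each of the four interior nodes, that the image of the incoming map equals the preimage of the base point under the outgoing one. The two maps $\Ci(M,H)\to\Ci(M,A)$ and $\Ci(M,A)\to\Ci(M,G)$ are induced pointwise by the Lie group inclusion $H\hookrightarrow A$ and the projection $q$; the two maps $\Hc^1(M,H)\to\Hc^1(M,A)\to\Hc^1(M,G)$ are induced on cocycles by postcomposition with these same homomorphisms and passing to the direct limit over covers.

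The essential construction is the connecting map $\delta:\Ci(M,G)\to\Hc^1(M,H)$. The crucial analytic input is that a smooth surjective morphism $q:A\to G$ of Lie groups with kernel $H$ makes $A$ a smooth principal $H$-bundle over $G$: one picks a smooth local section of $q$ near $1\in G$ and translates. Consequently every $g\in\Ci(M,G)$ admits, after passing to a suitable open cover $\V=(V_i)_{i\in I}$ of $M$, smooth lifts $a_i\in\Ci(V_i,A)$ with $q\circ a_i=g|_{V_i}$. On double intersections the elements $h_{ij}:=a_i\cdot a_j^{-1}$ satisfy $q(h_{ij})=1$ and therefore take values in $H$; they form a cocycle tautologically, and any two choices of lifts $a_i,a_i'$ differ by smooth $H$-valued functions $k_i:=a_i'\cdot a_i^{-1}$, producing a cohomologous cocycle $k_i h_{ij} k_j^{-1}$. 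Thus $\delta(g)\in\Hc^1(M,H)$ is well-defined independently of all choices.

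Exactness at $\Ci(M,A)$ is the tautology $\ker(q_*)=\Ci(M,H)$. Exactness at $\Ci(M,G)$: $\delta(g)=1$ iff the cocycle $(h_{ij})$ is a coboundary, i.e.\ there exist $k_i\in\Ci(V_i,H)$ with $h_{ij}=k_i k_j^{-1}$, iff the elements $k_i^{-1} a_i$ agree on overlaps and glue to a global $\tilde a\in\Ci(M,A)$ with $q\circ\tilde a=g$. Exactness at $\Hc^1(M,H)$: a class $[(h_{ij})]$ has trivial image in $\Hc^1(M,A)$ iff there exist $a_i\in\Ci(V_i,A)$ with $h_{ij}=a_i a_j^{-1}$, and then the $q\circ a_i$ agree on overlaps, glue to some $g\in\Ci(M,G)$, and one reads off $\delta(g)=[(h_{ij})]$. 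Exactness at $\Hc^1(M,A)$: if $[(a_{ij})]$ has trivial image in $\Hc^1(M,G)$, then after refining the cover there are $g_i\in\Ci(V_i,G)$ with $q\circ a_{ij}=g_i g_j^{-1}$; lifting each $g_i$ locally to $\tilde a_i\in\Ci(V_i,A)$ (refining again if necessary) produces a cocycle $\tilde a_i^{-1}\cdot a_{ij}\cdot\tilde a_j\in\Ci(V_i\cap V_j,H)$ cohomologous in $\Hc^1(M,A)$ to $(a_{ij})$, exhibiting the required preimage under $\Hc^1(M,H)\to\Hc^1(M,A)$.

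The main obstacle is bookkeeping: covers arising in different parts of the argument must be refined to a common cover before cocycles can legitimately be compared, and in the non-abelian setting the coboundary relation is the twisted one $h_{ij}'=k_i h_{ij} k_j^{-1}$ rather than a pointwise quotient, so each exactness step requires an explicit check that modifications made on the $A$-level project cleanly to the $H$-level and vice versa. Once everything is pulled back to a sufficiently fine common refinement, each verification reduces to a short calculation in $A$ using only the relation $q(h)=1$ for $h\in H$ and the cocycle identity $g_{ji}g_{ik}g_{kj}=1$, which is exactly the combinatorial content one expects of a connecting homomorphism in non-abelian \v Cech cohomology.
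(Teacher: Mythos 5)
The paper does not actually prove this statement: it is quoted verbatim as Theorem V.5 of the cited manuscript [Ne04] and used as a black box to derive Lemma \ref{firstCechlemma}, so there is no in-paper argument to compare yours against. Your proposal supplies the standard proof of the six-term exact sequence in non-abelian \v Cech cohomology, and it is essentially correct: the connecting map $\delta$ via local lifts $a_i$ of $g$ and the $H$-valued differences $h_{ij}=a_i a_j^{-1}$, the verification that a change of lifts produces the twisted coboundary $k_i h_{ij} k_j^{-1}$ (matching the paper's convention $k_{VU}=h_V\, g_{VU}\, h_U^{-1}$), and the four exactness checks are all carried out with the right computations. The one point you assert rather than prove is the crucial analytic input, namely that $q:A\to G$ admits smooth local sections so that local lifts exist at all. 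For the finite-dimensional, second-countable Lie groups considered here this is a standard fact (a homomorphism has constant rank by translation, and Sard's theorem forces a surjective one to be a submersion, whence $A\to G$ is a locally trivial $H$-principal bundle), but since it is the only place where smoothness and the Lie group structure genuinely enter, it deserves at least a sentence of justification rather than a parenthetical. Two further small points worth making explicit: exactness at the first node $\Ci(M,H)$ (injectivity of the inclusion-induced map) should be recorded since the sequence starts with $1$, and when you invoke "after refining the cover" in the last two exactness checks you are implicitly using that the direct limit defining $\Hc^1(M,\cdot)$ is independent of the choice of refinement maps, which the paper itself only flags in a footnote. None of these affects the validity of the argument.
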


\begin{lem}\label{firstCechlemma}
Let $A$ be a Lie group, $H \ide A$ open and $G:=A/H$ such that $\Hc^1(M,G)=1$. Then $\Hc^1(M,A)\cong\Hc^1(M,H)$.
\end{lem}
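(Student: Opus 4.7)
The plan is to apply Theorem~\ref{NeebsCechTheorem} to the canonical short exact sequence of Lie groups $1 \to H \to A \xrightarrow{q} G \to 1$. First I would check that the hypotheses of that theorem are satisfied in our situation: since $H$ is an open normal subgroup of the Lie group $A$, the quotient $G = A/H$ carries the structure of a discrete Lie group, and the canonical projection $q$ is automatically smooth and surjective with $\ker q = H$.

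Applying Theorem~\ref{NeebsCechTheorem} will then produce the exact sequence of pointed sets
\[
1 \to C^{\infty}(M,H) \to C^{\infty}(M,A) \to C^{\infty}(M,G) \xrightarrow{\;\delta\;} \check H^{1}(M,H) \xrightarrow{\;\iota_{*}\;} \check H^{1}(M,A) \to \check H^{1}(M,G),
\]
where $\iota_{*}$ is the map induced by the inclusion $H \hookrightarrow A$. Surjectivity of $\iota_{*}$ follows immediately from the hypothesis $\check H^{1}(M,G) = 1$ combined with exactness at $\check H^{1}(M,A)$.

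For the triviality of the kernel of $\iota_{*}$, I would show that the connecting map $\delta$ is the zero map. This is where the discreteness of $G$ and the connectedness of $M$ (a standing convention throughout the paper) become decisive: every smooth map $M \to G$ is locally constant, hence constant on the connected manifold $M$, and any constant value $g \in G$ is lifted by the smooth constant function with value any chosen $a \in q^{-1}(g) \subseteq A$. Therefore the map $C^{\infty}(M,A) \to C^{\infty}(M,G)$ is surjective, and exactness at $C^{\infty}(M,G)$ forces $\delta = 0$. Exactness at $\check H^{1}(M,H)$ then reads $\ker \iota_{*} = \operatorname{im}\delta = \{1\}$.

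Combining the two steps, $\iota_{*}: \check H^{1}(M,H) \to \check H^{1}(M,A)$ is a surjection of pointed sets with trivial kernel, which is the claimed isomorphism $\check H^{1}(M,A) \cong \check H^{1}(M,H)$. The main conceptual obstacle is that the two required pieces of exactness rest on different features of the setup: the surjectivity of $\iota_{*}$ genuinely uses the hypothesis $\check H^{1}(M,G) = 1$, while the triviality of the kernel is instead a consequence of the structural assumption that $H$ is open in $A$ (making $G$ discrete) together with the connectedness of $M$. Isolating these two assumptions and checking that each translates to the appropriate piece of exactness is the bulk of the argument; everything else is then a direct application of Theorem~\ref{NeebsCechTheorem}.
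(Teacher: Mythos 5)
Your first step coincides with the paper's entire proof: the paper deduces surjectivity of $\iota_*:\Hc^1(M,H) \to \Hc^1(M,A)$ from Theorem \ref{NeebsCechTheorem} together with the hypothesis $\Hc^1(M,G)=1$, and stops there. Your additional observation that the connecting map $\delta$ is trivial is correct and well argued: since $H$ is open, $G$ is discrete, so every smooth map $M \to G$ is locally constant, hence constant on the connected manifold $M$, and lifts to a constant map into $A$; therefore $\Ci(M,A)\to\Ci(M,G)$ is surjective and exactness at $\Ci(M,G)$ forces $\delta$ to hit only the basepoint.

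The gap is in your final inference. The sequence of Theorem \ref{NeebsCechTheorem} is an exact sequence of \emph{pointed sets}, not of groups, and for pointed sets a surjection with trivial kernel need not be injective: exactness at $\Hc^1(M,H)$ only identifies the fibre of $\iota_*$ over the distinguished point (the class of the trivial cocycle) with $\im\delta$; it gives no control over the fibres above other classes. Concretely, for a fixed $a \in A$ the $H$-valued cocycles $(g_{VU})$ and $(a\,g_{VU}\,a^{-1})$ are always cohomologous over $A$ (take $h_U \equiv a$), but when $a \notin H$ induces an outer automorphism of $H$ they need not be cohomologous over $H$; in general the nonempty fibres of $\iota_*$ are the orbits of this twisting action of $G \cong \Ci(M,G)$ on $\Hc^1(M,H)$, which your argument does not show to be trivial. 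So injectivity is not established (the paper's own one-line proof does not address it either). Note that the only consequences drawn from this lemma in the paper, in Theorem \ref{decotheo} and in the proposition on complex structures, use nothing beyond surjectivity, i.e.\ the existence of an $H$-valued cocycle representing a given class in $\Hc^1(M,A)$; that is exactly the part you and the paper both prove.
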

\begin{proof}
Since $\Hc^1(M,G)=1$, the map $\Hc^1(M,H) \to \Hc^1(M,A)$ is, by Theorem \ref{NeebsCechTheorem}, surjective.
\end{proof}

\begin{thm}\label{decotheo}
Let $\k= \bigoplus_{i=1}^n \k_i$ be a decomposition into a direct sum of indecomposable non-zero ideals of a perfect or centerfree \La $\k$ and let $\Hc^1(M,\operatorname{G}(\k))$ be trivial (e.g. $M$ simply connected).
Then there is a decomposition into a direct sum of indecomposable non-zero ideals $\Gamma^k(\L)=\bigoplus_{i=1}^n \Gamma^k(\L^i)$, where each $\pi_{|\L^i}: \L^i \to M$ is a subbundle of $\pi: \L \to M$ with fiber $\k_i$ and this decomposition is unique except for the order.
\end{thm}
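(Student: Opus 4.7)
The plan is to exploit the cohomological hypothesis in order to refine the cocycle of $\L$ so that it respects the decomposition of $\k$, then to build the subbundles $\L^i$ directly from this refined cocycle, and finally to invoke Theorem \ref{Gammaindecomposable} for indecomposability and the argument of Lemma \ref{indecomposablemma} for uniqueness. The main obstacle is the first step: recognizing that triviality of $\Hc^1(M, \operatorname{G}(\k))$ is precisely the cohomological condition needed to reduce the structure group of $\L$ from $\Aut(\k)$ to $\operatorname{H}(\k)$, thereby globalizing the fiberwise decomposition. Once this reduction is in place, the rest assembles from results already established in the paper.

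More precisely, the vector bundle $\pi: \L \to M$, being one with $\Aut(\k)$-valued transition maps, is classified by an element of $\Hc^1(M, \Aut(\k))$. The remark preceding the theorem shows that $\operatorname{H}(\k) \ide \Aut(\k)$ is open with discrete quotient $\operatorname{G}(\k)$, and since $\Hc^1(M, \operatorname{G}(\k)) = 1$ by hypothesis, Lemma \ref{firstCechlemma} yields $\Hc^1(M, \Aut(\k)) \cong \Hc^1(M, \operatorname{H}(\k))$. Passing to a cohomologous cocycle, one may therefore assume that every transition map $g_{ji}$ lies in $\operatorname{H}(\k)$, i.e.\ preserves the decomposition $\k = \bigoplus_{i=1}^n \k_i$. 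For each $i$ the restricted cocycle $(g_{ji}|_{\k_i})$ is then $\Aut(\k_i)$-valued and, by Proposition \ref{vectobundlebycocyles}, defines a bundle of Lie algebras $\L^i$ with fiber $\k_i$ that sits naturally as a subbundle of $\L$. Fiberwise $\L_x = \bigoplus_i \L^i_x$ with each $\L^i_x$ an ideal of $\L_x$, so $\Gamma^k(\L) = \bigoplus_{i=1}^n \Gamma^k(\L^i)$ as a direct sum of ideals, each non-zero since $\ev_x: \Gamma^k(\L^i) \to \L^i_x$ is surjective by Lemma \ref{billigLemma}. Because $\k$ is perfect (respectively centerfree) and $\lie{\k_i,\k_j} \sbs \k_i \cap \k_j = 0$ for $i \ne j$, each $\k_i$ is itself perfect (respectively centerfree); in either case $\Hom(\k_i / \lie{\k_i,\k_i}, \z(\k_i)) = 0$, and Theorem \ref{Gammaindecomposable} delivers the indecomposability of each $\Gamma^k(\L^i)$.

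For uniqueness, suppose $\Gamma^k(\L) = \bigoplus_{j=1}^m J_j$ is a second decomposition into indecomposable non-zero ideals with projections $Q^j$, and let $P^i$ denote the projections associated to $(\Gamma^k(\L^i))_i$. By Corollary \ref{perfectcoro} together with its analogue for the center, $\Gamma^k(\L)$ inherits being perfect or centerfree from $\k$, so Lemma \ref{Liealglemma} and Theorem \ref{Gammacentroid} ensure that $\Cent(\Gamma^k(\L)) = \Gamma^k(\Cent(\L))$ is abelian. The argument in the proof of Lemma \ref{indecomposablemma} then carries over verbatim: both families of projections lie in this abelian centroid and therefore commute pairwise, each $P^i$ preserves every $J_j$ and restricts there to a projection onto the ideal $J_j \cap \Gamma^k(\L^i)$ of $J_j$; indecomposability of $J_j$ forces this restriction to be $0$ or $\id_{J_j}$, and a symmetric dichotomy for the $Q^j$'s acting on $\Gamma^k(\L^i)$ produces the desired bijection between the two decompositions. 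The finite-dimensionality assumed in Lemma \ref{indecomposablemma} is used in its proof only for existence, not for uniqueness, so the transfer to our infinite-dimensional $\Gamma^k(\L)$ is immediate.
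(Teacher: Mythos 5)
Your proposal is correct and follows essentially the same route as the paper: Lemma \ref{firstCechlemma} to reduce the cocycle to $\operatorname{H}(\k)$, construction of the subbundles $\L^i$ from the restricted cocycle, Theorem \ref{Gammaindecomposable} for indecomposability of each $\Gamma^k(\L^i)$, and a Krull--Schmidt-style comparison of commuting projections in the abelian centroid (Lemma \ref{Liealglemma}.3) for uniqueness. The only cosmetic difference is that the paper verifies the commutation of the projections fiberwise via $\Cent(\Gamma^k(\L))=\Gamma^k(\Cent(\L))$, whereas you apply the abelianness of the centroid of $\Gamma^k(\L)$ directly; both are valid.
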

\begin{proof}
Lemma \ref{firstCechlemma} yields $\Hc^1(M,\Aut(\k))\cong\Hc^1(M,\operatorname{H}(\k))$, so the bundle $\pi: \L \to M$ admits a cocycle in $\operatorname{H}(\k)$ and we can construct the subbundles $\pi_{|\L^i}: \L^i \to M$ with fiber $\k_i$. We also have \es{\Gamma^k(\L)=\bigoplus_{i=1}^n \Gamma^k(\L^i),} which is a decomposition into a direct sum of indecomposable non-zero ideals because each $\k_i$ is indecomposable (cf. Theorem \ref{Gammaindecomposable}). We conclude by showing the uniqueness of this decomposition (except for the order).

Let $\Gamma^k(\L)=I_1 \oplus I_2$ be a decomposition into a direct sum of ideals and $P^1$ be the projection onto $I_1$ parallel to $I_2$ and for $i \in \set{1, \ldots , n}$ let $P_i$ be the projection onto $\Gamma^k(\L^i)$ associated to the decomposition $\Gamma^k(\L)=\bigoplus_{i=1}^n \Gamma^k(\L^i)$. Calculations totally analogous to \eqref{projtheta} show that $P^1$ and each $P_i$ are in $\Cent(\Gamma^k(\L))=\Gamma^k(\Cent(\L))$. For all $x \in M$ we have $\lie{P_i,P^1}(x)=\lie{P_i(x),P^1(x)}=0$ because $\Cent(\L_x)$ is abelian by $\Hom(\k / \lie{\k,\k} , \z(\k))=0$ and Lemma \ref{Liealglemma}.3. Therefore $P^1$ commutes with $P_i$ for any $i\in\set{1, \ldots, n}$. This implies $P^1\left(\Gamma^k(\L^i)\right) = P_i(I_1)$ and, due to the indecomposability of the ideals, $P^1_{|\Gamma^k(\L^i)} = 0$ or $P^1_{|\Gamma^k(\L^i)} = \1$ for any $i\in\set{1, \ldots, n}$, resulting in
\es{
I_1 = \bigoplus_{\ell=1}^t \Gamma^k\left(\L^{j_{\ell}}\right)
}
for some $1 \le j_1 < \ldots < j_t \le n$. Thus the decomposition $\Gamma^k(\L)=\bigoplus_{i=1}^n \Gamma^k(\L^i)$ is unique.
\end{proof}

A statement concerning complex structures of real Lie algebras of $\Ck$-sections is given by the following proposition.

\begin{prop}
Let $\K = \R$ and $\k$ be indecomposable with $\Hom(\k/\lie{\k,\k},\z(\k))=0$.
Then $\Gamma^k(\L)$ admits at most two complex structures and, if $\Hc^1(M,\Z/2\Z)\cong \Hom(\pi_1(M), \Z / 2\Z)$ is trivial (e.g. $M$ simply connected), then $\Gamma^k(\L)$ admits a complex structure \iff $\k$ does.
\end{prop}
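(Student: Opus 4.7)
The plan is to transport complex structures on $\Gamma^k(\L)$ to pointwise data via Theorem \ref{Gammacentroid} and then to analyse them fiberwise using Lemma \ref{indecomplexreallemma} and the cocycle-reduction Lemma \ref{firstCechlemma}. Since $\Hom(\k/\lie{\k,\k},\z(\k)) = 0$ forces $\k$ to be perfect or centerfree, Theorem \ref{Gammacentroid} identifies $\Cent(\Gamma^k(\L))$ with $\Gamma^k(\Cent(\L))$ as associative $\R$-algebras. A complex structure on $\Gamma^k(\L)$ is precisely an element $\mathfrak{J}$ of its centroid satisfying $\mathfrak{J}^2 = -\1$, hence a $\Ck$-section of $\Cent(\L)$ whose value at every $x$ is a complex structure on the finite-dimensional indecomposable fiber $\L_x \cong \k$. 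This already supplies the trivial direction of the iff: evaluating any complex structure on $\Gamma^k(\L)$ at any one point yields one on $\k$.

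For the bound of at most two complex structures, I would apply Lemma \ref{indecomplexreallemma} in each fiber: each $\L_x$ admits at most two complex structures, and they differ only by sign. Given two sections $\mathfrak{J}_1, \mathfrak{J}_2$ of $\Cent(\L)$ with $\mathfrak{J}_i^2 = -\1$, the sets $\set{x \in M : \mathfrak{J}_1(x) = \mathfrak{J}_2(x)}$ and $\set{x \in M : \mathfrak{J}_1(x) = -\mathfrak{J}_2(x)}$ are closed (as preimages of the zero section under the continuous sections $\mathfrak{J}_1 \mp \mathfrak{J}_2$) and together cover $M$, so by connectedness of $M$ one of them equals $M$. Hence $\mathfrak{J}_2 = \pm \mathfrak{J}_1$ globally, yielding at most two complex structures.

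The nontrivial direction of the iff is the construction of a complex structure on $\Gamma^k(\L)$ from one on $\k$. Starting from $J_0 \in \operatorname{S}(\k)$ with $J_0^2 = -\1$, I would observe that $\operatorname{S}(\k) \cong \C$ as associative $\R$-algebras (Lemma \ref{indecomplexreallemma} again) and that the conjugation action of $\Aut(\k)$ on $\operatorname{S}(\k)$ factors through $\Aut_{\R\text{-alg}}(\C) \cong \Z/2\Z$. This gives a continuous group morphism $q : \Aut(\k) \to \Z/2\Z$ whose kernel $H$ is an open normal subgroup, with $\Aut(\k)/H$ embedding into $\Z/2\Z$. The hypothesis $\Hc^1(M,\Z/2\Z) = 1$ then, via Lemma \ref{firstCechlemma} applied to $H \ide \Aut(\k)$, yields $\Hc^1(M,\Aut(\k)) \cong \Hc^1(M,H)$, so $\L$ admits a cocycle entirely in $H$. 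In a bundle atlas $(U_i,\ph_i)_{i \in I}$ adapted to such a cocycle, the local prescriptions $x \mapsto (\ph_i|_{\L_x})^{-1} \circ J_0 \circ (\ph_i|_{\L_x})$ agree on overlaps (because conjugation by any element of $H$ fixes $J_0$) and therefore glue to a global section $\mathfrak{J} \in \Gamma^k(\operatorname{S}(\L)) \sbs \Gamma^k(\Cent(\L))$ with $\mathfrak{J}^2 = -\1$, i.e.\ a complex structure on $\Gamma^k(\L)$.

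The main obstacle will be setting up this $\Z/2\Z$-quotient rigorously: one must verify that $q$ is continuous and $H$ is open in $\Aut(\k)$ so that Lemma \ref{firstCechlemma} genuinely applies. Both follow from continuity of conjugation on $\End(\k)$ together with the fact that $\Aut_{\R\text{-alg}}(\operatorname{S}(\k))$ sits as a discrete finite subgroup of $\GL(\operatorname{S}(\k))$. Once this is in place, the remainder of the argument is routine bookkeeping with the isomorphism $\Cent(\Gamma^k(\L)) \cong \Gamma^k(\Cent(\L))$.
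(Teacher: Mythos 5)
Your proposal is correct and follows essentially the same route as the paper: identify complex structures with sections of $\Cent(\L)$ via Theorem \ref{Gammacentroid}, use Lemma \ref{indecomplexreallemma} fiberwise together with connectedness of $M$ for the bound of two, and reduce the cocycle to $\Aut_\C(\k)=\ker q$ via Lemma \ref{firstCechlemma} and $\Hc^1(M,\Z/2\Z)=1$ to glue $J_0$ into a global complex structure. Your explicit gluing of the local conjugates of $J_0$ and your remark that $\Aut(\k)/H$ merely embeds into $\Z/2\Z$ (covering the degenerate index-one case) are slightly more careful than the paper's wording, but the argument is the same.
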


\begin{proof}
A complex structure $J \in \Cent\left(\Gamma^k(\L)\right)=\Gamma^k(\Cent(\L))$
induces a complex structure $J_x$ on each fiber $\Cent(\L_x)$, thus on $\k$.
Lemma \ref{indecomplexreallemma}.2 implies that $\k$ possesses at most two
complex structures, thus, by the connectedness of $M$, there can be at most two
different $\operatorname{C}^k$-complex structures, namely $J$ and $-J$, on
$\Gamma^k(\L)$.

Let $J_0$ be a complex structure on $\k$. We turn $\k$ into a complex Lie algebra by defining the multiplication by a complex scalar via $(a+bi) \cdot x := ax + b J_0(x)$ for $a, b \in \R$ and $x \in \k$. If $\sigma \in \Aut_\R\left(\k\right)$, $a, b \in \R$ and $x \in \k$, then
\es{
\sigma((a+bi)\cdot x) = \sigma(ax + b J_0(x)) = a \sigma(x) + b \sigma(J_0(x)).
}
Thus $\sigma \in \Aut_\R\left(\k\right)$ is in $\Aut_\C\left(\k\right)$ \iff $\sigma \circ J_0 = J_0 \circ \sigma$. Otherwise we have $\sigma \circ J_0 \circ \sigma^{-1} = -J_0$ because $\left(\sigma \circ J_0 \circ \sigma^{-1}\right)^2 = -\1$ and $J_0$, $-J_0$ are the only complex structures on $\k$ by Lemma \ref{indecomplexreallemma}.2. So $\Aut_\C\left(\k\right)$ is a subgroup of index 2 of $\Aut_\R\left(\k\right)$, thus normal and $\Aut_\R\left(\k\right) / \Aut_\C\left(\k\right) \cong \Z / 2\Z$. Since $\Hc^1(M,\Z/2\Z)=1$, we have $\Hc^1(M,\Aut_\R(\k))\cong\Hc^1\left(M,\Aut_\C\left(\k\right)\right)$ by Lemma \ref{firstCechlemma}, yielding the existence of an $\Aut_\C\left(\k\right)$-valued cocyle of $\L$ and this turns $\pi: \L \to M$ into a bundle of Lie algebras with complex fiber $\k$ and $\Gamma^k(\L)$ into a complex Lie algebra.

Conversely, if $\Gamma^k(\L)$ admits a complex structure, then, by Theorem \ref{Gammacentroid}, this induces a complex structure on each fiber $\k$, thus on $\k$.
\end{proof}

\subsubsection{Isomorphisms}
Now we will discuss the isomorphisms of Lie algebras of $\Ck$-sections. In
the light of Theorem \ref{decotheo} it suffices to do this for indecomposable
Lie algebras only. The main key to the analysis of the isomorphisms is Lemma
\ref{monsterlemma}. We want to show some lemmas before. In this
subsection $M,N$ are smooth manifolds with positive dimensions $m,n$, 
respectively

\begin{lem}\label{Cktop}
For $k \in \Ni$ the topology on $M$ is the same as the initial topology of the maps in $\operatorname{C}^k(M,\K)$.
\end{lem}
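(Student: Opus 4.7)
The plan is to prove the two inclusions between topologies separately. Write $\tau_M$ for the manifold topology on $M$ and $\tau_i$ for the initial topology generated by the family $\Ck(M,\K)$.

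The inclusion $\tau_i \sbs \tau_M$ is immediate from definitions: every $f \in \Ck(M,\K)$ is continuous \wrt $\tau_M$, so $\tau_M$ makes all the generating maps of $\tau_i$ continuous; since $\tau_i$ is by definition the coarsest such topology, this forces $\tau_i \sbs \tau_M$.

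For the nontrivial direction $\tau_M \sbs \tau_i$, the strategy is to exhibit, for each open $U \in \tau_M$ and each $x \in U$, a $\tau_i$-open neighbourhood of $x$ contained in $U$. The tool is the existence of smooth bump functions on the paracompact smooth manifold $M$: one chooses a smooth (hence $\Ck$ for every $k \in \Ni$) function $\rho: M \to [0,1]$ with $\rho(x)=1$ and $\supp(\rho) \sbs U$, viewed as an element of $\Ck(M,\K)$ via the inclusion $\R \hookrightarrow \K$. Then with $W:=\set{z \in \K : \abs{z-1} < 1/2}$, the set $\rho^{-1}(W)$ lies in $\tau_i$ (as the preimage of an open subset of $\K$ under a generating continuous map), contains $x$, and is contained in $\set{y : \rho(y) \ne 0} \sbs \supp(\rho) \sbs U$. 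This shows that $U$ is a $\tau_i$-neighbourhood of each of its points, hence $U \in \tau_i$.

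There is no real obstacle here; the only points worth a line of care are that one must invoke paracompactness of $M$ (already assumed in the standing conventions on manifolds) to produce the bump function, and that the same smooth bump $\rho$ witnesses the claim uniformly in $k \in \Ni$ and uniformly in $\K \in \set{\R,\C}$.
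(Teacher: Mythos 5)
Your proof is correct and follows essentially the same route as the paper: the easy inclusion from the definition of the initial topology, and the reverse inclusion via a smooth bump function $\rho$ supported in $U$ with $\rho(x)=1$, using the preimage of a small ball around $1$ in $\K$ as the $\tau_i$-open neighbourhood. The paper's proof is identical in substance (it phrases the bump via $\rho_{|M\backslash V}\equiv 0$ and uses $\rho^{-1}(B(0.5;1))$), so there is nothing to add.
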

\begin{proof}
Let $\O_M$ denote the topology on $M$. The initial topology of the maps in $\operatorname{C}^k(M,\K)$, denoted by $\O_i$, is the coarsest topology on $M$ such that each $f \in \operatorname{C}^k(M,\K)$ is continuous, so that $\O_i \sbs \O_M$. Let $V$ be a neighbourhood of $x \in M$ \wrt $\O_M$. Then, by paracompactness of $M$, there is a smooth function $\rho: M \to [0,1] \sbs \K$ such that $\rho_{|M \backslash V} \equiv 0$ and  $\rho_{|W} \equiv 1$ for a smaller $x$-neighbourhood $W \sbs V$. But this yields $W \sbs \rho^{-1}(B(0.5;1)) \sbs V$, where $B(r;z)$ denotes the open ball with radius $r$ around $z$ in $\K$. Thus $V$ is also a neighbourhood of $x \in M$ \wrt $\O_i$ and we conclude $\O_M \sbs \O_i$, so $\O_i = \O_M$.
\end{proof}

\begin{lem}\label{prop}
There exists a non-constant proper map $\ph \in \Ci(M,\K)$, i.e. $\ph$-preimages of compact subspaces are compact.
\end{lem}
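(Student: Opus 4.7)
The plan is to exploit that the connected paracompact manifold $M$ is $\sigma$-compact, and then to build $\ph$ as a weighted sum against a partition of unity. Concretely, since $M$ is connected, paracompact, Hausdorff and locally compact, it is $\sigma$-compact, so it admits a countable locally finite open cover $(V_i)_{i \in \N}$ by relatively compact subsets, together with a subordinate smooth partition of unity $(\rho_i)_{i \in \N}$ satisfying $\supp(\rho_i) \sbs V_i$. I then set
\[
\ph := \sum_{i=1}^\infty i\cdot \rho_i,
\]
which, thanks to the local finiteness of $(\supp \rho_i)_{i \in \N}$, is a well-defined smooth $\R$-valued function on $M$, hence an element of $\Ci(M,\K)$.

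To verify properness, it suffices to show that $\ph^{-1}([-A,A])$ is compact for every $A \ge 0$. The key observation is the following: if $\rho_i(x)=0$ for all $i \le A$, then the normalization $\sum_i \rho_i(x) = 1$ forces $\sum_{i > A}\rho_i(x)=1$, so $\ph(x) \ge A+1$. Contrapositively,
\[
\ph^{-1}([-A,A]) \sbs \bigcup_{i \le A}\supp(\rho_i),
\]
which is a finite union of compact sets. Since $\ph^{-1}([-A,A])$ is a closed subset of this compact set, it is itself compact.

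For non-constancy, one distinguishes two cases. If $M$ is non-compact then $\ph$ is unbounded, because no finite union of the compact sets $\supp(\rho_i)$ can cover $M$; in particular $\ph$ is non-constant. If $M$ is compact then every continuous function $M\to \K$ is automatically proper, so one simply replaces $\ph$ by a non-constant smooth bump obtained via a chart, which exists since $\dim M = m \ge 1$.

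There is no genuinely difficult step; the only point that requires care is producing the countable locally finite partition of unity with compactly supported members, which is standard for paracompact, second countable manifolds of positive dimension. The substance of the proof is the elementary estimate in the second paragraph, which exploits that the partition-of-unity normalization converts ``small $\ph$'' into ``support overlaps only with small indices''.
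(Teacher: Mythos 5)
Your proof is correct, and it takes a genuinely different route from the paper's --- in fact a more robust one. The paper's own proof picks a single bump function $\ph$ with compact support contained in a relatively compact open set $U \subsetneq M$ and asserts that $\ph^{-1}(C)$ is a closed subset of $\overline{U}$ for every compact $C \sbs \K$; this assertion fails as soon as $0 \in C$ and $M$ is non-compact, since $\ph^{-1}(\set{0})$ contains the closed, non-compact set $M \setminus U$, so the bump-function argument really only settles the case of compact $M$ (where properness is automatic anyway). Your construction $\ph = \sum_{i} i\,\rho_i$ over a countable, locally finite, compactly supported partition of unity is the standard exhaustion-function argument: the normalization $\sum_i \rho_i = 1$ converts the bound $\ph \le A$ into membership in the compact set $\bigcup_{i \le A}\supp(\rho_i)$, and this works uniformly on any $\sigma$-compact $M$. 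The two points you rightly isolate --- that connectedness together with paracompactness and local compactness yields $\sigma$-compactness, hence the countable locally finite cover by relatively compact sets, and that the compact case must be handled separately because the sum may then degenerate to a constant --- are exactly the ones that need saying, and you say them. In short, your argument is not merely an alternative but the one that actually establishes the lemma in the generality in which it is used later (Lemma \ref{monsterlemma} requires a proper, non-locally-constant function on a possibly non-compact $M$).
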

\begin{proof}
Let $U \subsetneq M$ be a non-empty open subset with compact closure, $K \sbs U$ a non-empty compact subset and $\ph \in \Ci(M,[0,1]) \sbs \Ci(M,\K)$ a map such that $\supp \rho \sbs U$ and ${\rho}_{|K} \equiv 1$. For any compact $C \sbs \K$, the set $\ph^{-1}(C)$ is a closed subset of $\overline U$, hence compact.
\end{proof}

\begin{lem}\label{monsterlemma}
Let $v: \operatorname{C}^k(M,\K) \to \operatorname{C}^\ell(N,\K)$ be an isomorphism of associative algebras for some $0 \ne k,\ell \in \Ni$. Then $k=\ell$ and there is a $\operatorname{C}^k$-diffeomorphism $\lambda: M \to N$ such that $v(f)=f \circ \lambda^{-1}$ for all $f \in \operatorname{C}^k(M,\K)$.
\end{lem}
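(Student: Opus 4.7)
The plan is to recover $M$ from the algebra $\Ck(M,\K)$ as its set of characters (non-zero $\K$-algebra morphisms to $\K$), transport the isomorphism $v$ into a bijection $\lambda: M \to N$, and finally promote $\lambda$ to a $\Ck$-diffeomorphism with $k=\ell$. The first and most delicate step is to show that every character $\chi: \Ck(M,\K) \to \K$ equals an evaluation $\ev_{x_0}$ at a unique point of $M$. Uniqueness is immediate from Lemma \ref{Cktop}, since $\Ck$-functions separate the points of $M$. For existence, I would invoke Lemma \ref{prop} to pick a proper $\varphi \in \Ci(M,\K) \sbs \Ck(M,\K)$; then $\psi := \varphi - \chi(\varphi)\cdot \1 \in \ker\chi$ cannot be a unit of $\Ck(M,\K)$ (otherwise $1 = \chi(\psi\cdot\psi^{-1}) = 0$), so its zero set $Z := \psi^{-1}(0)$ is non-empty and, by properness of $\varphi$, contained in the compact set $\varphi^{-1}(\chi(\varphi))$. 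Supposing for contradiction that $\chi \neq \ev_x$ for every $x \in Z$, one can for each such $x$ select $g_x \in \ker\chi$ with $g_x(x) \neq 0$; the ideal property gives $|g_x|^2 = g_x\,\overline{g_x} \in \ker\chi$, a non-negative $\Ck$-function strictly positive on a neighbourhood $U_x$ of $x$. By compactness of $Z$, finitely many $U_{x_1},\ldots,U_{x_N}$ cover $Z$, and $h := \sum_i |g_{x_i}|^2 \in \ker\chi$ is strictly positive on an open neighbourhood of $Z$; consequently $h + |\psi|^2 \in \ker\chi$ is strictly positive on all of $M$ (positive near $Z$ thanks to $h$, positive off $Z$ thanks to $|\psi|^2$), hence a unit of $\Ck(M,\K)$, which is the desired contradiction.

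Once the identification $M \cong \Hom_{\K\text{-alg}}(\Ck(M,\K),\K)$ is in hand, the isomorphism $v$ transfers pointwise: for each $y \in N$, the character $\ev_y \circ v$ of $\Ck(M,\K)$ equals $\ev_{\mu(y)}$ for a unique $\mu(y) \in M$, whence $v(f) = f \circ \mu$ for all $f$. Applying the same construction to $v^{-1}$ yields $\nu: M \to N$ with $v^{-1}(g) = g \circ \nu$, and the relations $v v^{-1} = \id$, $v^{-1} v = \id$, together with point-separation, force $\mu \circ \nu = \id_M$ and $\nu \circ \mu = \id_N$. Setting $\lambda := \nu$ gives $\lambda^{-1} = \mu$ and $v(f) = f \circ \lambda^{-1}$.

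For regularity, I would take a chart $(U,\xi)$ of $M$ and cut off its coordinates to global $\Ci$-functions $\xi_1,\ldots,\xi_m \in \Ck(M,\K)$; then $v(\xi_i) = \xi_i \circ \lambda^{-1} \in \operatorname{C}^\ell(N,\K)$ shows that the local components of $\lambda^{-1}$ are $\operatorname{C}^\ell$, so $\lambda^{-1}$ is globally $\operatorname{C}^\ell$, and symmetrically $\lambda$ is $\Ck$. The relation $\lambda \circ \lambda^{-1} = \id$ forces $D\lambda$ and $D\lambda^{-1}$ to be pointwise invertible, so the inverse function theorem lifts both $\lambda$ and $\lambda^{-1}$ to $\operatorname{C}^{\max(k,\ell)}$, and $\lambda$ becomes a $\operatorname{C}^{\max(k,\ell)}$-diffeomorphism. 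To conclude $k = \ell$, assume WLOG $\ell > k$ and choose in a chart on $M$ (using $m \ge 1$) a function $f \in \Ck(M,\K) \setminus \operatorname{C}^{k+1}(M,\K)$, for instance a cutoff of $|\xi_1|^{k+1/2}$. Then $v(f) \in \operatorname{C}^\ell(N,\K)$; but $\lambda$ is a $\operatorname{C}^\ell$-diffeomorphism, so $f = v(f) \circ \lambda$ would have to be $\operatorname{C}^\ell \sps \operatorname{C}^{k+1}$, contradicting the choice of $f$. Hence $\ell \le k$, and by symmetry $k = \ell$.

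The hard part is the character-surjectivity step: all subsequent structural conclusions follow comparatively mechanically once $M$ has been reconstructed as the character space of $\Ck(M,\K)$. The trick there is to combine the proper function from Lemma \ref{prop} with the identity $g\,\overline g \in \ker\chi$, valid because $\ker\chi$ is a two-sided ideal, in order to manufacture a unit of $\Ck(M,\K)$ lying in $\ker\chi$ whenever $\chi$ fails to be an evaluation.
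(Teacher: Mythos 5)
Your proof is correct, and it reaches the same intermediate waypoints as the paper (a bijection $\lambda$ with $v(f)=f\circ\lambda^{-1}$, then regularity via cut-off chart coordinates and the Inverse Mapping Theorem, then $k=\ell$), but the central technical step is handled by a genuinely different argument. The paper classifies the codimension-one ideals $I \ide \operatorname{C}^k(M,\K)$ directly: assuming $I \not\sbs \Nc_x$ for every $x$, it shows $\A_U \sbs I$ for every ``admissible'' $U$, uses the proper map of Lemma \ref{prop} to produce $f_1 = a\ph + b\ph^2 \in I$ with compact zero set, and then assembles $\1 \in I$ from a partition of unity subordinate to finitely many admissible sets --- a purely ring-theoretic covering argument. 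You instead identify points with characters (equivalent data, since a codimension-one ideal cannot contain $\1$ and hence is automatically the kernel of a character) and exploit the positivity available for $\K \in \set{\R,\C}$: the element $\psi = \ph - \chi(\ph)\cdot\1 \in \ker\chi$ has compact zero set $Z$ by properness, and if $\chi$ were not an evaluation on $Z$ you manufacture the everywhere-positive unit $\sum_i g_{x_i}\overline{g_{x_i}} + \psi\overline\psi \in \ker\chi$. This is shorter and avoids both the quadratic trick $a\ph+b\ph^2$ and the partition of unity, at the price of using conjugation and the order on $\R$; the paper's route is more algebraic and would survive in settings without positivity. Two small points you should make explicit when writing this up: (i) before reading off the $\operatorname{C}^\ell$-regularity of the components of $\lambda^{-1}$ from $v(\xi_i)=\xi_i\circ\lambda^{-1}$, you need $\lambda$ and $\lambda^{-1}$ to be continuous so that sets of the form $U_i\cap\lambda^{-1}(V_j)$ are open --- this is exactly what Lemma \ref{Cktop} gives, as in the paper, so state it; (ii) in the final step the inclusion runs $\operatorname{C}^\ell \sbs \operatorname{C}^{k+1}$ for $\ell \ge k+1$ (your $\sps$ is reversed), though the intended contradiction with $f \notin \operatorname{C}^{k+1}$ is clearly correct, and your explicit witness $\abs{\xi_1}^{k+1/2}$ makes precise a point the paper leaves implicit when it concludes $\operatorname{C}^k(M,\K)=\operatorname{C}^\ell(M,\K) \implies k=\ell$.
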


\begin{proof}
We set $\A:=\A^M:=\operatorname{C}^k(M,\K)$ and
 $\A_U:=\A^M_U:=\set{\left. f \in \A \right| \supp
f \sbs U}$ for open $U \sbs M$. Furthermore, for $x_0 \in M$ we write
$\Nc_{x_0}:=\Nc^M_{x_0}:=\set{\left. f \in \A \right|f(x_0)=0}$. The symbols
$\A^N$, $\A^N_V$ for open $V \sbs N$ and $\Nc^N_{y_0}$ for $y_0 \in N$ are
understood in the obvious analogous manner.

Since any $\Nc_{x_0}$ obviously is a vector subspace of $\A$ and $\A \cdot \Nc_{x_0} \sbs \Nc_{x_0}$, it is even an ideal of $\A$. Its codimension\footnote{The ``codimension'' of an ideal $I \ide \A=\A^M$ or $J \ide \A^N$ is always meant to be the vector space dimension of the quotient algebra $\A / I$ or $\A^N / J$, respectively.} is 1 because, evidently, we have $\Nc_{x_0} \oplus \K \cdot \1 = \A$. We will show that in fact \textsl{every} ideal $I \ide \A$ of codimension 1 takes the form $\Nc_x$ for a some $x \in M$.\footnote{The proof will also show the analogous statemanet for the ideals $J \ide \A^N$ with codimension 1.} For this, it suffices to show $I \sbs \Nc_x$ for some $x \in M$ because all $\Nc_x$ are ideals of codimension 1. Since, for $x_0 \ne x_1$, we have $\left(\Nc_{x_0} \cap \Nc_{x_1}\right) \oplus \K \cdot \1 \ne \A$, the point $x \in M$ corresponding to the ideal $I$ is uniquely determined.

Assume there exists an ideal $I \ide \A$ of codimension 1 such that for all $x \in M$ we have $I \not\sbs \Nc_x$, i.e., there exists $f_0 \in I$ with $f_0(x)\ne0$. By continuity, there is even an admissible\footnote{We call an open set $U$ \emph{admissible}, if there is a function $f \in I$ such that $f(y) \ne 0$ for all $y \in U$.} open $x$-neighbourhood $U \sbs M$ with $f_0(x') \ne 0$ for all $x' \in U$. Note that $M$ can, in our case, be covered by admissible open sets, say $M = \bigcup_{\gamma \in \Gamma} U_{\gamma}$. If $U \sbs M$ is open and $f \in \A_U$ then $\frac{f}{f_0}$ is a well-defined function in $\A = \operatorname{C}^k(M,\K)$ and $f = f_0 \cdot \frac{f}{f_0} \in I$ because $f_0 \in I$ and $I$ is an ideal of $\A$. Thus $\A_U \sbs I$.

Now let $\ph \in \A$ be a non-locally constant proper map, which exists by Lemma \ref{prop}.
Since $\text{codim}_{\A} I =1$ and $\dim
\left(\K \cdot \ph \oplus \K \cdot \ph^2  \right) = 2$, there are $a,b \in \K$
such that $a\ph + b\ph^2=:f_1 \in I$ is not zero and
$f_1^{-1}\left(\set{0}\right)$ is compact because $f_1^{-1}\left(\set{0}\right)=
(b\ph)^{-1}\left(\set{0}\right) \cup \ph^{-1} \left(\set{-\frac a b}\right)$ and
$b\ph$ is proper in the case of $b \ne 0$ and $f_1^{-1}\left(\set{0}\right) =
(a\ph)^{-1}\left(\set{0}\right)$ and $a\ph$ is proper if $b=0$. We define an
open set $U_1:=f_1^{-1}(\K \backslash \set{0})$. The compact subset
$f_1^{-1}\left(\set{0}\right) \sbs M$ is covered by finitely many admissible
open sets $U_2, U_3, \ldots , U_r \in (U_\gamma)_{\gamma \in \Gamma}$. Let
$(V_\beta)_{\beta \in B}$ be a locally finite refinement of $(U_1, \ldots ,
U_r)$ such that there is a smooth partition of unity $\left(\rho_\beta: M \to
[0,1]\right)_{\beta \in B}$ subordinate to $(V_\beta)_{\beta \in B}$. Let $B_1,
\ldots , B_r \sbs B$ be subsets such that $B=\bigcup_{i=1}^r B_i$ and let
$\beta \in B_i$ for some $i \in \set{1, \ldots , r}$ imply $V_{\beta} \sbs
U_i$. Then, for any $i \in \set{1, \ldots r}$ and $\beta \in B_i$, we have $\rho
\cdot \A \sbs \A_{U_i}$. We obtain \es{ \A = \1 \cdot \A = \sum_{\beta \in B}
\left(\rho_{\beta} \cdot \A\right) \sbs \sum_{i=1}^r \left(\sum_{\beta \in
B_i} \rho_{\beta} \cdot \A \right) \sbs \sum_{i=1}^r \A_{U_i} \sbs I, }
but this contradicts the fact that $I$ has codimension 1. This shows that for
any ideal $I \ide \A$ with codimension 1 there exists $x \in M$ such that
$I=\Nc_x$.

Since $v$ is an isomorphism of associative algebras, $v(\Nc_x)$ is, for any $x \in M$, an ideal of $\A^N$ with codimension 1, thus equal to $\Nc^N_y$ for some unique $y \in N$. We write $y=:\lambda(x)$ and obtain a mapping $\lambda: M \to N$. A dual argumentation with $v^{-1}$ instead of $v$ leads to a mapping $\wt \lambda: N \to M$ such that $\lambda \circ \wt \lambda = \id_N$ and $\wt \lambda \circ \lambda = \id_M$, so $\lambda$ is bijective. Note that we can perform the following calculation for $f \in \A$ and $y \in N$:
\es{
f\left(\lambda^{-1}(y)\right)=0 \quad \implies \quad f \in \Nc_{\lambda^{-1}(y)} \quad \implies \quad v(f) \in \Nc^N_y  \quad \implies \quad v(f)(y) =0.
}
But this already implies that $f \circ \lambda^{-1} = v(f)$ for any $f\in \A$ because we may replace the mapping $f$ by $f-r \cdot \1$ for arbitrary $r \in \K$ in the above calculation. We will now show that $\lambda$ is a $\operatorname{C}^k$-diffeomorphism.

By Lemma \ref{Cktop}, the topologies on $M$ and $N$ can be described as the initial topologies of the maps in $\operatorname{C}^k(M,\K)$ and $\operatorname{C}^\ell(N,\K)$, respectively. Since $v^{-1}(g)= g \circ \lambda \in \operatorname{C}^k(M,\K)$ for all $g \in \operatorname{C}^\ell(N,\K)$, the map $\lambda: M \to N$ is continuous. Let $(U'_i,\ph'_i)_{i \in I}$ be a locally finite atlas of $M$ and $(V'_j,\psi'_j)_{j \in J}$ a locally finite atlas of $N$. We modify the chart maps by multiplying them with the maps $\rho_i: M \to [0,1]$ and $\varpi_j: N \to [0,1]$, respectively, of smooth partitions of unity subordinate to the atlases. Then ${\rho_i}_{|U_i} \equiv 1$ and ${\varpi_j}_{|V_j} \equiv 1$ for open sets $U_i \sbs U'_i$ and $V_j \sbs V'_j$ still covering the whole manifold. We obtain new atlases, denoted by $(U_i,\ph_i)_{i \in I}$ and $(V_j,\psi_j)_{j \in J}$, where each chart map is defined on the whole manifold. Now let $t \in \set{1, \ldots , n}$, $i \in I$, $j \in J$ and define $A:=\ph_i\left(U_i \cap \lambda^{-1}(V_i)\right) \sbs \R^m$ and $D:=e_t^* (\psi_j(V_j \cap \lambda(U_i))) \sbs \R$. The map $e_t^* \circ \psi_j \circ \lambda \circ \left(\ph_i\right)^{-1}: A \to D$ is equal to
\es{
\underbrace{v^{-1}\left(e_t^* \circ \psi_j \right)}_{\in \operatorname{C}^k(M,\R)} \circ \left(\ph_i\right)^{-1} \in \operatorname{C}^k\left(A,D\right),
}
thus, since $t$,$i$ and $j$ were arbirarily chosen, $\lambda: M \to N$ is $\operatorname{C}^k$.

By the symmetry of the arguments, $\lambda^{-1}$ is a $\operatorname{C}^\ell$-map and, by the Inverse Mapping Theorem, even a $\max(k,\ell)$-times continuously differentiable diffeomorphism. Since the composition with $\lambda$ turns $\operatorname{C}^\ell(N,\K)$-maps into $\operatorname{C}^k(M,\K)$-maps  and $\lambda^{-1}$ turns $\operatorname{C}^k(M,\K)$-maps into $\operatorname{C}^\ell(N,\K)$-maps, so $\operatorname{C}^k(M,\K)=\operatorname{C}^\ell(M,\K)$ and $\operatorname{C}^k(N,\K)=\operatorname{C}^\ell(N,\K)$, thus $k=\ell$, completing the proof.
\end{proof}

The following Lemma will be used in Theorem \ref{isotheo}.
\begin{lem} \
\begin{enumerate}
\item Let $V$ be a vector space of finite dimension, $(U,\xi)$ a chart of $M$ and $T:U \to \End(V)$, $f:U \to V$ smooth functions. Then, for any multi-index $\alpha \in \N^m$, we have:
\begin{equation}\label{GenLeibRul}
\partial_{\xi}^{\alpha}(T \cdot f) = \sum_{\gamma \le \alpha} \begin{pmatrix}\alpha \\ \gamma\end{pmatrix} \left(\partial_{\xi}^{\gamma}T \cdot \partial_{\xi}^{\alpha -\gamma}f \right).
\end{equation}
\item We have, for $\alpha \in \N^m$:
\begin{equation}\label{multinom}
\sum_{\gamma \le \alpha} \frac{(-1)^{\abs{\alpha-\gamma}}}{\gamma! (\alpha-\gamma)!} =  \begin{cases}1 \text{, if } \alpha = 0\\ 0 \text{ otherwise} \end{cases} = \frac{\delta_{\alpha,0}}{\alpha !}.
\end{equation}
\end{enumerate}
\end{lem}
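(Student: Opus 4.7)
For part (1), I would argue by induction on $|\alpha|$, reducing to the ordinary one-variable Leibniz rule applied direction by direction. The base case $|\alpha|=0$ is trivial. For the inductive step, pick some index $i\in\{1,\dots,m\}$ with $\alpha_i\ge 1$, write $\alpha = \beta + e_i$ with $|\beta|=|\alpha|-1$, and compute
\es{
\partial_\xi^\alpha(T\cdot f) = \partial_{\xi_i}\bigl(\partial_\xi^\beta(T\cdot f)\bigr) = \partial_{\xi_i}\!\sum_{\gamma\le\beta}\begin{pmatrix}\beta\\\gamma\end{pmatrix}\partial_\xi^\gamma T\cdot\partial_\xi^{\beta-\gamma}f
}
using the inductive hypothesis. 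Applying the ordinary Leibniz rule to each term yields a sum of two pieces; after reindexing the second piece by $\gamma\mapsto\gamma+e_i$, the coefficient in front of $\partial_\xi^\gamma T\cdot\partial_\xi^{\alpha-\gamma}f$ becomes $\binom{\beta}{\gamma}+\binom{\beta}{\gamma-e_i}$. By the multi-index Pascal relation $\binom{\beta+e_i}{\gamma}=\binom{\beta}{\gamma}+\binom{\beta}{\gamma-e_i}$ stated early in the excerpt (applied with the roles of $\alpha,\gamma$ suitably shifted), this equals $\binom{\alpha}{\gamma}$, giving the desired formula. Boundary cases where $\gamma$ has a component $-1$ or $\gamma\not\le\alpha$ contribute zero under the standard convention $\binom{\alpha}{\gamma}=0$ when $\gamma\not\le\alpha$.

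For part (2), I would factor $\frac{1}{\gamma!(\alpha-\gamma)!} = \frac{1}{\alpha!}\binom{\alpha}{\gamma}$ so the left-hand side becomes $\frac{1}{\alpha!}\sum_{\gamma\le\alpha}(-1)^{|\alpha-\gamma|}\binom{\alpha}{\gamma}$. Using $\binom{\alpha}{\gamma}=\prod_{i=1}^m\binom{\alpha_i}{\gamma_i}$ and $(-1)^{|\alpha-\gamma|}=\prod_{i=1}^m(-1)^{\alpha_i-\gamma_i}$, the sum factorises:
\es{
\sum_{\gamma\le\alpha}(-1)^{|\alpha-\gamma|}\begin{pmatrix}\alpha\\\gamma\end{pmatrix} = \prod_{i=1}^m\sum_{\gamma_i=0}^{\alpha_i}(-1)^{\alpha_i-\gamma_i}\begin{pmatrix}\alpha_i\\\gamma_i\end{pmatrix} = \prod_{i=1}^m(1-1)^{\alpha_i} = \prod_{i=1}^m 0^{\alpha_i},
}
by the ordinary one-variable binomial theorem. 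The last product equals $1$ if every $\alpha_i=0$ (i.e.\ $\alpha=0$) and $0$ otherwise, using the convention $0^0=1$. Dividing by $\alpha!$ (which equals $1$ when $\alpha=0$) yields exactly $\delta_{\alpha,0}/\alpha!$.

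The only mildly delicate point is the book-keeping in the induction for part (1): one has to handle the two index shifts $\gamma\mapsto\gamma$ and $\gamma\mapsto\gamma+e_i$ so that the combined sum ranges over all $\gamma\le\alpha$, and then invoke the Pascal-type identity from the introduction to collapse the two binomial coefficients into one. Neither part requires any new machinery beyond the ordinary Leibniz rule, the ordinary binomial theorem, and the multi-index Pascal identity already stated in the excerpt.
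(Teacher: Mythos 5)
Your proof of part (1) is correct and is essentially the paper's own argument: induction on $\abs{\alpha}$, peeling off one derivative $\partial_{\xi_i}$ with $\alpha=\beta+e_i$, applying the ordinary Leibniz rule termwise, reindexing the second sum by $\gamma\mapsto\gamma+e_i$, and collapsing the two binomial coefficients via the Pascal-type identity $\binom{\alpha+e_i}{\gamma+e_i}=\binom{\alpha}{\gamma}+\binom{\alpha}{\gamma+e_i}$ from the introduction. For part (2), however, you take a genuinely different route. The paper introduces the monomial $F_\alpha(x)=\prod_i x_i^{\alpha_i}$, expands $F_\alpha(x-y)$ by the multivariate Taylor Formula (using that the derivatives $\partial^\gamma F_\alpha$ vanish for $\gamma\not\le\alpha$, so the remainder is zero), and then evaluates at $x=y=(1,\dots,1)$ to read off the identity from $F_\alpha(0)=\delta_{\alpha,0}$. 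You instead factor out $1/\alpha!$, write the sum as $\prod_i\sum_{\gamma_i=0}^{\alpha_i}(-1)^{\alpha_i-\gamma_i}\binom{\alpha_i}{\gamma_i}=\prod_i(1-1)^{\alpha_i}$, and invoke the one-variable binomial theorem with the convention $0^0=1$. Your version is more elementary and self-contained (no appeal to Taylor's theorem), and makes the combinatorial content transparent; the paper's version has the mild virtue of reusing the Taylor machinery already set up in the notation section and of exhibiting the identity as an instance of the polynomial expansion that is used again in the proof of Theorem \ref{isotheo}. Both are complete and correct.
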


\begin{proof}\
\begin{enumerate}
\item The proof is done by mathematical induction over $\abs{\alpha}$. The claim is trivially true for $\alpha = 0$. For $\abs{\alpha}=1$ there exists a canonical basis vector $e_i$ of $\R^m$ such that $\alpha = e_i$ and $\partial_{\xi}^{\alpha}=\partial_{\xi_i}$. Then
\es{
\partial_{\xi_i}(T \cdot f) =  \partial_{\xi_i}T \cdot f + T \cdot \partial_{\xi_i}f
}
and the claim is also true. Now consider the claim shown for all multi-indices $\beta \in \N^m$ with $\abs{\beta}=n > 0$ and fix $\alpha \in \N^m$ with $\abs{\alpha}=n+1$. There exists a canonical basis vector $e_i$ of $\R^m$ such that $\beta := \alpha - e_i \in \N^m$ and $\abs{\beta}=n$. By induction hypothesis, we have
\[
\d_{\xi}^\beta(T \cdot f)=\sum_{\gamma \le \beta} \begin{pmatrix}\beta \\ \gamma\end{pmatrix} \d_{\xi}^\gamma T \cdot \d_{\xi}^{\beta-\gamma}f,
\]
thus
\begin{align*}
\d_{\xi}^\alpha(T \cdot f)&=\sum_{\gamma \le \beta} \begin{pmatrix}\beta \\ \gamma\end{pmatrix} \d_{\xi_i}\left(\d_{\xi}^\gamma T \cdot \d_{\xi}^{\beta-\gamma}f\right)=
\sum_{\gamma \le \beta} \begin{pmatrix}\beta \\ \gamma\end{pmatrix} \left(\d_{\xi}^{\gamma+e_i} T \cdot \d_{\xi}^{\beta-\gamma}f + \d_{\xi}^\gamma T \cdot \d_{\xi}^{\beta-\gamma+e_i}f\right)\\
&=\sum_{\gamma \le \beta} \begin{pmatrix}\beta \\ \gamma\end{pmatrix} \d_{\xi}^{\gamma+e_i} T \cdot \d_{\xi}^{\beta-\gamma}f
+
\sum_{\gamma \le \beta} \begin{pmatrix}\beta \\ \gamma\end{pmatrix} \d_{\xi}^\gamma T \cdot \d_{\xi}^{\beta-\gamma+e_i}f\\
&=\sum_{e_i \le \delta \le \alpha} \begin{pmatrix}\alpha - e_i \\ \delta - e_i\end{pmatrix} \d_{\xi}^{\delta} T \cdot \d_{\xi}^{\alpha-\delta}f
+
\sum_{\gamma \le \alpha - e_i} \begin{pmatrix}\alpha - e_i \\ \gamma\end{pmatrix} \d_{\xi}^\gamma T \cdot \d_{\xi}^{\alpha-\gamma}f\\
&=
1 \cdot \d_{\xi}^0 T \cdot \d_{\xi}^{\alpha}f +
\sum_{e_i \le \delta \le \alpha-e_i} \left( \begin{pmatrix}\alpha - e_i \\ \delta - e_i\end{pmatrix} +  \begin{pmatrix}\alpha - e_i \\ \delta\end{pmatrix} \right) \d_{\xi}^{\delta} T \cdot \d_{\xi}^{\alpha-\delta}f
+
1 \cdot \d_{\xi}^{\alpha} T \cdot \d_{\xi}^0 f\\
&=
\sum_{\delta \le \alpha} \begin{pmatrix}\alpha \\ \delta\end{pmatrix} \left(\partial_{\xi}^{\delta}T \cdot \partial_{\xi}^{\alpha -\delta}f \right).
\end{align*}
So the claim is true for all multi-indices $\alpha \in \N^m$.
\item For a given multi-index $\alpha \in \N^m$ we define a mapping $F_\alpha: \R^m \to \R$, $x \mapsto \prod_{i=1}^m x_i^{\alpha_i}$. By the Taylor Formula and the fact that $\partial^{\gamma}F \equiv 0$ for each $\gamma \in \N^m$ with $\gamma_i > \alpha_i$  for an index $i \in \set{1, \ldots , m}$, we obtain:
\es{
F_\alpha(x-y) &= \sum_{\gamma \le \alpha} \frac{\prod_{i=1}^m (-y_i)^{\alpha_i}}{\gamma !} \cdot \partial^{\gamma}F_{\alpha}(x) + 0\\
&= \sum_{\gamma \le \alpha} \frac{1}{\gamma !} \cdot \left(\prod_{i=1}^m (-y_i)^{\alpha_i}\right) \cdot \left(\frac{\alpha !}{(\alpha-\gamma)!}\prod_{j=1}^m x_j^{\alpha_j-\gamma_j}\right)
.}
This yields, by setting $x=y=(1, \ldots , 1) \in \R^m$:
\es{
\frac{\delta_{\alpha,0}}{\alpha !} = \frac{F_{\alpha}(0)}{\alpha !}=
\sum_{\gamma \le \alpha} \frac{1}{\gamma !} \cdot \left(\prod_{i=1}^m (-1)^{\alpha_i}\right) \cdot \left(\frac{1}{(\alpha-\gamma)!}\prod_{j=1}^m 1^{\alpha_j-\gamma_j}\right) = \sum_{\gamma \le \alpha} \frac{(-1)^{\abs{\alpha-\gamma}}}{\gamma ! (\alpha-\gamma)!}
.}
\end{enumerate}
\end{proof}

\begin{thm}\label{isotheo}
Let $\pi: \L \to M$ and $\varpi: \E \to N$ be two bundles of Lie algebras with fiber $\k$ and $\g$, respectively, $\dim \k = d$, $\dim \g = e$, $\Hom(\k / \lie{\k,\k} , \z(\k))= 0$, $\Hom(\g / \lie{\g,\g}, \z(\g))= 0$ and $\operatorname{S}(\k)=\K \cdot \1$, $\operatorname{S}(\g)=\K \cdot \1$, so that both Lie algebras are indecomposable by Lemma \ref{indecomplexreallemma}. Suppose there exists an isomorphism of Lie algebras $\mu: \Gamma^k(\L) \to \Gamma^\ell(\E)$ for some $0 \ne k,\ell \in \Ni$.
Then $k=\ell$ and
\begin{enumerate}
\item[(a)] if $k,\ell \in \N$, then $\mu$ is induced in a natural way by a $\operatorname{C}^k$-isomorphism of vector bundles $\kappa: \L \to \E$, i.e., if $\kappa': M \to N$ is the map underlying to $\kappa$ (that means $\kappa' \circ \pi = \varpi \circ \kappa$), then for all $X \in \Gamma^k(\L)$ we have $\mu(X) = \kappa \circ X \circ \left(\kappa'\right)^{-1}$. In particular, the manifolds $M,N$ are $\Ck$-diffeomorphic and the Lie algebras $\k,\g$ are isomorphic.
\item[(b)] if $k,\ell = \infty$, then the manifolds $M,N$ are diffeomorphic and the Lie algebras $\k,\g$ are isomorphic. After identifying the manifolds and the Lie algebras, $\mu$ turns into a linear differential operator of order at most $e-1$ taking the following local form on a bundle chart $(U,\ph)$ of $\pi: \L \to M$ with corresponding chart $(U,\xi)$ of $M$:
\begin{equation}\label{locaformofisos}
A^{\ph} \overset{\mu^{\ph}}{\longmapsto} \sum_{\abs{\alpha} < d} \frac{1}{\alpha !}  N^\alpha \cdot \left( \mu_0  \cdot \partial_{\xi}^\alpha A^{\ph}\right),
\end{equation}
where $\mu_0$ is a smooth function $U \to \Aut(\g)$ and we use the notation $N^{\alpha}=N_1^{\alpha_1} \circ \ldots \circ N_m^{\alpha_m}$ for multi-indices $\alpha=(\alpha_1, \ldots , \alpha_m) \in \N^m$ and smooth functions $N_1, \ldots , N_m \in \Ci(U,\operatorname{N}(\g))$.
\end{enumerate}
\end{thm}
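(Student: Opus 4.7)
The plan is to (i) extract a $\operatorname{C}^k$-diffeomorphism $M\cong N$ and a fibrewise isomorphism $\k\cong\g$ from the induced centroid isomorphism, (ii) show that after pulling $\E$ back to $M$ via this diffeomorphism the reduced map $\wt\mu$ is a local operator, and (iii) apply the appropriate version of Peetre's Theorem. Concretely, conjugation by $\mu$ induces an algebra isomorphism $\mu_\ast\colon\Cent(\Gamma^k(\L))\to\Cent(\Gamma^\ell(\E))$; by Theorem \ref{Gammacentroid} (whose hypothesis is ensured by $\Hom(\k/\lie{\k,\k},\z(\k))=0$) this identifies with an isomorphism $\Gamma^k(\Cent(\L))\to\Gamma^\ell(\Cent(\E))$. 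The pointwise decompositions $\Cent(\k)=\operatorname{N}(\k)\oplus\K\cdot\1$ and $\Cent(\g)=\operatorname{N}(\g)\oplus\K\cdot\1$ (Lemma \ref{Liealglemma}.3) make these algebras abelian and identify their nilradicals with $\Gamma^k(\operatorname{N}(\L))$ and $\Gamma^\ell(\operatorname{N}(\E))$. Since $\mu_\ast$ preserves nilradicals, it descends to an algebra isomorphism $v\colon\operatorname{C}^k(M,\K)\to\operatorname{C}^\ell(N,\K)$ on the reduced quotients (using equation \eqref{SGammakL}); Lemma \ref{monsterlemma} then forces $k=\ell$ and yields a $\operatorname{C}^k$-diffeomorphism $\lambda\colon M\to N$ with $v(f)=f\circ\lambda^{-1}$.

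Pulling back $\E$ along $\lambda$ yields $\E':=\lambda^\ast\E$, and under the natural identification $\Gamma^k(\E)\cong\Gamma^k(\E')$ the map $\mu$ becomes a Lie algebra isomorphism $\wt\mu\colon\Gamma^k(\L)\to\Gamma^k(\E')$. The centroid intertwining reads
\[
\wt\mu(fX)=f\cdot\wt\mu(X)+\eta(f)\cdot\wt\mu(X),\qquad f\in\operatorname{C}^k(M,\K),\ X\in\Gamma^k(\L),
\]
where $\eta(f)\in\Gamma^k(\operatorname{N}(\E'))$ is a (possibly nonzero) pointwise nilpotent defect. Locality of $\wt\mu$ then follows: for $X$ vanishing on an open $U\ni x$, choose $f\in\operatorname{C}^k(M,\K)$ with $f(x)=0$ and $f\equiv 1$ outside $U$, so that $fX=X$; evaluating at $x$ gives $\wt\mu(X)(x)=\eta(f)(x)(\wt\mu(X)(x))$, and iterating this identity using the nilpotency of $\eta(f)(x)\in\operatorname{N}(\g)$ forces $\wt\mu(X)(x)=0$.

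For part (a) with $k\in\N$, Theorem \ref{PseudoPeetre} applied with $k_1=k_2=k$ forces $\wt\mu$ to be a differential operator of order at most $0$; by Remark \ref{diffopsection} it is given by a $\operatorname{C}^k$-section of $\Hom(\L,\E')$ that is fibrewise invertible (since $\wt\mu$ is bijective), hence a vector bundle isomorphism, whose composition with the canonical map $\E'\to\E$ above $\lambda$ yields the required $\kappa\colon\L\to\E$ (with $\k\cong\g$ read off fibrewise). For part (b) with $k=\infty$, Peetre's Theorem \ref{Peetre} only gives that $\wt\mu$ is locally a differential operator; the explicit form \eqref{locaformofisos} is extracted by first identifying $\mu_0\in\operatorname{C}^\infty(U,\Aut(\g))$ as the zeroth-order coefficient (valued in $\Aut(\g)$ because $\wt\mu$ preserves the fibrewise Lie brackets on constant sections), and then using the intertwining with the nilpotent part of the centroid together with the generalized Leibniz rule \eqref{GenLeibRul} and the identity \eqref{multinom} to solve recursively for the higher-order coefficients as $f_\alpha=\frac{1}{\alpha!}N^\alpha\cdot\mu_0$ with $N_1,\dots,N_m\in\operatorname{C}^\infty(U,\operatorname{N}(\g))$; the order bound $|\alpha|<d$ then follows from the pointwise nilpotency of the $N_i$ combined with the commutativity of $\Cent(\g)$.

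The main obstacle lies in part (b): Steps leading to part (a) and the reduction of $\mu$ to $\wt\mu$ are essentially formal consequences of the centroid theorem, Lemma \ref{monsterlemma}, and the finite-order version of Peetre's Theorem, whereas the explicit multi-index formula \eqref{locaformofisos} in the smooth case requires a careful combinatorial inversion of the linear system arising from the nilpotent centroid intertwining, using \eqref{GenLeibRul} and \eqref{multinom} to read off the nilpotent endomorphism-valued functions $N_i$ from the defect $\eta(f)$, and verifying that the resulting recursive expression is well-defined and coincides with the differential operator produced by Peetre.
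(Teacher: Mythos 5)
Your overall route coincides with the paper's: conjugation gives the centroid isomorphism, Theorem \ref{Gammacentroid} and the splitting $\Cent(\g)=\operatorname{N}(\g)\oplus\K\cdot\1$ produce the scalar isomorphism $v$, Lemma \ref{monsterlemma} yields $k=\ell$ and the diffeomorphism $\lambda$, locality follows from the nilpotency of the defect $N_f$ (your iteration $\mu(X)(x)=N_f(x)^n(\mu(X)(x))$ is in fact a slightly cleaner version of the paper's $(1-\rho)^e$-argument), and Peetre's Theorem finishes part (a). All of that is sound.

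There is, however, a genuine gap in part (b): you assert that $\mu_0$ is valued in $\Aut(\g)$ ``because $\wt\mu$ preserves the fibrewise Lie brackets on constant sections.'' Bracket preservation only shows that each $\mu_0(x)\colon\k\to\g$ is a \emph{morphism} of Lie algebras; it gives neither injectivity nor surjectivity, and in particular it does not give $d=e$ or $\k\cong\g$ --- which in the smooth case is part of the assertion to be proved, not something you may presuppose by ``identifying the Lie algebras.'' This bijectivity is the genuinely hard point, and it is exactly where the paper uses \eqref{GenLeibRul} and \eqref{multinom}: one builds the operator $P(A)=\sum_{\abs{\alpha}<e}\frac{(-1)^{\abs{\alpha}}}{\alpha!}\partial_\xi^\alpha\bigl(N^\alpha\cdot(\mu_0\cdot A^\ph)\bigr)$ and proves the identity $P(A)=Q(\mu(A))$ of \eqref{muidentity}, where $Q_x$ is unipotent and hence invertible; combining this with the surjectivity of $\mu$ gives injectivity of the zeroth-order coefficient $\eta_0(x)$ of $\mu^{-1}$, hence $e\le d$, and the symmetric argument for $\mu$ gives $d\le e$ and bijectivity of $\mu_0(x)$. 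In your sketch the combinatorial inversion is instead directed at ``solving recursively for the higher-order coefficients'' $f_\alpha=\frac{1}{\alpha!}N^\alpha\cdot\mu_0$; those coefficients actually come directly from Taylor expansion and the multiplicativity $\mu(fA)=\wt\mu(f\cdot\1)\mu(A)$ (together with $N^\alpha=0$ for $\abs{\alpha}\ge e$ to truncate the expansion), so no inversion is needed there. What is missing from your argument is the approximate left inverse $P$ and the resulting proof that $\mu_0(x)$ is an isomorphism; without it, the conclusion that $\k\cong\g$ in the case $k=\infty$, and the claim $\mu_0\in\Ci(U,\Aut(\g))$ in \eqref{locaformofisos}, remain unproved.
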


\begin{proof}
The isomorphism of Lie algebras $\mu: \Gamma^k(\L) \to \Gamma^\ell(\E)$ induces an isomorphism of associative algebras $\wt \mu: \Cent\left(\Gamma^k(\L)\right) \to \Cent\left(\Gamma^\ell(\E)\right)$ by $\wt \mu (T):= \mu \circ T \circ \mu^{-1}$ for $T \in \Cent\left(\Gamma^k(\L)\right)$. This identifies, by Theorem \ref{Gammacentroid}, with an isomorphism $\Gamma^k(\Cent(\L)) \to \Gamma^\ell(\Cent(\E))$ and, by applying Lemma \ref{Liealglemma}.3 to the fibers of $\Cent(\E)$, we deduce that $\wt \mu$ induces an isomorphism of associative algebras $v: \operatorname{C}^k(M,\K) \to \operatorname{C}^\ell(N,\K)$ as follows: By \eqref{SGammakL}, an arbitary element of $\operatorname{S}\left(\Gamma^k(\L)\right)$ takes the form $f \cdot \1$, where $f \in \operatorname{C}^k(M,\K)$. By another application of \eqref{SGammakL}, we see that there is a function $v(f) \in \operatorname{C}^\ell(N,\K)$ such that we have
\begin{equation}\label{formofmutilde}
\wt{\mu}(f \cdot \1) = v(f) \cdot \1 + N_f,
\end{equation}
where $N_f \in \operatorname{N}\left(\Gamma^\ell(\E)\right)=\Gamma^\ell(\operatorname{N}(\E))$ is nilpotent and this decomposition is unique. Note that for constant functions $f \equiv c$, the map $\wt \mu(f \cdot \1)= c \cdot \wt \mu(\1) = c \cdot \1$ is semisimple and thus, by uniqueness of the decomposition \eqref{formofmutilde}, we have $N_f = 0$ for constant functions $f$. The morphism property of $v$ is shown by the following calculations:
\es{
\wt{\mu}(f \cdot \1) \cdot \wt{\mu}(g\cdot \1) = \wt{\mu}((f \cdot \1)\cdot(g\cdot \1)) = \wt{\mu}(fg \cdot \1) = v(fg) \cdot \1 + N_{fg}
}
and
\es{
\wt{\mu}(f \cdot \1) \cdot \wt{\mu}(g\cdot \1) = (v(f) \cdot \1 + N_f) \cdot (v(g) \cdot \1 + N_g) = v(f) v(g) \cdot \1 + \underbrace{v(f) \cdot N_g + v(g) \cdot N_f + N_f N_g}_{\text{nilpotent}}
.}
Furthermore, the Binomial Theorem yields, for $f \in \Ck(M,\K)$ and $r \in \N$:
\begin{equation*}
N_{f^r} = \sum_{t=1}^r \begin{pmatrix}r\\t\end{pmatrix} v(f)^{r-t} \left(N_f\right)^t.
\end{equation*}
Note that $v$ is bijective because $\wt \mu_{|\operatorname{S}\left(\Gamma^k(\L) \right)}$ and $\wt \mu^{-1}_{|\operatorname{S}\left(\Gamma^k(\E) \right)}$ are injective. By applying Lemma \ref{monsterlemma} to $v: \operatorname{C}^k(M,\K) \to \operatorname{C}^\ell(N,\K)$, we obtain $k=\ell$ and the existence of a $\operatorname{C}^k$-diffeomorphism $\lambda: M \to N$ such that $v(f)=f \circ \lambda^{-1}$ for all $f \in \operatorname{C}^k(M,\K)$. We now identifiy the manifolds $M, N$ via $\lambda$ and the associative algebras $\operatorname{C}^k(M,\K)$, $\operatorname{C}^k(N,\K)$ via $v$, so that we have the isomorphisms $\mu: \Gamma^k(\L) \to \Gamma^k(\E)$ and $\wt{\mu}: \Gamma^k(\Cent(\L)) = \Cent\left(\Gamma^k(\L)\right) \lra  \Gamma^k(\Cent(\E)) = \Cent\left(\Gamma^k(\E) \right)$. For all sections $A \in \Gamma^k(\L)$, all points $x \in M$ and all mappings $f \in \operatorname{C}^k(M,\K)$ with $f(x)=0$, we calculate:
\begin{equation}\label{mulocal}
\begin{split}
\left(\mu\left(f^e A\right)\right)_x & =\left(\mu\left(\left(f^e \cdot \1 \right)(A)\right)\right)_x
=\left(\left(\wt \mu \left(f^e \cdot \1 \right) \right) (\mu(A)) \right)_x
=\left(\left(f^e \cdot \1 + N_{f^e} \right) (\mu(A)) \right)_x\\
&=\left(f^e \mu(A) \right)_x +  \left(N_{f^e}(\mu(A))\right)_x = f(x)^e \left(\mu(A)\right)_x +
\sum_{t=1}^e \begin{pmatrix}e\\t\end{pmatrix} f(x)^{e-t} \left(N_f\right)^t_{x}(\mu(A))_x = 0+0= 0.
\end{split}
\end{equation}
Suppose a section $A \in \Gamma^k(\L)$ be zero on an open set $U \sbs M$, $x \in U$ and let $\rho: M \to [0,1]$ be a smooth function and $W \sbs U$ a smaller $x$-neighbourhood such that $\rho_{|M \backslash U} \equiv 0$ and $\rho_{|W} \equiv 1$. Then $A = (1-\rho)^e A$ and \eqref{mulocal} shows $(\mu(A))_x = 0$. Since $x$ was arbitrarily chosen, $\mu(A)$ is also zero on $U$. So $\mu: \Gamma^k(\L) \to \Gamma^k(\E)$ is local and, by the Peetre Theorems \ref{Peetre} and \ref{PseudoPeetre}, a differential operator.

If $k \in \N$, then Theorem \ref{PseudoPeetre} even implies that $\mu$ is a differential operator of order $0$ inducing a bundle isomorphism $\kappa: \L \to \E$ (cf. Definition \ref{bundlemorph} and Remark \ref{diffopsection}). This proves (a).

Now assume $k = \infty$. Let $A \in \Gamma(\L)$ be a section with $j^{e-1}_x(A)=0$. By Lemma \ref{Taylorlemma}, the section $A$ locally takes the form
\es{
A=\sum_{i=1}^r f_i^e A_i
}
for functions $f_i \in \Ci(M,\K)$, $f_i(x)=0$ and $A_i \in \Gamma(\L)$. By using relation \eqref{mulocal}, we see that
\es{
\left(\mu(A)\right)_x = \sum_{i=1}^r \left[\mu\left(f_i^e A_i\right)\right]_x = \sum_{i=1}^r 0  = 0.
}
This proves that the order of the differential operator $\mu$ is at most $e-1$ because $j^{e-1}_x(A)=0$ already implies $\left(\mu(A)\right)_x =0$. Let $(U,\ph)$ be a bundle chart of $\pi: \L \to M$ with corresponding chart $(U,\xi)$ of $M$ such that $\xi(U)$ is convex. We have the local forms $\mu^{\ph}: \Ck(U,\k) \to \Ck(U,\g)$, ${\wt \mu}^{\ph}: \Ck(U,\Cent(\k)) \to \Ck(U,\Cent(\g))$ and $N_f^\ph \in \Ck(U,\operatorname{N}(\g))$ of $\mu$, $\wt \mu$ and $N_f \in \Gamma^k(\operatorname{N}(\E))$, respectively. Locally, the decomposition \eqref{formofmutilde}, turns into:
\es{
\wt{\mu}^\ph(f \cdot \1) = f \cdot \1 + N^\ph_f.
}
The Taylor Formula yields, for each $x \in U$ and $A \in \Gamma(\L)$:
\begin{equation}\label{smalljet}
j^{e-1}_x \left(y \longmapsto A^{\ph}(y) - \sum_{\abs{\alpha} < e} \frac{\prod_{i=1}^m (\xi_i(y)-\xi_i(x))^{\alpha_i}}{\alpha !} \cdot \partial_{\xi}^{\alpha}A^{\ph}(x)\right)=0.
\end{equation}
For $x,y \in U$, $i \in \set{1, \ldots , m}$ and $\alpha \in \N^m$ with $\abs{\alpha} < e$ we write:
\es{
\xi_{i,x}(y)&:=\xi_i(y) - \xi_i(x),\\
\Xi_{\alpha,x}(y)&:=\prod_{i=1}^m (\xi_i(y)-\xi_i(x))^{\alpha_i},\\
\phi_{\alpha,x}(y)&:=\prod_{i=1}^m (\xi_i(y)-\xi_i(x))^{\alpha_i} \cdot \partial_{\xi}^{\alpha}A^{\ph}(x).
}
Then we define smooth mappings $N_1, \ldots , N_m : U \to \operatorname{N}(\g)$ and $\mu_0: U \to \Hom(\k,\g)$ (in the sense of Lie algebra morphisms), by setting for $x \in U$, $u \in \k$ and the constant map $c_u: U \to \g$, $y \mapsto u$:
\es{
\mu_0 (x)(u) := \mu^{\ph}(c_u)(x)
}
and
\es{
N_i(x) &:= N_{\xi_{i,x}}^\ph (x) = \wt \mu^\ph(\xi_{i,x} \cdot \1)(x) - (\xi_{i,x} \cdot \1)(x)\\
&= \wt \mu^\ph(\xi_{i,x} \cdot \1)(x)\\
}
We calculate:
\es{
\mu^{\ph}\left(\phi_{\alpha,x} \right)
&= \mu^{\ph}\left(y \mapsto \left( \prod_{i=1}^m (\xi_i(y)-\xi_i(x))^{\alpha_i} \cdot \1 \right) \left( \partial_{\xi}^{\alpha}A^{\ph}(x) \right) \right) \\
&=\wt \mu^{\ph}\left(\Xi_{\alpha,x} \cdot \1 \right) \cdot \mu^\ph\left(y \mapsto \partial_{\xi}^{\alpha}A^{\ph}(x) \right)\\
&= \left( \prod_{i=1}^m  \underbrace{\wt \mu^{\ph}\left(\xi_{i,x}^{\alpha_i} \cdot \1 \right)}_{=N_i^{\alpha_i}} \right) \cdot \left(\mu_0 \cdot \partial_{\xi}^\alpha A^\ph \right)\\
&= \left(\prod_{i=1}^m N_i^{\alpha_i}\right) \cdot \left( \mu_0 \cdot \partial_{\xi}^{\alpha}A^{\ph}\right)  =
N^\alpha \cdot \left( \mu_0 \cdot \partial_{\xi}^{\alpha}A^{\ph}\right).
}
Thus, by \eqref{smalljet} and the fact that $\mu$ is of order at most $e-1$, we have the local form
\begin{equation}\label{muphilocal}
A^{\ph} \overset{\mu^{\ph}}{\longmapsto} \sum_{\abs{\alpha} < e} \frac{1}{\alpha !} N^\alpha \cdot \left( \mu_0  \cdot \partial_{\xi}^\alpha A^{\ph}\right).
\end{equation}
It remains to show that each $\mu_0 (x)$, where $x \in U$, is bijective. This will be done in two steps. First, we verify the following identitiy for $A \in \Gamma(\L)$ and $B \in \Gamma(\E)$ with $\mu(A)=B$ (implying $\mu^\ph\left( A^\ph \right) = B^\ph$):
\begin{equation}\label{muidentity}
P\left(A \right) := \sum_{\abs{\alpha} < e} \frac{(-1)^{\abs{\alpha}}}{\alpha !}  \partial_{\xi}^{\alpha}\left(N^{\alpha} \cdot \left(\mu_0 \cdot A^{\ph}\right)\right)
= \sum_{\abs{\alpha} < e} \frac{(-1)^{\abs{\alpha}}}{\alpha !}  \partial_{\xi}^{\alpha}\left(N^{\alpha} \cdot B^{\ph}\right)=:Q\left(B\right).
\end{equation}
Note:
\begin{equation}\label{enilpot}
\text{If } S_1, \ldots , S_m \in \operatorname{N}(\g) \text{ and } \abs{\alpha} \ge e, \text{ then }S^{\alpha} = S_1^{\alpha_1} \circ \ldots \circ S_m^{\alpha_m}=0
\end{equation}
because $S^{\alpha}$ can be written as a sum of $\abs{\alpha}$-th powers of linear combinations of the $S_i$ and the $e$-th power of a nilpotent morphism cotained in $\operatorname{N}(\g)$ is zero. Therefore we can perform the following manipulations:
\es{
\sum_{\abs{\alpha} < e} \frac{(-1)^{\abs{\alpha}}}{\alpha !}  \partial_{\xi}^{\alpha}\left(N^{\alpha} \cdot B^{\ph}\right)
&\overset{\eqref{muphilocal}}{=}
\sum_{\abs{\alpha} < e} \frac{(-1)^{\abs{\alpha}}}{\alpha !}  \partial_{\xi}^{\alpha}\left(N^{\alpha} \cdot \sum_{\abs{\beta} < e} \frac{1}{\beta !} N^\beta \cdot \left(\mu_0 \cdot \partial_{\xi}^\beta A^{\ph}\right)\right)
\\&=\sum_{\stackrel{\abs{\alpha} < e}{\abs{\beta} < e}} \frac{(-1)^{\abs{\alpha}}}{\alpha ! \beta !}  \partial_{\xi}^{\alpha}\left(N^{\alpha+\beta} \cdot \left(\mu_0 \cdot \partial_{\xi}^\beta A^{\ph}\right)\right)
\\&\overset{\eqref{enilpot}}{=}
\sum_{\abs{\alpha+\beta} < e} \frac{(-1)^{\abs{\alpha}}}{\alpha ! \beta !}  \partial_{\xi}^{\alpha}\left( \left(N^{\alpha+\beta} \circ \mu_0 \right) \cdot \partial_{\xi}^\beta A^{\ph} \right)
\\&\overset{\eqref{GenLeibRul}}{=}
\sum_{\stackrel{\abs{\alpha+\beta} < e}{\gamma \le \alpha}} \frac{(-1)^{\abs{\alpha}}}{\beta ! \gamma ! (\alpha - \gamma) !}  \partial_{\xi}^{\gamma}\left(N^{\alpha+\beta} \circ \mu_0 \right) \cdot \partial_{\xi}^{\alpha + \beta - \gamma} A^{\ph}.
}
Now we perform the following substitutions: $\alpha':=\alpha + \beta - \gamma$, $\beta':=\gamma$ and $\gamma':=\beta$, thus $\alpha+\beta = \alpha' + \beta'$, $\alpha - \gamma = \alpha' - \gamma'$ and $\gamma \le \alpha \, \Longleftrightarrow \, \gamma' \le \alpha'$. And so we can finally show \eqref{muidentity}:
\es{
\sum_{\abs{\alpha} < e} \frac{(-1)^{\abs{\alpha}}}{\alpha !}  \partial_{\xi}^{\alpha}\left(N^{\alpha} \cdot B^{\ph}\right) &=
\sum_{\stackrel{\abs{\alpha+\beta} < e}{\gamma \le \alpha}} \frac{(-1)^{\abs{\alpha}}}{\beta ! \gamma ! (\alpha - \gamma) !}  \partial_{\xi}^{\gamma}\left(N^{\alpha+\beta} \circ \mu_0 \right) \cdot \partial_{\xi}^{\alpha + \beta - \gamma} A^{\ph}\\
&= \sum_{\stackrel{\abs{\alpha'+\beta'} < e}{\gamma' \le \alpha'}} \frac{(-1)^{\abs{\alpha'-\gamma'}} \cdot (-1)^{\beta'}}{\gamma' ! \beta' ! (\alpha' - \gamma') !}  \partial_{\xi}^{\beta'}\left(N^{\alpha'+\beta'} \circ \mu_0 \right) \cdot \partial_{\xi}^{\alpha'} A^{\ph}\\
&= \sum_{\abs{\alpha'+\beta'} < e} \left(\sum_{\gamma' \le \alpha'} \frac{(-1)^{\abs{\alpha'-\gamma'}}}{\gamma' ! (\alpha' - \gamma') !} \right) \cdot  \frac{(-1)^{\beta'}}{\beta'!}  \partial_{\xi}^{\beta'}\left(N^{\alpha'+\beta'} \circ \mu_0 \right) \cdot \partial_{\xi}^{\alpha'} A^{\ph}\\
& \overset{\eqref{multinom}}{=}
\sum_{\abs{\beta'} < e} \frac{(-1)^{\abs{\beta'}}}{\beta' !}  \partial_{\xi}^{\beta'}\left(N^{\beta'} \cdot \left(\mu_0 \cdot A^{\ph}\right)\right).
}
By the definition of $P$ and $\mu_0$, if $A^\ph(x) = u = A'^\ph(x)$ for $A,A' \in \Gamma(\L)$ and $x \in U$, then:
\es{
P(A)(x)=\sum_{\abs{\alpha} < e} \frac{(-1)^{\abs{\alpha}}}{\alpha !}  \partial_{\xi}^{\alpha}\left(N^{\alpha}(x) \left(\mu_0(x) \left(A^{\ph}(x)\right)\right)\right)=\sum_{\abs{\alpha} < e} \frac{(-1)^{\abs{\alpha}}}{\alpha !}  \partial_{\xi}^{\alpha}\left(N^{\alpha}(x) \left(\mu^{\ph}(c_u)(x) \right)\right)=P(A')(x).
}
So $P(A)(x)\in \g$ depends on $A^\ph$ only via $A^\ph(x) \in \k$ and we can define a linear map $P_x: \k \to \g$ by setting $P_x(A^\ph(x)):=P(A)(x)$ for $x \in U$. We may also define linear maps $Q_x: \g \to \g$ for $x \in U$ by $Q_x(B^\ph(x)):=Q(B)(x)$ for $x \in U$ due to an analogous element as above with the $N^\alpha$ instead of $\mu_0$. Since any $Q_x$ is a sum of $\1$ and a nilpotent linear map (see the sum in the third term of \eqref{muidentity} evaluated in $x$), it is bijective. We define a smooth mapping $\eta_0: U \to \Hom(\g,\k)$ (in the sense of Lie algebra morphisms) by setting for $x \in U$, $v \in \g$ and the constant map $c_v: U \to \g$, $x' \to v$:
\es{
\eta_0 (x)(v) := \left(\mu^{-1}\right)^{\ph}(c_v)(x).
}
We now fix $x \in U$ and $v \in \g$. Since $\mu$ is surjective, there exists $A \in \Gamma(\L)$ such that $\mu^\ph\left(A^\ph\right)=c_v$, thus $A^\ph(x)=\eta_0(x)(v)$. By \eqref{muidentity}, we have $P_x\left(\eta_0(x)(v) \right)=Q_x(v)$. The injectivity of $Q_x$ implies the injectivity of $\eta_0(x): \g \to \k$, yielding $e \le d$. By the symmetry of the arguments, $\mu_0(x): \k \to \g$ is also injective, yielding $d \le e$. So $\mu_0(x)$ is an isomorphism of Lie algebras.
\end{proof}

\begin{rem}
If the Lie algebra $\k$ is complex simple, then it is central, i.e. $\Cent(\k)=
\C \cdot \1$, by the Schur Lemma. If the Lie algebra $\k$ is real simple, then
Proposition X.1.5 of \cite{He78} says that $\k$ satisfies exactly one
of the following two conditions:
\begin{enumerate}
\item[A.] $\k$ admits a complex structure and the complexification $\k_{\C}=\k
\otimes_{\R} \C$ is the direct sum of two simple isomorphic ideals, hence
$\k_{\C}$ is not a simple $\C$-\La.
\item [B.] $\k_{\C}$ is a simple $\C$-\La.
\end{enumerate}
If the real simple Lie algebra $\k$ is the Lie algebra associated
to a compact Lie group, then its complexificaton $\k_{\C}$ is complex simple
by Lemma X.1.3 of \cite{He78}, so we are in case B
and have $\Cent(\k)=\R \cdot \1$.
\end{rem}

\begin{cor}\label{AutKoro}
If we have \textit{one} of the following two cases:
\begin{enumerate}
\item the Lie algebra $\k$ is complex simple,
\item the Lie algebra $\k$ is real simple and associated to
a compact Lie group,
\end{enumerate}
then for any $\mu \in \Aut\left(\Gamma^k(\L)\right)$, where $k \in \Ni$,
there is a $\Ck$-diffeomorphism $\lambda: M \to M$ such that $\mu$ can be
identified with some $\mu_0 \in \Gamma^k\left(\Aut_\lambda(\L)\right)$, i.e. a $\Ck$-section in $\Gamma^k\left(\Hom(\L,\L)\right)$ where for all $x \in M$ the map $\mu_0(x): \L_x \to \L_{\lambda(x)}$ is an isomorphism of Lie algebras. The bundle $\Aut_\lambda(\L)$ is isomorphic to $\Aut(\L):=\Aut_{\id_M}(\L)$ by $\left(f: \L_x \rightarrow \L_{\lambda(x)} \right) \longmapsto \left(\mu^{-1} \circ f: \L_x \rightarrow \L_x\right)$.
\end{cor}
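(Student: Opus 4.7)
The plan is to reduce this corollary to Theorem \ref{isotheo} applied with $\E = \L$, $\g = \k$ and $N = M$. First, I will verify the hypotheses of that theorem. In case (i), the Schur Lemma gives $\Cent(\k) = \C \cdot \1$, while in case (ii), the preceding remark (invoking Lemma X.1.3 of \cite{He78}) yields $\Cent(\k) = \R \cdot \1$; in both cases $\operatorname{S}(\k) = \K \cdot \1$ and, crucially, $\operatorname{N}(\k) = 0$ since a field contains no nonzero nilpotents. Being simple, $\k$ is both perfect and centerfree, so $\Hom(\k/\lie{\k,\k}, \z(\k)) = 0$. Thus every hypothesis of Theorem \ref{isotheo} is met.

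Applying Theorem \ref{isotheo} to $\mu \in \Aut(\Gamma^k(\L))$ produces a $\Ck$-diffeomorphism $\lambda: M \to M$. In the case $k \in \N$, part (a) directly furnishes a vector bundle $\Ck$-isomorphism $\kappa: \L \to \L$ covering $\lambda$; the collection of fiber restrictions $\mu_0(x) := \kappa|_{\L_x}: \L_x \to \L_{\lambda(x)}$ is then the desired section of $\Aut_\lambda(\L)$. In the case $k = \infty$, part (b) gives the local form \eqref{locaformofisos}, where the summands indexed by $\alpha \ne 0$ each carry a factor $N^\alpha$ with components $N_i \in \Ci(U, \operatorname{N}(\k))$. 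Because $\operatorname{N}(\k) = 0$, these summands vanish identically and only the $\alpha = 0$ term survives, yielding $\mu^\ph(A^\ph) = \mu_0 \cdot A^\ph$ for a smooth $\mu_0: U \to \Aut(\k)$. Undoing the identification of base manifolds via $\lambda$, this is precisely a $\Ck$-section of $\Aut_\lambda(\L)$.

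For the final identification, I will regard $\Aut_\lambda(\L)$ as the subbundle of $\lambda^* \Hom(\L,\L)$ consisting, over each $x \in M$, of Lie algebra isomorphisms $\L_x \to \L_{\lambda(x)}$; local trivializations come from combining those of $\Hom(\L,\L)$ with $\lambda$. Since $\mu_0(x): \L_x \to \L_{\lambda(x)}$ is itself a Lie algebra isomorphism, fiberwise postcomposition with $\mu_0(x)^{-1}$ sends any $f: \L_x \to \L_{\lambda(x)}$ to an automorphism of $\L_x$, giving a $\Ck$-bundle isomorphism $\Aut_\lambda(\L) \to \Aut(\L)$ with inverse $g \mapsto \mu_0 \circ g$.

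The only substantive obstacle is already absorbed into Theorem \ref{isotheo}: once we have the local form \eqref{locaformofisos}, the remaining work is merely the observation that the centroid condition $\Cent(\k) = \K \cdot \1$ forces $\operatorname{N}(\k) = 0$, collapsing the a priori higher-order local expression of $\mu$ to an order-zero fiberwise isomorphism. Thus the corollary is essentially a clean specialization of Theorem \ref{isotheo} to the case where the fiber is centroidally minimal.
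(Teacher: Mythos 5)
Your proposal is correct and follows exactly the route the paper intends: the corollary is stated without a separate proof precisely because it is the specialization of Theorem \ref{isotheo} to $\E=\L$, $N=M$, $\g=\k$, with the preceding remark supplying $\Cent(\k)=\K\cdot\1$ and hence all hypotheses of that theorem. Your explicit observation that $\operatorname{N}(\k)=0$ forces every $\alpha\ne 0$ term in the local form \eqref{locaformofisos} to vanish, so that $\mu$ collapses to an order-zero fiberwise isomorphism, is exactly the (implicit) content of the paper's derivation.
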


\begin{cor}
In both of the cases of Corollary \ref{AutKoro}, the Lie algebra of $\Ck$-sections, where $k \in \Ni$, of the trivial bundle $\L = M \times \k$ is naturally isomorphic to $\Ck(M,\k)$ and, since $\Diff^k(M)$ and $\Gamma^k\left(\Aut(\L)\right) \cong \Ck(M,\Aut(\k))$ can be naturally embedded into $\Aut(\Ck(M,\k))$ as a subgroup and a normal subgroup, respectively, we obtain the isomorphism
\es{
\Aut(\Ck(M,\k)) \cong \Ck(M,\Aut(\k)) \rtimes \Diff^k(M).
}
\end{cor}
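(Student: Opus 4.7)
The plan is to exhibit the two embeddings explicitly, show that one image is normal with the expected conjugation action, and then invoke Corollary \ref{AutKoro} to see that every automorphism lies in the product. First I identify $\Gamma^k(M \times \k) \cong \Ck(M,\k)$ by $X = (\id_M, f) \mapsto f$, under which the fiberwise Lie bracket becomes the pointwise bracket on $\Ck(M,\k)$. Then I define
\es{
\iota_1 \colon \Diff^k(M) \lra \Aut\left(\Ck(M,\k)\right), \quad \iota_1(\lambda)(f):= f \circ \lambda^{-1},
}
and
\es{
\iota_2 \colon \Ck(M,\Aut(\k)) \lra \Aut\left(\Ck(M,\k)\right), \quad \iota_2(g)(f)(x):=g(x)(f(x)).
}
A short direct check shows that both are injective group homomorphisms into $\Aut\left(\Ck(M,\k)\right)$ (the fact that $\iota_2(g)$ is a Lie algebra morphism uses only that $g(x) \in \Aut(\k)$ for each $x$ and that the bracket is pointwise).

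Next I would verify normality and the semidirect action. For $\lambda \in \Diff^k(M)$, $g \in \Ck(M,\Aut(\k))$, $f \in \Ck(M,\k)$, $y \in M$, a one-line calculation gives
\es{
\left(\iota_1(\lambda) \circ \iota_2(g) \circ \iota_1(\lambda)^{-1}\right)(f)(y) = g(\lambda^{-1}(y))\left(f(y)\right) = \iota_2(g \circ \lambda^{-1})(f)(y),
}
so the image of $\iota_2$ is normal in the subgroup generated by both images, and the induced action of $\Diff^k(M)$ on $\Ck(M,\Aut(\k))$ is precisely pullback $g \mapsto g \circ \lambda^{-1}$, i.e. the action which makes the semidirect product well-defined. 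The images of $\iota_1$ and $\iota_2$ intersect trivially: if $\iota_1(\lambda) = \iota_2(g)$, then evaluating on constant sections $f \equiv u$ shows $u = g(x)(u)$ for all $x \in M$ and $u \in \k$, hence $g \equiv \1$ and $\iota_1(\lambda) = \id$, which forces $\lambda = \id_M$.

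It remains to show surjectivity onto $\Aut\left(\Ck(M,\k)\right)$, and this is where I would invoke Corollary \ref{AutKoro}. Given $\mu \in \Aut\left(\Ck(M,\k)\right)$, the corollary supplies a $\Ck$-diffeomorphism $\lambda \colon M \to M$ and a section $\mu_0 \in \Gamma^k(\Aut_\lambda(\L))$ with $\mu_0(x)\colon \L_x \to \L_{\lambda(x)}$ a Lie algebra isomorphism. Because $\L = M \times \k$ is trivial, each $\mu_0(x)$ canonically identifies with an element $\tilde{\mu}_0(x) \in \Aut(\k)$, and smoothness of $\mu_0$ makes $\tilde{\mu}_0 \colon M \to \Aut(\k)$ a $\Ck$-map. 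Setting $g(y) := \tilde{\mu}_0(\lambda^{-1}(y))$, the relation $\mu(X)(\lambda(x)) = \tilde{\mu}_0(x)(X(x))$ rewrites as $\mu(X)(y) = g(y)\bigl(X(\lambda^{-1}(y))\bigr)$, i.e. $\mu = \iota_2(g) \circ \iota_1(\lambda)$. Combining this with the previous paragraph yields the claimed isomorphism $\Aut\left(\Ck(M,\k)\right) \cong \Ck(M,\Aut(\k)) \rtimes \Diff^k(M)$.

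The main obstacle is bookkeeping rather than mathematics: one must be careful that the identification $\Aut_\lambda(\L) \cong M \times \Aut(\k)$ (for the trivial bundle) is the one compatible with the formula extracted from Corollary \ref{AutKoro}, and that the chosen conventions for $\iota_1$, $\iota_2$ match the action coming out of conjugation, so that the resulting semidirect product has the direction stated. Everything else is a straightforward verification.
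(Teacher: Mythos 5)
Your proposal is correct and follows exactly the route the paper intends: the statement itself already names the two embeddings, and the paper leaves the verification (injectivity, the conjugation formula giving normality and the pullback action, trivial intersection, and surjectivity via Corollary \ref{AutKoro} applied to the trivial bundle) as implicit, which is precisely what you fill in. Your computation $\iota_1(\lambda)\circ\iota_2(g)\circ\iota_1(\lambda)^{-1}=\iota_2(g\circ\lambda^{-1})$ and the factorization $\mu=\iota_2(g)\circ\iota_1(\lambda)$ with $g=\tilde\mu_0\circ\lambda^{-1}$ are exactly right, so nothing is missing.
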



\end{document}